\algrenewcommand\algorithmicrequire{\textbf{Input:}}
\algrenewcommand\algorithmicensure{\textbf{Output:}}
\newenvironment{breakablealgorithm}
  {% \begin{breakablealgorithm}
   \begin{center}
     \refstepcounter{algorithm}% New algorithm
     \hrule height.8pt depth0pt \kern2pt% \@fs@pre for \@fs@ruled
     \renewcommand{\caption}[2][\relax]{% Make a new \caption
       {\raggedright\textbf{\fname@algorithm~\thealgorithm} ##2\par}%
       \ifx\relax##1\relax % #1 is \relax
         \addcontentsline{loa}{algorithm}{\protect\numberline{\thealgorithm}##2}%
       \else % #1 is not \relax
         \addcontentsline{loa}{algorithm}{\protect\numberline{\thealgorithm}##1}%
       \fi
       \kern2pt\hrule\kern2pt
     }
  }{% \end{breakablealgorithm}
     \kern2pt\hrule\relax% \@fs@post for \@fs@ruled
   \end{center}
  }
\DeclareFontFamily{U}{mathx}{}
\DeclareFontShape{U}{mathx}{m}{n}{<-> mathx10}{}
\DeclareSymbolFont{mathx}{U}{mathx}{m}{n}
\DeclareMathAccent{\widehat}{0}{mathx}{"70}
\DeclareMathAccent{\widecheck}{0}{mathx}{"71}
\DeclareMathOperator{\Mod}{Mod}
\DeclareMathOperator{\SL}{SL}
\DeclareMathOperator{\PSL}{PSL}
\DeclareMathOperator{\Aut}{Aut}
\DeclareMathOperator{\Hom}{Hom}
\DeclareMathOperator{\Epi}{Epi}
\DeclareMathOperator{\Out}{Out}
\DeclareMathOperator{\Homeo}{Homeo}
\DeclareMathOperator{\trace}{trace}
\DeclareMathOperator{\sgn}{sgn}
\DeclareMathOperator{\cxty}{cxty}
\theoremstyle{definition}
\newtheorem*{defx*}{Definition}
\newtheorem{definition}{Definition}[subsection]
\newtheorem{example}[definition]{Example}
\theoremstyle{plain}
\newtheorem{thmx}{Theorem}
\newtheorem{colx}{Corollary}
\newtheorem{theorem}[definition]{Theorem}
\newtheorem{lemma}[definition]{Lemma}
\newtheorem{corollary}[definition]{Corollary}
\newtheorem{proposition}[definition]{Proposition}
\newtheorem{question}[definition]{Question}
\theoremstyle{remark}
\newtheorem{remark}[definition]{Remark}
\newcommand{\RNum}[1]{\lowercase\expandafter{\romannumeral #1\relax}}
\newcommand{\URNum}[1]{\uppercase\expandafter{\romannumeral #1\relax}}
\newcommand\xleftrightarrow[2][]{%
  \ext@arrow 9999{\longleftrightarrowfill@}{#1}{#2}}
\newcommand\longleftrightarrowfill@{%
  \arrowfill@\leftarrow\relbar\rightarrow}
\title{Classification of Torus Fibrations Over $S^2$\\Up to Fibre Sum Stabilisation}
\author{
  \Large{Yibo Zhang}
  \footnote{
    This work is supported by the French National Research Agency in the framework of the \guillemotleft France 2030 \guillemotright program (ANR-15-IDEX-0002) and by the LabEx PERSYVAL-Lab (ANR-11-LABX-0025-01).
  }\\
\small{\emph{Institut Fourier, UMR 5582, Laboratoire de Mathématiques}}\\
\small{\emph{Université Grenoble Alpes, CS 40700, 38058 Grenoble cedex 9, France}}\\
\small{\emph{email:}} \tt{yibo.zhang@univ-grenoble-alpes.fr}
}
\date{}
\begin{document}

%%%%%%%%%%%%%%%%%%%%%%%%%%%%%%%%%%%%%%%%%%%%%%%%%%%%%%%%%%%%%%%%%%%%%%%%%%%%%%

\maketitle

\begin{abstract}
    We study torus fibrations over the $2$-sphere and Hurwitz equivalence of their monodromies.
    We show that, if two torus fibrations over $S^2$ have the same type of
    singularities, then their global monodromies are Hurwitz equivalent after performing direct sums with a certain torus Lefschetz fibration.
    The additional torus Lefschetz fibration is universal when the type of singularities is ``simple''. 
\end{abstract}

\tableofcontents

\section{Introduction}

A generalised \emph{torus fibration} over the $2$-sphere is a continuous map $f:M^4\rightarrow S^2$ from a closed oriented $4$-manifold $M^4$ to the $2$-sphere $S^2$, for which there exists some finite set $\mathcal{B}\subset S^2$,
called the \emph{branch set},
so that the restriction of $f$ to $M^4\setminus f^{-1}(\mathcal{B})$ is a locally trivial fibration over $S^2\setminus\mathcal{B}$
with fibre a torus.
In this paper, we will study torus fibrations over $S^2$ and provide an algebraic classification of their monodromies.

Given a torus fibration $f:M^4\rightarrow S^2$, we suppose that the branch set for $f$ is $\mathcal{B}=\{p_1,\ldots,p_n\}$ and choose a \emph{base point} $p\in S^2\setminus\mathcal{B}$.
As in \cites[p.176]{moishezon1977complex}{kas1980handlebody}{funar2022singular}, the locally trivial fibration $f:M^4\setminus f^{-1}(\mathcal{B})\rightarrow S^2\setminus \mathcal{B}$ determines a \emph{monodromy homomorphism}
\begin{equation*}
    \Phi_{f,p}:\pi_1(S^2\setminus \mathcal{B},p)\rightarrow \Mod(\mathbb{T}^2),
\end{equation*}
by identifying $f^{-1}(p)$ with $\mathbb{T}^2$,
where $\Mod(\mathbb{T}^2)=\pi_0 \Homeo^+(\mathbb{T}^2)$ is the mapping class group of torus and hence isomorphic to $\SL(2,\mathbb{Z})$.

Choose homotopy classes of loops $\gamma_1,\ldots,\gamma_n\subset S^2$ based at $p$ such that each loop $\gamma_j$ goes around some branch point $p_i$ exactly once clockwise
and the fundamental group $\pi_1(S^2\setminus\mathcal{B},p)$ is generated by $\gamma_1,\ldots,\gamma_n$ with the relation $\gamma_1\cdots\gamma_n=1$. Therefore, the group $\pi_1(S^2\setminus\mathcal{B} ,p)$ is isomorphic to $\mathbb{F}_{n-1}$.
The monodromy $\phi_j = \Phi_{f,p}(\gamma_j) \in \SL(2,\mathbb{Z})$ is called the \emph{fibre monodromy} around the singular fibre $f^{-1}(p_i)$.
Note that $\phi_1\cdots\phi_n=1$.

The $n$-tuple $(\phi_1,\ldots,\phi_n)$ is called a \emph{global monodromy} of $f$ and the homomorphism $\Phi_{f,p}$ is uniquely determined by $p$, $\gamma_1,\ldots,\gamma_n$ and $(\phi_1,\ldots,\phi_n)$.
However, a different choice of the base point $p$ amounts to changing $(\phi_1,\ldots,\phi_n)$ by a diagonal (or simultaneous) conjugacy.
Also a different choice of homotopy classes of $\gamma_1,\ldots,\gamma_n$ may change $(\phi_1,\ldots,\phi_n)$ by a sequence of elementary transformations (or Hurwitz moves; see Subsection \ref{subsection::elementary transformations} for more details):
\begin{equation*}
    (\ldots,\phi_{i}\phi_{i+1}\phi_i^{-1},\phi_i,\ldots)
    \xleftarrow{R_i^{-1}}
    (\ldots,\phi_i,\phi_{i+1},\ldots)
    \xrightarrow{R_i}(\ldots,\phi_{i+1},\phi_{i+1}^{-1}\phi_i\phi_{i+1},\ldots),
    1\le i\le n-1.
\end{equation*}

We will denote a multi-set by $[x_1, x_2, x_2, x_3, x_3, x_3, \ldots]$ and denote the conjugacy class of an element $g$ in a group $G$ by $Cl_G(g)$ (or $Cl(g)$ if we do not specify $G$).

\begin{defx*}
Let $f:M^4\rightarrow S^2$ be a torus fibration over $S^2$ with $n$ branch points and $(\phi_1,\ldots,\phi_n)$ be a global monodromy of $f$.
The \emph{type} (of singularities) of $f$
is defined to be the multi-set
\begin{equation*}
\mathcal{O}(f)=[Cl_{\SL(2,\mathbb Z)}(\phi_1), \ldots, Cl_{\SL(2,\mathbb Z)}(\phi_n)],
\end{equation*}
which does not depend on the choice of its global monodromy.
\end{defx*}

%The elementary transformations are used for the Artin's representation which represents the braid group $B_n$ as a subgroup of $Aut(\mathbb{F}_n)$.
%Besides, the action of $Aut(\mathbb{F}_n)$ on $\Hom(\mathbb{F}_n,SL(2,\mathbb{Z}))$ by precomposition has a $B_n$-invariant subset identified with $\Hom(\mathbb{F}_{n-1},SL(2,\mathbb{Z}))$.
%Since the group $\pi_1(S^2\setminus\mathcal{B})$ is isomorphic to $\mathbb{F}_{n-1}$, we then consider the orbit space

There is a left action of $\Aut(\mathbb F_n)$ on $\Hom(\mathbb F_n, \SL(2,\mathbb{Z}))$ 
by precomposition with the inverse.
Suppose that $\{\alpha_1,\alpha_2,\ldots,\alpha_n\}$ is a generating set of $\mathbb F_n$.
Artin's representation embeds the braid group $B_n$ on $n$ strands as a subgroup of $\Aut(\mathbb F_n)$.
The subset $\Hom(\mathbb F_n/\langle \alpha_1\cdots\alpha_n\rangle, \SL(2,\mathbb Z))$ of $\Hom(\mathbb F_n, \SL(2,\mathbb Z))$ is $B_n$-invariant and identified with $\Hom(\pi_1(S^2\setminus\mathcal{B}), \SL(2,\mathbb{Z}))$, which inherits the action of $B_n$.
We then consider the orbit space
\begin{equation*}
    \mathcal{M}_n=
    B_n\backslash \Hom(\mathbb{F}_{n-1},\SL(2,\mathbb{Z})) / \SL(2,\mathbb{Z})
\end{equation*}
where the action of $\SL(2,\mathbb{Z})$ on the right is by conjugation.
Note that the $B_n$ action induces an action of the sphere braid group $B_n(S^2)$ on
$\Hom(\mathbb F_{n-1}, \SL(2,\mathbb Z))/\SL(2,\mathbb Z)$, coming from the natural mapping class group action on this set.
A torus fibration with $n$ branch points determines an element in $\mathcal{M}_n$ and therefore the study of torus fibrations by means of their monodromy addresses two independent questions.

\begin{question}
How does an orbit in $\mathcal{M}_n$ limit the corresponding torus fibration?
\end{question}

\begin{question}
How to characterise or classify the elements in $\mathcal{M}_n$?
\end{question}

A torus \emph{Lefschetz fibration} $f:M^4\rightarrow S^2$ is the simplest torus fibration,
which is a smooth torus fibration and contains only one singularity in each singular fibre, each singularity admitting complex local coordinates $(z_1,z_2)$ compatible with the orientation of $M^4$ such that the fibration is locally given by $f(z_1,z_2)=z_1^2+z_2^2$.
The type of singularities then depends only on the number of branch points, every $\phi_i$ being a positive Dehn twist around some simple loop.

For torus Lefschetz fibrations, answers to both questions are given by Moishezon and Livn\'e.
On the one hand, an orbit in $\mathcal{M}_n$, if it does correspond to a torus Lefschetz fibration, determines the unique one up to fibre-preserving diffeomorphism
% isomorphism of smooth fibrations
%, i.e., pair of orientation-preserving and fiber-preserving diffeomorphisms 
(see Part \URNum{2}, Lemma 7a in \cite{moishezon1977complex}).
On the other hand, for torus Lefschetz fibrations with the same number of branch points, the action of $B_n$ on the set of their monodromy homomorphisms is transitive (see Part \URNum{2}, Lemma 8 in \cite{moishezon1977complex}).
This result was generalized by Orevkov (see \cite{orevkov2004braid}).

If one relaxes the requirement of the orientation for Lefschetz fibrations, the fibrations are \emph{achiral} Lefschetz fibrations.
We say that the orientation is still preserved for a type $I_1^+$ singular fibre but not for a type $I_1^-$ singular fibre.
The global monodromy was first investigated by Matsumoto in \cite{matsumoto1985torus} (see also \cite[Section 8.4]{gompf19994}).
An inspirational result in his study introduces a representative of the global monodromy using elementary transformations which is, however, not unique.
In particular, one cannot readily classify those achiral Lefschetz fibrations (or their corresponding elements in $\mathcal{M}_n$) whose singular fibres of type $I_1^+$ and $I_1^-$ occur in pairs.

In general, it is extremely difficult to classify the orbits in $\mathcal{M}_n$. 
An algebraic understanding of $\mathcal{M}_n$ is related the study of Wiegold (see \cite{Lubotzky2011}) who conjectured that
\begin{equation*}
    |\Out(\mathbb{F}_{n-1})  \backslash \Epi(\mathbb{F}_{n-1},G) / \Aut(G)| = 1
\end{equation*}
for any finite simple group $G$ and $n\ge 4$, where $\Epi(\mathbb{F}_{n-1},G)$ denotes the set of epimorphisms $\mathbb{F}_{n-1}\rightarrow G$. For the study of its extension to surface groups, we refer to \cite[Theorem 1.4]{Funar2018}.

As in \cites{auroux2005stable}{Catanese2015}{Samperton_2020}, we discuss the stable equivalence of algebraic objects by relating them to the direct sum construction.
When the $2$-sphere is replaced by an arbitrary surface, another notion of stabilisation corresponds to pinching a hole (see \cites{Catanese2016}{funar2020braidedsurfaces}). For more interesting problems on the orbit space $\mathcal{M}_n$ and its variations,
not related to the concept of stabilisation,
we refer to \cites{auroux2006problems}{auroux2015factorizations}.

\subsection*{Global monodromies with stabilisation}

Suppose that $f_1:M_1\rightarrow S^2$ and $f_2:M_2\rightarrow S^2$ are two torus fibrations.
Choosing a pair of $2$-disks $D_1, D_2\subset S^2$ that do not contain any branch points of $f_1$, $f_2$ respectively, 
gluing $M_1\setminus f_1^{-1}(D_1)$ and $M_2\setminus f_2^{-1}(D_2)$ along 
some orientation reversing fibrewise homeomorphism
%diffeomorphism
$\beta:\partial f_1^{-1}(D_1)\rightarrow \partial f_2^{-1}(D_2)$, we obtain a fibre-connected sum $M_1\oplus_{\beta} M_2$ between $M_1$ and $M_2$. The fibration $f$ of $M_1\oplus_{\beta} M_2$ piecing together $f_1$ and $f_2$ is again a torus fibration over $S^2$, called a direct sum between $f_1$ and $f_2$ and written as $f=f_1\oplus f_2$ if we do not specify $\beta$.
In \cite{auroux2005stable} Auroux introduced the direct sum between a fibration and a fixed standard fibration, called stabilisation.
He then proceeded to give a classification of genus $g\ge 2$ Lefschetz fibrations, up to stabilisation.

\begin{defx*}
A conjugacy class of $\SL(2,\mathbb{Z})$ which either corresponds to elements of trace $0,\pm 1,\pm 3$ or else contains 
$\begin{bmatrix}1 & 1\\0 & 1\end{bmatrix}$,
$\begin{bmatrix}1 & -1\\0 & 1\end{bmatrix}$,
$\begin{bmatrix}-1 & 1\\0 & -1\end{bmatrix}$ or
$\begin{bmatrix}-1 & -1\\0 & -1\end{bmatrix}$
is called \emph{simple}.
\end{defx*}

The following result is a rather general extension of Auroux's stable classification in genus $1$ but for arbitrary singularities:

\begin{thmx}
\label{thmx::A}
Let $\mathcal{O}$ be a multi-set of conjugacy classes of $\SL(2,\mathbb{Z})$.
There exists a torus Lefschetz fibration $f_O^L$ over $S^2$ depending only on the non-simple conjugacy classes occurring in $\mathcal{O}$ that has the following property:
for $i=1,2$,
\begin{itemize}[-,topsep=0pt,itemsep=-1ex,partopsep=1ex,parsep=1ex]
    \item let $f_i$ be a torus fibration over $S^2$ with $\mathcal{O}(f_i)=\mathcal{O}$;
    \item let $\widetilde{f_i}$ be a direct sum between $f_i$ and $f_{\mathcal{O}}^L$;
    \item let $(g_1^{(i)},\ldots,g_n^{(i)})$ be a global monodromy of $\widetilde{f_i}$.
\end{itemize}
Then $(g_1^{(1)},\ldots,g_n^{(1)})$ and $(g_1^{(2)},\ldots,g_n^{(2)})$ are Hurwitz equivalent i.e. one can transform $(g_1^{(1)},\ldots,g_n^{(1)})$ into $(g_1^{(2)},\ldots,g_n^{(2)})$ using a finite sequence of elementary transformations.
\end{thmx}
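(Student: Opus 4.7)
The approach I would take is to introduce a normalisation result: after stabilisation by $f_\mathcal{O}^L$, any global monodromy of a torus fibration of type $\mathcal{O}$ can be brought, by Hurwitz moves, to a canonical form depending only on $\mathcal{O}$. From this normalisation, the theorem is immediate by transitivity of Hurwitz equivalence.

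First I would exploit the fact that the Hurwitz move $R_i$ swaps the conjugacy classes at positions $i$ and $i+1$ (sending $(\phi_i,\phi_{i+1})$ to $(\phi_{i+1},\phi_{i+1}^{-1}\phi_i\phi_{i+1})$). Hence, using the $B_n$-action, one can rearrange the entries of $(\phi_1^{(i)},\ldots,\phi_n^{(i)})$ so that the conjugacy classes appear in a prescribed order, the same for $i=1$ and $i=2$, using only the equality $\mathcal{O}(f_1)=\mathcal{O}(f_2)$. After appending the global monodromy $(\tau_1,\ldots,\tau_m)$ of $f_\mathcal{O}^L$, the two stabilised tuples $(g_1^{(i)},\ldots,g_{n+m}^{(i)})$ share a common Dehn-twist tail, while their first $n$ entries are componentwise conjugate in $\SL(2,\mathbb{Z})$.

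The key lemma to establish would be a \emph{local conjugation} lemma: for each conjugacy class $C$ appearing in $\mathcal{O}$ and each $h \in \SL(2,\mathbb{Z})$, the substitution $\phi \mapsto h\phi h^{-1}$ applied to a single occurrence of $\phi \in C$ can be realised by Hurwitz moves inside the stabilised factorisation. Iterating this lemma across $j=1,\ldots,n$ would transform one tuple into the other. For a \emph{simple} class $C$, the rigid structure of its centraliser, combined with the availability of other factors in the factorisation, should suffice to realise this conjugation without any additional Dehn twists from $f_\mathcal{O}^L$; this is precisely why the stabilising Lefschetz fibration depends only on the non-simple classes occurring in $\mathcal{O}$.

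The hardest part will be constructing, for each non-simple conjugacy class, a block of positive Dehn twists to include in $f_\mathcal{O}^L$ powerful enough to realise arbitrary conjugations of an occurrence of that class. This likely requires a case analysis distinguishing hyperbolic classes from parabolic classes with off-diagonal entry of absolute value $\geq 2$, together with explicit manipulation of Hurwitz moves between class representatives and the chosen twists. I expect the Moishezon--Livn\'e transitivity theorem for torus Lefschetz fibrations to control the internal structure of the twist tail, and the explicit description of $\SL(2,\mathbb{Z})$ via its amalgamated product structure $\mathbb{Z}/4 *_{\mathbb{Z}/2} \mathbb{Z}/6$ to guide the computations.
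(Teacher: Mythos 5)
Your high-level strategy---normalise every stabilised global monodromy to a canonical form depending only on $\mathcal{O}$, then conclude by transitivity---is exactly the paper's (this is Theorem \ref{theorem::global monodromy fibre sum Hurwitz equivalent}), and your guess that the stabiliser carries one block per non-simple class together with a generating set matches the design of $T_{\mathcal{O}}=T_{\mathcal{O},0}\bullet T_{\mathcal{O},1}\bullet T_{\mathcal{O},2}$. But your key lemma is false as stated, and this is a genuine gap. Elementary transformations preserve the \emph{exact} product $g_1\cdots g_n$, not merely its conjugacy class, so the substitution $\phi_j\mapsto h\phi_j h^{-1}$ on a \emph{single} occurrence cannot be realised by Hurwitz moves in general: it changes the product. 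The paper's conjugation tool, Lemma \ref{lemma::tuples containing a generating set}, only conjugates a \emph{sub-tuple whose product is central}, using the generating set inside $T_{\mathcal{O},0}$; single entries of non-central product are never conjugated in isolation. Consequently your iteration ``across $j=1,\ldots,n$'' has no valid base step.

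The deeper problem is that even granting some conjugation mechanism, ``componentwise conjugate with equal product'' does not imply Hurwitz equivalent, so reordering the classes into a prescribed order plus local conjugation cannot finish the proof. The paper's Subsection \ref{subsection::fails} gives exactly such counterexamples: $(-A^2B,-BA,-A^2B,-BA)$ and $(-A^2B,-BA,-ABA,A^2BA^2)$ have the same type and product $I$ yet are not Hurwitz equivalent, and the second example there shows that appending arbitrarily many copies of a fixed block can still fail---so the choice of stabiliser is delicate, not just ``powerful enough''. What actually closes the argument in the paper is a long normal-form theory for tuples in $\PSL(2,\mathbb{Z})$ of conjugates of short and almost short elements (Theorems \ref{theorem::Livne mathcalS}, \ref{theorem::Livne mathcalS2}, \ref{theorem::Livne mathcalS2_modification}), proved by length/complexity-reduction inductions together with the contraction--restoration machinery of Subsection \ref{subsection::contractions-restorations}, plus the swappability result (Proposition \ref{proposition::swappability_along_iota}) to control the $\pm I$ ambiguity when lifting the $\PSL(2,\mathbb{Z})$ normal form back to $\SL(2,\mathbb{Z})$---an issue your proposal does not address at all. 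Your appeal to Moishezon--Livn\'e transitivity only governs tuples all of whose entries are conjugates of a single positive twist, which is not the situation for arbitrary $\mathcal{O}$; none of your remaining ingredients (centraliser rigidity for simple classes, the amalgam structure) substitutes for this normal-form analysis.
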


In Theorem \ref{thmx::A}, the choices of direct sums $\widetilde{f_1}$, $\widetilde{f_2}$, base points and loops for the global monodromies are far from unique.
As such, we adopt the following convention: we will use the double plural to highlight the unlimited objects, say all global monodromies of all direct sums.

Theorem \ref{thmx::A} shows that, in particular, given a torus fibration $f$ over $S^2$, all global monodromies of all direct sums $f\oplus f_{\mathcal{O}(f)}^L$ are pairwise Hurwitz equivalent.
The additional fibration $f_{\mathcal{O}}^L$ in Theorem \ref{thmx::A} can be replaced by a torus fibration with fewer branch points but which is not a Lefschetz fibration (see Theorem \ref{theorem::global monodromy fibre sum Hurwitz equivalent} for a more detailed reformulation). In both cases, the number of branch points in the additional fibration depends on the number of non-simple elements in $\mathcal{O}$. In particular, we have the following results:

\begin{thmx}
\label{thmx::B}
There exists a torus Lefschetz fibration $f_{12}^L$ over $S^2$ with $12$ branch points such that, for any multi-set $\mathcal{O}$ of simple conjugacy classes of $\SL(2,\mathbb{Z})$ corresponding to elements of trace $0,\pm 1$ or $\pm 2$,
all global monodromies of all direct sums $f\oplus f_{12}^L$ with $f$ a torus fibration over $S^2$ satisfying $\mathcal{O}(f)=\mathcal{O}$ are pairwise Hurwitz equivalent.
\end{thmx}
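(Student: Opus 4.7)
I would deduce Theorem \ref{thmx::B} from Theorem \ref{thmx::A} by exhibiting a canonical universal choice of auxiliary fibration. Set $f_{12}^L$ to be the standard torus Lefschetz fibration over $S^2$ with twelve singular fibres of type $I_1^+$ (for instance the elliptic fibration on the rational elliptic surface), whose global monodromy is a product of twelve positive Dehn twists equal to the identity of $\SL(2,\mathbb{Z})$.

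Under the hypotheses of Theorem \ref{thmx::B}, every class in $\mathcal{O}$ is simple, so $\mathcal{O}$ contains no non-simple classes. Since the auxiliary fibration $f_{\mathcal{O}}^L$ of Theorem \ref{thmx::A} depends only on the non-simple part of $\mathcal{O}$, it reduces to a single torus Lefschetz fibration $f_{\emptyset}^L$ independent of $\mathcal{O}$. Hence Theorem \ref{thmx::A} already guarantees that for any two torus fibrations $f_1, f_2$ with $\mathcal{O}(f_i)=\mathcal{O}$, all global monodromies of $f_i \oplus f_{\emptyset}^L$ are pairwise Hurwitz equivalent.

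The remaining task is to replace $f_{\emptyset}^L$ by $f_{12}^L$. My plan is to write $f_{12}^L$, up to Hurwitz equivalence of its global monodromy, as a direct sum $f_{\emptyset}^L \oplus f'$ for some auxiliary torus Lefschetz fibration $f'$. Given such a decomposition, and using that direct sum corresponds on the monodromy level to concatenation of factorisations (up to simultaneous conjugation), one has $f_i \oplus f_{12}^L = (f_i \oplus f_{\emptyset}^L) \oplus f'$; any sequence of elementary transformations equating the monodromies of $f_1 \oplus f_{\emptyset}^L$ and $f_2 \oplus f_{\emptyset}^L$ then extends to the enlarged factorisations by keeping the $f'$-coordinates fixed, yielding the claim.

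The main obstacle is establishing such a decomposition of $f_{12}^L$ through $f_{\emptyset}^L$. Equivalently, one must show that the positive Dehn twist factorisation associated to $f_{\emptyset}^L$ is a sub-factorisation of the canonical twelve-twist factorisation of the identity, up to Hurwitz equivalence; this should rely on the Moishezon–Livné transitivity theorem for the Hurwitz action on torus Lefschetz factorisations with a fixed number of positive Dehn twists, together with explicit bounds on the number of branch points needed by $f_{\emptyset}^L$. The trace restriction to $0, \pm 1, \pm 2$ is essential here: it excludes the hyperbolic classes (trace $\pm 3$ and beyond), whose infinitely many distinct conjugacy classes would force genuine dependence of the auxiliary fibration on $\mathcal{O}$, and it keeps $f_{\emptyset}^L$ small enough to be housed inside the twelve-twist model.
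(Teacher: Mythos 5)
Your plan founders at the decomposition step, and the failure is structural, not a matter of missing details. The auxiliary fibration $f_{\emptyset}^L$ that Theorem \ref{thmx::A} supplies for multi-sets with empty non-simple part must, by the very statement of that theorem, work uniformly for \emph{all} multi-sets of simple classes — including those containing trace $\pm 3$ classes, which are simple under the paper's definition. In the paper's construction this universal fibration is (a Lefschetz realisation of) $T_{\mathcal{O},0}\bullet T_{\mathcal{O},1}$, namely $f_{60}^L$ with $60$ branch points; Theorem \ref{thmx::A}, taken as a black box, gives no upper bound on the number of branch points of $f_{\emptyset}^L$, and certainly not the bound $\le 12$ your argument needs. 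Since branch points add under fibre sum, any direct sum $f_{\emptyset}^L\oplus f'$ has at least as many branch points as $f_{\emptyset}^L$, so the identity $f_{12}^L = f_{\emptyset}^L\oplus f'$ is impossible for the fibration the paper actually provides, and unobtainable from \ref{thmx::A} alone. Your closing heuristic also has the logic reversed: $f_{\emptyset}^L$ is by definition independent of $\mathcal{O}$, so restricting the traces in $\mathcal{O}$ to $0,\pm 1,\pm 2$ cannot shrink it. What the trace restriction actually buys — and this is the paper's proof of Theorem \ref{thmx::B} — is that the stabilising tuple $T_{\mathcal{O}}$ of Definition \ref{definition::additional tuple} collapses to $T_{\mathcal{O},0}=(\widecheck{L},\widehat{L},A^2)^2$, because the block $T_{\mathcal{O},1}$ (present exactly when trace $\pm 3$ occurs, even though such classes are simple) and the block $T_{\mathcal{O},2}$ (non-simple classes) are both empty. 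Then $H_{12}=(\widecheck{L},\widehat{L})^6$ is a $T_{\mathcal{O},0}$-expansion (using $\widecheck{L}\widehat{L}=A$, so $A^2=(\widecheck{L}\widehat{L})^2$), and Theorem \ref{theorem::global monodromy fibre sum Hurwitz equivalent}, case $\RNum{2})$, applies directly with $f_0=f_{H_{12}}^L$. In other words, Theorem \ref{thmx::B} is proved in parallel with Theorem \ref{thmx::A} from the same normal-form theorem, not deduced from it.

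Two smaller remarks. Your choice of $f_{12}^L$ (twelve fibres of type $I_1^+$, the rational elliptic surface) is consistent with the paper's $f_{H_{12}}^L$, by Moishezon--Livn\'e transitivity for torus Lefschetz factorisations with a fixed number of branch points. And your extension step — moves supported in the first $n+m$ coordinates extend to a longer concatenation fixing the $f'$-block — is correct as far as it goes, but to cover ``all global monodromies of all direct sums'' you must still eliminate the simultaneous conjugations $\psi_1,\psi_2$ appearing in a general monodromy of a fibre sum, which requires Lemma \ref{lemma::tuples containing a generating set} applied to a sub-tuple containing a generating set; you gloss over this. Neither point, however, rescues the argument: the gap is the nonexistent decomposition of $f_{12}^L$ through $f_{\emptyset}^L$.
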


\begin{thmx}
\label{thmx::C}
There exists a torus Lefschetz fibration $f_{60}^L$ over $S^2$ with $60$ branch points such that, for any multi-set $\mathcal{O}$ of simple conjugacy classes of $\SL(2,\mathbb{Z})$,
all global monodromies of all direct sums $f\oplus f_{60}^L$ with $f$ a torus fibration over $S^2$ satisfying $\mathcal{O}(f)=\mathcal{O}$ are pairwise Hurwitz equivalent.
\end{thmx}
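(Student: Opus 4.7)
The plan is to take $f_{60}^L$ to be the five-fold fibre sum $(f_{12}^L)^{\oplus 5}$, where $f_{12}^L$ is the torus Lefschetz fibration from Theorem~\ref{thmx::B}; geometrically, this is the standard elliptic Lefschetz fibration of $E(5)$ with $60 = 5\times 12$ branch points. This choice guarantees that $f_{60}^L$ contains $f_{12}^L$ as a fibre-sum summand, so Theorem~\ref{thmx::B} applies whenever $\mathcal{O}$ consists only of simple classes of trace $0,\pm 1,\pm 2$: in that case the decomposition
\[
f \oplus f_{60}^L \;\cong\; (f \oplus f_{12}^L) \oplus (f_{12}^L)^{\oplus 4},
\]
combined with the Moishezon--Livn\'e uniqueness for torus Lefschetz monodromies, allows one to normalise the four extra copies of $f_{12}^L$ by Hurwitz moves, reducing the statement to Theorem~\ref{thmx::B}.

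The genuinely new content is thus the treatment of the simple classes of trace $\pm 3$, of which there is essentially one per sign in $\SL(2,\mathbb Z)$, represented by $\begin{bmatrix}2 & 1\\ 1 & 1\end{bmatrix}$ and its inverse; crucially, this matrix already factors as a product of two positive Dehn twist matrices. My strategy is a \emph{splitting procedure}: for each trace $\pm 3$ entry $\phi$ appearing in the global monodromy, use a Hurwitz move between $\phi$ and a carefully chosen adjacent Dehn twist $D$ from the $f_{60}^L$ part to replace the pair $(\phi,D)$ by a pair $(T_1,T_2)$ of positive Dehn twists with $T_1T_2 = \phi D$. After applying this to every trace $\pm 3$ entry, the resulting Hurwitz-equivalent factorisation has non-Lefschetz part of type involving only simple classes of trace $0,\pm 1,\pm 2$; Theorem~\ref{thmx::B} then identifies any two such factorisations up to Hurwitz equivalence, and one may finally undo the splittings by the reverse Hurwitz moves to conclude.

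The main obstacle will be showing that the splitting is always realisable in a compatible way. Given the Dehn twists supplied by $f_{60}^L$, one must exhibit, for every conjugate of the trace $\pm 3$ matrix, an adjacent Dehn twist $D$ such that $\phi D$ factors as a product of two positive Dehn twists, and arrange for all the Dehn twists so produced to lie within a common $\Mod(\mathbb{T}^2)$-orbit where Theorem~\ref{thmx::B}'s normalisation is admissible. The factor of $5$ (rather than a smaller multiple of $12$) is expected to be exactly what is needed to guarantee a sufficient stock of Dehn twists with the right geometric intersection pattern to carry out these splittings simultaneously and without mutual obstruction; a careful analysis in $\SL(2,\mathbb Z)$ of those pairs of Dehn twists whose product has trace $\pm 3$, together with a counting argument on the fibre-sum components, should supply the required flexibility.
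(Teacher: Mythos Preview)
Your splitting procedure contains a fundamental error: Hurwitz moves preserve the multiset of conjugacy classes of the components. Applying $R_i^{\pm 1}$ to a pair $(\phi,D)$ yields $(D,\,D^{-1}\phi D)$ or $(\phi D\phi^{-1},\,\phi)$, and in either case one component is still conjugate to $\phi$. Hence no sequence of Hurwitz moves can turn a tuple containing a trace~$\pm 3$ entry into one whose entries all have trace in $\{0,\pm 1,\pm 2\}$, and you cannot reduce to Theorem~\ref{thmx::B} in the way you describe. (If instead you meant a contraction $(\phi,D)\dashrightarrow\phi D$ followed by a different re-expansion $(T_1,T_2)$, that is not a Hurwitz move either, and the resulting tuple is no longer a global monodromy of $f\oplus f_{60}^L$, so Theorem~\ref{thmx::B} does not apply to it.)

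The paper's argument is of a different nature. It does \emph{not} take $f_{60}^L=(f_{12}^L)^{\oplus 5}$; rather it exhibits an explicit $60$-tuple $H_{60}$ of Dehn twists which is a $T_{\mathcal{O}}$-expansion in the sense of Theorem~\ref{theorem::global monodromy fibre sum Hurwitz equivalent}, where for simple $\mathcal{O}$ the crucial extra block is $T_{\mathcal{O},1}=(B,B,B,B)\bullet(-A^2BAB,\,BA,\,-ABA)^3$. The trace~$\pm 3$ classes are handled not by eliminating them but by a genuine extension of the Livn\'e--Moishezon theory to the conjugacy class of $ababa$ in $\PSL(2,\mathbb{Z})$ (the ``almost short'' elements of Section~\ref{section::Livne}); this is the content of Theorems~\ref{theorem::Livne mathcalS2} and~\ref{theorem::Livne mathcalS2_modification}, which show that after concatenating with the fixed tuple $\mathcal{F}_{13}$ one can bring all trace~$\pm 3$ entries into a normal form consisting of pairs $(a^2bab,\,ba^2ba)$ plus leftover short pieces. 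The $48$ extra Dehn twists beyond $H_{12}$ are precisely an expansion of $(B,B,B,B)\bullet\mathcal{F}_{13}$-type data, not a generic stock of twists for ad hoc splittings.
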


In Theorem \ref{thmx::B} each of $-2,-1,0,1$ and $2$ might occur as the trace of some element in $\mathcal{O}$. We emphasise that the ``or'' is always inclusive in this paper.
The fibration $f_{12}^L$ in Theorem \ref{thmx::B} can be replaced by a non-Lefschetz fibration with only $6$ branch points and the fibration $f_{60}^L$ in Theorem \ref{thmx::C} can be replaced by a fibration with only $19$ branch points.

The stated Hurwitz equivalence in Theorem \ref{thmx::A}, Theorem \ref{thmx::B} and Theorem \ref{thmx::C}
is obtained with a specific normal form (see Theorem \ref{theorem::global monodromy fibre sum Hurwitz equivalent}) which satisfies a remarkable property, called swappability (see Subsection \ref{subsection::swappability}).
The normal form is computable:
one can compute the finite sequence of elementary transformations with algorithms (see Appendix \ref{section::computability}).

The Hurwitz equivalence fails without stabilisation or with an unreasonable stabilisation, see Subsection \ref{subsection::fails}.
The following theorem compares the (unstable) Hurwitz equivalence and the stable equivalence
between global monodromies of torus achiral Lefschetz fibrations.

\begin{thmx}
\label{thmx::achiralLef}
For torus achiral Lefshetz fibrations with $p$ singular fibres of type $I_1^+$ and $q$ singular fibres of type $I_1^-$,
we have the following statements.
\begin{enumerate}
[(i),topsep=0pt,itemsep=-1ex,partopsep=1ex,parsep=1ex]
\item After performing direct sums with $f_{12}^L$,
all global monodromies are Hurwitz equivalent.
\item If $p\neq q$, then all global monodromies are Hurwitz equivalent.
\item If $p=q\ge 1$, then the global monodromies have infinitely many Hurwitz equivalent classes
and there exists an explicit combinatorial classification.
\end{enumerate}
\end{thmx}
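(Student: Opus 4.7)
My plan proceeds in three parts, matching the statement of the theorem.

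For part~(i), the fibre monodromy around an $I_1^+$ (resp. $I_1^-$) singular fibre is a conjugate of $\begin{bmatrix}1 & 1\\0 & 1\end{bmatrix}$ (resp. $\begin{bmatrix}1 & -1\\0 & 1\end{bmatrix}$), both of which appear explicitly in the definition of a simple conjugacy class. Hence $\mathcal{O}(f)$ consists entirely of simple conjugacy classes of trace $2$, and Theorem~\ref{thmx::B} applies verbatim, yielding the stated Hurwitz equivalence after direct sum with $f_{12}^L$.

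For part~(ii), I would build on Matsumoto's normal form for torus achiral Lefschetz fibrations, reinterpreted within the framework of the present paper. Matsumoto's result produces a preferred word $W_{p,q}$ to which every global monodromy can be reduced by elementary transformations, up to a residual ambiguity that in full generality cannot be eliminated. The new ingredient when $p\neq q$ is that the excess $|p-q|\ge 1$ fibre monodromies of the majority sign constitute a built-in ``Lefschetz fragment'' inside the global monodromy. Combining the swappability of the author's normal form (Subsection~\ref{subsection::swappability}) with the observation that a single extra positive (or negative) Dehn twist already unlocks the elementary transformations that Theorem~\ref{thmx::B} realises via $f_{12}^L$, one can emulate the required stabilisation moves \emph{internally}: every Hurwitz move between two normal forms is rewritten as a sequence of elementary transformations on the existing $n$-tuple, using the excess twists as the ``placeholder'' stabilisation. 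Iterating, any two global monodromies with the same $(p,q)$ are brought to a common form without ever enlarging $n$.

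For part~(iii), both an infinite lower bound on the number of Hurwitz classes and an explicit combinatorial classification are required. The case $p=q=1$ already illustrates the obstruction: the relation $\phi_1\phi_2=1$ forces $\phi_2=\phi_1^{-1}$, and the only non-trivial Hurwitz move on a pair sends $(\phi_1,\phi_1^{-1})$ to $(\phi_1^{-1},\phi_1)$; thus the Hurwitz class is determined by the unordered pair $\{\phi_1,\phi_1^{-1}\}$, and since conjugates of $\begin{bmatrix}1 & 1\\0 & 1\end{bmatrix}$ in $\SL(2,\mathbb{Z})$ are parameterised by primitive lines in $\mathbb{Z}^2$, this already yields infinitely many classes. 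For general $p=q\ge 2$, I would embed this obstruction: exhibit monodromies of the form $(\phi_1,\phi_1^{-1},\alpha_2,\alpha_2^{-1},\ldots,\alpha_p,\alpha_p^{-1})$ and construct a Hurwitz invariant from the axes of the fibre monodromies that distinguishes infinitely many of them. For the classification, every global monodromy should be reducible by Hurwitz moves alone to such a ``paired normal form'', whose residual data---a configuration of primitive vectors in $\mathbb{Z}^2$ modulo a prescribed equivalence---serves as the complete invariant.

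The main obstacle is the classification half of part~(iii). Proving that the proposed combinatorial invariant is \emph{complete}---that any two global monodromies with matching invariants are Hurwitz equivalent---requires a reduction algorithm that cannot appeal to the excess positive or negative twists exploited in part~(ii). One must therefore develop moves intrinsic to the balanced case $p=q$, based on controlled cancellations between paired $I_1^+$ and $I_1^-$ contributions while preserving the invariant at every step; this is where most of the combinatorial labour of the argument is concentrated.
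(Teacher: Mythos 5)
Your part (i) is correct and is exactly the paper's route: both classes are simple of trace $2$, so Theorem \ref{thmx::B} applies directly. The serious gap is in part (ii). Your central mechanism --- that the excess $|p-q|$ twists act as an internal placeholder for $f_{12}^L$, so that ``a single extra positive (or negative) Dehn twist already unlocks'' the stabilisation moves --- does not hold as stated. The stabilisation in Theorem \ref{thmx::B} is exploited through Lemma \ref{lemma::tuples containing a generating set}, which requires the auxiliary tuple to \emph{contain a generating set} of $\SL(2,\mathbb{Z})$; one extra twist, or indeed any unnormalised collection of excess twists, does not. In the paper, part (ii) is instead a consequence of the projective statement Theorem \ref{theorem::Livne ST}: one projects to $\PSL(2,\mathbb{Z})$ (the lift is unique because every fibre monodromy has trace $+2$), and the entire difficulty is the reduction Theorem \ref{theorem::Livne S->short} --- any tuple of conjugates of short elements can be transformed into short elements plus inverse pairs and triples --- proved by the $\mathcal{S}$-complexity induction together with Proposition \ref{proposition::reduce_and_reloading}. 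Only \emph{after} that reduction does your intuition become valid: when $p\neq q$ the leftover inverse-free short part contains a generating set and is then used to normalise the residual pairs $(k,k^{-1})$. Your proposal skips precisely this reduction, which is the bulk of Section \ref{section::Livne}; Matsumoto's normal form, which you invoke in its place, is explicitly noted in the paper to be non-unique and insufficient for this purpose.

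In part (iii) your $p=q=1$ computation is correct and agrees with the paper, but the invariant you propose for $p\ge 2$ --- a configuration of primitive vectors (axes of the parabolic monodromies) modulo an unspecified equivalence --- cannot be completed in the form suggested, because the multiset of axes is not Hurwitz-invariant. Concretely, once a tuple $(e_1,e_1^{-1},\ldots,e_p,e_p^{-1})$ contains pairs from two distinct exceptional vertices, e.g.\ $(-A^2B,-BA)$ and $(-ABA,A^2BA^2)$, it contains a generating set of $\SL(2,\mathbb{Z})$, and by Lemma \ref{lemma::tuples containing a generating set} every remaining pair can be conjugated arbitrarily: infinitely many distinct axis-configurations collapse into the single class $\{pt.\}$ of Theorem \ref{theorem::achiralLef}. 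The paper's complete invariant is organised quite differently: the inverse pairs are vertices of the forest $\Gamma$ of suffix trees, an ancestor-descendant pair of vertices always permits a strict decrease of a complexity function (Lemma \ref{lemma::a_pair_of_vertices_in_Gamma}), and the invariant is the minimal formal sum supported on an antichain, i.e.\ an element of $\Omega(\cdot,\cdot)$, with the collapsed exceptional class adjoined (Lemmas \ref{lemma::no-ancestor-descendant-relationship} and \ref{lemma::2p tuple without vertices joined by a directed path}). So the ``combinatorial labour'' you defer is not merely labour: without the suffix-tree structure and the complexity-minimality argument, your proposed invariant is neither complete nor even well defined.
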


As a consequence of Theorem \ref{thmx::B}, we partially extend Kas' classification of elliptic surfaces up to diffeomorphism \cite{kas1977deformation} to a stable classification of their global monodromies.
\emph{Elliptic surfaces} over $\mathbb{CP}^1$ are proper holomorphic maps $f:S\rightarrow \mathbb{CP}^1$ between a complex surface $S$ and $\mathbb{CP}^1$ such that the generic fibre is an elliptic curve.
An elliptic surface is certainly a torus fibration whose singular fibres were classified by Kodaira in \cites{Kodaira1964}{Kodaira1966}; the fibre monodromies are described in \cite{miranda1989basic}.

\begin{colx}
\label{corollary::elliptic surfaces}
Let $f_1:S_1\rightarrow\mathbb{CP}^1$ and $f_2:S_2\rightarrow\mathbb{CP}^1$ be elliptic surfaces without multiple singular fibres, without singular fibres of type $\URNum{1}_v$ or $\URNum{1}_v^*$, $v\ge 2$ in Kodaira's classification.
Suppose that $\mathcal{O}(f_1)=\mathcal{O}(f_2)$.
Then, all global monodromies of all direct sums $f_1\oplus f_{12}^L$ and $f_2\oplus f_{12}^L$ are pairwise Hurwitz equivalent.
\end{colx}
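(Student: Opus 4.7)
The plan is to reduce the corollary to a direct invocation of Theorem \ref{thmx::B}. The only real content is a dictionary between Kodaira's classification and the class of admissible monodromies in Theorem \ref{thmx::B}. Set $\mathcal O := \mathcal O(f_1) = \mathcal O(f_2)$; it suffices to check that under the given hypotheses, every conjugacy class appearing in $\mathcal O$ is simple and has trace in $\{0,\pm 1,\pm 2\}$. Once this is established, Theorem \ref{thmx::B} applies verbatim.

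First I would recall, following \cite{miranda1989basic}, the list of Kodaira singular fibre types and their standard fibre monodromies in $\SL(2,\mathbb Z)$. Excluding multiple fibres and types $I_v, I_v^*$ with $v \ge 2$, the remaining singular fibre types are
\begin{equation*}
I_1,\ II,\ III,\ IV,\ I_0^*,\ I_1^*,\ II^*,\ III^*,\ IV^*,
\end{equation*}
with fibre monodromy traces $2,1,0,-1,-2,-2,1,0,-1$ respectively, all contained in $\{0,\pm 1, \pm 2\}$. This disposes of the trace condition in Theorem \ref{thmx::B}.

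The second step is verifying simplicity. For the six types $II,III,IV,II^*,III^*,IV^*$ the trace already lies in $\{0,\pm 1\}$ so their conjugacy classes are simple by definition. The remaining types have trace $\pm 2$, and for these one has to exhibit, within the conjugacy class, one of the four matrices listed in the definition of \emph{simple}. For $I_1$ the standard monodromy is $\begin{bmatrix}1 & 1\\0 & 1\end{bmatrix}$ and for $I_1^*$ it is $\begin{bmatrix}-1 & -1\\0 & -1\end{bmatrix}$, both in the designated list. This is precisely the place where the hypothesis $v < 2$ is used: for $v \ge 2$, the conjugacy class of $\begin{bmatrix}1 & v\\0 & 1\end{bmatrix}$ does not contain any of the four exceptional matrices, so Theorem \ref{thmx::B} would not apply. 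The only mildly delicate case is $I_0^*$, whose monodromy is $-I$; here one should either verify the convention under which the corresponding $\SL(2,\mathbb Z)$ conjugacy class is recorded in $\mathcal O(f_i)$ (the admissible representative being a half twist in the framework used to state Theorem \ref{thmx::B}), or otherwise treat this type separately using the flexibility allowed in the choice of fibre monodromy representative for Kodaira fibres.

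With this tabulation in hand the proof concludes immediately: since $\mathcal O(f_1) = \mathcal O(f_2)$ is a multiset of simple conjugacy classes with traces in $\{0,\pm 1,\pm 2\}$, Theorem \ref{thmx::B} yields that all global monodromies of all direct sums $f_1 \oplus f_{12}^L$ and $f_2 \oplus f_{12}^L$ are pairwise Hurwitz equivalent. The main, and essentially only, obstacle is the bookkeeping of the trace $\pm 2$ cases, in particular reconciling $-I$ for $I_0^*$ with the definition of simple; everything else is a direct consequence of Kodaira's list and Theorem \ref{thmx::B}.
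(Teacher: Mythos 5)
Your route is exactly the paper's: its entire proof of this corollary is the remark that the fibre monodromy determines the Kodaira type, followed by an appeal to Theorem \ref{thmx::B}, so your explicit dictionary (traces $2,1,0,-1,-2,-2,1,0,-1$ for $I_1$, $II$, $III$, $IV$, $I_0^*$, $I_1^*$, $II^*$, $III^*$, $IV^*$, with $I_1$ and $I_1^*$ realising the representatives $\begin{bmatrix}1&1\\0&1\end{bmatrix}$ and $\begin{bmatrix}-1&-1\\0&-1\end{bmatrix}$ from the definition of simple) is precisely the tabulation the paper leaves implicit, and it is correct, including your observation that $v\ge 2$ is exactly what the simplicity hypothesis excludes.

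The one step you leave open is the one you flagged: $I_0^*$ has monodromy $-I$, whose conjugacy class is the singleton $\{-I\}$, has trace $-2$, and contains none of the four designated matrices, so under the paper's literal definition it is \emph{not} simple and Theorem \ref{thmx::B} does not verbatim apply. Your second suggested escape is not available: there is no flexibility in the choice of representative, since the class of $-I$ is a singleton canonically attached to the fibre. The honest repair is elementary, though. Because $-I$ is central, any Hurwitz move involving a component equal to $-I$ merely transposes it past its neighbour without conjugating anything (conjugation by $-I$ is trivial, and $-I$ is fixed by conjugation), so deleting all components equal to $-I$ induces a well-defined operation on Hurwitz classes; two tuples with the same number of $-I$ entries are Hurwitz equivalent if and only if their deletions are, and the deleted tuples have product $\pm I$, which is exactly the setting of Theorem \ref{theorem::global monodromy fibre sum Hurwitz equivalent} and Proposition \ref{proposition::swappability} (whose normal form already tolerates one-component sub-tuples with product $\pm I$). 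With that patch your argument closes; note that the paper's own one-line proof elides the same point, so your write-up is, if anything, more careful than the original.
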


%%%%%%%%%%%%%%%%%%%%%%%%%%%%%%
%%%%%%%%%%%%%%%%%%%%%%%%%%%%%%

\subsection*{Fibre-preserving homeomorphisms}

An element in $\mathcal{M}_n$ does not provide all the data about the fibration.
In most cases, a torus fibration cannot be determined by its monodromy in any way.
Additional restrictions and data on the local models at singularities are essential.

One remarkable encoding for the local model comes from King’s classification in \cite{king1978topological,king1997topology} of isolated singularities and the local study of singularities by Church and
Timourian in \cite{church1972differentiable,church1974differentiable}, using this we study the so-called \emph{singular fibrations}.

Roughly speaking by singular
fibration we mean a smooth fibration with only finitely many singularities each having a ``nice'' neighbourhood
(see Subsection \ref{subsection::singular fibration} for a precise definition).
Each singularity is then characterised by a local Milnor fibre which is a sub-surface of the generic fibre, a binding link K and an open book decomposition.
Singular fibrations have been studied in \cite{looijenga1971note,funar2011global,funar2022singular}. The local properties of their singularities are related to the corresponding fibred knots (see e.g. \cite{burde2008knots}).

As an improvement of Proposition 2.1 in \cite{funar2022singular} as well as a consequence of Theorem \ref{thmx::C} and the swappability of the corresponding normal form, we have the following stable classification of singular fibrations based on the type of singularities up to fibre-preserving homeomorphism:

\begin{colx}
\label{corollary::main_result_singular_fibrations}
Let $f_1:M_1\rightarrow S^2$ and $f_2:M_2\rightarrow S^2$ be torus singular fibrations with a single singularity in each singular fibre
%, with surjective monodromy homomorphisms
and with $\mathcal{O}(f_1)=\mathcal{O}(f_2)$.
Suppose that each local Milnor fibre of singularities in $f_1$ and $f_2$ is either
\begin{itemize}[-,topsep=0pt,itemsep=-1ex,partopsep=1ex,parsep=1ex]
\item a surface of genus $0$ with $\le 2$ boundary components, or
\item a surface of genus $1$ with only $1$ boundary component.
\end{itemize}
Let $\widetilde{f_1}=f_1\oplus f_{60}^L: \widetilde{M_1}\rightarrow S^2$ and
$\widetilde{f_2}=f_2\oplus f_{60}^L: \widetilde{M_2}\rightarrow S^2$ be direct sums.
Then $(\widetilde{M_1},\widetilde{f_1})$ and $(\widetilde{M_2},\widetilde{f_2})$ are fibre-preserving homeomorphic.
\end{colx}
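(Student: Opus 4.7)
My strategy is to combine three ingredients: the stable Hurwitz equivalence of global monodromies from Theorem~\ref{thmx::C}, the local reconstruction of a singular fibration from its monodromy in the spirit of Proposition~2.1 of \cite{funar2022singular}, and the swappability of the normal form introduced in Subsection~\ref{subsection::swappability}. The plan is to first check that $\mathcal{O}(f_1)$ consists exclusively of simple conjugacy classes so that Theorem~\ref{thmx::C} applies, then translate the resulting Hurwitz equivalence of global monodromies of $\widetilde{f_1}$ and $\widetilde{f_2}$ into a fibre-preserving homeomorphism by invoking the fact that the allowed local Milnor fibres determine the local topological model at each singularity up to fibre-preserving homeomorphism.

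The first step is a case check. For a torus singular fibration with a single singularity in each singular fibre, King's classification in \cite{king1978topological,king1997topology} together with Church--Timourian \cite{church1972differentiable,church1974differentiable} identifies each singularity with the triple given by its local Milnor fibre, its binding link and its open book decomposition. Under the hypothesis that the Milnor fibre is either a planar surface with at most two boundary components or a one-holed torus, I would inspect the finitely many possibilities for the open book monodromy on such small surfaces and verify that the fibre monodromy of the ambient singular fibre necessarily belongs to a simple conjugacy class of $\SL(2,\mathbb{Z})$. Hence $\mathcal{O}(f_1)=\mathcal{O}(f_2)$ is a multi-set of simple conjugacy classes, and Theorem~\ref{thmx::C} furnishes a sequence of elementary transformations carrying a chosen global monodromy of $\widetilde{f_1}$ to one of $\widetilde{f_2}$.

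Next, I would upgrade this algebraic equivalence to a geometric one, following the pattern of Proposition~2.1 in \cite{funar2022singular}. Each elementary transformation corresponds to a half-twist isotopy of two adjacent branch points on $S^2$, which can be lifted to a fibrewise diffeomorphism between the restrictions of the two fibrations to the complements of the preimages of their branch sets. To extend this lift across the singular fibres, I would use the fact that, under the Milnor fibre hypothesis, the singular neighbourhood of each singular fibre is uniquely determined up to fibre-preserving homeomorphism by the conjugacy class of its fibre monodromy, which is precisely why the hypothesis on the Milnor fibres is made. Gluing the local models of corresponding singular fibres of $\widetilde{f_1}$ and $\widetilde{f_2}$ along the fibrewise diffeomorphism of the complements then yields the desired global fibre-preserving homeomorphism between $(\widetilde{M_1},\widetilde{f_1})$ and $(\widetilde{M_2},\widetilde{f_2})$.

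The main obstacle is ensuring that when branch points are permuted by Hurwitz moves, the bijection between singular fibres respects the conjugacy classes of the fibre monodromies, so that the corresponding local models can be identified coherently. Bare Hurwitz equivalence is not sufficient for this, because in general one has no control over which singular fibres of $\widetilde{f_1}$ are matched to which singular fibres of $\widetilde{f_2}$ at the end of the transformation sequence. Swappability of the normal form (Subsection~\ref{subsection::swappability}) is exactly the tool that overcomes this obstacle: it guarantees that fibre monodromies lying in the same conjugacy class may be freely permuted within the Hurwitz orbit, which allows one to arrange a type-preserving bijection between the singular fibres of $\widetilde{f_1}$ and those of $\widetilde{f_2}$ and hence to paste the local models compatibly.
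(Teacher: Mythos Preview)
Your proposal is correct and follows essentially the same route as the paper. The paper packages your ``upgrade'' step as Theorem~\ref{theorem::fibre-preserving equivalence}, whose hypotheses require checking separately that each singular fibre is \emph{locally symmetric} and that singular fibres with conjugate monodromies are locally fibre-preserving homeomorphic \emph{compatibly} with the monodromies; both checks reduce, as you anticipate, to the observation that under the Milnor-fibre hypothesis the isotopy class of the local Milnor fibre is determined by the fibre monodromy (via Corollary~\ref{corollary::singular fibration milnor genus 0 and two bds} and Lemma~\ref{lemma::singular fibration milnor genus 1 and 1 bd}). Your identification of swappability as the device that aligns the bijection of singular fibres with conjugacy classes is exactly how it enters the proof of Theorem~\ref{theorem::fibre-preserving equivalence}.
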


Throughout this paper, all fibrations which we consider will be over $S^2$,
unless otherwise stated.

\subsection*{Acknowledgements}
This work is part of my PhD thesis.
I would like to thank my advisors, Louis Funar and Greg McShane, for their helpful discussions and guidance. I would also like to thank Stepan Orevkov for useful suggestions.%, especially on Subsection \ref{subsection::contractions-restorations}.

\section{Connected sums and Hurwitz equivalence}

\subsection{Elementary transformations}
\label{subsection::elementary transformations}
We first define the elementary transformations.
Throughout this subsection, $G$ is an arbitrary group and $Z(G)$ is the center of $G$.
An $n$-tuple in $G$ is a sequence $(g_1,\ldots,g_n)$ of elements in $G$, each $g_i$ is called a component of the tuple.
Let $\mathcal{T}_{G,n}$ be the set of $n$-tuples $(g_1,\ldots,g_n)$ in $G$ satisfying $g_1\cdots g_n\in Z(G)$.

\begin{definition}
For $1\le i\le n-1$, the \emph{elementary transformations} (or \emph{Hurwitz moves}) $R_i$ is a bijection on the set of $n$-tuples in $G$ defined by:
\begin{equation*}
R_i (g_1,\ldots,g_n)
=
(g_1,\ldots,g_{i-1},g_{i+1},g_{i+1}^{-1}g_ig_{i+1},g_{i+2},\ldots,g_n).    
\end{equation*}
Both $R_i$ and its inverse $R_i^{-1}$ are elementary transformations.
A pair of tuples $(g_1,\ldots,g_n)$ and $(h_1,\ldots,h_n)$ which can be transformed into each other by a finite sequence of elementary transformations are called \emph{Hurwitz equivalent}, written as:
\begin{equation*}
    (g_1,\ldots,g_n) \sim (h_1,\ldots,h_n).
\end{equation*}
\end{definition}

We emphasise that the set of all $n$-tuples in $G$ can also be interpreted as $\Hom(\mathbb{F}_n,G)$ and the subset $\mathcal{T}_{G,n}$ is invariant under the elementary transformations.

\begin{lemma}
\label{lemma::cyclic shift} For $(g_1,\ldots,g_n)\in \mathcal{T}_{G,n}$ and any $1\le k\le n$, the tuple $(g_1,\ldots,g_n)$ is Hurwitz equivalent to $(g_k,g_{k+1},\ldots,g_n,g_1,g_2,\ldots,g_{k-1})$.
\end{lemma}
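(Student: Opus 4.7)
The plan is to prove the lemma by induction on $k$, reducing everything to the base case $k=2$ (cyclic shift by one). Once we can show that $(g_1,\ldots,g_n)\sim(g_2,\ldots,g_n,g_1)$ whenever $g_1\cdots g_n\in Z(G)$, we can iterate, provided we check that the condition defining $\mathcal{T}_{G,n}$ is preserved under this single shift. But this is immediate: setting $c=g_1g_2\cdots g_n\in Z(G)$, we get $g_2\cdots g_n g_1=g_1^{-1}(g_1g_2\cdots g_n)g_1=g_1^{-1}cg_1=c$, so the shifted tuple also lies in $\mathcal{T}_{G,n}$.

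For the base case, the approach is to apply the composition $R_{n-1}R_{n-2}\cdots R_1$ and show by induction on $j$ that
\begin{equation*}
R_j R_{j-1}\cdots R_1(g_1,\ldots,g_n)=\bigl(g_2,g_3,\ldots,g_{j+1},\,(g_2\cdots g_{j+1})^{-1}g_1(g_2\cdots g_{j+1}),\,g_{j+2},\ldots,g_n\bigr).
\end{equation*}
The inductive step is a direct computation: applying $R_{j+1}$ acts on positions $j+1$ and $j+2$, sending the pair $\bigl((g_2\cdots g_{j+1})^{-1}g_1(g_2\cdots g_{j+1}),\,g_{j+2}\bigr)$ to $\bigl(g_{j+2},\,(g_2\cdots g_{j+2})^{-1}g_1(g_2\cdots g_{j+2})\bigr)$, which is exactly what the formula predicts at level $j+1$.

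Taking $j=n-1$ yields
\begin{equation*}
R_{n-1}R_{n-2}\cdots R_1(g_1,\ldots,g_n)=\bigl(g_2,g_3,\ldots,g_n,\,u^{-1}g_1 u\bigr),\qquad u:=g_2g_3\cdots g_n.
\end{equation*}
The key step—really the only place where the centrality hypothesis enters—is to observe that $g_1$ and $u$ commute. Indeed, $g_1 u=c$ is central, so $ug_1=g_1^{-1}(g_1ug_1)=g_1^{-1}(cg_1)=g_1^{-1}g_1 c=c=g_1u$. Thus $u^{-1}g_1 u=g_1$ and the right-hand side is exactly the cyclic shift $(g_2,g_3,\ldots,g_n,g_1)$.

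The only conceptual obstacle is recognizing that the naive Hurwitz sequence $R_1 R_2\cdots R_{n-1}$ produces a cyclic shift only up to a simultaneous conjugation, which cannot in general be undone by further Hurwitz moves. The trick is to use the opposite order $R_{n-1}\cdots R_1$: the ``leftover'' conjugator then happens to be precisely $g_2\cdots g_n$, which commutes with $g_1$ exactly because $g_1\cdots g_n$ is central. After this observation, the rest is a routine induction—first the inductive computation above for a single shift, then a second induction on $k$ to obtain arbitrary cyclic shifts.
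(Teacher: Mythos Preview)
Your proof is correct and follows exactly the same approach as the paper: apply $R_{n-1}\circ\cdots\circ R_1$ to obtain $(g_2,\ldots,g_n,g_1)$, using centrality of $g_1\cdots g_n$ to kill the conjugator on the last entry, and then iterate. The paper compresses all of this into a single sentence, but your expanded version with the intermediate inductive formula and the explicit commutation check is a faithful unpacking of that sentence.
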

\begin{proof}
Applying $R_{n-1}\circ\ldots\circ R_1$ on the $n$-tuple $(g_1,\ldots,g_n)$ we get $(g_2,\ldots,g_n,g_1)$.
\end{proof}

Let $\bullet$ denote the concatenation of tuples: $(g_1,\ldots,g_n)\bullet (h_1,\ldots,h_m)=(g_1,\ldots,g_n,h_1,\ldots,h_m)$.
The power of a tuple corresponds to a repeated concatenation with itself.
The symbol $\prod$ represents the concatenation of a family of tuples.

\begin{lemma}
\label{lemma::move subtuples}
Let $(g_1,\ldots,g_n,h_1,\ldots,h_m,g_{n+1},\ldots,g_{n+n'})$ be an $(n+m+n')$-tuple in $G$ satisfying $h_i\cdots h_m\in Z(G)$. For $0\le k\le n+n'$, this $(n+m+n')$-tuple is Hurwitz equivalent to
\begin{equation}
    (g_1,\ldots,g_k,h_1,\ldots,h_m,g_{k+1},\ldots,g_{n+n'}).
\end{equation}

In particular, let $(g_{1,1},\ldots,g_{1,n_1})$, $(g_{2,1},\ldots,g_{2,n_2})$, \ldots, $(g_{k,1},\ldots,g_{k,n_k})$ be tuples in $G$ satisfying $g_{j,1}\cdots g_{j,n_j}\in Z(G)$ for each of $j=1,\ldots,k$. Then their concatenations in any order are pairwise Hurwitz equivalent.
\end{lemma}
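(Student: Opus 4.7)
The plan is to reduce everything to a single ``block shift'' move: I will show that one can slide the $(h_1,\ldots,h_m)$ block exactly one position to the left (or right) past an adjacent $g$, leaving both the block and all other components unchanged. Iterating yields the general statement, and handling several blocks with central products reduces to permuting them.

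More precisely, writing the tuple as $(\ldots,g_n,h_1,h_2,\ldots,h_m,g_{n+1},\ldots)$, I would apply the composition $R_{n+m-1}\circ R_{n+m-2}\circ\cdots\circ R_n$. A direct unwinding of the definition of $R_i$ shows this produces
\begin{equation*}
(\ldots,g_{n-1},h_1,h_2,\ldots,h_m,\,h_m^{-1}\cdots h_1^{-1}\,g_n\,h_1\cdots h_m,\,g_{n+1},\ldots).
\end{equation*}
The hypothesis $h_1\cdots h_m\in Z(G)$ then collapses the conjugated entry back to $g_n$, so the effect is exactly ``the $h$-block moves one step to the left, the rest is unchanged''. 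Symmetrically, applying $R_n^{-1}\circ R_{n+1}^{-1}\circ\cdots\circ R_{n+m}^{-1}$ pushes the block one step to the right, with the same centrality argument trivialising the resulting conjugation. Iterating these two moves, the $h$-block can be placed at any position $k$ with $0\le k\le n+n'$, which is the first claim.

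For the ``in particular'' part, I would proceed by induction on the number $k$ of blocks. Given subtuples $(g_{j,1},\ldots,g_{j,n_j})$ for $j=1,\ldots,k$ with each product $g_{j,1}\cdots g_{j,n_j}\in Z(G)$, any two orderings of their concatenation differ by a permutation $\sigma\in S_k$. Since adjacent transpositions generate $S_k$, it suffices to swap two adjacent blocks; but this is exactly a special case of the first assertion applied to one of the two blocks (with the other block contained in the surrounding $g$-entries). Iterating gives any desired ordering.

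The ``hard part'' is really only the verification of the telescoping conjugation identity after the sequence $R_{n+m-1}\cdots R_n$, which is a routine induction on $m$; the centrality hypothesis is used only at the very end to remove the outer conjugation. No other obstacle arises, since the elementary transformations preserve the subset $\mathcal{T}_{G,n+m+n'}$ and all intermediate tuples therefore remain in the ambient set on which the moves are defined.
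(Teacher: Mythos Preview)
Your proposal is correct and follows essentially the same approach as the paper: the paper's proof also applies $R_{n+m-1}\circ\cdots\circ R_n$ to shift the $h$-block one step to the left and $R_{n+1}^{-1}\circ\cdots\circ R_{n+m}^{-1}$ to shift it one step to the right, then implicitly iterates. Your right-shift composition has a small off-by-one (the range should be $n+1$ to $n+m$, giving $m$ moves rather than $m+1$; your extra $R_n^{-1}$ would swap $g_n$ and $g_{n+1}$), but this is an indexing slip, not a conceptual gap.
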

\begin{proof}
Applying $R_{n+m-1}\circ\ldots\circ R_{n}$ if $n'>0$ and applying $R_{n+1}^{-1}\circ\ldots\circ R_{n+m}^{-1}$ if $n>0$ on the $(n+m+n')$-tuple we transform the tuple into
$(g_1,\ldots,g_{n+1},h_1,\ldots,h_m,g_{n+2},\ldots,g_{n+n'})$
and
$(g_1,\ldots,g_{n-1},h_1,\ldots,h_m,g_{n},\ldots,g_{n+n'})$
respectively.
\end{proof}

\begin{lemma}
\label{lemma::switch places of components with the same value}
Let $(g_1,\ldots,g_n)$ be an $n$-tuple in $G$ satisfying $g_i=g_jh$ with some $1\le i<j\le n$ and $h$ in $Z(G)$. Then
$(g_1,\ldots,g_n) \sim (g_1,\ldots,g_{i-1},g_j,g_{i+1},\ldots,g_{j-1},g_i,g_{j+1},\ldots,g_n)$.

\end{lemma}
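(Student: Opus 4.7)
The plan is to execute a palindromic sequence of Hurwitz moves $R_{j-1}\cdots R_{i+1}\cdot R_i\cdot R_{i+1}^{-1}\cdots R_{j-1}^{-1}$ that transports $g_j$ leftward into position $i$, swaps the resulting adjacent pair, and then transports $g_i$ rightward back into position $j$, restoring every intermediate entry along the way. Two elementary consequences of the hypothesis $g_i = g_j h$ with $h \in Z(G)$ make this possible: first, $g_i$ and $g_j$ commute (both products equal $g_j^2 h$), and second, $g_i g_j^{-1} = h$ lies in $Z(G)$.

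First I would apply the moves $R_{j-1}, R_{j-2}, \ldots, R_{i+1}$ in that order. Each step shifts $g_j$ one position to the left and conjugates the element it crosses by $g_j$, so after this sweep the tuple is unchanged outside positions $i+1,\ldots,j$, with $g_j$ sitting in position $i+1$ and $g_j^{-1} g_k g_j$ in each position $k$ with $i+1 \le k \le j-1$. Next, the single move $R_i$ replaces the adjacent pair $(g_i, g_j)$ by $(g_j, g_j^{-1} g_i g_j) = (g_j, g_i)$, using the commutativity of $g_i$ and $g_j$. Finally, I would apply $R_{i+1}^{-1}, R_{i+2}^{-1}, \ldots, R_{j-1}^{-1}$; each such inverse move shifts $g_i$ one position further to the right and conjugates the element it crosses by $g_i$.

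The only nontrivial verification is that the two conjugation sweeps cancel: the entry left in position $k$ after everything is $g_i (g_j^{-1} g_k g_j) g_i^{-1}$, and this equals $h g_k h^{-1} = g_k$ because $g_i g_j^{-1} = h$ is central. I do not expect any real obstacle; apart from that one-line cancellation the argument is pure index-bookkeeping, and morally the swap succeeds for the same reason that an inner automorphism by a central element is trivial.
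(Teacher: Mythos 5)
Your proof is correct and matches the paper's argument essentially verbatim: the paper applies exactly the same palindromic sequence $R_{j-1},\ldots,R_{i+1},R_i,R_{i+1}^{-1},\ldots,R_{j-1}^{-1}$, obtaining the intermediate tuple with entries $g_ig_j^{-1}g_kg_jg_i^{-1}$ in positions $i+1,\ldots,j-1$, and concludes by the same observation that $g_ig_j^{-1}=h$ is central so these entries equal $g_k$. Nothing to add.
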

\begin{proof}
Applying $R_{j-1}^{-1}\circ\ldots\circ R_{i+1}^{-1}\circ R_i \ldots\circ R_{j-1}$ on $(g_1,\ldots,g_n)$ we get the tuple
\[
(g_1,\ldots,g_{i-1},g_j,g_ig_j^{-1}g_{i+1}g_jg_i^{-1},\ldots,g_ig_j^{-1}g_{j-1}g_jg_i^{-1},g_i,g_{j+1},\ldots,g_n),
\]
which is equal to $(g_1,\ldots,g_{i-1},g_j,g_{i+1},\ldots,g_{j-1},g_i,g_{j+1},\ldots,g_n)$, as desired.
\end{proof}

\begin{definition}
An $n$-tuple in $G$ is said to \emph{contain a generating set} if its components form a generating set of the group $G$.
\end{definition}

For instance, the modular group $\PSL(2,\mathbb{Z})=\SL(2,\mathbb{Z}) / \{+I,-I\}$ has the presentation
\begin{equation*}
    \PSL(2,\mathbb{Z})=\langle a,b\mid a^3=b^2=1\rangle;
\end{equation*}
both $(a^2b,ba^2,a^2b,ba^2,a^2b,ba^2)$ and $(ba,ab,ba,ab,ba,ab)$ contain generating sets.

\begin{lemma}
\label{lemma::tuples containing a generating set}
Suppose that $(g_1,\ldots,g_n)$ and $(h_1,\ldots,h_m)$ are tuples in $G$ such that
$(h_1,\ldots,h_m)$ contains a generating set. Let $Q$ be an arbitrary element in $G$.
If there exists a sub-tuple of $(g_1,\ldots,g_n)$, say $(g_l,\ldots,g_r)$ with $1\le l\le r\le n$, such that $\prod_{i=l}^r g_i\in Z(G)$, then the concatenation $(g_1,\ldots,g_n)\bullet (h_1,\ldots,h_m)$ is Hurwitz equivalent to
\[
    (g_1,\ldots,g_{l-1},Q^{-1}g_lQ,\ldots,Q^{-1}g_rQ,g_{r+1},\ldots,g_n)\bullet (h_1,\ldots,h_m).
\]
\end{lemma}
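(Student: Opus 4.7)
The strategy is to \emph{shuttle} the sub-tuple $(g_l,\ldots,g_r)$ through the $h$-block, conjugate it one letter of a word for $Q$ at a time, and shuttle it back. First, since $\prod_{i=l}^r g_i \in Z(G)$, Lemma \ref{lemma::move subtuples} slides the sub-tuple to the right end of the concatenation, yielding the equivalent tuple
\[
(g_1,\ldots,g_{l-1},g_{r+1},\ldots,g_n,h_1,\ldots,h_m,g_l,\ldots,g_r).
\]
Because $(h_1,\ldots,h_m)$ contains a generating set of $G$, I would write $Q = h_{i_1}^{\epsilon_1} h_{i_2}^{\epsilon_2} \cdots h_{i_k}^{\epsilon_k}$ with each $\epsilon_j \in \{\pm 1\}$.

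The key computational tool is a direct application of the Hurwitz moves: if a tuple contains a local block of the form $(x_1,\ldots,x_s,h)$ with $x_1\cdots x_s \in Z(G)$, then a sequence of $R$-moves that swaps $h$ leftwards through the block yields the local block $(h,\, h^{-1}x_1 h,\ldots, h^{-1}x_s h)$, conjugating each $x_i$ by $h$ while leaving $h$ itself and all other components unchanged; symmetrically, a sequence of $R^{-1}$-moves slides an $h$ rightwards through an adjacent block $(h, x_1,\ldots,x_s)$ and conjugates each $x_i$ by $h^{-1}$. I would then iterate over $j = 1, 2, \ldots, k$: apply Lemma \ref{lemma::move subtuples} to position the (current) sub-tuple immediately to the left of $h_{i_j}$ if $\epsilon_j = 1$ (respectively, to its right if $\epsilon_j = -1$), and then apply the push-through computation to conjugate each sub-tuple entry by $h_{i_j}^{\epsilon_j}$. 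Centrality of the sub-tuple product is preserved under every conjugation, since for any $Q' \in G$,
\[
(Q'^{-1}g_l Q')(Q'^{-1}g_{l+1} Q')\cdots(Q'^{-1}g_r Q') \;=\; Q'^{-1}(g_l g_{l+1}\cdots g_r)Q' \;=\; g_l g_{l+1}\cdots g_r \in Z(G),
\]
so Lemma \ref{lemma::move subtuples} remains applicable at every stage of the iteration.

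After the $k$ conjugation stages, each $g_i$ with $l \le i \le r$ has been replaced by $h_{i_k}^{-\epsilon_k}\cdots h_{i_1}^{-\epsilon_1}\, g_i\, h_{i_1}^{\epsilon_1}\cdots h_{i_k}^{\epsilon_k} = Q^{-1}g_iQ$, while $h_1,\ldots,h_m$ retain both their values and their relative order (only the location of the sub-tuple within the $h$-block may have shifted). One final application of Lemma \ref{lemma::move subtuples} extracts the conjugated sub-tuple from the $h$-block and returns it to positions $l$ through $r$, producing the target tuple. The main obstacle is careful bookkeeping in the push-through step, particularly aligning the direction of each slide with the sign of $\epsilon_j$ so that $Q^{-1}g_iQ$, rather than $Qg_iQ^{-1}$, appears at the end; this is settled by a routine but sign-sensitive Hurwitz-move calculation.
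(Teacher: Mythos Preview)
Your proposal is correct and follows essentially the same approach as the paper: write $Q$ as a word in the $h_j^{\pm1}$, then for each letter use Lemma~\ref{lemma::move subtuples} to slide the central sub-tuple adjacent to the appropriate $h_j$ and push $h_j$ through it via a run of $R$- or $R^{-1}$-moves, conjugating each entry by $h_j^{\pm1}$. The paper's proof displays exactly these two push-through substitutions; your extra initial step of first parking the sub-tuple at the far right is harmless but not needed.
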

\begin{proof}
We express a given element $Q$ in $G$ as $q_1\cdots q_u$ such that $q_i\in \{h_1,h_1^{-1},\ldots,h_m,h_m^{-1}\}$, $i=1,\ldots,u$. The lemma follows from Lemma \ref{lemma::move subtuples} and the following substitutions via elementary transformations for each of $j=1,\ldots,m$:
\begin{align*}
&(g_1,\ldots,g_l,\ldots,g_r,\ldots,g_n)\bullet(h_1,\ldots,h_j,\ldots,h_m)\\
&\rightarrow
(g_1,\ldots,g_{l-1},g_{r+1},\ldots,g_n,h_1,\ldots,h_{j-1},g_l,\ldots,g_r,h_j,\ldots,h_m)\\
&\rightarrow
(g_1,\ldots,g_{l-1},g_{r+1},\ldots,g_n,h_1,\ldots,h_j,h_j^{-1}g_lh_j,\ldots,h_j^{-1}g_rh_j,h_{j+1},\ldots,h_m)\\
&\rightarrow
(g_1,\ldots,g_{l-1},h_j^{-1}g_lh_j,\ldots,h_j^{-1}g_rh_j,g_{r+1},\ldots,g_n)\bullet(h_1,\ldots,h_j,\ldots,h_m);\\
&(g_1,\ldots,g_l,\ldots,g_r,\ldots,g_n)\bullet(h_1,\ldots,h_j,\ldots,h_m)\\
&\rightarrow
(g_1,\ldots,g_{l-1},g_{r+1},\ldots,g_n,h_1,\ldots,h_{j},g_l,\ldots,g_r,h_{j+1},\ldots,h_m)\\
&\rightarrow
(g_1,\ldots,g_{l-1},g_{r+1},\ldots,g_n,h_1,\ldots,h_{j-1},h_jg_lh_j^{-1},\ldots,h_jg_rh_j^{-1},h_j,\ldots,h_m)\\
&\rightarrow
(g_1,\ldots,g_{l-1},h_jg_lh_j^{-1},\ldots,h_jg_rh_j^{-1},g_{r+1},\ldots,g_n)\bullet(h_1,\ldots,h_j,\ldots,h_m).
\end{align*}
\end{proof}

\subsection{Contraction and restoration on tuple}
\label{subsection::contractions-restorations}
%% tech

In this subsection, we introduce the notions of \emph{contraction} and \emph{restoration} on tuples.
We move on to a procedure that involves a series of operations, including contractions, restorations, and elementary transformations.
The procedure behaves like a self-consistent machine, maintaining data about the given tuple and operations.
Our study repeatedly utilises this procedure.
To make it clear and easy to visualise, thus we start with the following definition.

\begin{definition}
An \emph{iterated tuple} of height $0$ in $G$ is an element $g\in G$;
for $h\ge 1$, an iterated tuple of height $h$ in $G$ is a tuple whose components are iterated tuples of height smaller than $h$ such that at least one component is of height $h-1$.

Take $g\in G$ and $H=(H_1,\ldots,H_n)$ an iterated tuple of height $h\ge 1$.
The \emph{evaluation} on an iterated tuple is defined by $ev(g)=g$ and $ev(H)=\prod_{i=1}^n ev(H_i)$.
With $v\in G$, using the notation $g^v=v^{-1}gv$ we define $H^v$ as
\begin{equation*}
    H^v = (H_1,\ldots,H_n)^v = (H_1^v,\ldots,H_n^v).
\end{equation*}

The elementary transformation $R_i$ acts on the set of iterated tuples with $n\ge i+1$ components by taking the conjugation of each element in $H_i$ with $ev(H_{i+1})$ and swapping the positions, to wit
\begin{equation*}
    R_i(H_1,\ldots,H_n)=(H_1,\ldots,H_{i-1},H_{i+1},H_i^{ev(H_{i+1})},H_{i+2},\ldots,H_n).
\end{equation*}
\end{definition}

Given an $n$-tuple $(g_1,\ldots,g_n)$ in $G$, we keep hold of the following data:
\begin{itemize}[-]
    \item $(h_1,\ldots,h_m)$: an tuple in $G$;
    
    \item $(H_1,\ldots,H_m)$: an iterated tuple in $G$ such that $(ev(H_1),\ldots,ev(H_m))=(h_1,\ldots,h_m)$;
    
    \item $\mathcal{F}$: an ordered list such that each element is either
    \begin{itemize}[-,topsep=0pt,itemsep=-1ex,partopsep=1ex,parsep=1ex]
    \item a pair $(\mu,\sigma)$ with $\mu\in \mathbb{Z}$ and $\sigma$ an elementary transformation on (iterated) $\mu$-tuples, or
    \item a pair of integers $(l,r)$ with $1\le l< r$.
    \end{itemize}
\end{itemize}

At the beginning, both $(h_1,\ldots,h_m)$ and $(H_1,\ldots,H_m)$ are copies of $(g_1,\ldots,g_n)$, the ordered list $\mathcal{F}$ is empty.
We apply the following operations successively on the data.

\begin{enumerate}[(i)]
    \item \textbf{Elementary transformation:} Apply an elementary transformation, say $R_i^{\epsilon}$ with $1\le i\le m-1$ and $\epsilon=\pm 1$, on the $m$-tuple $(h_1,\ldots,h_m)$ and the iterated $m$-tuple $(H_1,\ldots,H_m)$.
    Append $(m,R_i^{\epsilon})$ to $\mathcal{F}$.
    
    \item \textbf{Contraction:}
    For a pair of integers $1\le l< r\le n$, we replace the tuple $(h_1,\ldots,h_m)$ with
    \begin{equation*}
        (h_1,\ldots,h_{l-1},h_l\cdots h_r,h_{r+1},\ldots,h_m)
    \end{equation*}
    and replace the iterated tuple $(H_1,\ldots,H_m)$ with
    \begin{equation*}
        (H_1,\ldots,H_{l-1},(H_l,\ldots,H_r),H_{r+1},\ldots,H_m).
    \end{equation*}
    Append $(l,r)$ to $\mathcal{F}$.
    
    \item \textbf{Restoration:} 
    Take the last pair of the form $(l,r)$ in $\mathcal{F}$, still denoted by $(l,r)$.
    Let $\mathcal{F}'$ be the sub-list of $\mathcal{F}$  which consists of the elements after $(l,r)$.
    Remove $(l,r)$ and all the elements after $(l,r)$ from $\mathcal{F}$.
    
    Set $k=l$ and $m'=m+(r-l)$.
    We consider each pair $(\mu,\sigma)=(m,R_i^{\epsilon})$ in $\mathcal{F}'$ with the order.
    %Let $(\mathcal{H}_1,\ldots,\mathcal{H}_m)$ be an iterated $m$-tuple and $(\mathcal{H}'_1,\ldots,\mathcal{H}'_{r-l+1})$ be an iterated $(r-l)$-tuple such that $\mathcal{H}_k=(\mathcal{H}'_1,\ldots,\mathcal{H}'_{r-l+1})$.
    
    \begin{itemize}[-,topsep=0pt,itemsep=-1ex,partopsep=1ex,parsep=1ex]
    \item If $1\le i\le k-2$, then append $(m',R_i^{\epsilon})$ to $\mathcal{F}$.
    
    \item If $k+1\le i\le m$, then append $(m',R_{i+(r-l)}^{\epsilon})$ to $\mathcal{F}$.
    
    \item If $\sigma=R_{k-1}$, then append the pairs
    $(m',R_{k-1}),\ldots,(m',R_{k-1+(r-l)})$
    to $\mathcal{F}$ and replace $k$ with $k-1$.
    
    In this case, the elementary transformation $\sigma$ acts on an iterated $m$-tuple of the form $(\mathcal{H}_1,\ldots,\mathcal{H}_{k-1},(\mathcal{H}'_1,\ldots,\mathcal{H}'_{r-l+1}),\mathcal{H}_{k+1},\ldots,\mathcal{H}_m)$ via
    \begin{equation*}
        (\ldots,\mathcal{H}_{k-1},(\mathcal{H}'_1,\ldots,\mathcal{H}'_{r-l+1}),\ldots)\xrightarrow{R_{k-1}} (\ldots,(\mathcal{H}'_1,\ldots,\mathcal{H}'_{r-l+1}),\mathcal{H}_{k-1}^{ev(\mathcal{H}'_1,\ldots,\mathcal{H}'_{r-l+1})},\ldots).
    \end{equation*}
    The new pairs
    $(m',R_{k-1}),\ldots,(m',R_{k-1+(r-l)})$ in $\mathcal{F}$
    act on an iterated $m'$-tuple of the form $(\mathcal{H}_1,\ldots,\mathcal{H}_{k-1},\mathcal{H}'_1,\ldots,\mathcal{H}'_{r-l+1},\mathcal{H}_{k+1},\ldots,\mathcal{H}_m)$
    via
    \begin{align*}
        (\ldots,\mathcal{H}_{k-1},\mathcal{H}'_1,\ldots,\mathcal{H}'_{r-l+1},\ldots)
        \xrightarrow{R_{k-1}}&
        (\ldots,\mathcal{H}'_1,\mathcal{H}_{k-1}^{ev(\mathcal{H}'_1)},\mathcal{H}'_2\ldots,\mathcal{H}'_{r-l+1},\ldots)\\
        \xrightarrow{R_{k}}&
        (\ldots,\mathcal{H}'_1,\mathcal{H}'_2,\mathcal{H}_{k-1}^{ev(\mathcal{H}'_1)ev(\mathcal{H}'_2)},\mathcal{H}'_3\ldots,\mathcal{H}'_{r-l+1},\ldots)\\
        \rightarrow& \ldots\\
        \xrightarrow{R_{k-1+(r-k)}}&
        (\ldots,\mathcal{H}'_1,\ldots,\mathcal{H}'_{r-l+1},\mathcal{H}_{k-1}^{ev(\mathcal{H}'_1)\cdots ev(\mathcal{H}'_{r-l+1})},\ldots).
    \end{align*}
    
    \item If $\sigma=R_k$, then append the pairs $(m',R_{k+(r-l)}),\ldots,(m',R_k)$ to $\mathcal{F}$ and replace $k$ with $k+1$.
    
    \item If $\sigma=R_{k-1}^{-1}$, then append the pairs $(m',R_{k-1}^{-1}),\ldots,(m',R_{k-1+(r-l)}^{-1})$ to $\mathcal{F}$ and replace $k$ with $k-1$.
    
    \item If $\sigma=R_k^{-1}$, then append the pairs $(m',R_{k+(r-l)}^{-1}),\ldots,(m',R_k^{-1})$ to $\mathcal{F}$ and replace $k$ with $k+1$.
    \end{itemize}
    
    Finally, suppose that $H_k = (H'_1,\ldots,H'_{r-l+1})$. We replace $(h_1,\ldots,h_m)$ with
    \begin{equation*}
    (h_1,\ldots,h_{k-1},
    ev(H'_1),\ldots,ev(H'_{r-l+1}),
    h_{k+1},\ldots,h_m)
    \end{equation*}
    and replace $(H_1,\ldots,H_m)$ with
    \begin{equation*}
        (H_1,\ldots,H_{k-1},
        H'_1,\ldots,H'_{r-l+1},
        H_{k+1},\ldots,H_m).
    \end{equation*}

\end{enumerate}

Note that above operations can be applied in any order, possibly each appears many times and different operations may alternate with each other. However, we apply operations only finitely many times.
The following lemma shows the main property of these operations.

\begin{lemma}
\label{lemma::technique}
If $m=n$, then $(h_1,\ldots,h_m)$ coincides with the resulting tuple of $(g_1,\ldots,g_n)$ after applying all elementary transformations $\sigma$ occurring in $\mathcal{F}$ with the order.
\end{lemma}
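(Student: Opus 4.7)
The plan is to proceed by induction on the total number of contraction operations performed during the computation. The base case, with no contractions at all, is immediate: each elementary transformation simultaneously updates $(h_1,\ldots,h_m)$ and appends itself to $\mathcal{F}$, so the assertion follows directly from the definition of operation (i). For the inductive step, I would focus on the \emph{innermost} contraction--restoration pair, meaning the restoration that is processed first among all pending ones. Let $(l,r)$ be the corresponding contraction, performed at some stage $t_1$, and let $t_2$ be the stage at which it is restored. By choice of ``innermost'', between $t_1$ and $t_2$ only elementary transformations act on the (then contracted) tuple.

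The central calculation is to verify, case by case, the correctness of the translation rules appearing in the description of operation (iii). Concretely: if $\sigma=R_{k-1}$ acts on a contracted tuple whose block $(H'_1,\ldots,H'_{r-l+1})$ occupies position $k$, then the sequence $R_{k-1},R_k,\ldots,R_{k-1+(r-l)}$ on the expanded tuple moves the block past $H_{k-1}$ while conjugating $H_{k-1}$ by $ev(H'_1)\cdots ev(H'_{r-l+1})$, which exactly matches the flattening of the contracted outcome, as already indicated in the lemma's statement. Analogous direct computations handle $R_k$, $R_{k-1}^{-1}$ and $R_k^{-1}$, while the shift-only cases $1\le i\le k-2$ and $k+1\le i\le m$ are trivial index relabellings. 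Once these translation identities are established, the effect of all operations between $t_1$ and $t_2$ on the flattened state is precisely the effect of the translated sequence of elementary transformations appended to $\mathcal{F}$ during restoration, applied directly to the uncontracted tuple that was present at time $t_1^-$. Hence after the restoration the state coincides with what the overall computation would have produced had the pair $(l,r)$ been replaced by this straight sequence; the induction hypothesis applied to the resulting shorter history then yields the desired identification of $(h_1,\ldots,h_m)$ with the result of applying the $\sigma$'s of the final $\mathcal{F}$ in order to $(g_1,\ldots,g_n)$.

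The main obstacle will be the bookkeeping. One has to track how the position $k$ of the contracted block shifts under each elementary transformation between $t_1$ and $t_2$; how the rewritten indices in the pairs appended during restoration interact with any \emph{nested} contraction--restoration cycles (which is precisely why the induction is organised ``innermost first''); and how the conjugations accumulated inside the iterated tuple $(H_1,\ldots,H_m)$ correctly redistribute after restoration. The technical crux is thus the single identity that a generator $R_{k-1}^{\pm 1}$ or $R_k^{\pm 1}$ acting on the contracted side corresponds to the prescribed $(r-l+1)$-long sequence on the expanded side; once this identity is verified, the overall induction collapses to a clean reduction step.
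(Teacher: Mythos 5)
Your proposal is correct and follows essentially the same route as the paper: both arguments single out a matched contraction--restoration pair (the LIFO discipline of the restoration operation guarantees that only elementary transformations act between the contraction and its restoration), verify that each generator $R_{k-1}^{\pm1}$ or $R_k^{\pm1}$ on the contracted tuple translates into the prescribed $(r-l+1)$-long sequence on the expanded tuple while the contracted product is tracked as the $k$-th component, and then conclude by induction on the number of contractions. Your write-up merely makes explicit the index bookkeeping that the paper's proof leaves implicit.
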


\begin{proof}
Let $(l,r)$ be the last pair of integers in $\mathcal{F}$ which indicates the last contraction operation and replaces $h_l,\ldots,h_r$ with $h_l\cdots h_r$. The product exactly corresponds to the $k$-th component of the $m$-tuple after each of the subsequent elementary transformations, where $k$ is introduced in the restoration operation.
Therefore, the restoration cancels the contraction and constructs the corresponding elementary transformations on the $m'$-tuple.
We conclude the lemma by induction.
\end{proof}

A direct application of the above operations requires us to maintain a lot of data, which would be a massive and tedious project. To simplify the application, our usage only focuses on the replacement
\begin{equation*}
(h_1,\ldots,h_m)
\dashrightarrow
(h_1,\ldots,h_{l-1},
h_l\cdots h_r,
h_{r+1},\ldots,
h_m)
\end{equation*}
of the contraction;
when applying the restoration, we enumerate all possible patterns of the corresponding contraction instead.
Therefore, the iterated tuple $(H_1,\ldots,H_m)$ and the ordered list $\mathcal{F}$ never appear in the argument.

%\textit{Given an $n$-tuple $(g_1,\ldots,g_n)$ in $G$, we sometimes glue adjacent components $g_l,\ldots,g_r$ together, with $1\le l\le r\le n$, and replace them by their product. Then the tuple $(g_1,\ldots,g_n)$ is compressed into an $(n-r+l)$-tuple}
%\begin{equation*}
%    (g_1,\ldots,g_{l-1},g_l\cdots g_r,g_{r+1},\ldots,g_n).
%\end{equation*}
%\textit{The tuple would be further processed by elementary transformations, while the product $g_l\cdots g_r$ would be sent to a conjugate $Q^{-1}g_l\cdots g_rQ$ with $Q\in G$. We always memorise what happened on $g_l\cdots g_r$ and, once we are happy, rewrite the product as adjacent components $Q^{-1}g_lQ,\ldots,Q^{-1}g_rQ$. Naturally, the elementary transformations applied on the $(n-r+l)$-tuple correspond to some elementary transformations applied on the original $n$-tuple which always adjust the $r-l+1$ adjacent components along the same way in quick succession.}

More delicate operations for tuples in the modular group and their properties will be introduced in Proposition \ref{proposition::reduce_and_reloading} and Proposition \ref{proposition::reduce_and_reloading_hard}.
We need the following definition in the sequel:

\begin{definition}
Let $(g_1,\ldots,g_n)$ and $(h_1,\ldots,h_m)$ be tuples in $G$ with $n\ge m$. The tuple $(g_1,\ldots,g_n)$ is said to be an \emph{$(h_1,\ldots,h_m)$-expansion} (or an \emph{expansion} of $(h_1,\ldots,h_m)$) if there exist integers $0=i_0<i_1<i_2<\ldots<l_m=n$ such that
$g_{i_{j-1}+1}\cdots g_{i_j}=h_j$ for each of $j=1,\ldots,m$.
\end{definition}

Suppose that $(g_1,\ldots,g_n)$ is an expansion of $(h_1,\ldots,h_m)$. Then the associated contraction operations consist of $m$ contractions that replace $(g_1,\ldots,g_n)$ with $(h_1,\ldots,h_m)$.

%%%%
\subsection{Direct sums of fibrations and their global monodromies}

Recall that the \emph{type} $\mathcal{O}(f)$ of singularities of a torus fibration $f$ is a multi-set of fibre monodromies counted with multiplicity.
% TODO: Is the following is true? Why?
%Two global monodromies of $f$ with the same base point can be obtained from each other by a sequence of elementary transformations.
Let $f_1:M_1\rightarrow S^2$, $f_2:M_2\rightarrow S^2$ be torus fibrations,
possibly with different numbers of singular fibres. Let $f_1\oplus f_2$ be a direct sum of $f_1$ and $f_2$.
The global monodromy of $f_1\oplus f_2$ depends on the 
fibre-connected sum $M_1\oplus_{\beta} M_2$, the base point $p$ on $S^2$ and the set of generators for the fundamental group $\pi_1(S^2\setminus B)$.
To be precise, a global monodromy of $f_1\oplus f_2$ is a concatenation of two sub-tuples, say
\[
(\psi_1^{-1}\phi_{1,1}\psi_1,\ldots,\psi_1^{-1}\phi_{1,n_1}\psi_1)
\bullet
(\psi_2^{-1}\phi_{2,1}\psi_2,\ldots,\psi_2^{-1}\phi_{2,n_2}\psi_2),
\]
such that $(\phi_{1,1},\ldots,\phi_{1,n_1})$ and $(\phi_{2,1},\ldots,\phi_{2,n_2})$ are global monodromies of $f_1$ and $f_2$ respectively, $\psi_1,\psi_2\in \SL(2,\mathbb Z)$ and at least one of $\psi_1,\psi_2$ is $1$.
In general, global monodromies of different direct sums or of the same direct sum but with different base points are not Hurwitz equivalent.
%For convenience, we will not specify $\beta$ in the notation for a direct sum of fibrations.

For any $n$-tuple $(\phi_1,\ldots,\phi_n)$ in $\SL(2,\mathbb Z)$ with $\phi_1\cdots\phi_n=1$, we use $f_{(\phi_1,\ldots,\phi_n)}$ to denote a torus fibration that has a global monodromy equal to $(\phi_1,\ldots,\phi_n)$, if it exists.
We use the notation $f_{(\phi_1,\ldots,\phi_n)}^L$ for such a fibration that is also a Lefschetz fibration.
Lemma \ref{lemma::contruct_lefschetz_fibrations} will point out that we can always work with such a Lefschetz fibration up to expansion.
Let us first recall some facts about $\SL(2,\mathbb{Z})$ and Lefschetz fibrations.

Set $A=\begin{bmatrix}0 & -1\\1 & 1\end{bmatrix}$ and
$B=\begin{bmatrix}-1 & -2\\1 & 1\end{bmatrix}\in \SL(2,\mathbb{Z})$.
Let $L=-ABA=\begin{bmatrix}1 & 0\\1 & 1\end{bmatrix}$ and $R=-AB=\begin{bmatrix}1 & 1\\0 & 1\end{bmatrix}$.
The conjugacy classes of $\SL(2,\mathbb{Z})$ have been described using the geometry of continued fractions (see \cite{series1985geometry,karpenkov2013geometry,mathoverflow236162}).
They are classified according to the trace, which is conjugacy invariant, as follows.
\begin{enumerate}[(0)]
    \item For trace $0$, there are two conjugacy classes represented by $B$ and $-B$.
\end{enumerate}
For nonzero trace, the conjugacy classes come in opposite pairs, represented by a matrix $M$ and its opposite $-M$ with $tr(M)>0$ and $tr(-M)<0$.
\begin{enumerate}[(1)]
    \item For trace $1$, there are two conjugacy classes represented by $A$ and $-A^2$.\\
    For trace $-1$, there are two conjugacy classes represented by $-A$ and $A^2$.
    
    \item For trace $2$, there is a $\mathbb{Z}$-indexed families of conjugacy classes represented by $L^r$ with $r\in \mathbb{Z}$.\\
    For trace $-2$, there is a $\mathbb{Z}$-indexed families of conjugacy classes represented by $-L^r$ with $r\in \mathbb{Z}$.
    
    \item For trace $3$, there is only one conjugacy class represented by $LR$.\\
    For trace $-3$, there is only one conjugacy class represented by $-LR$.
    
    \item[($\ge 3$)] In general, for trace of absolute value $\ge 3$, the words of the form $\pm R^{j_1}L^{k_1}R^{j_2}L^{k_2}\cdots R^{j_m}L^{k_m}$ with $m\ge 1$, $j_1,\ldots,j_m,k_1,\ldots,k_m\ge 1$ represent all conjugacy classes.
    Conversely, different words of this form up to cyclic conjugacy belong to different conjugacy classes.
\end{enumerate}

%Let $f:M^4\rightarrow S^2$ be a torus fibration of connected compact oriented differential $4$-manifolds, each of whose singular fibres contains a single singularity. Suppose that each singularity admits a complex local coordinates $z_1$, $z_2$ and $\lambda$ with the same orientations as global orientations of $M^4$ and $S^2$ such that the fibration is locally given by $\lambda=z_1^2+z_2^2$. Such a fibration is called a Lefschetz fibration.
Recall that the fibre monodromies of torus Lefschetz fibrations are conjugates of $L$.

For convenience, we set $\widecheck{L}=-A^2B$ and $\widehat{L}=-BA^2$, which are conjugates of $L$.

\begin{lemma}
\label{lemma::contruct_lefschetz_fibrations}
Let $(\phi_1,\ldots,\phi_n)$ be an $n$-tuple in $\SL(2,\mathbb Z)$ with $\phi_1\cdots\phi_n=1$. There exists a torus Lefschetz fibration $f^L$, one of whose global monodromy is an expansion of $(\phi_1,\ldots,\phi_n)$.
\end{lemma}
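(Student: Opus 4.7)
The plan is to reduce the statement to the classical fact that any positive factorisation of $I \in \SL(2,\mathbb{Z})$ into conjugates of $L$ is realised by a torus Lefschetz fibration over $S^2$. If we can write each $\phi_i$ as a product $L_{i,1} \cdots L_{i,k_i}$ of conjugates of $L$, then the concatenation is automatically an expansion of $(\phi_1,\ldots,\phi_n)$ whose total product equals $\phi_1 \cdots \phi_n = 1$, and the existence of the Lefschetz fibration with this global monodromy follows from the standard geometric construction (each conjugate of $L$ corresponds to a positive Dehn twist around some simple closed curve on $\mathbb{T}^2$, and such a twist is the monodromy of a Lefschetz singularity, which we can glue in one after another following the factorisation).

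Thus the whole lemma reduces to showing that every $\phi\in \SL(2,\mathbb{Z})$ can be written as a product of conjugates of $L$. First note that $R^{-1}$ is itself a conjugate of $L$: with $S=\begin{bmatrix}0 & -1\\ 1 & 0\end{bmatrix}$ one checks directly that $SLS^{-1}=R^{-1}$. Next, one verifies the relation $(LR^{-1})^{6}=I$ in $\SL(2,\mathbb{Z})$ by direct computation. Rewriting this relation as
\begin{equation*}
L^{-1}=R^{-1}(LR^{-1})^{5},
\end{equation*}
we obtain an expression of $L^{-1}$ as a product of $11$ conjugates of $L$. Consequently $R=(SL S^{-1})^{-1}=SL^{-1}S^{-1}$ is a conjugate of $L^{-1}$ and hence also a product of conjugates of $L$.

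Since $L$ and $R$ generate $\SL(2,\mathbb{Z})$, every $\phi\in \SL(2,\mathbb{Z})$ is a word in $L^{\pm 1}, R^{\pm 1}$, and by the preceding step each of the letters $L^{-1}$, $R$, $R^{-1}$ can be replaced by a positive product of conjugates of $L$; this yields the desired factorisation $\phi=L_{i,1}\cdots L_{i,k_i}$ for each $\phi_i$. Concatenating the resulting tuples gives an $(\phi_1,\ldots,\phi_n)$-expansion whose components are all conjugates of $L$ and whose product is $I$, and the construction above produces the required torus Lefschetz fibration $f^L$.

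The only non-routine step is the production of $L^{-1}$ (and thereby of $R^{\pm 1}$) as a positive product of conjugates of $L$; this is the place where one must use a non-trivial relation in $\SL(2,\mathbb{Z})$, and the identity $(LR^{-1})^{6}=I$ does the job cleanly. The correspondence between positive factorisations of $I$ by conjugates of $L$ and torus Lefschetz fibrations over $S^2$ is classical (see e.g.\ Moishezon \cite{moishezon1977complex} or Kas \cite{kas1980handlebody}), so no further difficulty appears.
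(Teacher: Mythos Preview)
Your proof is correct and follows the same strategy as the paper: reduce the lemma to showing that the semigroup generated by conjugates of $L$ is all of $\SL(2,\mathbb{Z})$, then invoke the classical correspondence between positive factorisations of $I$ and torus Lefschetz fibrations. The paper's execution is slightly different and a bit more economical: it uses the two specific conjugates $\widecheck{L}=-A^2B$ and $\widehat{L}=-BA^2$ and observes that $\widecheck{L}\widehat{L}=A$ and $\widecheck{L}\widehat{L}\widecheck{L}=B$; since $A$ and $B$ have finite order ($A^6=B^4=I$), their inverses $A^{-1}=A^5$ and $B^{-1}=B^3$ lie in the semigroup automatically, and $A,B$ generate $\SL(2,\mathbb{Z})$. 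Your route via $L$, $R^{-1}=SLS^{-1}$ and the relation $(LR^{-1})^6=I$ achieves the same end, just with a longer expression for $L^{-1}$. Both arguments exploit a finite-order relation to pass from semigroup to group; the paper's choice has the mild advantage that $\widecheck{L},\widehat{L}$ are reused throughout the rest of the paper (e.g.\ in $T_{\mathcal{O},0}$ and the tuples $H_{12}$, $H_{60}$).
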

\begin{proof}
It suffices to show that the semigroup generated by $\widecheck{L}$ and $\widehat{L}$ is exactly $\SL(2,\mathbb{Z})$. It follows from that $\widecheck{L}\widehat{L}=A$ and $\widecheck{L}\widehat{L}\widecheck{L}=B$ whose inverses are $A^5$ and $B^3$ respectively.
\end{proof}

Now we describe the following tuples with respect to a multi-set $\mathcal{O}$ of fibre monodromies. Their induced fibrations $f_{T_{\mathcal{O},0}}$, $f_{T_{\mathcal{O},1}}$, $f_{T_{\mathcal{O},2}}$ and $f_{T_{\mathcal{O}}}$ will stabilise torus fibrations.

\begin{definition}
\label{definition::additional tuple}
Suppose that $\mathcal{O}$ is a multi-set of conjugacy classes of $\SL(2,\mathbb{Z})$.
\begin{enumerate}[1)]
    \item We define $T_{\mathcal{O},0}$ as $(\widecheck{L},\widehat{L},A^2,\widecheck{L},\widehat{L},A^2)$.
    
    \item We define $T_{\mathcal{O},1}$ as an empty tuple if there does not exist a conjugacy class of trace $\pm 3$ in $\mathcal{O}$, otherwise
    \begin{equation*}
    T_{\mathcal{O},1}=(B,B,B,B)\bullet (-A^2BAB,BA,-ABA)^3.
    \end{equation*}
    
    \item We define $T_{\mathcal{O},2}$ as the concatenation of the following tuples.
    \begin{enumerate}[i),topsep=0pt,itemsep=-1ex,partopsep=1ex,parsep=1ex]
        \item If the conjugacy class represented by $\epsilon L^r$ with $r\ge 2$ and $\epsilon\in\{1,-1\}$ occurs $m\ge 1$ times in $\mathcal{O}$, take $m$ copies of
        \begin{equation*}
        \underbrace{(L,\ldots,L,L^{-r})}_{r+1 \text{ components}}.
        \end{equation*}
        
        \item If the conjugacy class represented by $\epsilon R^2$ with $r\ge 2$ and $\epsilon\in\{1,-1\}$ occurs $m\ge 1$ times in $\mathcal{O}$, take $m$ copies of
        \begin{equation*}
        \underbrace{(R,\ldots,R,R^{-r})}_{r+1 \text{ components}}.
        \end{equation*}
        
        \item
        Suppose that a conjugacy class of elements with $|trace|\ge 4$ is represented by
        \[
        \epsilon R^{j_1}L^{k_1}R^{j_2}L^{k_2}\cdots R^{j_m}L^{k_m}
        \]
        with $\epsilon=\{1,-1\}$, $m\ge 1$, $j_1,\ldots,j_m$, $k_1,\ldots,k_m\ge 1$.
        If the conjugacy class occurs $m\ge 1$ times in $\mathcal{O}$, take $m$ copies of
        \begin{equation*}
        \bigl(
        \underbrace{R,\ldots,R}_{j_1 \text{ components}},
        \overbrace{L,\ldots,L}^{k_1 \text{ components}},
        \ldots,
        \underbrace{R,\ldots,R}_{j_m \text{ components}},
        \overbrace{L,\ldots,L}^{k_m \text{ components}},
        (R^{j_1}L^{k_1}\cdots R^{j_m}L^{k_m})^{-1}
        \bigr).
        \end{equation*}
    \end{enumerate}
    \end{enumerate}
Eventually, we define $T_{\mathcal{O}}$ as $T_{\mathcal{O},0}\bullet T_{\mathcal{O},1}\bullet T_{\mathcal{O},2}$.
\end{definition}
\subsection{Hurwitz equivalence of global monodromies}
\label{subsection::Hurwitz equivalence of global monodromies}

If two global monodromies of torus fibrations are Hurwitz equivalent, then they must have the same number of branch points and the same type of singularities.
The following theorem shows that the global monodromies of torus fibrations with the same type of singularities become Hurwitz equivalent up to fibre sum stabilisations.

\begin{theorem}
\label{theorem::global monodromy fibre sum Hurwitz equivalent}
Given a torus fibration, let $\mathcal{O}$ be the type of singularities.
Suppose that $f_0$ is one of the following:
\begin{enumerate}[i),topsep=0pt,itemsep=-1ex,partopsep=1ex,parsep=1ex]
    \item a torus fibration, one of whose global monodromy is $(h_1,\ldots,h_m)=T_{\mathcal{O}}$;
    \item a torus Lefschetz fibration, one of whose global monodromy $(h_1,\ldots,h_m)$ is a $T_{\mathcal{O}}$-expansion.
\end{enumerate}
Then all global monodromies of all direct sums $f\oplus f_0$ are Hurwitz equivalent for all torus fibrations $f$ with $\mathcal{O}(f)=\mathcal{O}$.
Moreover, these global monodromies have a specific normal form determined by $\mathcal{O}$ and $(h_1,\ldots,h_m)$ as follows:
\begin{equation*}
(g_1,\ldots,g_l)
\bullet
\prod_i (\phi_{i,1},\ldots,\phi_{i,{n_i}})
\end{equation*}
where
$g_1\cdots g_l=I$,
$(g_1,\ldots,g_l)$ is the sub-tuple of $(h_1,\ldots,h_m)$ either equal to $T_{\mathcal{O},0}$
or corresponding to $T_{\mathcal{O},0}$,
$\phi_{i,1}\cdots \phi_{i,n_i} = \pm I$ for each $i$
and
each 
$(\phi_{i,1},\ldots,\phi_{i,n_i})$
is either
\begin{itemize}[-,topsep=0pt,itemsep=-1ex,partopsep=1ex,parsep=1ex]
\item a tuple of the form $(X,Y)$ with $XY=\pm I$, or
\item a tuple of $\pm A$, $\pm A^2$, $\pm B$, $\pm \widecheck{L}$, $\pm L$, $\pm \widehat{L}$, $\pm \widecheck{L}^{-1}$, $\pm L^{-1}$, $\pm \widehat{L}^{-1}$, except for at most $1$ component.
\end{itemize}
\end{theorem}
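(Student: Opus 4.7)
The plan is to start with an arbitrary global monodromy of $f \oplus f_0$ and transform it, by elementary transformations, into a normal form depending only on $\mathcal{O}$ and on $(h_1,\ldots,h_m)$. Since any global monodromy of any direct sum $f\oplus f_0$ will reach the same normal form, Hurwitz equivalence follows. By definition, a global monodromy of $f\oplus f_0$ is a concatenation
\begin{equation*}
(\psi_1^{-1}\phi_{1}\psi_1,\ldots,\psi_1^{-1}\phi_{n}\psi_1)
\bullet
(\psi_2^{-1}h_1\psi_2,\ldots,\psi_2^{-1}h_m\psi_2)
\end{equation*}
with one of $\psi_1,\psi_2$ equal to $I$. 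By Lemma \ref{lemma::cyclic shift} and Lemma \ref{lemma::move subtuples}, I may assume $\psi_2=I$ and that the tuple has the form $(h_1,\ldots,h_m)\bullet(\psi^{-1}\phi_1\psi,\ldots,\psi^{-1}\phi_n\psi)$, where $\psi\in\SL(2,\mathbb Z)$ and $(\phi_1,\ldots,\phi_n)$ is a global monodromy of $f$.

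The first reduction is to kill the conjugation by $\psi$. The sub-tuple of $(h_1,\ldots,h_m)$ coming from $T_{\mathcal{O},0}=(\widecheck L,\widehat L,A^2,\widecheck L,\widehat L,A^2)$ (or its expansion in case (ii), which after contraction is literally $T_{\mathcal{O},0}$) contains a generating set of $\SL(2,\mathbb Z)$: since $\widecheck L\widehat L=A$ and, by Lemma \ref{lemma::contruct_lefschetz_fibrations}, the semigroup generated by $\widecheck L,\widehat L$ is all of $\SL(2,\mathbb Z)$, every element of $\SL(2,\mathbb Z)$ (in particular $\psi$) is a word in $\widecheck L, \widehat L$ and their inverses $\widecheck L^5, \widehat L^5$. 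The sub-tuple $(\psi^{-1}\phi_1\psi,\ldots,\psi^{-1}\phi_n\psi)$ has product $I\in Z(\SL(2,\mathbb Z))$, so Lemma \ref{lemma::tuples containing a generating set} lets me conjugate it by $\psi$ using Hurwitz moves, producing $(h_1,\ldots,h_m)\bullet(\phi_1,\ldots,\phi_n)$. In case (ii), the same argument applies to an expansion since, by Lemma \ref{lemma::contruct_lefschetz_fibrations}, the expansion refines the original with positive Dehn twists whose products still realise the generators $A$ and $B$.

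Next, I use Lemma \ref{lemma::move subtuples} to rearrange into $T_{\mathcal{O},0}\bullet T_{\mathcal{O},1}\bullet T_{\mathcal{O},2}\bullet(\phi_1,\ldots,\phi_n)$ (or its expansion), noting that each sub-block $T_{\mathcal{O},1}$ and each triple/pair from $T_{\mathcal{O},2}$ has central product. I then treat each $\phi_i$ according to whether its conjugacy class is simple or not. If $Cl(\phi_i)$ is simple, then Lemma \ref{lemma::tuples containing a generating set} (applied again with $T_{\mathcal{O},0}$ as the reservoir) lets me replace $\phi_i$ by any chosen representative of its conjugacy class; grouping $\phi_i$ with the unique subsequent component closing its central product yields a pair $(X,Y)$ with $XY=\pm I$. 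If $Cl(\phi_i)$ is non-simple, its representative has the form $\pm L^r$, $\pm R^r$, $\pm LR$, or $\pm R^{j_1}L^{k_1}\cdots R^{j_m}L^{k_m}$; in each case $T_{\mathcal{O},2}$ (respectively $T_{\mathcal{O},1}$ for trace $\pm 3$) contributes a matching block whose final component is the inverse of this representative. By Lemma \ref{lemma::switch places of components with the same value}, I swap $\phi_i$ with this terminal component (both lie in the same conjugacy class of $\SL(2,\mathbb Z)$ modulo its centre) to absorb $\phi_i$ into the standardised block. The contraction/restoration framework of Subsection \ref{subsection::contractions-restorations} justifies that, in case (ii), these manipulations at the contracted level lift to Hurwitz equivalences at the Lefschetz-expansion level.

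The main obstacle is the trace $\pm 3$ case: the class of $\pm LR$ has no ``natural'' $L,R$-expansion compatible with its place in the normal form, which is precisely why $T_{\mathcal{O},1}$ takes the specific shape $(B,B,B,B)\bullet(-A^2BAB,BA,-ABA)^3$. The verification that this block both has central product and, after swapping in an incoming trace $\pm 3$ fibre via Lemma \ref{lemma::switch places of components with the same value}, produces a sub-tuple consisting of standard generators with at most one exception, is the crucial explicit calculation; one checks $(-A^2BAB)(BA)(-ABA)=LR$ and that the first four $B$'s absorb the parity. Once all $\phi_i$ are processed, the result is the advertised normal form, in which each sub-tuple $(\phi_{i,1},\ldots,\phi_{i,n_i})$ is either a length-two pair with product $\pm I$ or a tuple of the listed standard generators with at most one exceptional entry. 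Since this normal form depends only on $\mathcal{O}$ and $(h_1,\ldots,h_m)$, all global monodromies of all direct sums $f\oplus f_0$ are Hurwitz equivalent, completing the proof.
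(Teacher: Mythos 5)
There is a genuine gap, and it sits exactly where the real difficulty of the theorem lies: your treatment of the components in \emph{simple} conjugacy classes. You claim that if $Cl(\phi_i)$ is simple, Lemma \ref{lemma::tuples containing a generating set} ``lets me replace $\phi_i$ by any chosen representative of its conjugacy class,'' and that $\phi_i$ can then be grouped ``with the unique subsequent component closing its central product'' into a pair $(X,Y)$ with $XY=\pm I$. Lemma \ref{lemma::tuples containing a generating set} does not permit this: it only conjugates a sub-tuple $(g_l,\ldots,g_r)$ whose \emph{product lies in the centre}, and a single simple component $\phi_i$ has $\phi_i\neq \pm I$, so it cannot be individually conjugated to a chosen representative. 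Nor does any canonical pairing of simple components exist --- consider an achiral Lefschetz fibration with $p$ fibres of type $I_1^+$ and $q$ of type $I_1^-$, $p\neq q$: these components cannot be paired off as $(X,Y)$ with $XY=\pm I$, and indeed the normal form of the theorem places them in the long tuples of standard generators with at most one exceptional entry. Producing that form from an arbitrary configuration of simple components is precisely the content of Section \ref{section::Livne}: Theorem \ref{theorem::Livne mathcalS} (the extension of Livn\'e--Moishezon to all conjugates of $a$, $a^2$, $b$, $s_1$, $t_1$ in $\PSL(2,\mathbb{Z})$), Theorem \ref{theorem::Livne mathcalS2} and its modification Theorem \ref{theorem::Livne mathcalS2_modification} for the trace-$\pm 3$ classes (which require the auxiliary tuple $\mathcal{F}_{13}$, i.e.\ the block $T_{\mathcal{O},1}$, to pair an unbounded number of conjugates of $ababa$ into standard pairs), together with Proposition \ref{proposition::swappability_along_iota} to allocate the signs when lifting the $\PSL(2,\mathbb{Z})$ normal form back to $\SL(2,\mathbb{Z})$. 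Your proposal bypasses all of this machinery with a single unjustified sentence, and your remark that the four $B$'s in $T_{\mathcal{O},1}$ ``absorb the parity'' does not engage with the actual role of this block.

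A secondary error: for the non-simple classes you invoke Lemma \ref{lemma::switch places of components with the same value} to swap $\phi_i$ with the terminal component of its matching block because they ``lie in the same conjugacy class modulo the centre.'' That lemma requires $g_i=g_jh$ with $h$ central, i.e.\ equality up to sign, not mere conjugacy. The paper's Step 2 circumvents this correctly: since the block $(L,\ldots,L,L^{-r})$ has product $I$ (central), Lemma \ref{lemma::tuples containing a generating set} conjugates the whole block to $(h^{-1}Lh,\ldots,h^{-1}Lh,h^{-1}L^{-r}h)$; then Lemma \ref{lemma::move subtuples} exchanges $\phi_i=\epsilon h^{-1}L^rh$ against the sub-tuple of conjugates of $L$, leaving the central-product pair $(\epsilon h^{-1}L^rh,\,h^{-1}L^{-r}h)$, which a further application of Lemma \ref{lemma::tuples containing a generating set} standardises --- and, crucially, $\phi_i$'s position is thereby re-populated with conjugates of $L$, i.e.\ \emph{simple} elements fed into the later steps. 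So even the part of your argument that is in the right spirit does not close the loop: the output of the non-simple reduction is a fresh supply of simple components, which again lands you in the missing Livn\'e-type classification.
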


\begin{remark}
Theorem \ref{theorem::global monodromy fibre sum Hurwitz equivalent} and Lemma \ref{lemma::contruct_lefschetz_fibrations} imply the main result Theorem \ref{thmx::A}. Let
$H_{12}=(\widecheck{L},\widehat{L})^6$
and
\begin{equation*}
    H_{60}=
    (\widecheck{L},\widehat{L})^6
    \bullet
    (\widecheck{L},\widehat{L},\widecheck{L})^4
    \bullet
    (\widecheck{L},L,\widecheck{L},\widehat{L},\widecheck{L},\widehat{L},\widecheck{L},\widehat{L},\widecheck{L},\widehat{L},L,L)^3
\end{equation*}
be tuples in $\SL(2,\mathbb{Z})$.
Theorem \ref{thmx::B} and \ref{thmx::C} again follow from Theorem \ref{theorem::global monodromy fibre sum Hurwitz equivalent}, where the torus Lefschetz fibrations $f_{12}^L$ and $f_{60}^L$ are $f_{H_{12}}^L$ and $f_{H_{60}}^L$ respectively. 
\end{remark}

\begin{remark}
The normal form given in Theorem \ref{theorem::global monodromy fibre sum Hurwitz equivalent}, though its precise form is not given, satisfies a remarkable property, called \emph{swappability}.
We explain the swappability in Subsection \ref{subsection::swappability} but as a consequence, we have Proposition \ref{proposition::swappability_along_iota}.
\end{remark}

Let $\iota:\SL(2,\mathbb{Z})\rightarrow \PSL(2,\mathbb{Z})$ be the natural group homomorphism.

\begin{proposition}
\label{proposition::swappability_along_iota}
For $i=1,2$,
let $\prod_j (\phi_{j,1}^{(i)},\ldots,\phi_{j,n_j}^{(i)})$ be a tuple in $\SL(2,\mathbb{Z})$ such that
\begin{equation*}
\phi_{j,1}^{(i)}\cdots \phi_{j,n_j}^{(i)} = \pm I
\end{equation*}
for each $j$
and
each sub-tuple
$(\phi_{j,1}^{(i)},\ldots,\phi_{j,n_j}^{(i)})$
is either
\begin{itemize}[-,topsep=0pt,itemsep=-1ex,partopsep=1ex,parsep=1ex]
\item a tuple of the form $(X,Y)$ with $XY=\pm I$, or
\item a tuple of $\pm A$, $\pm A^2$, $\pm B$, $\pm \widecheck{L}$, $\pm L$, $\pm \widehat{L}$, $\pm \widecheck{L}^{-1}$, $\pm L^{-1}$, $\pm \widehat{L}^{-1}$, except for at most $1$ component.
\end{itemize}
Let $(g_1,\ldots,g_l)$ be a tuple in $\SL(2,\mathbb Z)$ that is either equal to $T_{\mathcal{O},0}$ or a $T_{\mathcal{O},0}$ expansion.
Suppose that
\begin{equation*}
[Cl(\phi_{j,1}^{(1)}),\ldots,Cl(\phi_{j,n_j}^{(1)}) \mid j]
=
[Cl(\phi_{j,1}^{(2)}),\ldots,Cl(\phi_{j,n_j}^{(2)}) \mid j]
\end{equation*}
and
\begin{equation*}
\prod_j (\iota(\phi_{j,1}^{(1)}),\ldots,\iota(\phi_{j,n_j}^{(1)}))
=
\prod_j (\iota(\phi_{j,1}^{(2)}),\ldots,\iota(\phi_{j,n_j}^{(2)})).
\end{equation*}
Then,
\begin{equation*}
(g_1,\ldots,g_l) \bullet
\prod_j (\phi_{j,1}^{(1)},\ldots,\phi_{j,n_j}^{(1)})
\sim
(g_1,\ldots,g_l) \bullet
\prod_j (\phi_{j,1}^{(2)},\ldots,\phi_{j,n_j}^{(2)}).
\end{equation*}
\end{proposition}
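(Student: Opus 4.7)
The strategy exploits two key structural facts. Each sub-tuple $(\phi_{j,1}^{(i)},\ldots,\phi_{j,n_j}^{(i)})$ has product in $Z(\SL(2,\mathbb{Z}))=\{\pm I\}$, so such blocks can be freely permuted by Lemma \ref{lemma::move subtuples}; and the prefix $(g_1,\ldots,g_l)$ contains a generating set of $\SL(2,\mathbb{Z})$, so whole sub-tuples of central product can be conjugated by arbitrary elements via Lemma \ref{lemma::tuples containing a generating set}. The hypothesis $\iota(\phi_{j,k}^{(1)})=\iota(\phi_{j,k}^{(2)})$ determines each component up to sign: $\phi_{j,k}^{(2)}=\epsilon_{j,k}\phi_{j,k}^{(1)}$ with $\epsilon_{j,k}\in\{\pm 1\}$, and the multi-set condition on conjugacy classes forces the $-1$-flips to pair up across opposite classes in $\SL(2,\mathbb{Z})$.

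First, I would apply the contraction--restoration procedure of Subsection \ref{subsection::contractions-restorations} to reduce to the case where $(g_1,\ldots,g_l) = T_{\mathcal{O},0} = (\widecheck L,\widehat L,A^2,\widecheck L,\widehat L,A^2)$; any elementary moves performed in this reduced setting lift back to the original expansion. The set $\{\widecheck L,\widehat L, A^2\}$ generates $\SL(2,\mathbb{Z})$ (as already used in Lemma \ref{lemma::contruct_lefschetz_fibrations}: $\widecheck L\widehat L = A$ and $\widecheck L\widehat L\widecheck L = B$), so Lemma \ref{lemma::tuples containing a generating set} applies with the prefix playing the role of the generating block.

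Next, I would handle each sub-tuple by type. For a sub-tuple made of ``standard'' elements (with at most one non-standard exception), Lemma \ref{lemma::switch places of components with the same value} lets us swap adjacent components that differ by a central factor, so one matches the orderings of same-$\iota$-image entries on the two sides; the exceptional entry has fixed $\iota$-image, so only its sign is in question. For a length-two block $(X,Y)$ with $XY=\pm I$, a same-sign flip is trivial to implement, whereas an opposite-sign flip cannot be corrected within the block by conjugation alone (conjugation preserves the sign of the product). In that case the multi-set hypothesis guarantees a compensating sign flip in another sub-tuple; via Lemma \ref{lemma::move subtuples} I would bring the two offending blocks adjacent, so that their concatenation has central product, and resolve both mismatches simultaneously by conjugating through the generating set. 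After every sub-tuple is matched, restoration inverts the initial contraction and produces the claimed Hurwitz equivalence.

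The main obstacle is the cross-block sign pairing: sub-tuples of different types (length-two versus standard) may both contribute sign mismatches, and the pairing must be realised by a globally consistent sequence of Hurwitz moves rather than purely local ones. A careful enumeration of mismatch patterns---organised by pairs of opposite conjugacy classes in $\SL(2,\mathbb{Z})$---together with the redundancy provided by the two copies of $(\widecheck L,\widehat L,A^2)$ inside $T_{\mathcal{O},0}$, is where most of the technical work lies; the underlying matrix algebra is routine.
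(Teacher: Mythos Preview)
Your strategy is in the right spirit but takes a substantially different route from the paper, and the cross-block step as you describe it does not actually work.

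The paper's proof is a two-line reduction. It introduces an injective, conjugacy-preserving (but non-homomorphic) map $\sharp:\SL(2,\mathbb Z)\to\{\pm1\}\times\SL(2,\mathbb Z)$ sending $\phi$ to $(\sgn\trace\phi,\psi)$ with $\trace\psi\ge 0$. Under $\sharp$, the two hypotheses (same $\iota$-image entrywise, same multiset of $\SL(2,\mathbb Z)$-conjugacy classes) translate verbatim into the hypothesis of Proposition~\ref{proposition::swappability} with $G=\{\pm1\}$: the second coordinates $\psi_{j,k}^{(1)}=\psi_{j,k}^{(2)}$ agree, and the multisets of $\{\pm1\}\times\SL(2,\mathbb Z)$-conjugacy classes agree. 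The conclusion is then immediate from swappability. All of the actual Hurwitz-move content lives in Proposition~\ref{proposition::swappability}, which is proved separately and once.

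Your approach amounts to reproving the relevant cases of Proposition~\ref{proposition::swappability} in situ, without isolating the abstraction. That is legitimate, but the mechanism you propose for the hard cases is not the correct one. You write that two blocks with compensating sign mismatches can be brought adjacent and then ``resolved simultaneously by conjugating through the generating set''. Simultaneous conjugation via Lemma~\ref{lemma::tuples containing a generating set} replaces each entry $e$ by $Q^{-1}eQ$; this preserves every conjugacy class and therefore cannot convert any $X$ into $-X$. What is actually needed is a two-step manoeuvre: first use Lemma~\ref{lemma::tuples containing a generating set} to conjugate one of the two sub-tuples so that the offending component becomes \emph{equal} (not just conjugate) to its counterpart, and then apply Lemma~\ref{lemma::switch places of components with the same value} to swap the two now-equal-up-to-centre entries. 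You invoke Lemma~\ref{lemma::switch places of components with the same value} only for within-block reordering, not for this cross-block swap, which is where it matters.

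There is a further case you do not address at all: two mismatched components lying in the \emph{same} sub-tuple of the second kind (both among $\pm\widecheck L,\pm L,\pm\widehat L,\ldots$, hence conjugate to one another but not equal). Here neither block can be conjugated independently of the other, and the paper's proof of Proposition~\ref{proposition::swappability} uses a genuinely longer chain of substitutions that routes one component through the $\widecheck L$ sitting in the prefix $T_{\mathcal O,0}$, exploiting both copies of $(\widecheck L,\widehat L,A^2)$. Your final paragraph gestures at ``the redundancy provided by the two copies'', but nothing in the preceding outline indicates how it would be used.
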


We need a deeper understanding of tuples in $\PSL(2,\mathbb{Z})$.
Set $a=\iota(A)$, $b=\iota(B)$.
Recall that $\PSL(2,\mathbb{Z})$ is generated by $a$ and $b$ with the relation $a^3=b^2=1$. Some other elements are marked as follows:
\begin{equation*}
    s_0=a^2b, s_1=aba, s_2=ba^2, t_0=ba, t_1=a^2ba^2, t_2=ab
\end{equation*}
Here $\iota(L)=s_1$, $\iota(R)=t_2$, $\iota(\widecheck{L})=s_0$ and $\iota(\widehat{L})=s_2$.
We further emphasise that $s_it_i=1$ for $i=0,1,2$.
Elements $s_0$, $s_1$, $s_2$ are conjugate to each other and $t_0$, $t_1$, $t_2$ are conjugate to each other.

Elements in $\mathcal{S}=\{a,a^2,b,s_0,s_1,s_2,t_0,t_1,t_2\}$ are called ``short'' and elements in
\begin{equation*}
    \mathcal{S}_2=\mathcal{S}\cup 
    \{
    bab, ba^2b, a^2ba, aba^2,
    a^2bab, ababa, baba^2, ba^2ba, a^2ba^2ba^2, aba^2b
    \}
\end{equation*}
are called ``almost short''.

The following improves and extends Theorem 3.6 in \cite{matsumoto1985torus}, which divides the tuples of elements in $\PSL(2,\mathbb{Z})$ conjugate to $a$, $a^2$, $b$, $s_0$ or $t_0$ into two categories and, for tuples in the second category, presents the normal forms.

\begin{theorem}
\label{theorem::Livne mathcalS}
Let $g_1,\ldots,g_n \in \PSL(2,\mathbb{Z})$ be conjugates of $a$, $a^2$, $b$, $aba$ or $a^2ba^2$ satisfying $g_1\cdots g_n=1$.
Suppose that $p_a$ of them are conjugates of $a$, $q_a$ of them are conjugates of $a^2$, $n_b$ of them are conjugates of $b$, $p$ of them are conjugates of $s_0$ and $q$ of them are conjugates of $t_0$ with $p_a$, $q_a$, $n_b$, $p$, $q\ge 0$ and $p_a+q_a+n_b+p+q=n$. Then,
\begin{enumerate}[topsep=0pt,itemsep=-1ex,partopsep=1ex,parsep=1ex]
    \item if $p=q$, $|p_a-q_a|\equiv 0\pmod{3}$ and $n_b$ is even, then the $n$-tuple $(g_1,\ldots,g_n)$ is Hurwitz equivalenet to
    \begin{equation*}
    (k_1,k_1^{-1},\ldots,k_{n'},k_{n'}^{-1},l_1,l_1,l_1,\ldots,l_{n''},l_{n''},l_{n''})    
    \end{equation*}
    with $n'+n''=n$, $k_i,l_j\in G$ and $l_j^3=1$, $i=1,\ldots,n'$, $j=1,\ldots,n''$;
    
    \item otherwise, the $n$-tuple $(g_1,\ldots,g_n)$ is Hurwitz equivalent to the concatenation of
    \begin{eqnarray*}
    (s_0,s_2,s_0,s_2,s_0,s_2)^{\lfloor \max\{p-q,0\}/6 \rfloor}\bullet
    (t_0,t_2,t_0,t_2,t_0,t_2)^{\lfloor \max\{q-p,0\}/6 \rfloor}\bullet
    (s_0,t_0)^{\min\{p,q\}}\bullet\\
    (a,a^2)^{\min\{p_a,q_a\}}\bullet
    (b,b)^{\lfloor n_b/2 \rfloor}\bullet
    (a,a,a)^{\lfloor \max\{p_a-q_a,0\}/3 \rfloor}\bullet
    (a^2,a^2,a^2)^{\lfloor \max\{q_a-p_a,0\}/3 \rfloor}
    \end{eqnarray*}
    and at most one of the following tuples:
    \begin{align*}
    &(a^2,s_0,s_2), (a,t_2,t_0), (a,s_0,s_0,s_2,s_0), (a^2,t_0,t_2,t_0,t_0), (b,s_0,s_2,s_0), (b,t_0,t_2,t_0),\\
    &(a,b,s_2),(a^2,b,t_0),(a,t_2,t_0,b,t_0,t_2,t_0),(a^2,s_0,s_2,b,s_0,s_2,s_0),\\
    &(a,a,s_0,s_2), (a^2,a^2,t_2,t_0), (a^2,a^2,s_0,s_0,s_2,s_0), (a,a,t_0,t_2,t_0,t_0),\\
    &(a^2,a^2,b,s_2),(a,a,b,t_0),(a^2,a^2,t_2,t_0,b,t_0,t_2,t_0),(a,a,s_0,s_2,b,s_0,s_2,s_0).
    \end{align*}
    The resulting $n$-tuple is called the normal form of $(g_1,\ldots,g_n)$
\end{enumerate}
\end{theorem}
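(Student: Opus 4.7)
This theorem refines Matsumoto's Theorem 3.6 in \cite{matsumoto1985torus} by also handling components in the conjugacy classes of $s_1 = aba$ and $t_1 = a^2 b a^2$. The plan is to proceed in three phases: standardization, absorption, and residue classification.

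First I would use Hurwitz moves to bring each component $g_i$ to one of the nine short representatives in $\mathcal{S} = \{a, a^2, b, s_0, s_1, s_2, t_0, t_1, t_2\}$. Each $R_i^{\pm 1}$ conjugates a component by a neighbor, so by iterated moves one can conjugate $g_i$ by any product of $g_j^{\pm 1}$'s. Generically the components of the tuple generate all of $\PSL(2,\mathbb{Z})$ so this reduction succeeds; degenerate cases (e.g., all components conjugate to $a$) can be dispatched by direct inspection since they are automatically in a pairing-friendly configuration. Then Lemma \ref{lemma::switch places of components with the same value} lets me cluster identical components together, reducing the problem to the finite combinatorial question of classifying tuples whose entries lie in $\mathcal{S}$ and are grouped by type.

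Next I would apply the cancelling identities $s_i t_i = 1$, $a \cdot a^2 = 1$, $b^2 = 1$, $a^3 = 1$, $(s_0 s_2)^3 = 1$ and $(t_0 t_2)^3 = 1$, repeatedly extracting the corresponding null-product subtuples via Lemma \ref{lemma::move subtuples} and the contraction-restoration machinery of Subsection \ref{subsection::contractions-restorations}. Each extracted subtuple is then rewritten either as a pair $(k, k^{-1})$ or as a triple $(l, l, l)$ with $l^3 = 1$, matching the form in conclusion (1). Under the hypotheses of conclusion (1), namely $p = q$, $|p_a - q_a| \equiv 0 \pmod 3$, and $n_b$ even, these cancellations exhaust the tuple, which proves the first claim.

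Finally, for conclusion (2), I would invoke the abelianization map $\PSL(2,\mathbb{Z}) \to \mathbb{Z}/2 \times \mathbb{Z}/3$ applied to the relation $g_1 \cdots g_n = 1$, which yields $n_b + p + q \equiv 0 \pmod 2$ and $p_a + 2 q_a + 2p + q \equiv 0 \pmod 3$. Together with the negation of at least one hypothesis of conclusion (1), these pin down the admissible residues. The absorption prefix listed in the theorem extracts the bulk, leaving a short residue subtuple that I would match to one of the $18$ enumerated tails by direct verification. \textbf{The main obstacle} is precisely this final matching: one must show that the $18$ listed tails exhaustively cover all residue combinations compatible with the abelianization constraints, and that the residue uniquely determines the tail up to Hurwitz equivalence. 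This is a finite but delicate case check, requiring for the uniqueness part an invariant finer than the abelianization (for instance, the projection to a suitable finite quotient of $\PSL(2,\mathbb{Z})$) to distinguish tails sharing the same abelianization residue.
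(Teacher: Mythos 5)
There is a genuine gap, and it sits exactly where your plan is most casual: the first ``standardization'' phase. Making \emph{every} component short simultaneously is not a generic-position triviality --- it is the content of Theorem \ref{theorem::Livne S->short}, the technical heart of this part of the paper (an extension of Livn\'e's theorem). An elementary transformation only conjugates a component by an adjacent one, and in doing so it changes that neighbour's position; you cannot conjugate a single $g_i$ by an arbitrary word in the other components while leaving the rest fixed. Lemma \ref{lemma::tuples containing a generating set}, which is what your ``generate all of $\PSL(2,\mathbb{Z})$'' heuristic implicitly appeals to, only applies to a \emph{sub-tuple whose product is central}, and at the start no proper sub-tuple of $(g_1,\ldots,g_n)$ need have trivial product. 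The paper instead runs a delicate induction on the complexity vector $\bigl(\mu,\sum_i f(h_i), l(h_1),\ldots,l(h_\mu)\bigr)$, controlled by Lemma \ref{lemma::m_i-1_and_m_i} (bounds on the cancellation lengths $m_i$) and the contraction--restoration machinery of Proposition \ref{proposition::reduce_and_reloading}; the key non-trivial input is Proposition \ref{proposition::reduced_form_s0_s2} (and its twin \ref{proposition::reduced_form_t0_t2}), a Moishezon-style reduced-form computation showing that a ``stuck'' configuration would have product with a non-trivial reduced form, contradicting $g_1\cdots g_n=1$. Without some substitute for this argument your reduction to $\mathcal{S}$ simply does not get off the ground, and the degenerate cases you wave away are not the problem --- the generic ones are.

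The same deficiency infects your third phase. The abelianization constraints $n_b+p+q\equiv 0 \pmod 2$ and $p_a+2q_a+2p+q\equiv 0\pmod 3$ are far too coarse to derive the eighteen tails: the paper obtains them from Lemmas \ref{lemma::a^2,s}, \ref{lemma::a,t}, \ref{lemma::b,s}, \ref{lemma::b,t}, \ref{lemma::a,s}, \ref{lemma::a^2,t} (each again resting on Proposition \ref{proposition::reduced_form_s0_s2} to force a usable cancellation pattern in an inverse-free tuple with one exceptional component), followed by the explicit substitution sequences of Steps 1--4 in the paper's proof, which merge the exceptional component with the $s$/$t$ part case by case. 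Abelianization can certify that a residue is \emph{consistent}, but it cannot produce the Hurwitz moves realizing the equivalence, nor rule out that an inverse-free tuple mixes $s$- and $t$-type components (Proposition \ref{proposition::ST}). Finally, note that your ``uniqueness of the tail'' obstacle is a red herring for this statement: the theorem only asserts \emph{existence} of the normal form, so no finer invariant separating tails is required here (uniqueness-type questions are handled elsewhere, e.g.\ via the swappability discussion).
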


As a supplement, we have Theorem \ref{theorem::Livne mathcalS2} and its modification.

\begin{theorem}
\label{theorem::Livne mathcalS2}
Let $g_1,\ldots,g_n\in \PSL(2,\mathbb{Z})$ be conjugates of $a$, $a^2$, $b$, $s_0$, $t_0$ or $ababa$ satisfying $g_1\cdots g_n=1$. Suppose that $m$ of them are conjugates of $ababa$. Take
\begin{equation*}
\mathcal{F}_{13}=(b,b,b,b,a^2bab,t_0,s_1,a^2bab,t_0,s_1,a^2bab,t_0,s_1).
\end{equation*}
Then $(g_1,\ldots,g_n)\bullet\mathcal{F}_{13}$ is Hurwitz equivalent to
\begin{equation*}
    (h_1,\ldots,h_{n-m-3-2\mu})\bullet
    (a^2bab,ba^2ba)^{(m+3-\mu)/2}\bullet
    (a^2bab,t_0,s_1)^{\mu}
\end{equation*}
where
\begin{itemize}[-,topsep=0pt,itemsep=-1ex,partopsep=1ex,parsep=1ex]
    \item each component of $(h_1,\ldots,h_{n-m-3-2\mu})$ is conjugate to one of $a,a^2,b,s_0,t_0$;
    \item $\mu=3-m$ if $m\le 3$ and $\mu=(m+1) \mod{2}$ otherwise.
\end{itemize}
\end{theorem}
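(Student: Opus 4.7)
The plan is to exploit the stabilisation tuple $\mathcal{F}_{13}$ as a reservoir of useful algebraic structure. Note that $\mathcal{F}_{13}$ contains both $b$ and $t_0 = ba$, so the subgroup generated by its components contains $b \cdot t_0 = a$, hence $\mathcal{F}_{13}$ contains a generating set of $\PSL(2,\mathbb{Z})$. Moreover, each of the three triples $(a^2bab, t_0, s_1)$ has product $1$ and the prefix $(b,b,b,b)$ has product $1$, so these sub-tuples can be freely repositioned within the concatenation via Lemma \ref{lemma::move subtuples}.

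I would proceed in three stages. In the \emph{first stage}, use Hurwitz moves to gather the $m$ components of $(g_1,\ldots,g_n)$ that are conjugate to $ababa$ into a single contiguous block, while simultaneously conjugating each such component into the specific representative $a^2bab$ using Lemma \ref{lemma::tuples containing a generating set} (the auxiliary conjugation element being expressible as a word in elements of $\mathcal{F}_{13}$). This produces a tuple of the form $(\widetilde g_1,\ldots,\widetilde g_{n-m}) \bullet (a^2bab)^m \bullet \mathcal{F}_{13}$ in which each $\widetilde g_i$ remains conjugate to $a, a^2, b, s_0$ or $t_0$. In the \emph{second stage}, combine the $m$ reservoir copies of $a^2bab$ with the three $a^2bab$'s inside $\mathcal{F}_{13}$ and pair them up using the contraction/restoration machinery of Subsection \ref{subsection::contractions-restorations}: since $t_0 \cdot s_1 = ba \cdot aba = ba^2ba$, contracting $(t_0, s_1)$ inside a triple $(a^2bab, t_0, s_1)$ produces a pair $(a^2bab, ba^2ba)$, and any additional unpaired $a^2bab$ can be paired with the contracted $ba^2ba$ of another triple (then restored). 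A straightforward parity count shows that of the total $m+3$ copies of $a^2bab$, greedy pairing yields exactly $(m+3-\mu)/2$ pairs $(a^2bab, ba^2ba)$ and leaves $\mu$ uncontracted triples $(a^2bab, t_0, s_1)$, where $\mu = 3-m$ for $m\le 3$ and $\mu \equiv m+1 \pmod 2$ for $m \ge 4$. In the \emph{third stage}, apply Theorem \ref{theorem::Livne mathcalS} to the sub-tuple $(\widetilde g_1,\ldots,\widetilde g_{n-m}) \bullet (b,b,b,b)$ (whose product is $1$ and whose components are all conjugate to $a, a^2, b, s_0$ or $t_0$) to put this portion into Livne normal form, yielding the desired tuple $(h_1, \ldots)$.

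The main obstacle will be the first stage: Lemma \ref{lemma::tuples containing a generating set} requires a sub-tuple whose product is central, but a single $ababa$-conjugate is never central, so one cannot individually conjugate each such element in place. The natural workaround is an inductive argument in which the sub-tuple being conjugated consists of the current target $ababa$-conjugate together with the remaining $ababa$-conjugates and a compensating piece of $\mathcal{F}_{13}$ chosen so that the total product is $1$; after conjugation, the already-processed elements have been brought to $a^2bab$ and the remaining ones are handled in subsequent iterations, using the four $b$'s and the $(a^2bab, t_0, s_1)$ triples to absorb any side effects. A secondary subtlety will be the careful parity bookkeeping in the second stage to recover the exact expression for $\mu$, since the pairing behaviour changes qualitatively once the number $m$ of $ababa$-conjugates exceeds the stabiliser's capacity of three extra $a^2bab$'s.
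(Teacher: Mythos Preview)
Your Stage~1 target is impossible to reach, and this breaks the whole plan. Hurwitz moves preserve the product of all components; the input $(g_1,\ldots,g_n)\bullet\mathcal{F}_{13}$ has product $1$, but your claimed output $(\widetilde g_1,\ldots,\widetilde g_{n-m})\bullet (a^2bab)^m\bullet\mathcal{F}_{13}$ has product $\big(\prod\widetilde g_i\big)\cdot(a^2bab)^m$. Take the simplest case $n=m$ (for instance $(g_1,g_2)=(ababa,a^2ba^2ba^2)$): there are no $\widetilde g_i$'s at all, so the target has product $(a^2bab)^m$, which is nontrivial because $a^2bab$ (a conjugate of $ababa$, trace $\pm3$) has infinite order in $\PSL(2,\mathbb Z)$. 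Hence no sequence of Hurwitz moves produces your Stage~1 form, and consequently your Stage~3 claim that $(\widetilde g_1,\ldots,\widetilde g_{n-m})\bullet(b,b,b,b)$ has product $1$ is also false in general. Your proposed workaround does not escape this: conjugating a sub-tuple that includes a ``compensating piece of $\mathcal{F}_{13}$'' conjugates that piece as well, so you cannot simultaneously normalise the $ababa$-conjugates to a common value \emph{and} leave $\mathcal{F}_{13}$ intact.

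The paper's argument is genuinely different and avoids this obstruction. It never tries to force all $ababa$-conjugates to a single representative. Instead it first invokes the substantial Theorem~\ref{theorem::Livne S2->almost short} (via Corollary~\ref{corollary::Livne S2->almost short}) to make every component almost short; now each $ababa$-conjugate lies in the finite set $\{a^2bab,\,ababa,\,baba^2,\,ba^2ba,\,a^2ba^2ba^2,\,aba^2b\}$. The key algebraic trick is then that for each almost short $\tau$ in this set there is a matching conjugate $\tau_b$ of $b$ with $\tau_b\tau\tau_b^{-1}=\tau^{-1}$, whence
\[
(\tau_b,\tau,\tau)\ \xrightarrow{R_1^{-1}}\ (\tau^{-1},\tau_b,\tau)\ \xrightarrow{R_2}\ (\tau^{-1},\tau,\tau^{-1}\tau_b\tau),
\]
manufacturing an inverse pair $(\tau^{-1},\tau)$ out of two \emph{equal} copies of $\tau$ at the cost of one conjugate of $b$. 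The reservoir $(b,b,b,b)$ supplies (after conjugation via the generating triple) whichever $\tau_b$ is needed, and the three triples $(a^2bab,t_0,s_1)$ absorb the at most three leftover $ababa$-conjugates at the end. You bypassed Theorem~\ref{theorem::Livne S2->almost short} entirely, but that reduction to finitely many representatives is exactly what makes the pairing mechanism work.
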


\begin{theorem}[A modification of Theorem \ref{theorem::Livne mathcalS2}]
\label{theorem::Livne mathcalS2_modification}
Let $g_1,\ldots,g_n\in \PSL(2,\mathbb{Z})$ be conjugates of $a$, $a^2$, $b$, $s_0$, $t_0$ or $ababa$ satisfying $g_1\cdots g_n=1$. Suppose that $m$ of them are conjugates of $ababa$.
Take \begin{equation*}
\mathcal{F}_{13}=(b,b,b,b,a^2bab,t_0,s_1,a^2bab,t_0,s_1,a^2bab,t_0,s_1).
\end{equation*}
Let $\mathcal{F}^L$ be an $\mathcal{F}_{13}$-expansion of conjugates of $s_0$, written as
\begin{equation*}
    (u_{1,1},\ldots,u_{1,k_1},u_{2,1},\ldots,u_{2,k_2},\ldots,u_{13,1},\ldots,u_{13,k_{13}}),
\end{equation*}
such that $u_{i,1}\cdots u_{i,{k_i}}$ is equal to the $i$-th component of $\mathcal{F}_{13}$ for each of $i=1,\ldots,13$.
Then $(g_1,\ldots,g_n)\bullet\mathcal{F}^L$ is Hurwitz equivalent to
\begin{align*}
    (h_1,\ldots,h_{n'})
    &\bullet
    (a^2bab,ba^2ba)^{(m-3+\mu)/2}
    \bullet
    \prod_{i=1}^{3-\mu}
    (u_{3i+2,1},\ldots,u_{3i+2,k_{3i+2}},ba^2ba)\\
    &\bullet
    \prod_{i=3-\mu+1}^{3}
    (u_{3i+2,1},\ldots,u_{3i+2,k_{3i+2}},
    u_{3i+3,1},\ldots,u_{3i+3,k_{3i+3}},
    u_{3i+4,1},\ldots,u_{3i+4,k_{3i+4}})
\end{align*}
where:
\begin{itemize}[-,topsep=0pt,itemsep=-1ex,partopsep=1ex,parsep=1ex]
    \item each component of $(h_1,\ldots,h_{n'})$ is conjugate to one of $a,a^2,b,s_0,t_0$; 
    \item $\mu=3-m$ if $m\le 3$ and $\mu=(m+1) \mod{2}$ otherwise.
\end{itemize}
\end{theorem}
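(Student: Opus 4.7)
The plan is to apply Theorem \ref{theorem::Livne mathcalS2} to a contracted version of the given tuple and then lift the resulting Hurwitz equivalence through the contraction--restoration framework of Subsection \ref{subsection::contractions-restorations}. I initialise the tracked data with $(g_1,\ldots,g_n)\bullet\mathcal{F}^L$ and, for each $i=1,\ldots,13$, perform the contraction that replaces the block $(u_{i,1},\ldots,u_{i,k_i})$ by its product, which by hypothesis is the $i$-th component of $\mathcal{F}_{13}$. The tracked tuple is then $(g_1,\ldots,g_n)\bullet\mathcal{F}_{13}$, and Theorem \ref{theorem::Livne mathcalS2} supplies a finite sequence of elementary transformations producing a normal form of the shape
\[
(h_1,\ldots,h_{n''})\bullet(a^2bab,ba^2ba)^{(m+3-\mu)/2}\bullet(a^2bab,t_0,s_1)^{\mu}.
\]

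Next I track the provenance of the $13$ components of $\mathcal{F}_{13}$ through that sequence: of its three $a^2bab$'s (positions $5,8,11$), exactly $\mu$ remain as the leading component of an intact triple $(a^2bab,t_0,s_1)$, while the other $3-\mu$ pair with conjugates of $ababa$ from $(g_1,\ldots,g_n)$ to give $(a^2bab,ba^2ba)$ blocks; the four $b$'s and the $3-\mu$ corresponding $(t_0,s_1)$ pairs are absorbed into $(h_1,\ldots,h_{n''})$ as conjugates of $b$, $s_0$ and $t_0$. Now I restore the $13$ contractions in reverse order; by Lemma \ref{lemma::technique} the result is Hurwitz equivalent to $(g_1,\ldots,g_n)\bullet\mathcal{F}^L$, each restored single element being replaced at its current position by its block of $u$'s. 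An intact triple thus restores to $(u_{3i+2,\cdot},u_{3i+3,\cdot},u_{3i+4,\cdot})$; a paired $a^2bab$ restores to $(u_{3i+2,\cdot})$ followed by the untouched $ba^2ba$; and each absorbed $b,t_0,s_1$ expands to a block of conjugates of $s_0$ that stays inside the $h$-part. Lemma \ref{lemma::move subtuples} then permits reordering the three triples so that the $3-\mu$ split ones are indexed $i=1,\ldots,3-\mu$ and the $\mu$ intact ones $i=3-\mu+1,\ldots,3$, matching the stated form.

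The main obstacle I expect is that when a contracted single element is moved rightward by an elementary transformation, restoration produces a \emph{conjugate} of the intended expansion rather than the expansion itself. To handle this I plan to apply Lemma \ref{lemma::tuples containing a generating set}: each block $(u_{3i+2,\cdot},ba^2ba)$ and $(u_{3i+2,\cdot},u_{3i+3,\cdot},u_{3i+4,\cdot})$ has product the identity in $\PSL(2,\mathbb Z)$ by direct calculation, namely $a^2bab\cdot ba^2ba=(a^2ba)^2=a^3=1$ and $a^2bab\cdot t_0\cdot s_1=1$ using $a^3=b^2=1$; and the adjacent $(h_1,\ldots,h_{n''})$-part contains both $b$ and $t_0$, whose product $b\cdot t_0=a$ together with $b$ generates $\PSL(2,\mathbb Z)$. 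Hence the lemma lets us conjugate each affected sub-block back to its canonical form by further Hurwitz moves, completing the proof.
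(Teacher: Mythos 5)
There is a genuine gap, and it sits exactly where you flagged your "main obstacle" — but the problem is deeper than the conjugation issue you describe. Theorem \ref{theorem::Livne mathcalS2} is a pure existence statement: it supplies \emph{some} finite sequence of elementary transformations carrying $(g_1,\ldots,g_n)\bullet\mathcal{F}_{13}$ to the normal form, but it asserts nothing about which components of that normal form descend from the thirteen components of $\mathcal{F}_{13}$. Your provenance claims — that exactly $\mu$ of the three $a^2bab$'s survive as heads of intact triples, that the other $3-\mu$ land in $(a^2bab,ba^2ba)$-pairs opposite conjugates of $ababa$ coming from the $g_j$'s, and that the four $b$'s and the split $(t_0,s_1)$'s are absorbed into the $h$-part — are properties of one particular \emph{proof} of Theorem \ref{theorem::Livne mathcalS2}, not of its statement. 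An arbitrary equivalence realising the normal form could, for instance, send the fifth component of $\mathcal{F}_{13}$ into the $h$-part as a conjugate while a conjugate of some $g_j$ supplies the literal $a^2bab$ heading an intact triple; Lemma \ref{lemma::technique} then restores the expansion block in the wrong place. To legitimise the tracking you would have to re-open the proof of Theorem \ref{theorem::Livne mathcalS2} and re-run it with the contraction data attached — which is precisely what the paper avoids. The paper derives the modification directly from Theorem \ref{theorem::Livne mathcalS2_step1}, whose hypotheses were formulated for exactly this purpose: $(v'_1,\ldots,v'_{c'})$ is allowed to be any $(b,b,b,b)$-expanding tuple and $(v_1,\ldots,v_c)$ any tuple containing a generating set, here instantiated as the $\mathcal{F}^L$-expansions. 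In that route the triple-expansions are never contracted and ride along with their values untouched, after which the remaining $3-\mu$ conjugates of $ababa$ in $(h_1,\ldots,h_{n'})$ are paired with the $u_{3i+2}$-blocks one at a time via Lemma \ref{lemma::tuples containing a generating set} (using that each triple-expansion still contains a generating set).

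Your proposed repair of the conjugation defect also fails as stated, on two counts. First, its premise is unjustified: nothing guarantees the $h$-part contains the \emph{literal} elements $b$ and $t_0$ — Theorem \ref{theorem::Livne mathcalS2} only promises components that are \emph{conjugates} of $a$, $a^2$, $b$, $s_0$, $t_0$ — and when $\mu=0$ all three triples are split, so no intact triple-expansion is available as a generating block either. Second, Lemma \ref{lemma::tuples containing a generating set} conjugates only sub-tuples whose product is central. For an intact triple the three restored blocks carry a priori independent accumulated conjugators, and the individual blocks $u_{3i+3,\cdot}$ and $u_{3i+4,\cdot}$ have products $t_0$ and $s_1$, which are not central in $\PSL(2,\mathbb{Z})$; the lemma therefore cannot normalise them block by block, while a single simultaneous conjugation of the whole triple (whose product is $1$) cannot cancel three different conjugators. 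Only your pair-blocks $(u_{3i+2,\cdot},ba^2ba)$, with product $a^2bab\cdot ba^2ba=1$, are genuinely within the lemma's scope. So the correct fix is not post hoc conjugation but the paper's order of operations: never contract the $\mathcal{F}^L$-blocks in the first place.
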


The proof of Theorem \ref{theorem::global monodromy fibre sum Hurwitz equivalent} relies on Theorem \ref{theorem::Livne mathcalS}, Theorem \ref{theorem::Livne mathcalS2} and the above modification of Theorem \ref{theorem::Livne mathcalS2}. We will prove Theorem \ref{theorem::Livne mathcalS}, \ref{theorem::Livne mathcalS2} and \ref{theorem::Livne mathcalS2_modification} in Section \ref{section::Livne}.

\begin{proof}[Proof of Theorem \ref{theorem::global monodromy fibre sum Hurwitz equivalent}]
Suppose that $(h_1,\ldots,h_m)=T_{\mathcal{O}}$ if $f_0$ is as in $\RNum{1})$, or $(h_1,\ldots,h_m)$ is a $T_{\mathcal{O}}$-expansion if $f_0$ is as in $\RNum{2})$, which is a global monodromy of $f_0$. We write it as a concatenation either
\begin{itemize}[-,topsep=0pt,itemsep=-1ex,partopsep=1ex,parsep=1ex]
    \item of all the following tuples, or
    \item of the following tuples labelled $(1)$, $(2)$ and $(4)$.
\end{itemize}
The list of tuples is as follows.
\begin{enumerate}[(1),topsep=0pt,itemsep=-1ex,partopsep=1ex,parsep=1ex]
    %\item $(\phi_1,\ldots,\phi_n)$, which is a global monodromy of $f$.
    \item The tuple $(h_{1,1},\ldots,h_{1,{m_1}})$ is either $(\widecheck{L},\widehat{L},A^2)$ or a $(\widecheck{L},\widehat{L},A^2)$-expansion of conjugates of $L$.
    
    \item The tuple $(h_{2,1},\ldots,h_{2,{m_2}})$ is either $(\widecheck{L},\widehat{L},A^2)$ or a $(\widecheck{L},\widehat{L},A^2)$-expansion of conjugates of $L$, which may be different from $(h_{1,1},\ldots,h_{1,{m_1}})$.
    
    \item The tuple $(h_{3,1},\ldots,h_{3,{m_3}})$ is either $T_{\mathcal{O},1}$ or a $T_{\mathcal{O},1}$-expansion of conjugates of $L$.
    
    \item The tuple $(h_{4,1},\ldots,h_{4,{m_4}})$ is either $T_{\mathcal{O},2}$ or a $T_{\mathcal{O},2}$-expansion of conjugates of $L$.
\end{enumerate}

\textbf{Step 1.} We first show that any global monodromy of a direct sum $f\oplus f_0$ can be transformed into
\begin{equation*}
    (\phi_1,\ldots,\phi_n)\bullet (h_1,\ldots,h_m)
\end{equation*}
where $(\phi_1,\ldots,\phi_n)$ is a global monodromy of $f$.

Given a base point $p$ of $f\oplus f_0$, a global monodromy of the direct sum with respect to $p$ is the concatenation
\begin{equation*}
    (\psi_1^{-1}\phi_1\psi_1,\ldots,\psi_1^{-1}\phi_n\psi_1)\bullet\prod_{i\in \mathcal{I}} (\psi_2^{-1}h_{i,1}\psi_2,\ldots,\psi_2^{-1}h_{i,m_i}\psi_2)
\end{equation*}
with the tuple $(\phi_1,\ldots,\phi_n)$ a global monodromy of $f$, elements $\psi_1$, $\psi_2\in \SL(2,\mathbb{Z})$ and the index set $\mathcal{I}$ either $\{1,2,3,4\}$ or $\{1,2,4\}$.
Each tuple in the concatenation has the product of components equal to $\pm I$ and both of the tuples $(\psi_2^{-1}h_{1,1}\psi_2,\ldots,\psi_2^{-1}h_{1,m_1}\psi_2)$ and $(\psi_2^{-1}h_{2,1}\psi_2,\ldots,\psi_2^{-1}h_{2,m_2}\psi_2)$ contain generating sets.
By Lemma \ref{lemma::tuples containing a generating set}, we can eliminate all the $\psi_1$, $\psi_2$ in the global monodromy using elementary transformations.
Rewrite the resulting tuple as
\begin{equation*}
    (\phi_1,\ldots,\phi_n)\bullet
    (g_1,\ldots,g_l)\bullet
    (h_{3,1},\ldots,h_{3,{m_3}})^{[3\in\mathcal{I}]}\bullet
    (h_{4,1},\ldots,h_{4,{m_4}})
\end{equation*}
where $[3\in\mathcal{I}]=1$ if $3\in\mathcal{I}$ and $[3\in\mathcal{I}]=0$ if $3\not\in\mathcal{I}$,
such that $(g_1,\ldots,g_l)$ is either $T_{\mathcal{O},0}$ or a $T_{\mathcal{O},0}$-expansion of conjugates of $L$.

\textbf{Step 2.} We show that the above resulting tuple is Hurwitz equivalent to
\begin{equation*}
    (\varphi_1,\ldots,\varphi_{n'})
    \bullet (g_1,\ldots,g_l)
    \bullet (h_{3,1},\ldots,h_{3,{m_3}})^{[3\in\mathcal{I}]}
    \bullet (h'_{4,1},\ldots,h'_{4,{m_4}})
\end{equation*}
such that
\begin{itemize}[-,topsep=0pt,itemsep=-1ex,partopsep=1ex,parsep=1ex]
    \item $(\varphi_1,\ldots,\varphi_{n'})$ is a tuple of elements in simple conjugacy classes such that $\varphi_1\cdots\varphi_{n'}=\pm I$;
    \item $(h'_{4,1},\ldots,h'_{4,{m_4}})$ depends only on $(h_{4,1},\ldots,h_{4,{m_4}})$ and components of $(\phi_1,\ldots,\phi_n)$ in non-simple conjugacy classes.
\end{itemize}
If $(h_{4,1},\ldots,h_{4,{m_4}})$ is a $T_{\mathcal{O},2}$-expansion, then using contractions on $(h_{4,1},\ldots,h_{4,{m_4}})$ as in Subsection \ref{subsection::contractions-restorations} we replace $(h_{4,1},\ldots,h_{4,{m_4}})$ with $T_{\mathcal{O},2}$.
The definition of $T_{\mathcal{O},2}$ states that it is the concatenation of several sub-tuples. These sub-tuples are in one-to-one correspondence with the singular fibres of $f$ whose fibre monodromies belong to non-simple conjugacy classes and they are further in one-to-one correspondence with the components of $(\phi_1,\ldots,\phi_n)$ excluding those of trace $0$, $\pm 1$, $\pm 3$ or conjugate to $\pm L$, $\pm R$.

Suppose that there exists an $(r+1)$-sub-tuple of the form $(L,\ldots,L,L^{-r})$ in $T_{\mathcal{O},2}$ with $r\ge 2$. We take the corresponding component, say $\phi_i$, which is equal to $\epsilon h^{-1}L^rh$ with $\epsilon=\pm 1$ and $h\in \SL(2,\mathbb{Z})$.
Since $(g_1,\ldots,g_l)$ contains a generating set, by Lemma \ref{lemma::tuples containing a generating set}, we replace the $(r+1)$-sub-tuple with
\begin{equation*}
    (h^{-1}Lh,\ldots,h^{-1}Lh,h^{-1}L^{-r}h).
\end{equation*}
By Lemma \ref{lemma::move subtuples}, we further replace $\phi_i$ with $(h^{-1}Lh,\ldots,h^{-1}Lh)$ and replace the above $(r+1)$-sub-tuple with $(\epsilon h^{-1}L^rh,h^{-1}L^{-r}h)$.
Again by Lemma \ref{lemma::tuples containing a generating set}, the pair $(\epsilon h^{-1}L^rh,h^{-1}L^{-r}h)$ can be transformed into $(\epsilon L^r,L^r)$.

We have similar arguments for sub-tuples of the form $(R,\ldots,R,R^{-r})$ or of the form
\begin{equation*}
    (R,\ldots,R,L,\ldots,L,\ldots,R,\ldots,R,L,\ldots,L,(R\ldots RL\ldots L\ldots R\ldots RL\ldots L)^{-1}) 
\end{equation*}
as in Definition \ref{definition::additional tuple}.
For the restoration, according to each component in $T_{\mathcal{O},2}$, we rewrite the corresponding component as a sub-tuple.
Notice that if such a component belongs to some simple conjugacy class,
then it is replaced by a sub-tuple of conjugates of $L$. Hence the resulting tuple is as desired.

We will not modify $(h'_{4,1},\ldots,h'_{4,{m_4}})$ anymore.

\textbf{Step 3.}
Suppose that $n_{+}$ components of $(\varphi_1,\ldots,\varphi_{n'})$ are of trace $3$ and $n_{-}$ components of $(\varphi_1,\ldots,\varphi_{n'})$ are of trace $-3$.
If $n_{+}+n_{-}=0$, then take $(\varphi'_1,\ldots,\varphi'_{n''})=(\varphi_1,\ldots,\varphi_{n'})$ and skip the step.
Otherwise, $[3\in\mathcal{I}]=1$.
We further show that, by elementary transformations, $(\varphi_1,\ldots,\varphi_{n'})\bullet
(g_1,\ldots,g_l) \bullet
(h_{3,1},\ldots,h_{3,{m_3}})$ can be transformed into
\begin{equation*}
    (\varphi'_1,\ldots,\varphi'_{n''})\bullet (g_1,\ldots,g_l) \bullet (\bar{h}_1,\ldots,\bar{h}_{\bar{m}})
\end{equation*}
such that
\begin{itemize}[-,topsep=0pt,itemsep=-1ex,partopsep=1ex,parsep=1ex]
    \item $(\varphi'_1,\ldots,\varphi'_{n''})$ is a tuple of elements either of trace $0,\pm 1$ or conjugate to $\pm L$ or $\pm R$,
    \item $(\bar{h}_1,\ldots,\bar{h}_{\bar{m}})$ depends only on $n_+$, $n_-$ and $(h_{3,1},\ldots,h_{3,{m_3}})$.
\end{itemize}
If $(h_{3,1},\ldots,h_{3,{m_3}})$ is an expansion of $T_{\mathcal{O},1}$, then using contractions on $(h_{3,1},\ldots,h_{3,{m_3}})$ as in Subsection \ref{subsection::contractions-restorations} we replace it with $T_{\mathcal{O},1}$.
By applying Theorem \ref{theorem::Livne mathcalS2} to $(\iota(\varphi_1),\ldots,\iota(\varphi_{n'}))\bullet\mathcal{F}_{13}$, the tuple is transformed into
\[
(\varphi'_1,\ldots,\varphi'_{n''})\bullet (g_1,\ldots,g_l) \bullet \prod_{i=1}^{k} (\psi_{i,0},\psi_{i,1}) \bullet (-A^2BAB,BA,-ABA)^{\mu}
\]
such that each of $\iota(\varphi'_i)$, $i=1,\ldots,n''$ is conjugate to $a$, $a^2$, $b$, $s_0$ or $t_0$, $\iota(\psi_{i,0})=a^2bab$, $\iota(\psi_{i,1})=ba^2ba$ and $\mu\le 3$.
The number $\mu$ is determined by $n_++n_-$, which further separates the cases.

Then, the restoration operations apply on the tuple.
Some components of $(\varphi'_1,\ldots,\varphi'_{n''})$ are replaced by sub-tuples of elements conjugate to $L$,
while keeping each component conjugate to some preimage of $a$, $a^2$, $b$, $s_0$ or $t_0$.
The remaining components that might be modified by the restoration are exactly the components of the last $\mu$ sub-triples and the last $3-\mu$ components denoted by $\psi_{i,0}$.
By Theorem \ref{theorem::Livne mathcalS2_modification} they are replaced by certain sub-tuples of $(h_{3,1},\ldots,h_{3,{m_3}})$.

The remaining components of $\prod_{i=1}^{k} (\psi_{i,0},\psi_{i,1})$ are of trace $\pm 3$ and they are either $\pm A^2BAB$ or $\pm BA^2BA$. To restrict their dependencies only on $n_+$ and $n_-$, we have to show that their signs can be rearranged to certain positions, but this follows from Proposition \ref{proposition::swappability_along_iota}.

\iffalse
Since $(g_1,\ldots,g_l)$ contains a generating set,
Lemma \ref{lemma::switch places of components with the same value} and the following substitutions realise what we desired:
\begin{align*}
    &(\epsilon_1 A^2BAB,\epsilon_2 BA^2BA)
    \bullet (g_1,\ldots,g_l)
    \longrightarrow
    (\epsilon_1 BA^2BA,\epsilon_2 B^2A^2BAB^{-1})
    \bullet (g_1,\ldots,g_l)\\
    &=
    (\epsilon_1 BA^2BA,\epsilon_2 A^2BAB)
    \bullet (g_1,\ldots,g_l)
    \longrightarrow
    (\epsilon_2 A^2BAB,\epsilon_1 BA^2BA)
    \bullet (g_1,\ldots,g_l).
\end{align*}
\fi

\textbf{Step 4.}
We conclude the proof of Theorem \ref{theorem::global monodromy fibre sum Hurwitz equivalent} by showing that $(\varphi'_1,\ldots,\varphi'_{n''})\bullet (g_1,\ldots,g_l)$ is Hurwitz equivalent to $(\varphi''_1,\ldots,\varphi''_{n''})\bullet (g_1,\ldots,g_l)$ such that $(\varphi''_1,\ldots,\varphi''_{n''})$ depends only on the multi-set $\mathcal{O}$.

Applying Theorem \ref{theorem::Livne mathcalS} to
$(\iota(\varphi'_1),\ldots,\iota(\varphi'_{n''}))$, we transform the tuple $(\varphi'_1,\ldots,\varphi'_{n''})$ into a new tuple, denoted by $(\varphi''_1,\ldots,\varphi''_{n''})$.
For the first case in Theorem \ref{theorem::Livne mathcalS}, as $(g_1,\ldots,g_l)$ contains a generating set, applying Lemma \ref{lemma::tuples containing a generating set} we further transform the concatenation into a resulting tuple, denoted by $(\varphi''_1,\ldots,\varphi''_{n''})\bullet (g_1,\ldots,g_l)$, satisfying
\begin{equation*}
    (\iota(\varphi''_1),\ldots,\iota(\varphi''_{n''})) =
    (s_0,t_0)^{\mu_1}\bullet (a,a^2)^{\mu_2}\bullet (b,b)^{\mu_3}\bullet (a,a,a)^{\mu_4}\bullet (a^2,a^2,a^2)^{\mu_5}
\end{equation*}
with $\mu_1,\ldots,\mu_5$ determined by $\mathcal{O}$.
The theorem follows from Proposition \ref{proposition::swappability_along_iota}.
%The theorem follows from Lemma \ref{lemma::swap components with the same projection in PSL(2,Z)}.
\end{proof}

\begin{remark}
Alternatively, instead of using Proposition \ref{proposition::swappability_along_iota}, one may apply the substitutions 
\begin{align*}
    &(\epsilon_1 A^2BAB,\epsilon_2 BA^2BA)
    \bullet (g_1,\ldots,g_l)
    \longrightarrow
    (\epsilon_1 BA^2BA,\epsilon_2 B^2A^2BAB^{-1})
    \bullet (g_1,\ldots,g_l)\\
    &=
    (\epsilon_1 BA^2BA,\epsilon_2 A^2BAB)
    \bullet (g_1,\ldots,g_l)
    \longrightarrow
    (\epsilon_2 A^2BAB,\epsilon_1 BA^2BA)
    \bullet (g_1,\ldots,g_l).
\end{align*}
at the end of Step 3 and
\begin{align*}
    (\tau_1 A^2B, \tau_2 BA^2, \tau_3 A^2B)
    \longleftrightarrow
    (\tau_2 BA^2, \tau_1 ABA, \tau_3 A^2B)
    \longleftrightarrow
    (\tau_2 BA^2, \tau_3 A^2B, \tau_1 BA^2).
\end{align*}
at the end of Step 4, where $\epsilon_1$, $\epsilon_2$, $\tau_1$, $\tau_2$, $\tau_3\in\{-I,+I\}$ are arbitrary.
They appeared in an earlier version of this paper.
\end{remark}

We end with the proof of Corollary \ref{corollary::elliptic surfaces}.
\begin{proof}[Proof of Corollary \ref{corollary::elliptic surfaces}]
The fibre monodromy of a singular fibre distinguishes the type in the Kodaira classification. The corollary follows from Theorem \ref{thmx::B}.
\end{proof}

\section{Swappability and local models}

\subsection{Swappability 
of the normal form}
\label{subsection::swappability}

This subsection introduces the notion of \emph{swappability} for a tuple in $\SL(2,\mathbb Z)$ with a stabilisation.

\begin{definition}
Let $(\phi_1,\ldots,\phi_n)$ and $(g_1,\ldots,g_l)$ be tuples in $\SL(2,\mathbb Z)$.
Suppose that, for any abelian group $G$ and tuples $(\epsilon_1,\ldots,\epsilon_n)$, $(\epsilon'_1,\ldots,\epsilon'_n)$, $(\sigma_1,\dots,\sigma_l)$ in $G$ such that the following multi-sets of conjugacy classes in $G\times \SL(2,\mathbb Z)$ coincide:
\begin{equation*}
[Cl\big((\epsilon_1,\phi_1)\big),\ldots,Cl\big((\epsilon_n,\phi_n)\big)]
=
[Cl\big((\epsilon'_1,\phi_1)\big),\ldots,Cl\big((\epsilon'_n,\phi_n)\big)],
\end{equation*}
we have
\begin{equation*}
\big((\sigma_1,g_1),\ldots,(\sigma_l,g_l)\big)
\bullet
\big((\epsilon_1,\phi_i),\ldots,(\epsilon_n,\phi_n)\big)
\sim
\big((\sigma_1,g_1),\ldots,(\sigma_l,g_l)\big)
\bullet
\big((\epsilon'_1,\phi_i),\ldots,(\epsilon'_n,\phi_n)\big).
\end{equation*}
In this case,
we say that $(\phi_1,\ldots,\phi_n)$ is
$(g_1,\ldots,g_l)$-stabilised \emph{swappable}.
\end{definition}

\begin{remark}
The normal form given in Theorem \ref{theorem::global monodromy fibre sum Hurwitz equivalent}
is an example of swappable tuples,
which is guaranteed by Proposition \ref{proposition::swappability}.
\end{remark}

\begin{proposition}
\label{proposition::swappability}
Let $\prod_i (\phi_{i,1},\ldots, \phi_{i,n_{i}})$
be a tuple in $\SL(2,\mathbb Z)$ such that
$\phi_{i,1}\cdots \phi_{i,n_i}=\pm I$ for each $i$ and each 
$(\phi_{i,1},\ldots,\phi_{i,n_i})$
is either
\begin{itemize}[-,topsep=0pt,itemsep=-1ex,partopsep=1ex,parsep=1ex]
\item a tuple of the form $(X,Y)$ with $XY=\pm I$, or
\item a tuple of $\pm A$, $\pm A^2$, $\pm B$, $\pm \widecheck{L}$, $\pm L$, $\pm \widehat{L}$, $\pm \widecheck{L}^{-1}$, $\pm L^{-1}$, $\pm \widehat{L}^{-1}$, except for at most $1$ component.
\end{itemize}
Let $(g_1,\ldots,g_l)$ be a tuple in $\SL(2,\mathbb Z)$ either equal to $T_{\mathcal{O},0}$ or a $T_{\mathcal{O},0}$ expansion.
Then the tuple $\prod_i (\phi_{i,1},\ldots, \phi_{i,n_{i}})$ is $(g_1,\ldots,g_l)$-stabilised swappable.
\end{proposition}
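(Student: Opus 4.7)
The approach exploits the commutativity of $G$. A direct computation shows that because $G$ is abelian, an elementary transformation $R_i$ on a tuple in $G\times\SL(2,\mathbb Z)$ acts on the $G$-coordinates as the pure transposition of positions $i$ and $i+1$, while acting on the $\SL(2,\mathbb Z)$-coordinates via the usual Hurwitz rule. Therefore the two tuples compared in the statement are Hurwitz equivalent if and only if there is a sequence of elementary transformations whose net action fixes the ordered $\SL(2,\mathbb Z)$-tuple $(g_1,\ldots,g_l,\phi_1,\ldots,\phi_n)$ pointwise, and whose underlying permutation of positions realises the label permutation carrying $(\sigma_i,\epsilon_j)$ to $(\sigma_i,\epsilon'_j)$. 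The multiset hypothesis guarantees that this permutation is a product of transpositions $(p,q)$ with $\phi_p$ and $\phi_q$ conjugate in $\SL(2,\mathbb Z)$, so it suffices to realise each such elementary transposition.

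Given a pair $(p,q)$ with $\phi_p$ and $\phi_q$ conjugate, I construct the required sequence as a conjugation $\Sigma_0^{-1}\cdot s\cdot \Sigma_0$. Here $\Sigma_0$ is an auxiliary Hurwitz sequence bringing $\phi_p$ and $\phi_q$ to two adjacent positions with \emph{equal} matrix values, and $s$ is the adjacent swap of Lemma \ref{lemma::switch places of components with the same value}, which preserves the $\SL(2,\mathbb Z)$-part pointwise precisely because the two adjacent matrices coincide. The composite restores the $\SL(2,\mathbb Z)$-tuple exactly, while realising the transposition $(p,q)$ on the $G$-labels.

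The auxiliary $\Sigma_0$ is built in two stages. First, using Lemma \ref{lemma::move subtuples} (each sub-tuple of the normal form has product $\pm I\in Z(\SL(2,\mathbb Z))$) together with Lemma \ref{lemma::cyclic shift}, I rearrange the sub-tuples so that those containing positions $p$ and $q$ are adjacent. Second, I equalise $\phi_p$ and $\phi_q$ via Lemma \ref{lemma::tuples containing a generating set}: the stabilisation $T_{\mathcal O,0}=(\widecheck L,\widehat L,A^2,\widecheck L,\widehat L,A^2)$ contains a generating set of $\SL(2,\mathbb Z)$ via the identities $\widecheck L\widehat L=A$ and $\widecheck L\widehat L\widecheck L=B$ from Lemma \ref{lemma::contruct_lefschetz_fibrations}; when $(g_1,\ldots,g_l)$ is an expansion of $T_{\mathcal O,0}$, the contraction--restoration procedure of Subsection \ref{subsection::contractions-restorations} reduces to this case. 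For the case in which both positions lie inside a single short-element sub-tuple and hold distinct matrix representatives of the common conjugacy class, direct computations such as $R_1(\widecheck L,\widehat L,A^2)=(\widehat L,L,A^2)$ together with cyclic permutations morph between the representatives $\widecheck L$, $L$, $\widehat L$ of the trace-two class inside a short-element sub-tuple, with analogous identities for $\pm A$, $\pm A^2$ and the other conjugacy classes.

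The main obstacle is this last configuration: the equalisation inside a single short-element sub-tuple must be performed by internal Hurwitz moves that do not disturb the at-most-one exceptional component permitted in such a sub-tuple. This amounts to a finite case analysis on the possible local patterns of short elements, in each of which one exhibits an explicit sequence of internal Hurwitz identities carrying the two components to a common matrix, and then reverses the sequence after the label swap. The remaining configurations (positions in different sub-tuples, or in the same pair-type sub-tuple $(X,Y)$ with $XY=\pm I$) reduce directly to Lemma \ref{lemma::tuples containing a generating set} combined with Lemma \ref{lemma::switch places of components with the same value}, since pair-type sub-tuples have product in the centre and can therefore be conjugated by any prescribed element of $\SL(2,\mathbb Z)$.
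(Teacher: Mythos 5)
There is a genuine gap, and it sits exactly at the configuration you yourself flag as ``the main obstacle''. Your plan for two distinct conjugate components lying in the same short-element sub-tuple is to equalise them by \emph{internal} Hurwitz moves and then invoke Lemma \ref{lemma::switch places of components with the same value}; but internal equalisation is not merely unproven, it is impossible in the decisive case. Elementary transformations preserve both the product of a sub-tuple and the multiset of conjugacy classes of its components. Take the sub-tuple $(\widecheck{L},\widehat{L},A^2)$, which satisfies $\widecheck{L}\widehat{L}A^2=-I$ and is of the permitted second type: if internal moves produced $(\phi,\phi,\psi)$ with the first two components equal, then $\phi$ would be conjugate to $L$ and $\psi=-\phi^{-2}$ would have trace $-2$, whereas $\psi$ must remain conjugate to $A^2$ of trace $-1$ --- a contradiction. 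So no sequence of moves supported inside that sub-tuple ever makes the two $L$-class components equal, and your identity $R_1(\widecheck{L},\widehat{L},A^2)=(\widehat{L},L,A^2)$, while correct, only cycles through distinct representatives and can never close the loop. The ``finite case analysis'' you defer therefore cannot be carried out as described.

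The paper's proof has the same overall architecture as yours (reduce to label transpositions on conjugate components; Lemma \ref{lemma::switch places of components with the same value} when the matrices are equal; conjugation of a whole sub-tuple via Lemma \ref{lemma::tuples containing a generating set} when the two positions lie in different sub-tuples; the commuting-pair trick for sub-tuples $(X,Y)$ with $XY=\pm I$), but in the same-sub-tuple case it does something your sketch is missing: it uses the stabiliser block itself as a parking slot. Concretely, one conjugates the entire sub-tuple by $Q_i$ so that $\phi_i$ becomes literally equal to the stabiliser entry $\widecheck{L}$, swaps $(\epsilon_i,\phi_i)$ into the stabiliser via Lemma \ref{lemma::switch places of components with the same value}, re-conjugates the sub-tuple so that $\phi_j$ matches the parked value, swaps the labels, and unwinds; for components conjugate to $\pm L^{-1}$ one first contracts $(\widehat{L},A^2)$, whose product is $-\widecheck{L}^{-1}$, to manufacture the parking entry, using the procedure of Subsection \ref{subsection::contractions-restorations}. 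Your conjugation skeleton $\Sigma_0^{-1}\cdot s\cdot\Sigma_0$ and your reduction of the multiset hypothesis to transpositions within $\SL(2,\mathbb Z)$-conjugacy classes are both sound; to repair the proof, replace the internal-equalisation stage by this parking mechanism (equalise $\phi_p$ with a stabiliser component rather than with $\phi_q$ directly), which is precisely why the proposition stabilises by $T_{\mathcal{O},0}$ rather than by an arbitrary generating tuple.
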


\begin{proof}
We need only prove the proposition for the case $(g_1,\ldots,g_l)=T_{\mathcal{O},0}$.

Set $(\phi_1,\ldots,\phi_n)=\prod_i (\phi_{i,1},\ldots, \phi_{i,n_{i}})$
and consider the tuple
\begin{equation*}
\big((\sigma_1,g_1),\ldots,(\sigma_l,g_l)\big)
\bullet
\big((\epsilon_1,\phi_i),\ldots,(\epsilon_n,\phi_n)\big)
\end{equation*}
in $G\times \SL(2,\mathbb Z)$.
It suffices to show that, for any two components $(\epsilon_i,\phi_i)$ and $(\epsilon_j,\phi_j)$ such that $\phi_i$ is conjugate to $\phi_j$, one can interchange $\epsilon_i$ and $\epsilon_j$
using elementary transformations.

When $\phi_i=\phi_j$, the swapping follows from Lemma \ref{lemma::switch places of components with the same value}.

When $\phi_i\neq \phi_j$ but $\phi_i$, $\phi_j$ belong to different sub-tuples, using Lemma \ref{lemma::tuples containing a generating set} for the sub-tuple containing $(\epsilon_i,\phi_i)$, we transform the component $(\epsilon_i,\phi_i)$ into $(\epsilon_i,\phi_j)$.
After swapping $(\epsilon_i,\phi_j)$ and $(\epsilon_j,\phi_j)$, we apply Lemma \ref{lemma::tuples containing a generating set} again to make other components unchanged.

When $\phi_1\neq \phi_j$ and $\phi_i$, $\phi_j$ belong to a sub-tuple not of the form $(X,Y)$ with $XY=\pm I$,
we must have $\phi_i$ conjugate to one of $L$, $-L$, $L^{-1}$ and $-L^{-1}$.
Recall
$T_{\mathcal{O},0}=(\widecheck{L}, \widehat{L}, A^2)\bullet (\widecheck{L}, \widehat{L}, A^2)$
and notice that the (first) $(\widecheck{L}, \widehat{L}, A^2)$ contains generating sets.
If $\phi_i$ and $\phi_j$ are conjugate to $\pm L$,
then take $Q_i$ and $Q_j$ be such that
$Q_i^{-1}\phi_i Q_i=\pm L$ and $Q_j^{-1}\phi_j Q_j=\pm L$, therefore
$Q_j Q_i^{-1} \phi_i Q_i Q_j^{-1} = \phi_j$
and
$Q_i Q_j^{-1} \phi_j Q_j Q_i^{-1} = \phi_i$.
Using Lemma \ref{lemma::tuples containing a generating set} and Lemma \ref{lemma::switch places of components with the same value}, we have the following substitutions
\begin{align*}
&
(\ldots, (\epsilon_i,\phi_i), \ldots, (\epsilon_j,\phi_j), \ldots)
\bullet
((\delta_1,\widecheck{L}), (\delta_2,\widehat{L}), \ldots, (\delta_6,A^2))
\\
\longrightarrow&
(\ldots, (\epsilon_i,Q_i^{-1}\phi_iQ_i), \ldots, (\epsilon_j,Q_i^{-1}\phi_jQ_i), \ldots)
\bullet
((\delta_1,\widecheck{L}), (\delta_2,\widehat{L}), \ldots, (\delta_6,A^2))
\\
\longrightarrow&
(\ldots, (\delta_1,\widecheck{L}), \ldots, (\epsilon_j,Q_i^{-1}\phi_jQ_i), \ldots)
\bullet
((\epsilon_i,Q_i^{-1}\phi_iQ_i), (\delta_2,\widehat{L}), \ldots, (\delta_6,A^2))
\\
\longrightarrow&
(\ldots, (\delta_1,Q_i\widecheck{L}Q_i^{-1}), \ldots, (\epsilon_j,\phi_j), \ldots)
\bullet
((\epsilon_i,Q_i^{-1}\phi_iQ_i), (\delta_2,\widehat{L}), \ldots, (\delta_6,A^2))
\\
\longrightarrow&
(\ldots, (\delta_1,Q_j^{-1}Q_i\widecheck{L}Q_i^{-1}Q_j), \ldots, (\epsilon_j,Q_j^{-1}\phi_jQ_j), \ldots)
\bullet
((\epsilon_i,Q_i^{-1}\phi_iQ_i), (\delta_2,\widehat{L}), \ldots, (\delta_6,A^2))
\\
\longrightarrow&
(\ldots, (\delta_1,Q_j^{-1}Q_i\widecheck{L}Q_i^{-1}Q_j), \ldots, (\epsilon_i,Q_i^{-1}\phi_iQ_i), \ldots)
\bullet
((\epsilon_j,Q_j^{-1}\phi_jQ_j), (\delta_2,\widehat{L}), \ldots, (\delta_6,A^2))
\\
\longrightarrow&
(\ldots, (\delta_1,Q_i\widecheck{L}Q_i^{-1}), \ldots, (\epsilon_i,Q_jQ_i^{-1}\phi_iQ_iQ_j^{-1}), \ldots)
\bullet
((\epsilon_j,Q_j^{-1}\phi_jQ_j), (\delta_2,\widehat{L}), \ldots, (\delta_6,A^2))
\\
\longrightarrow&
(\ldots, (\delta_1,\widecheck{L}), \ldots, (\epsilon_i,Q_i^{-1}Q_jQ_i^{-1}\phi_iQ_iQ_j^{-1}Q_i), \ldots)
\bullet
((\epsilon_j,Q_j^{-1}\phi_jQ_j), (\delta_2,\widehat{L}), \ldots, (\delta_6,A^2))
\\
\longrightarrow&
(\ldots, (\epsilon_j,Q_j^{-1}\phi_jQ_j), \ldots, (\epsilon_i,Q_i^{-1}Q_jQ_i^{-1}\phi_iQ_iQ_j^{-1}Q_i), \ldots)
\bullet
((\delta_1,\widecheck{L}), (\delta_2,\widehat{L}), \ldots, (\delta_6,A^2))
\\
\longrightarrow&
(\ldots, (\epsilon_j,Q_iQ_j^{-1}\phi_jQ_jQ_i^{-1}), \ldots, (\epsilon_i,Q_jQ_i^{-1}\phi_iQ_iQ_j^{-1}), \ldots)
\bullet
((\delta_1,\widecheck{L}), (\delta_2,\widehat{L}), \ldots, (\delta_6,A^2))
\\
=&
(\ldots, (\epsilon_j,\phi_i), \ldots, (\epsilon_i,\phi_j), \ldots)
\bullet
((\delta_1,\widecheck{L}), (\delta_2,\widehat{L}), \ldots, (\delta_6,A^2)).
\end{align*}
If $\phi_i$ and $\phi_j$ are conjugate to $\pm L^{-1}$, then using the contraction on $((\delta_2,\widehat{L}),(\delta_3,A^2))$ we have a similar sequence of substitutions.

When $\phi_i\neq \phi_j$ and $\phi_i$, $\phi_j$ form a sub-tuple of the form $(X,Y)$ with $XY=\pm I$,
there exists $Q\in \SL(2,\mathbb Z)$ such that $Q^{-1} \phi_i Q = \phi_j$ and therefore $Q^{-1} \phi_j Q = \phi_i$.
By Lemma \ref{lemma::tuples containing a generating set}, the sub-tuple $((\epsilon_i,\phi_i),(\epsilon_j,\phi_j))$ can be transformed into $((\epsilon_i,Q^{-1}\phi_iQ ),(\epsilon_j,Q^{-1}\phi_jQ))$.
\end{proof}

As a consequence, we prove Proposition \ref{proposition::swappability_along_iota}.

\begin{proof}[Proof of Proposition \ref{proposition::swappability_along_iota}]
Let $G=\{1, -1\}$ be the group under multiplication.
We define
\begin{equation*}
\SL(2,\mathbb Z) \ni \phi \mapsto
\sharp(\phi)=
(\epsilon,\psi) \in G\times \SL(2,\mathbb Z)
\end{equation*}
such that $\trace(\psi)\ge 0$, $\epsilon=\sgn(\trace(\phi))$ if $\trace(\phi)\neq 0$ and $\epsilon=1$ otherwise.
This map is well-defined, injective and conjugacy-preserving, but not a group homomorphism.

Consider the tuple
\begin{equation*}
\prod_j 
((\epsilon_{j,1}^{(1)},\psi_{j,1}^{(1)}),\ldots,(\epsilon_{j,n_j}^{(1)},\psi_{j,n_j}^{(1)}))
=
\prod_j (\sharp(\phi_{j,1}^{(1)}),\ldots,\sharp(\phi_{j,n_j}^{(1)}))
\end{equation*}
and the tuple
\begin{equation*}
\prod_j 
((\epsilon_{j,1}^{(2)},\psi_{j,1}^{(2)}),\ldots,(\epsilon_{j,n_j}^{(2)},\psi_{j,n_j}^{(2)}))
=
\prod_j (\sharp(\phi_{j,1}^{(2)}),\ldots,\sharp(\phi_{j,n_j}^{(2)}))
\end{equation*}
in $\times \SL(2,\mathbb Z)$.
Their components present the same conjugacy classes counted with multiplicity and $\psi_{j,k}^{(1)}=\psi_{j,k}^{(2)}$ for all $j$ and $k$.
Besides, each sub-tuple of $\prod_j 
(\psi_{j,1}^{(1)},\ldots,\psi_{j,n_j}^{(1)})$
is either a tuple of the form $(X,Y)$ with $XY=\pm I$ or a tuple of $A$, $-A^2$, $\pm B$, $\widecheck{L}$, $L$, $\widehat{L}$, $\widecheck{L}^{-1}$, $L^{-1}$ and $\widehat{L}^{-1}$.
This proposition follows from Proposition \ref{proposition::swappability}.
\end{proof}
\subsection{Fibre-preserving homeomorphisms: from local to global}

This subsection investigates fibre-preserving homeomorphisms between torus fibrations.
We start with the following definitions.
%first provide an additional restriction of singular fibres and introduce the local models.

\begin{definition}
Suppose that $f:M\rightarrow S^2$ is a torus fibration and $p_j\in S^2$ is a branch point.
The singular fibre $f^{-1}(p_j)$ may be \emph{locally symmetric} in the following sense.
Let $U\subset S^2$ be any sufficiently small neighbourhood of $p_j$ and $p\in\partial U$ be an arbitrary point.
Identifying $f^{-1}(p)$ with $\mathbb{T}^2$,
we use $\phi_j\in \Mod(\mathbb{T}^2)$ to denote the monodromy along $\partial U$ at $p$.
Let $\psi\in \Mod(\mathbb{T}^2)$ be an arbitrary mapping class class such that
$\psi \phi_j=\phi_j \psi$.
We suppose that there exists a (self-)homeomorphism
$\Psi_M:f^{-1}(U)\rightarrow f^{-1}(U)$
such that $f\circ \Psi_M = f$
and $\Psi_M\mid_{f^{-1}(p)}$ represents $\psi$.
In this case, we say that the singular fibre $f^{-1}(p)$ is \emph{locally symmetric}.
\end{definition}

In particular, all singular fibres of a torus Lefschetz fibration are locally symmetric.

\begin{definition}
Suppose that $f_1:M_1\rightarrow S^2$ and $f_2:M_2\rightarrow S^2$ are torus fibrations with branch sets $\mathcal{B}_1=\{p_j^{(1)}\}$ and $\mathcal{B}_2=\{p_j^{(2)}\}$.
We say that the singular fibres $f_1^{-1}(p_j^{(1)})$ and $f_2^{-1}(p_j^{(2)})$ are \emph{locally fibre-preserving homeomorphic}
if, for any sufficiently small neighbourhood $U_j^{(1)}$ of $p_j^{(1)}$, there exist homeomorphisms
$\Psi_{S,j}:U_j^{(1)}\rightarrow S^2$ and
$\Psi_{M,j}:f_1^{-1}(U_j^{(1)})\rightarrow f_2^{-1}(\Psi_{S,j}(U_j^{(1)}))$
such that $f_2\circ \Psi_{M,j}=\Psi_{S,j}\circ f_1$.
We further say that $(M_1,f_1)$ and $(M_2,f_2)$ are \emph{fibre-preserving homeomorphic}
if there exist homeomorphisms $\Psi_S:S^2\rightarrow S^2$ and
$\Psi_M:M_1\rightarrow M_2$
such that $f_2\circ \Psi_M=\Psi_S\circ f_1$.
\end{definition}

\begin{definition}
A locally fibre-preserving homeomorphism
$(\Psi_{S,j},\Psi_{M,j})$ as above
may be \emph{compatible} with given global monodromies $(\phi_1^{(1)},\ldots,\phi_n^{(1)})$ of $f_1$ and $(\phi_1^{(2)},\ldots,\phi_n^{(2)})$ of $f_2$, in the following sense.
Recall that the global monodromy is determined by a base point $p^{(i)}$ and a collection of loops $\gamma_1^{(i)},\ldots,\gamma_n^{(i)}$ based at $p^{(i)}$ such that $\gamma_j^{(i)}$ is exactly the boundary of a neighbourhood of $p_j^{(i)}$, say $\gamma_j^{(i)}=\partial D_j^{(i)}$, for $i=1,2$.
Without loss of generality, assume that $U_j^{(1)}\subset D_j^{(1)}$
and $\Psi_{S,j}(U_j^{(1)})\subset D_j^{(2)}$.
Let $\beta_j^{(i)}$ be an arbitrary path in $D_j^{(i)}$
joining $p^{(i)}$ to some point on $\partial D_j^{(i)}$, for $i=1,2$.
The locally fibre-preserving homeomorphism $(\Psi_{M,j},\Psi_{S,j})$ is pushed forward
to a homeomorphism $\psi\in \Homeo(\mathbb{T}^2)$ between the generic fibres at base points.
We say that $(\Psi_{M,j},\Psi_{S,j})$ is \emph{compatible} with the global monodromies if $[\psi]\phi_j^{(1)}=\phi_j^{(2)}[\psi]$.
\end{definition}

The compatibility does not depend on the choice of $\beta_j^{(i)}$, for $i=1,2$.
Indeed, for a different choice of $(\beta_j^{(1)}, \beta_j^{(2)})$,
then $[\psi]$ is replaced by
$(\phi_j^{(2)})^{k_2} [\psi] (\phi_j^{(1)})^{k_1}$
for some $k_1,k_2\in \mathbb{Z}$.
It is easy to check that
\begin{equation*}
\Big( (\phi_j^{(2)})^{k_2} [\psi] (\phi_j^{(1)})^{k_1} \Big)
\phi_j^{(1)} =
\phi_j^{(2)} \Big( (\phi_j^{(2)})^{k_2} [\psi] (\phi_j^{(1)})^{k_1} \Big).
\end{equation*}
Besides, the compatibility does not depend on the choice of global monodromies.
Indeed, a different pair of global monodromies replaces $\phi_j^{(1)}$ and $\phi_j^{(2)}$ with
$Q_1^{-1}\phi_j^{(1)}Q_1$
and
$Q_2^{-1}\phi_j^{(2)}Q_2$, respectively.
The set of all possibilities for $[\psi]$ is
\begin{equation*}
\left\{
\Big(Q_2^{-1} \phi_j^{(2)} Q_2\Big)^{k_2}
Q_2^{-1}[\psi] Q_1
\Big(Q_1^{-1} \phi_j^{(1)} Q_1\Big)^{k_1}
\middle\vert
k_1,k_2\in \mathbb{Z}
\right\}.
\end{equation*}
It is easy to check that
\begin{align*}
\Big(
\big(Q_2^{-1} \phi_j^{(2)} Q_2\big)^{k_2}
Q_2^{-1}[\psi] Q_1
\big(Q_1^{-1} \phi_j^{(1)} Q_1\big)^{k_1}
\Big)
&
Q_1^{-1} \phi_j^{(1)} Q_1
\\
=&
Q_2^{-1} \phi_j^{(2)} Q_2
\Big(
\big(Q_2^{-1} \phi_j^{(2)} Q_2\big)^{k_2}
Q_2^{-1}[\psi] Q_1
\big(Q_1^{-1} \phi_j^{(1)} Q_1\big)^{k_1}
\Big).
\end{align*}

\begin{theorem}
\label{theorem::fibre-preserving equivalence}
Let $f_1:M_1\rightarrow S^2$ and $f_2:M_2\rightarrow S^2$ be torus fibrations with branch sets $\mathcal{B}_1=\{p_j^{(1)}\}$ and $\mathcal{B}_2=\{p_j^{(2)}\}$, $|\mathcal{B}_1|=n=|\mathcal{B}_2|$,
with global monodromies
$(\phi_1^{(1)},\ldots,\phi_n^{(1)})$ and $(\phi_1^{(2)},\ldots,\phi_n^{(2)})$
such that each singular fibre is locally symmetric.
Suppose that, for each $j$, there exists a locally fibre-preserving homeomorphism between $f_1^{-1}(p_j^{(1)})$ and $f_2^{-1}(p_j^{(2)})$ compatible with the given global monodromies.
Let
$\widetilde{f_1}=f_1\oplus f_{\mathcal{O}(f_1)}^L:\widetilde{M}_1\rightarrow S^2$ and $\widetilde{f_2}=f_2\oplus f_{\mathcal{O}(f_2)}^L:\widetilde{M}_2\rightarrow S^2$ be direct sums.
Then $(\widetilde{M}_1,\widetilde{f}_1)$ and $(\widetilde{M}_2,\widetilde{f}_2)$ are fibre-preserving homeomorphic.
\end{theorem}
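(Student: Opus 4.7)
The plan is to combine the stable Hurwitz classification of global monodromies with a local-to-global gluing whose consistency is ensured by the swappability of the normal form. First, since each pair $\phi_j^{(1)}, \phi_j^{(2)}$ must be conjugate (forced by the compatibility clause in the hypothesis), one has $\mathcal{O}(f_1)=\mathcal{O}(f_2)$, so the stabilising fibration $f_0 := f_{\mathcal{O}(f_1)}^L = f_{\mathcal{O}(f_2)}^L$ is common to both sides. Theorem~\ref{theorem::global monodromy fibre sum Hurwitz equivalent} then supplies a sequence of elementary transformations together with a diagonal conjugation transforming a global monodromy of $\widetilde{f_1}$ into one of $\widetilde{f_2}$. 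Each elementary transformation $R_i$ is geometrically induced by a half-twist diffeomorphism of $S^2$ that swaps the $i$-th and $(i+1)$-th branch points inside a small disc, and diagonal conjugation amounts to a change of trivialisation of the generic fibre; both lift to fibre-preserving self-homeomorphisms of the respective total spaces. Performing these preliminary moves, I may assume that $\widetilde{f_1}$ and $\widetilde{f_2}$ have identical global monodromies.

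Second, I construct the desired homeomorphism piecewise. Excising small disc neighborhoods $U_j^{(i)}$ of each branch point (the original $n$ plus the new ones contributed by $f_0$), the restrictions of $\widetilde{f_i}$ are genuine torus bundles over the punctured sphere. Since the monodromy representations of these two bundles now agree, they are fibre-preservingly homeomorphic, producing a homeomorphism $\Phi^\circ$ away from the singular fibres. Over each original branch point, the hypothesis supplies a local fibre-preserving homeomorphism compatible with the global monodromies; over each Lefschetz branch point arising from $f_0$, the standard node has a unique local model, so local fibre-preserving homeomorphisms exist trivially. The remaining step is to adjust each local homeomorphism so that its boundary behavior matches that of $\Phi^\circ$, and then glue.

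Third, at a branch point with fibre monodromy $\phi_j$, the discrepancy on the boundary torus between the local and the global homeomorphism is an element of $\Mod(\mathbb{T}^2)$ commuting with $\phi_j$, by the compatibility of both maps with the boundary monodromy. Local symmetry guarantees that any such centralising element can be realised as the boundary restriction of a self-homeomorphism of the singular neighborhood, so each individual discrepancy can be killed. The main obstacle, however, is that these corrections must be made simultaneously while the monodromy representation of $\Phi^\circ$ is held fixed: the centralising ``phases'' at distinct branch points are coupled through the global monodromy tuple, and the diffeomorphisms of $S^2$ used to equate the two monodromy tuples permute branch points of the same conjugacy class in a way that may scramble the local data. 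This is precisely the role of swappability (Proposition~\ref{proposition::swappability}): encoding each local phase as an extra coordinate in an abelian group and applying swappability to the enriched tuples shows that, over the stabilisation by $f_0$, any permutation of local data consistent with conjugacy classes is realised by a Hurwitz equivalence. Once this coherent adjustment is made, the local and global pieces fit together into the desired fibre-preserving homeomorphism $\widetilde{M_1}\to\widetilde{M_2}$; the stabilising Lefschetz factors provided by $f_0$ serve as the ``reservoir'' that absorbs any residual mismatch.
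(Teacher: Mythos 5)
Your overall architecture matches the paper's: equalise the stabilised global monodromies via Theorem \ref{theorem::global monodromy fibre sum Hurwitz equivalent} (with Hurwitz moves realised by half-twists of $S^2$ and diagonal conjugation by a change of fibre trivialisation), use swappability of the normal form on tuples enriched with labels in a free abelian group to make the correspondence of singular fibres position-preserving rather than merely conjugacy-class-preserving, then assemble local fibre-preserving homeomorphisms using local symmetry to kill the centraliser-valued discrepancies. Up to that point your proposal is essentially the paper's proof.

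However, your final gluing step has a genuine gap: you only correct discrepancies at the level of $\pi_0$, i.e.\ mapping classes commuting with the fibre monodromies, but the obstruction to gluing lives one level higher. Two torus fibrations that agree over a neighbourhood of all singular fibres (and of the arcs joining them to the base point) can still differ over the remaining disc by an $\alpha$-twisting, where $\alpha:S^1\rightarrow\Homeo_0(\mathbb{T}^2)$ is a loop in the identity component of the structure group; since $\pi_1(\Homeo_0(\mathbb{T}^2))\cong\mathbb{Z}^2$ is nontrivial, this ambiguity does not vanish for free, and your bundle map $\Phi^\circ$ over the punctured sphere is only well-defined up to such twistings. The paper handles exactly this point: it builds the homeomorphism over a spine $\Gamma$ (the union of the small neighbourhoods $U_j$ and the arcs $\beta_j$) and then invokes Lemma \ref{lemma::fibre-pres-homeo-a-twisting} (Moishezon's twisting construction, via Proposition 2.1 of the cited work of Funar), which says that an $\alpha$-twisting of a fibration with \emph{surjective} monodromy homomorphism is fibre-preservingly homeomorphic to the original. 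Surjectivity is precisely what the stabilising factor buys, since $T_{\mathcal{O},0}$ contains a generating set of $\SL(2,\mathbb{Z})$. Your closing remark that the Lefschetz factors form a ``reservoir'' absorbing residual mismatch gestures at this but supplies no mechanism; without the $\alpha$-twisting lemma and the surjectivity input, the proof does not close.
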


\begin{remark}
The one-to-one correspondence between singular fibres via locally fibre-preserving homeomorphisms in the hypothesis of Theorem \ref{theorem::fibre-preserving equivalence} implies that $\mathcal{O}(f_1)=\mathcal{O}(f_2)$.
\end{remark}

The following definition first appeared in Part \URNum{2}, Definition 4 in \cite{moishezon1977complex}.

\begin{definition}
Suppose that $f:M\rightarrow S^2$ is a torus fibration with branch set $\mathcal{B}=\{p_j\}$.
Let $\alpha:S^1\rightarrow \Homeo_0(\mathbb{T}^2)$ be a closed curve in the group of homeomorphisms of $\mathbb{T}^2$ isotopic to the identity.
Let $D\subset S^2\setminus \mathcal{B}$ be a disc.
Identify $\partial D$ with $S^1$
and $f^{-1}(D)$ with $D\times \mathbb{T}^2$,
then $\alpha$ defines a canonical homeomorphism
\begin{equation*}
\widetilde{\alpha}:
f^{-1}(\partial D) \rightarrow \partial(D\times \mathbb{T}^2).
\end{equation*}
Denote
$M_{D,\alpha} = \overline{M\setminus f^{-1}(D)} \cup_{\widetilde{\alpha}} (D\times \mathbb{T}^2)$
and let $f_{D,\alpha}:M_{D,\alpha}\rightarrow S^2$ be the map
which is equal to $f$ on $\overline{M\setminus f^{-1}(D)}$ and equal to the projection $D\times \mathbb{T}^2 \rightarrow D$ on $D\times \mathbb{T}^2$.
Thus the map $f_{D,\alpha}$ is a torus fibration, called \emph{the $\alpha$-twisting of $M$ at $D$}.
\end{definition}

\begin{lemma}
\label{lemma::fibre-pres-homeo-a-twisting}
Let $f:M\rightarrow S^2$ be a torus fibration and $f_{D,\alpha}$ be an $\alpha$-twisting of $f$ for some $\alpha:S^1\rightarrow \Homeo_0(\mathbb{T}^2)$ and disc $D\subset S^2$.
Suppose that $f$ has surjective monodromy homomorphisms.
Then $f$ and $f_{D,\alpha}$ are fibre-preserving homeomorphic.
\end{lemma}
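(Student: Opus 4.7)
The plan is to reduce the problem to a computation in $\pi_1(\Homeo_0(\mathbb{T}^2)) \cong \mathbb{Z}^2$ and to exploit the surjectivity of the monodromy homomorphism to kill any twist class.

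First I would observe that the fibre-preserving homeomorphism type of $f_{D,\alpha}$ depends on $\alpha$ only through its based homotopy class. A homotopy between two loops $\alpha_0,\alpha_1:S^1\to\Homeo_0(\mathbb{T}^2)$ yields an isotopy between the clutching maps $\widetilde{\alpha_0}$ and $\widetilde{\alpha_1}$, realized inside a collar of $f^{-1}(\partial D)$, and hence a fibre-preserving homeomorphism $M_{D,\alpha_0}\to M_{D,\alpha_1}$. Since the inclusion of translations $\mathbb{T}^2\hookrightarrow \Homeo_0(\mathbb{T}^2)$ is a homotopy equivalence, the class $[\alpha]$ is encoded by a pair $(m,n)\in\mathbb{Z}^2$, and I will write $f_{D,v}$ for the twist of class $v\in\mathbb{Z}^2$ at $D$.

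Next I would establish two operations on twists up to fibre-preserving homeomorphism. \emph{Additivity}: if $D_1,D_2\subset S^2\setminus\mathcal{B}$ are disjoint discs contained in a larger disc $\Delta\subset S^2\setminus\mathcal{B}$, then the simultaneous twists of classes $v_1,v_2$ at $D_1,D_2$ yield a fibration fibre-preserving homeomorphic to the single twist of class $v_1+v_2$ at $\Delta$, read in any trivialization over $\Delta$. \emph{Transport}: if $\gamma\subset S^2\setminus\mathcal{B}$ is an embedded loop disjoint from $D$ with monodromy $\phi=\Phi_{f,p}(\gamma)\in\SL(2,\mathbb{Z})$, then an ambient isotopy of $S^2$ dragging $D$ once around $\gamma$ lifts, by the local triviality of $f$ away from $\mathcal{B}$, to a fibre-preserving self-homeomorphism of $M$ whose effect on the local trivialization of the fibres over $D$ is $\phi$. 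This provides a fibre-preserving homeomorphism $f_{D,v}\simeq f_{D,\phi(v)}$.

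Combining these operations gives the key move: from any $f_{D,v}$, introduce a cancelling pair $(+w,-w)$ of twists in a small bidisc in $S^2\setminus\mathcal{B}$ (trivially equivalent to $f_{D,v}$ by additivity), transport the $+w$ piece once around a loop of monodromy $\phi$ to convert it into $\phi(w)$, and recombine with the original twist and $-w$ inside a common disc to obtain a single twist of class $v+(\phi-I)w$. Thus $f_{D,v}$ and $f_{D,v+(\phi-I)w}$ are fibre-preserving homeomorphic for every $\phi\in\mathrm{Im}(\Phi_{f,p})$ and every $w\in\mathbb{Z}^2$. Consequently the class of $f_{D,v}$ up to fibre-preserving homeomorphism depends only on the image of $v$ in $\mathbb{Z}^2/H$, where $H=\sum_{\phi}(\phi-I)(\mathbb{Z}^2)$ as $\phi$ ranges over the monodromy image. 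By surjectivity the Dehn twists $L$ and $R$ lie in this image, and $(L-I)\mathbb{Z}^2=\mathbb{Z}\cdot(0,1)^T$ together with $(R-I)\mathbb{Z}^2=\mathbb{Z}\cdot(1,0)^T$ generate all of $\mathbb{Z}^2$. Hence $H=\mathbb{Z}^2$ and every twist is killed, giving $f_{D,\alpha}$ fibre-preserving homeomorphic to $f$.

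The main obstacle will be making the transport step rigorous: one must produce an ambient isotopy of $S^2$ dragging $D$ once around $\gamma$, lift it to a fibrewise isotopy of $M$ using the local triviality of $f$ over $S^2\setminus\mathcal{B}$, and verify that the resulting self-homeomorphism acts on $\pi_1(\Homeo_0(\mathbb{T}^2))$ at $D$ by the monodromy $\phi$. Once this geometric fact is in hand, the remainder of the argument is purely algebraic and uses only surjectivity of $\Phi_{f,p}$.
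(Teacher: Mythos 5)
Your proposal is correct, but note what the paper actually does here: its entire ``proof'' is a citation to Proposition 2.1 of \cite{funar2022singular}, so you have reconstructed the content that the paper outsources. Your argument is the classical Moishezon-style one (the $\alpha$-twisting definition itself comes from Part \URNum{2}, Definition 4 of \cite{moishezon1977complex}, where this kind of statement is proved for Lefschetz fibrations), and it is sound: the twist class lives in $\pi_1(\Homeo_0(\mathbb{T}^2))\cong\mathbb{Z}^2$ via the homotopy equivalence $\mathbb{T}^2\hookrightarrow\Homeo_0(\mathbb{T}^2)$ (a genuinely nontrivial input, due to Hamstrom, which you should cite rather than treat as obvious); homotopic clutching loops give fibre-preserving homeomorphic results; disjoint twists add; and dragging the twisting disc around a loop of monodromy $\phi$ replaces the class $v$ by $\phi(v)$, since conjugating a loop of translations by a homeomorphism $h$ acts on $\pi_1(\Homeo_0(\mathbb{T}^2))\cong H_1(\mathbb{T}^2;\mathbb{Z})$ by $h_*$. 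Your matrix computation is right: with $L=\begin{bmatrix}1&0\\1&1\end{bmatrix}$ and $R=\begin{bmatrix}1&1\\0&1\end{bmatrix}$ one gets $(L-I)\mathbb{Z}^2=\mathbb{Z}\cdot(0,1)^T$ and $(R-I)\mathbb{Z}^2=\mathbb{Z}\cdot(1,0)^T$, so the subgroup $H=\sum_{\phi}(\phi-I)\mathbb{Z}^2$ is all of $\mathbb{Z}^2$ and every twist is killed.

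Two remarks on what each approach buys. First, your argument in fact proves a stronger statement than the lemma as used: surjectivity is only needed through the vanishing of the coinvariants $\mathbb{Z}^2/H$, so any monodromy image containing, say, both a conjugate of $L$ and a conjugate of $R$ in suitable position already suffices; this slack is invisible in the paper's citation-based treatment. Second, the one step you flag as the ``main obstacle'' — lifting the ambient isotopy dragging $D$ around $\gamma$ to a fibre-preserving isotopy of $M$ and identifying its effect on the trivialization over $D$ with $\phi$ — is exactly the technical core of the cited Proposition 2.1, so your self-assessment of where the work lies is accurate; you should also say a word about why the dragging isotopy can be chosen to avoid the branch points and the other twisting discs (immediate, since $\gamma\subset S^2\setminus\mathcal{B}$ and the discs are small), and fix a sign convention for whether transport acts by $\phi$ or $\phi^{-1}$, which is immaterial here because the monodromy image is a group.
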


\begin{proof}
See Proposition 2.1 in \cite{funar2022singular}.
\end{proof}

\begin{proof}[Proof of Theorem \ref{theorem::fibre-preserving equivalence}]
By Theorem \ref{theorem::global monodromy fibre sum Hurwitz equivalent},
we suppose that $(\widetilde{M}_1, \widetilde{f}_1)$ and $(\widetilde{M}_2, \widetilde{f}_2)$ have the same branch set $\mathcal{B}=\{p_1,\ldots,p_{l+m}\}$.
Taking the base point $p\in S^2\setminus\mathcal{B}$ and loops $\gamma_1,\ldots,\gamma_{l+m}$ based at $p$, we further suppose that the global monodromies of $(\widetilde{M}_1, \widetilde{f}_1)$ and $(\widetilde{M}_2, \widetilde{f}_2)$
determined by $p$, $\gamma_1,\ldots,\gamma_{l+m}$
coincide, say
\begin{equation*}
(g_1,\ldots, g_l)\bullet (\phi_1,\ldots,\phi_m).
\end{equation*}

Since the compatibility of a locally fibre-preserving homeomorphism does not depend on the global monodromy, we may assume that there exists the permutation $\sigma\in S_m$ such that
\begin{itemize}[-]
\item $\widetilde{f}_1^{-1}(p_j)$ and $\widetilde{f}_2^{-1}(p_j)$ are locally fibre-preserving homeomorphic compatible with the global monodromies, for $j=1,\ldots,l$;

\item $\widetilde{f}_1^{-1}(p_j)$ and $\widetilde{f}_2^{-1}(p_{l+\sigma(j-l)})$ are locally fibre-preserving homeomorphic compatible with the global monodromies, for $j=l+1,\ldots,l+m$.
\end{itemize}

Let $G=\mathbb{Z}\epsilon_1 + \ldots + \mathbb{Z}\epsilon_{l+m}$ be the free group of rank $l+m$.
Consider the tuples
\begin{align*}
&\Big(
(\epsilon_1,g_1),\ldots,(\epsilon_l,g_l),(\epsilon_{l+1},\phi_1),\ldots,(\epsilon_{l+m},\phi_m)
\Big),\\
&\Big(
(\epsilon_1,g_1),\ldots,(\epsilon_l,g_l),(\epsilon_{l+\sigma(1)},\phi_1),\ldots,(\epsilon_{l+\sigma(m)},\phi_m)
\Big)
\end{align*}
in $G\times \SL(2,\mathbb{Z})$.
The swappability of the global monodromy implies that one tuple can be transformed into the other by elementary transformations.
Thus, using a fibre-preserving homeomorphism, we may suppose that $\sigma$ is the identical permutation.

Consider the locally fibre-preserving homeomorphisms $\Psi_{S,j}:U_j^{(1)}\rightarrow U_j^{(2)}$, $\Psi_{M,j}:\widetilde{f}_1^{-1}(U_j^{(1)})\rightarrow \widetilde{f}_2^{-1}(U_j^{(2)})$
with sufficiently small neighbourhoods $U_j^{(i)}$ of $p_j$, for $j=1,\ldots,l+m$ and $i=1,2$.
They extend to locally fibre-preserving homeomorphisms
$\Psi'_{S,j}:U_j^{(1)}\cup \beta_j^{(1)}\rightarrow U_j^{(2)}\cup\beta_j^{(2)}$,
$\Psi'_{M,j}:\widetilde{f}_1^{-1}(U_j^{(1)}\cup \beta_j^{(1)})\rightarrow \widetilde{f}_2^{-1}(U_j^{(2)}\cup\beta_j^{(2)})$
where, for $i=1,2$, $\beta_j^{(i)}$ is a path joining $p$ to some point $d_j^{(i)}\in \partial U_j^{(i)}$ such that $\beta_1^{(i)},\ldots,\beta_{l+m}^{(i)}$, $U_1^{(i)},\ldots,U_{l+m}^{(i)}$ are disjoint away from $p$ and $d_1^{(i)},\ldots,d_{l+m}^{(i)}$.

The mapping class represented by $\varphi_j = \Psi'_{M,j}\mid_{\widetilde{f}_1^{-1}(p)}$ satisfies
$[\varphi_j]g_j=g_j[\varphi_j]$ if $j=1,\ldots,l$ or
$[\varphi_j]\phi_{j-l}=\phi_{j-l}[\varphi_j]$ otherwise.
All singular fibres are locally symmetric.
Set $\Gamma_i=\bigcup_j U_j^{(i)} \cap \beta_j^{(i)}$ for $i=1,2$.
Therefore we obtain a fibre-preserving homeomorphism
$\Psi_S:\Gamma_1\rightarrow \Gamma_2$,
$\Psi_M:\widetilde{f}_1^{-1}(\Gamma_1)\rightarrow \widetilde{f}_2^{-1}(\Gamma_2)$.
One may further assume that $\Gamma_1=\Gamma=\Gamma_2$ without loss of generality.

It remains to prove that $\widetilde{f}_1\mid_{\Gamma}$
and $\widetilde{f}_2\mid_{\Gamma}$
extend to the unique torus fibration over the complementary disc of $\Gamma$ within $S^2$, up to fibre-preserving homeomorphism,
but this follows from Lemma \ref{lemma::fibre-pres-homeo-a-twisting}.
\end{proof}

\subsection{Singular fibrations and singularities}
\label{subsection::singular fibration}

This subsection introduces \emph{singular fibrations} and illustrates Corollary \ref{corollary::main_result_singular_fibrations}.

Let $f:M^4\rightarrow S^2$ be a smooth map between a connected closed oriented $4$-manifold $M^4$ and the $2$-sphere with finitely many critical points, with generic fibre $F^2$.
Church and Timourian proved that each singularity $p$ of $f$ is \emph{cone-like}, i.e. the singularity $p$ admits a cone neighbourhood in the singular fibre $V=f^{-1}(f(p))$; see \cite[Lemma 2.1 and (Lemma) 2.4]{church1974differentiable} and also see \cite[p.835-836]{funar2011global}.

Isolated singularities are separated. In fact, there exist arbitrarily small adapted neighbourhoods of cone-like singularities, as introduced by King in \cite[p.396]{king1978topological}.
An \emph{adapted neighbourhood} around a singularity $p\in M^4$ is a compact neighbourhood $Z^4\subset M^4$ satisfying the following:

\begin{enumerate}[1)]
    \item The restriction $f|_{Z^4}:Z^4\rightarrow D^2$ is a proper map onto a disk $D^2\subset S^2$;
    
    \item The fibre $f^{-1}(x)$ is transversal to $\partial Z^4$ for each $x\in int(D^2)$ and $E=f^{-1}(S^1)\cap Z^4\subset \partial Z^4$;
    
    \item Set $V=f^{-1}(f(p))$ and $K=V\cap \partial Z^4$. Then $N(K)=f^{-1}(D_0^2) \cap \partial Z^4$ is a tubular neighbourhood of $K$ within $\partial Z^4$ endowed with a trivialization $\theta: N(K)\rightarrow K \times D_0^2$ induced by $f$, where $D_0^2\subset D^2$ is a sufficiently small disk containing $f(p)$.
    
    \item The composition $f_K = r\circ f: \partial Z^4\setminus K \rightarrow D^2 \rightarrow S^1$ is a locally trivial fibration over $S^1$, where $r$ is the radical projection;
    
    \item The data $(\partial Z^4, K, f_K, \theta)$ is an open book decomposition.
\end{enumerate}
It is equivalent to the date $(f_Z,\Phi)$ satisfying the following:

\begin{enumerate}[1)]
\item The map $f_Z:Z^4\rightarrow D^2$ is proper and induced by $f$. Set $V=f_Z^{-1}(f_Z(p))$, $K=V\cap \partial Z^4$ and $E^3=f_Z^{-1}(S^1) \subset \partial Z^4$. Then the restriction $f_Z:E^3\rightarrow S^1$ is a fibration with fibre $F_p^2$.

\item The flow $\Phi$ on $Z^4$ is continuous along directions parallel to $D^2$ such that
\begin{enumerate}[i),topsep=0pt,itemsep=-1ex,partopsep=1ex,parsep=1ex]
    \item $f(\Phi(z,d)) = f(z)+d$ for $z\in Z^4$ and $d\in D^2$ when both sides are within $Z^4$;
    \item the mapping $(x,t)\mapsto \Phi(x, -t f_Z(x))$ is a homeomorphism from $E^3\times [0,1)$ to $Z^4\setminus V$;
    \item there exists a vanishing compact subset $\mathcal{A}\subset E^3$ such that $x\mapsto \Phi(x, -f_Z(x))$ induces a homeomorphism from $E^3\setminus \mathcal{A}$ to $V\setminus p$ and sends $\mathcal{A}$ to $p$. 
\end{enumerate}
\end{enumerate}

King proved that, for the fibration of a manifold $M^m$ in dimension $m\neq 4,5$, one can always find adapted neighbourhoods for singularities diffeomorphic to the $m$-disk.
In dimension $4$, however, adapted neighbourhoods can only be supposed to be contractible.
We call a singularity \emph{regular} if it admits an arbitrarily small adapted neighbourhood which is diffeomorphic to the $4$-disk.

\begin{definition}
A smooth map $f:M^4\rightarrow S^2$ between a connected closed oriented $4$-manifold and the $2$-sphere is a \emph{singular fibration} if it has only finitely many critical points, all of them being regular.
\end{definition}

The binding $K\subset \partial Z^4$ of an open book decomposition is a fibered link.
Each fibre of $f_K$ is a surface that has the boundary $K$ and is homotopic to the local Milnor fibre $F_p^2$.
It is proved in \cite[Theorem 1]{king1978topological} that the local mapping torus $E^3$ and the vanishing compact subset $\mathcal{A}\subset E^3$ up to isotopy form a complete invariant of the adapted neighbourhood up to fibre-preserving homeomorphism.
In particular, if a singular fibre contains only one singularity and the fibre monodromy is given, then the singular fibre is determined by the isotopy class of local Milnor fibre, up to fibre-preserving homeomorphism.

In general, there could be many singularities in a singular fibre, say $p_1,\ldots,p_n$. The horizontal homeomorphisms given by disjoint adapted neighbourhoods reveal that the local Milnor fibres $F_{p_1}^2,\ldots,F_{p_n}^2$ are disjoint compact subsurfaces embedded in the generic fibre $F^2$ of the fibration.
The fibre monodromy around the singular fibre is a mapping class of the generic fibre $F^2$, denoted by $\phi_{f^{-1}(f(p_i))}$. The inclusions $\iota_i:F_{p_i}^2\hookrightarrow F$ induce the homomorphisms $\Mod(F_{p_i}^2)\rightarrow \Mod(F^2)$ which send the local monodromies $\phi_{F_{p_i}^2}$ of the mapping tori $E^3\rightarrow S^1$ to mapping classes of the generic fibre. Therefore
\begin{equation}
    \label{equation::local monodromies and local-global monodromy}
    \phi_{f^{-1}(f(p_i))} = \iota_{1,*}(\phi_{F_{p_1}^2}) \circ \ldots \circ \iota_{n,*}(\phi_{F_{p_n}^2}),
\end{equation}
which does not depend on the order.
Furthermore, the following should be well-known.

\begin{lemma}
In a singular fibration $f:M^4\rightarrow S^2$, each local Milnor fibre of an adapted neighbourhood diffeomorphic to the $4$-disk is connected with a non-empty boundary.
\end{lemma}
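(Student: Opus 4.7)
The plan is to leverage the hypothesis $Z^4\cong D^4$, which gives $\partial Z^4\cong S^3$, and then to exploit basic properties of the open book decomposition on $S^3$ provided by conditions (3)--(5) of the adapted neighbourhood definition.

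First I would dispose of the non-empty boundary claim by contradiction. If $\partial F_p^2=\emptyset$, then the pages of the open book on $\partial Z^4$ are closed surfaces and the binding $K=V\cap\partial Z^4$ must be empty, so $f_K$ realises $\partial Z^4\cong S^3$ as the total space of a smooth fibre bundle over $S^1$. But any such bundle pulls back the generator of $H^1(S^1;\mathbb Z)$ to a non-zero class, contradicting $H^1(S^3;\mathbb Z)=0$.

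For connectedness, I would use the defining property of the open book that $f_K$ agrees with the angular coordinate on the tubular neighbourhood $N(K)$ of $K$; this forces any meridian loop $m$ of a component of $K$ to satisfy $(f_K)_*(m)=\pm 1\in\pi_1(S^1)=\mathbb Z$. The first step gives $K\neq\emptyset$, so $\partial Z^4\setminus K=S^3\setminus K$ is connected (link complements in $S^3$ are connected). The long exact sequence of homotopy for the fibration $F_p^2\hookrightarrow\partial Z^4\setminus K\to S^1$ then shows that $\pi_1(S^1)$ acts transitively on $\pi_0(F_p^2)$, whence $\pi_0(F_p^2)\cong\mathbb Z/n$ with $n=|\pi_0(F_p^2)|$, and that $\operatorname{im}(f_K)_*=n\mathbb Z$. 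Since this image contains $\pm 1$, we conclude $n=1$, i.e.\ $F_p^2$ is connected.

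The main obstacle is verifying the meridian calculation $(f_K)_*(m)=\pm 1$ from the angular-coordinate trivialisation of $N(K)$; once this is in hand, the rest is a routine long exact sequence computation.
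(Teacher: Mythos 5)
Your proof is correct, and for the connectedness half it takes a genuinely different route from the paper's. On the non-empty boundary claim the two arguments agree: an empty binding would exhibit $\partial Z^4\cong S^3$ as a locally trivial bundle over $S^1$, which is impossible (the paper merely asserts the contradiction; your $H^1$ justification, or equivalently the fact that $\pi_1$ of a bundle over $S^1$ with compact total space surjects onto a non-trivial subgroup of $\mathbb{Z}$, fills in the reason). For connectedness the paper argues homologically: since the closure $\overline{F_p^2}$ is a compact surface with non-empty boundary, $\tilde{H}^2(\overline{F_p^2})=0$, so Alexander duality gives $\tilde{H}_0(S^3\setminus\overline{F_p^2})=0$; since the complement of one closed page is the union of the remaining open pages and fibres over an interval, it is homeomorphic to $F_p^2\times\mathbb{R}$, whence $F_p^2$ is connected (this last identification is left implicit in the paper). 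You instead run the homotopy exact sequence of $F_p^2\hookrightarrow S^3\setminus K\to S^1$ and show $(f_K)_*$ is onto $\pi_1(S^1)$ via a meridian. The meridian computation you flag as the main obstacle is in fact immediate from condition (3) of the definition: the trivialisation $\theta:N(K)\to K\times D_0^2$ is induced by $f$, so on $N(K)$ the map $f$ is the $D_0^2$-projection, and $f_K=r\circ f$ carries the meridian $\theta^{-1}(\{x\}\times\partial D_0^2)$ homeomorphically onto $S^1$, giving $(f_K)_*(m)=\pm 1$ and hence $n=1$ in your exact-sequence count (strictly, the fibre of $f_K$ is the open page rather than $F_p^2$ itself, but the paper's remark that each page is homotopy equivalent to the local Milnor fibre transfers connectedness). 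As for what each approach buys: yours avoids having to identify the page complement with $F_p^2\times\mathbb{R}$, and it only uses $b_1(\partial Z^4)=0$ together with connectivity of link complements, so it applies verbatim when $\partial Z^4$ is merely a homology sphere — a relevant flexibility, since in dimension $4$ adapted neighbourhoods can in general only be assumed contractible; the paper's duality argument is shorter once the open book structure of the page complement is taken for granted, but is tied to classical Alexander duality in $S^3$.
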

\begin{proof}
Suppose that $p$ is a singularity of the singular fibration $f:M^4\rightarrow S^2$.
If we assume that the binding link $K$ of a singularity is vacuous,
the adapted neighbourhood implies a locally trivial fibre bundle $S^3\rightarrow S^1$, which is a contradiction.
Since the completion of the local Milnor fibre has a non-empty boundary, the reduced cohomology group $\tilde{H}^2(\overline{F_p^2})$ is trivial.
We use Alexander duality and obtain that $\tilde{H}_0(S^3\setminus\overline{F_p^2})$ is trivial.
Hence $F_p^2$ is connected.
\end{proof}

\begin{definition}
A continuous map $g_1:X_1\rightarrow Y_1$ is \emph{locally topologically equivalent} at $x_1\in X_1$ to a continuous map $g_2:X_2\rightarrow Y_2$ at $x_2\in X_2$ if there exist sufficiently small open neighbourhoods $U_1$ of $x_1$, $U_2$ of $x_2$, $V_1$ of $g_1(x_1)$, $V_2$ of $g_2(x_2)$ and homeomorphisms $\alpha:U_1\rightarrow U_2$, $\beta:V_1\rightarrow V_2$ such that $\beta\circ g_1\mid_{U_1} = g_2\circ \alpha\mid_{U_1}$.
\end{definition}

A point at which $f$ fails to be locally topologically equivalent to the projection $\mathbb{R}^4 \rightarrow \mathbb{R}^2$ is called a \emph{branch point},
which is necessarily a singularity.
Church and Lamotke have shown that a local Milnor fibre is diffeomorphic to the $2$-disk if and only if the associated singularity is not a branch point; see \cite[Proposition p.151]{church1975non}.
We conclude that, up to fibre-preserving homeomorphism, one may assume that a torus singular fibration has no local Milnor fibre of genus $0$ with only $1$ boundary component.

\subsubsection{Local Milnor fibre of genus zero}

When the local Milnor fibre $F_p^2$ is a genus zero surface with $r\ge 2$ boundary components,
then $E^3\cong \partial Z^4\setminus K$ is the mapping torus of some mapping class $\phi_{F_p^2}$ that is identical on boundary, denoted by $\mathcal{M}_{\phi_{F_p^2}}$.
The group of mapping classes identical on the boundary, denoted by $\Mod^*(F_p^2)$, is generated by Dehn twists along the following loops; see \cite{wajnryb1999elementary}.
\begin{itemize}[-]
    \item Loops $\delta_{i,j}$, $2\le i<j\le r$, that each separates two boundary components from the others.
    \item Peripheral loops $\alpha_2,\ldots,\alpha_r$, that are parallel to the latter $r-1$ boundary components.
\end{itemize}
The peripheral loops are mutually disjoint and they keep away from the loops $\delta_{i,j}$. Therefore, $\phi_{F_p^2}$ is the composition of the product of (positive and negative) Dehn twists along peripheral loops and a mapping class $\varphi_{F_p^2}$ generated by the Dehn twists along the rest loops, denoted by
\begin{equation*}
    \phi_{F_p^2} = (\prod\limits_{i=2}^r T_{\alpha_i}^{u_i}) \varphi_{F_p^2}
\end{equation*}
with $u_2,\ldots,u_r\in\mathbb{Z}$.
The following shows a necessary property for local Milnor fibres of genus zero in a fibration $f:M^4\rightarrow S^2$.

\begin{lemma}
\label{lemma::genus_zero_surface_mapping_class_peripheral_loops}
Let $f:M^4\rightarrow S^2$ be a smooth map between a connected closed oriented $4$-manifold $M^4$ and the $2$-sphere. Let $p\in M^4$ be an isolated singularity. Given a contractible adapted neighbourhood of $p$, if the local Milnor fibre $F_p^2$ is a genus zero surface with $r\ge 2$ boundary components and the local monodromy is given by $\phi_{F_p^2}=\prod\limits_{i=2}^r T_{\alpha_i}^{u_i}$ with $u_2,\ldots,u_r\in\mathbb{Z}$, then $u_i=\pm 1$, $i=2,\ldots,r$.
\end{lemma}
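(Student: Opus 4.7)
The plan is to exploit the topology of $\partial Z^4$: since $Z^4$ is a contractible compact $4$-manifold, Poincar\'e--Lefschetz duality combined with the long exact sequence of the pair $(Z^4,\partial Z^4)$ forces
\[
H_2(Z^4,\partial Z^4)\cong H^2(Z^4)=0 \quad\text{and}\quad H_1(Z^4)=0,
\]
so $\partial Z^4$ is an integral homology $3$-sphere; in particular $H_1(\partial Z^4)=0$. The lemma will follow once we read this off from the open book structure $(\partial Z^4,K,f_K,\theta)$ provided by the adapted neighbourhood.

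Decompose $\partial Z^4$ as the union $M_\phi\cup (\sqcup_{i=1}^{r}K_i\times D^2)$, where $M_\phi$ is the mapping torus of $\phi=\phi_{F_p^2}$ and the meridian of each solid torus is the angular (flow) direction on $T_i=\partial_i\times S^1$. Since $F_p^2$ is planar, every peripheral curve $\alpha_i$ can be pushed into a collar of $\partial F_p^2$ and therefore has trivial algebraic intersection with any cycle, so each Dehn twist $T_{\alpha_i}$ acts as the identity on $H_1(F_p^2)$. Applying the Wang sequence of $M_\phi\to S^1$ then yields
\[
H_1(M_\phi)\cong H_1(F_p^2)\oplus\mathbb{Z}
\cong \mathbb{Z}\langle [\alpha_2],\ldots,[\alpha_r]\rangle \oplus \mathbb{Z}\langle[\mu]\rangle,
\]
where $[\mu]$ is the class of the flow loop based at a point of $\partial_1$.

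The key geometric step is to identify the class of the meridian $[\mu_i]$ of $T_i$ inside $H_1(M_\phi)$. Trivially $[\mu_1]=[\mu]$, since $\phi$ is the identity in a collar of $\partial_1$. For $i\ge 2$, joining $\partial_1$ to $\partial_i$ by a path that crosses exactly the support of $T_{\alpha_i}^{u_i}$ and no other twist annulus, one constructs a $2$-chain in $M_\phi$ (with the twist annulus parametrised as $S^1\times[1/4,3/4]$ and the flow as the third coordinate) whose boundary is
\[
\mu_i-\mu+u_i\alpha_i,
\]
so $[\mu_i]=[\mu]-u_i[\alpha_i]$. This is the same cut-and-paste computation that, for $r=2$, identifies the total space of the open book with $L(u_2,1)$.

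Finally, gluing the solid torus at $T_i$ kills $[\mu_i]$. By Mayer--Vietoris this produces the presentation
\[
H_1(\partial Z^4)\cong\big\langle [\alpha_2],\ldots,[\alpha_r]\,\big|\,u_i[\alpha_i]=0,\ i=2,\ldots,r\big\rangle
\cong \bigoplus_{i=2}^{r}\mathbb{Z}/u_i\mathbb{Z},
\]
and $H_1(\partial Z^4)=0$ forces $|u_i|=1$ for every $i$. The main obstacle is the explicit class identity $[\mu_i]=[\mu]-u_i[\alpha_i]$; everything else (the Wang sequence, the Mayer--Vietoris, and the homology-sphere condition on $\partial Z^4$) is routine. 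The rest of the proof is a careful bookkeeping of orientations so that the sign of $u_i$ is consistent across all $i$.
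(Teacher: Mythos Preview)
Your proof is correct and follows essentially the same route as the paper: decompose $\partial Z^4$ as the mapping torus $M_\phi$ glued to $r$ solid tori, use that $\partial Z^4$ is an integral homology sphere, and read the constraint on the $u_i$ off the Mayer--Vietoris sequence. The only difference is bookkeeping: the paper encodes the gluing data in a single $2r\times 2r$ matrix whose determinant is $\pm u_2\cdots u_r$, while you compute each meridian class $[\mu_i]=[\mu]-u_i[\alpha_i]$ explicitly via a $2$-chain and obtain $H_1(\partial Z^4)\cong\bigoplus_{i=2}^{r}\mathbb{Z}/u_i\mathbb{Z}$ directly; both force $|u_i|=1$ for every $i$.
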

\begin{proof}
The first homology group of $F_p^2$ is isomorphic to $\mathbb{Z}^{r-1}$ and generated by the cycles around boundary components, but excluding the first component.
Therefore, $\phi_{F_p^2,*}=id_{H_1(F_p^2,\mathbb{Z})}$ and the homology group $H_1(\mathcal{M}_{\phi_{F_p^2}},\mathbb{Z}) = H_1(F_p^2,\mathbb{Z}) \rtimes_{\phi_{F_p^2},*} \langle [\gamma] \rangle$ is isomorphic to $\mathbb{Z}^r$, where $\gamma$ is the closed curve in the mapping torus induced by a fixed point on the first boundary component of $F_p^2$.

We write $H_1(\mathcal{M}_{\phi_{F_p^2}},\mathbb{Z})=\langle a_2, \ldots, a_r, t \rangle$.
The boundary $\partial Z^4$ of the adapted neighbourhood is the union of the mapping torus $\mathcal{M}_{\phi_{F_p^2}}$ and $r$ more solid tori, which is a homology $3$-sphere.
The inclusion mapping the connected components of the intersection to the mapping torus derives from (positive or negative) powers of the Dehn twist along peripheral loops, which are denoted by $T_{\alpha_1}^{u_1},\ldots,T_{\alpha_r}^{u_r}$ respectively. By Mayer-Vietoris we have
\begin{equation*}
    H_2(\partial Z^4,\mathbb{Z}) \longrightarrow H_1(\mathbb{T}^2,\mathbb{Z})^r \xrightarrow{~\tau~} H_1(\mathcal{M}_{\phi_{F_p^2}},\mathbb{Z}) \oplus H_1(S^1\times D^1)^r \longrightarrow H_1(\partial Z^4,\mathbb{Z})
\end{equation*}
where $\tau$ is an isomorphism. After the choice of the natural basis, the corresponding $(2r)\times (2r)$-matrix is given by
\begin{equation*}
    A = 
    \begin{bmatrix}
    -1 & 0 & 1 & u_2 \\
    -1 & 0 &   &     & 1 & u_3 \\
    -1 & 0 &   &     &   &     & 1 & u_4 \\
    \cdot &    &   &     &   &     &   &     & \cdot \\
    \cdot &    &   &     &   &     &   &     & & \cdot \\
    \cdot &    &   &     &   &     &   &     & & & \cdot \\
    -1 & 0 &   &     &   &     &   &     & \cdot & \cdot & \cdot & 1 & u_r\\
    0 & 1 & 0 & 1 & 0 & 1 & 0 & 1 & \cdot & \cdot & \cdot & 0 & 1 \\
    1 & 0 \\
      &   & 1 & 0 \\
      &   &   &   & 1 & 0 \\
      &   &   &   &   &   & 1 & 0 \\
      &    &   &     &   &     &   &     & \cdot \\
      &    &   &     &   &     &   &     & & \cdot \\
      &    &   &     &   &     &   &     & & & \cdot \\
      & & & & & & & & & & & 1 & 0
    \end{bmatrix}
\end{equation*}
satisfying $det(A)=\pm 1$. It follows that $u_2\cdots u_r=\pm 1$.
\end{proof}

In particular, we have the following consequence.
\begin{corollary}
\label{corollary::singular fibration milnor genus 0 and two bds}
Let $f:M^4\rightarrow S^2$ be a smooth map between a connected closed oriented $4$-manifold and the $2$-sphere. Given a contractible adapted neighbourhood of a singularity, if the local Milnor fibre is a genus zero surface with exactly two boundary components, the local monodromy is either a positive or a negative Dehn twist.
\end{corollary}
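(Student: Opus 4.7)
The plan is to specialise Lemma \ref{lemma::genus_zero_surface_mapping_class_peripheral_loops} to the minimal case $r=2$. A genus zero surface with exactly two boundary components is an annulus, and the index set $\{(i,j) : 2 \le i < j \le r = 2\}$ parametrising the separating curves $\delta_{i,j}$ in the Wajnryb generating set of $\Mod^*(F_p^2)$ is empty. Consequently, in the decomposition
\begin{equation*}
    \phi_{F_p^2} = \Bigl(\prod_{i=2}^r T_{\alpha_i}^{u_i}\Bigr)\varphi_{F_p^2}
\end{equation*}
introduced just before Lemma \ref{lemma::genus_zero_surface_mapping_class_peripheral_loops}, the residual factor $\varphi_{F_p^2}$ lies in the subgroup generated by the empty family of Dehn twists and is therefore trivial. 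Hence the local monodromy is automatically of the form $\phi_{F_p^2} = T_{\alpha_2}^{u_2}$ for some $u_2\in\mathbb Z$, with $\alpha_2$ isotopic to the core of the annulus.

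This places us directly in the hypothesis of Lemma \ref{lemma::genus_zero_surface_mapping_class_peripheral_loops}, whose conclusion specialises to $u_2 = \pm 1$. I would then conclude that $\phi_{F_p^2}$ is either the positive or the negative Dehn twist along the core curve, which is precisely the statement of the corollary.

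I do not anticipate a substantive obstacle: the corollary is essentially the boundary case of the preceding lemma once one notices that the $\delta_{i,j}$ disappear when $r=2$. The only small verification worth making is that the Mayer--Vietoris argument of Lemma \ref{lemma::genus_zero_surface_mapping_class_peripheral_loops} remains valid at $r=2$, i.e.\ that the $(2r)\times(2r)$ matrix $A$ used there specialises to a $4\times 4$ integer matrix whose determinant equals $\pm u_2$, so that the homology-sphere constraint $\det(A)=\pm 1$ forces $u_2=\pm 1$ directly. Since this is a routine check, the proof essentially amounts to invoking the previous lemma after removing the vacuous $\varphi_{F_p^2}$ factor.
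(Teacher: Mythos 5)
Your proposal is correct and follows essentially the same route as the paper: the paper's proof likewise observes that for an annulus the mapping class group (identical on the boundary) is generated by the Dehn twist along the unique peripheral loop, so that $\phi_{F_p^2}=T_{\alpha_2}^{u_2}$, and then invokes Lemma \ref{lemma::genus_zero_surface_mapping_class_peripheral_loops} to conclude $u_2=\pm 1$. Your extra remark that the Mayer--Vietoris matrix specialises at $r=2$ to a $4\times 4$ matrix with $\det(A)=\pm u_2$ is a correct (and routine) sanity check, consistent with the lemma's general conclusion $u_2\cdots u_r=\pm 1$.
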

\begin{proof}
In this case, the local Milnor fibre $F_p^2$ is an annulus whose mapping class group is generated by the Dehn twist along the unique peripheral loop. By Lemma \ref{lemma::genus_zero_surface_mapping_class_peripheral_loops} we have $u_2=\pm 1$. Hence $\phi_{F_p^2}$ is either the positive or the negative Dehn twist.
\end{proof}

\subsubsection{Local Milnor fibre of genus one}

We consider the case when the local Milnor fibre for a contractible adapted neighbourhood of a cone-like singularity in $f:M^4\rightarrow S^2$ is a torus with $r\ge 1$ disks removed, say $F_p^2=\mathbb{T}^2\setminus (D_1\sqcup \ldots \sqcup D_r)$.
Again, let $\phi_{F_p^2}\in \Mod(F_p^2)$ be the local monodromy.
By the Mayer–Vietoris sequence on $\partial Z^4$ we have
\begin{equation*}
    H_2(\partial Z^4,\mathbb{Z})\longrightarrow
    H_1(\mathbb{T}^2,\mathbb{Z})^r \longrightarrow
    H_1(\mathcal{M}_{\phi_{F_p^2}},\mathbb{Z}) \oplus H_1(S^1\times D^1,\mathbb{Z})^r
    \longrightarrow H_1(\partial Z^4,\mathbb{Z}).
\end{equation*}
Since the boundary $\partial Z^4$ is a homology $3$-sphere,$H_1(\mathcal{M}_{\phi_{F_p^2}},\mathbb{Z})$ is isomorphic to $\mathbb{Z}^r$.

Now we compute the homology group $H_1(\mathcal{M}_{\phi_{F_p^2}},\mathbb{Z})$ of the mapping torus. Write $\mathcal{M}_{\phi_{F_p^2}}$ as the union of $A=F_p^2\times I_1$ and $B=F_p^2\times I_2$ and take the inclusion maps $i:A\cap B\hookrightarrow A$, $j:A\cap B\hookrightarrow B$, $k:A\hookrightarrow \mathcal{M}_{\phi_{F_p^2}}$ and $l:B\hookrightarrow \mathcal{M}_{\phi_{F_p^2}}$. By Mayer–Vietoris we have
\begin{align*}
    \longrightarrow &H_1(A\cap B,\mathbb{Z}) \xrightarrow{(i_*,j_*)}
    H_1(A,\mathbb{Z})\oplus H_1(B,\mathbb{Z}) \xrightarrow{k_*-l_*}
    H_1(\mathcal{M}_{\phi_{F_p^2}},\mathbb{Z}) \xrightarrow{\partial_*}\\
    &H_0(A\cap B,\mathbb{Z}) \xrightarrow{(i_*,j_*)}
    H_0(A,\mathbb{Z})\oplus H_0(B,\mathbb{Z}) \xrightarrow{k_*-l_*}
    H_0(\mathcal{M}_{\phi_{F_p^2}},\mathbb{Z}) \longrightarrow 0.
\end{align*}
Notice that $im\partial_*$ is isomorphic to $ker(H_0(A\cap B,\mathbb{Z})\xrightarrow{(i_*,j_*)}H_0(A,\mathbb{Z})\oplus H_0(B,\mathbb{Z}))\simeq \mathbb{Z}$.
To ensure that $H_1(\mathcal{M}_{\phi_{F_p^2}},\mathbb{Z})$ is isomorphic to $\mathbb{Z}^r$, we require that $ker\partial_*\simeq \mathbb{Z}^{r-1}$ and therefore 
\begin{equation*}
    im(H_1(A\cap B,\mathbb{Z}) \xrightarrow{(i_*,j_*)}H_1(A,\mathbb{Z})\oplus H_1(B,\mathbb{Z}))\simeq \mathbb{Z}^{3+r}.
\end{equation*}

\begin{lemma}
\label{lemma::singular fibration milnor genus 1 and 1 bd}
Let $f:M^4\rightarrow S^2$ be a smooth map between a connected closed oriented $4$-manifold $M^4$ and the $2$-sphere. Let $p$ be a singularity of $f$ with a contractible adapted neighbourhood.
Suppose that the local Milnor fibre $F_p^2$ is a torus with a disk removed and consider the inclusion $\iota:F_p^2\hookrightarrow \mathbb{T}^2$. Then the binding link $K\subset \partial Z^4\cong S^3$ is either the trefoil knot or the figure-eight knot. Furthermore, the local monodromy $\phi_{F_p^2}$ induces a mapping class of the torus $\iota_{*}(\phi_{F_p^2}) \in \Mod(\mathbb{T}^2)\simeq \SL(2,\mathbb{Z})$ which is conjugate to one of
\begin{equation*}
    \begin{bmatrix}0 & -1\\1 & 1\end{bmatrix},
    \begin{bmatrix}1 & 1\\-1 & 0\end{bmatrix},
    \begin{bmatrix}2 & 1\\1 & 1\end{bmatrix}.
\end{equation*}
\end{lemma}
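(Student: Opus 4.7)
The plan is to extract a trace condition on $\iota_*(\phi_{F_p^2}) \in \SL(2,\mathbb{Z})$ from the Mayer--Vietoris calculation already performed just above the lemma statement, and then to invoke the classical classification of fibred knots in $S^3$ with once-punctured torus fibre.

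First, I would turn the computation $H_1(\mathcal{M}_{\phi_{F_p^2}}, \mathbb{Z}) \cong \mathbb{Z}$ (the case $r=1$ of what was done right before the lemma) into an algebraic constraint. Setting $M = \iota_*(\phi_{F_p^2})$, note that $\iota_*: H_1(F_p^2, \mathbb{Z}) \to H_1(\mathbb{T}^2, \mathbb{Z})$ is an isomorphism onto $\mathbb{Z}^2$ and intertwines the action of $\phi_{F_p^2}$ with $M$. The Wang exact sequence
\begin{equation*}
    H_1(F_p^2) \xrightarrow{M - I} H_1(F_p^2) \longrightarrow H_1(\mathcal{M}_{\phi_{F_p^2}}) \longrightarrow H_0(F_p^2) \xrightarrow{0} H_0(F_p^2)
\end{equation*}
then forces $M - I$ to be surjective on $\mathbb{Z}^2$, equivalently $\det(M - I) = \pm 1$. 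Since $\det M = 1$, one has $\det(M - I) = 2 - \trace M$, so $\trace M \in \{1, 3\}$.

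Second, I would match this with the conjugacy-class list of $\SL(2,\mathbb{Z})$ recalled earlier in the paper: trace $1$ contributes exactly the classes of $A$ and $-A^2$, while trace $3$ contributes a single class, represented equally well by $LR$ or by $\begin{bmatrix} 2 & 1 \\ 1 & 1 \end{bmatrix}$. A short verification that $\begin{bmatrix} 0 & -1 \\ 1 & 1 \end{bmatrix}$, $\begin{bmatrix} 1 & 1 \\ -1 & 0 \end{bmatrix}$, $\begin{bmatrix} 2 & 1 \\ 1 & 1 \end{bmatrix}$ are representatives of these three classes (the first being $A$, the second $-A^2$) then produces the three matrices stated in the lemma.

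Third, I would identify $(\partial Z^4, K)$. By the adapted-neighbourhood structure, $\partial Z^4$ is the union of the mapping torus $\mathcal{M}_{\phi_{F_p^2}}$ and a solid torus $S^1 \times D^2$ glued along $T^2$, the gluing being prescribed by the trivialisation $\theta$ coming from $f$. For each of the three monodromy conjugacy classes just pinned down, there is a well-known genus-one fibred knot in $S^3$ whose open book realises exactly this data: the two trefoils for the classes of $A$ and $-A^2$, and the figure-eight knot for the class of $\begin{bmatrix} 2 & 1 \\ 1 & 1 \end{bmatrix}$. Appealing to the classical result that these are the only fibred knots in $S^3$ with once-punctured torus fibre (see, e.g.\ \cite{burde2008knots}), the pair $(\partial Z^4, K)$ is determined up to homeomorphism by the conjugacy class of $M$, giving both $\partial Z^4 \cong S^3$ and the claimed identification of $K$.

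The main obstacle I anticipate is the last step: confirming that the canonical framing $\theta$ provided by the flow $\Phi$ in the adapted neighbourhood coincides with the framing that exhibits $\mathcal{M}_{\phi_{F_p^2}}$ as the exterior of a fibred knot in $S^3$, rather than producing some other Dehn filling on a once-punctured torus bundle. Once this framing issue is resolved, the trace computation together with the classification of genus-one fibred knots closes the argument; the preceding homological calculation is essentially routine.
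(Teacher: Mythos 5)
Your proposal is correct and essentially reproduces the paper's proof: the paper derives the same constraint $\det(I-A)=\pm 1$, hence trace $1$ or $3$, from a Mayer--Vietoris computation on the mapping torus (your Wang sequence is the identical calculation in different packaging, after noting that the peripheral twist $T_\delta^u$ dies under $\iota_*$), matches the traces to the conjugacy classes of $A$, $-A^2$ and the single trace-$3$ class, and likewise defers the binding statement to Burde--Zieschang (Proposition 5.14 in \cite{burde2008knots}). Your anticipated framing obstacle is not actually an issue for the lemma as stated: since $\partial Z^4\cong S^3$ and the adapted-neighbourhood open book already exhibits $K$ as a fibred knot with once-punctured-torus fibre, the classification of genus-one fibred knots in $S^3$ applies directly, and no Dehn-filling analysis is needed (it would only be needed if one tried to deduce $\partial Z^4\cong S^3$ from the monodromy alone, which neither you nor the paper attempts).
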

\begin{proof}
We only prove the assertion of the local monodromy and a complete proof has been introduced by Burde and Zieschang (see Proposition 5.14 in \cite{burde2008knots}).

The mapping class group of $F_p^2$ is generated by the Dehn twists along two intersecting loops $\alpha,\beta$ and the Dehn twist along the peripheral loop $\delta$.
With a careful arrangement, the peripheral loop is away from the others and therefore the local monodromy is the composition $\phi_{F_p^2}=T_{\delta}^u \circ \varphi_{F_p^2}$ with $u\in\mathbb{Z}$ and $\varphi_{F_p^2}$ generated by the Dehn twists along $\alpha,\beta$.
Thus, along the inclusion $\iota:F_p^2\hookrightarrow \mathbb{T}^2$, the pushforward $\iota_*(\phi_{F_p^2})$ is equal to the pushforward $\iota_*(\varphi_{F_p^2})$.
Fix the isomorphism between $\Mod(\mathbb{T}^2)$ and $\SL(2,\mathbb{Z})$ such that the induced homomorphism $\Mod(F_p^2)\rightarrow \SL(2,\mathbb{Z})$ sends $T_{\alpha}$ (resp. $T_{\beta}$) to
\begin{equation*}
    \begin{bmatrix}
    1 & 0\\ 1 & 1
    \end{bmatrix}
    \text{~(resp. }
    \begin{bmatrix}
    1 & -1\\ 0 & 1
    \end{bmatrix}
    \text{).}
\end{equation*}
Suppose that $\iota_*(\phi_{F_p^2})\in \Mod(\mathbb{T}^2)$ is expressed by $A=\begin{bmatrix}
a & b\\
c & d
\end{bmatrix}\in \SL(2,\mathbb{Z})$

We take the basis of the homology group $H_1(F_p^2)$ consisting of the cycles which are parallel with $\alpha$ and $\beta$, which further determines the bases of $H_1(A,\mathbb{Z})$, $H_1(B,\mathbb{Z})$ and $H_1(A\cap B,\mathbb{Z})$.
The pushforward $\phi_{F_p^2,*}:H_1(F_p^2,\mathbb{Z})\rightarrow H_1(F_p^2,\mathbb{Z})$ is again expressed by $A$.
The homomorphism $H_1(A\cap B,\mathbb{Z}) \xrightarrow{(i_*,j_*)}H_1(A,\mathbb{Z})\oplus H_1(B,\mathbb{Z})$ is an isomorphism whose corresponding $4\times 4$-matrix is given by
\begin{equation*}
    \begin{bmatrix}
    I & A\\
    I & I
    \end{bmatrix}.
\end{equation*}
satisfying $det(I-A)=\pm 1$. Hence $a+d=1$ or $3$.
\end{proof}

Conversely, we do have a connected closed oriented $4$-manifold $M^4$ with a singular fibration $f:M^4\rightarrow S^2$ that has the singularities as desired.
Both the trefoil knot and the figure-eight knot are defined by the links of polynomial maps $\mathbb{R}^4\rightarrow \mathbb{R}^2$ with an isolated critical point at $0$.
An explicit realisation of the trefoil knot was first given by Brauner (see \cite{brauner1928verhalten}) who constructs a complex polynomial
\begin{equation*}
    (f_{\text{Brauner}}:\mathbb{C}^2\rightarrow \mathbb{C}): (u,v)\mapsto u^2-v^3.
\end{equation*}
Perron found the first realisation of the figure-eight knot in \cite{perron1982noeud}.

We end with the proof of Corollary \ref{corollary::main_result_singular_fibrations}.

\begin{proof}[Proof of Corollary \ref{corollary::main_result_singular_fibrations}]
Without loss of generality, we assume that there does not exist any local Milnor fibre of genus $0$ with only $1$ boundary components.
By Corollary \ref{corollary::singular fibration milnor genus 0 and two bds} and Lemma \ref{lemma::singular fibration milnor genus 1 and 1 bd}, the type of singularities $\mathcal{O}(f_1)=\mathcal{O}(f_2)$ consists of simple conjugacy classes of $\SL(2,\mathbb{Z})$.

All singular fibres are locally symmetric.
If the local Milnor fibre is an annulus, then a mapping class that commutes with the fibre monodromy preserves this annulus up to isotopy.
If the local Milnor fibre is a torus with a disc removed, then no mapping class changes the local Milnor fibre up to isotopy.

Any pair of singular fibres with conjugate fibre monodromies
has local Milnor fibres compatible with their fibre monodromies,
%i.e. using the difference between fibre monodromies,
so they have the same local Milnor fibre up to isotopy.
Therefore, there exists a local fibre-preserving homeomorphism compatible with their fibre monodromies.

The corollary follows from Theorem \ref{thmx::C} and Theorem \ref{theorem::fibre-preserving equivalence}.
\end{proof}

\section{Theorem of R. Livn\'e, complement and extension}
\label{section::Livne}

In this section, $G$ is the modular group $\PSL(2,\mathbb{Z})\simeq \mathbb{Z}/2\mathbb{Z} \ast \mathbb{Z}/3\mathbb{Z}$, which we represent as $\langle a,b\mid a^3=b^2=1\rangle$. Each element in $G$ has the unique \emph{reduced form} as a word in $\{a,a^2,b\}$ where $b$'s and powers of $a$ appear alternatively. The \emph{length} of an element $g\in G$ is defined as the length of its reduced form, denoted by $l(g)$.

Recall that elements in
\begin{equation*}
    \mathcal{S}=\{a,a^2,b,s_0=a^2b,s_1=aba,s_2=ba^2,t_0=ba,t_1=a^2ba^2,t_2=ab\}\subset G
\end{equation*}
are ``\emph{short}''; the rest conjugates of short elements in $G$ are called ``\emph{long}'', which are expressed by $Q^{-1}a^{\epsilon}Q$, $Q^{-1}bQ$ or $Q^{-1}a^{\epsilon}ba^{\epsilon}Q$ with $\epsilon=1,2$ and $l(Q)\ge 1$. The following diagram shows all conjugates of short elements and their conjugates with $a,a^2$ and $b$.

\begin{center}
\begin{tikzpicture}[node distance={16mm}, thick]
\node (a) {$a$};
\node (aa) [right of=a] {$a^2$};
\node (b) [right of=aa] {$b$};
\node (s0) [right of=b] {$a^2b$};
\node (s1) [right of=s0] {$aba$};
\node (s2) [right of=s1] {$ba^2$};
\node (t0) [right of=s2] {$ba$};
\node (t1) [right of=t0] {$a^2ba^2$};
\node (t2) [right of=t1] {$ab$};
\draw[->] (a) to [out=135,in=45,looseness=6.5] node[above] {$a$} (a);
\node (3) [below of=a, node distance={15mm}] {$Q^{-1}aQ$};
\draw[->] (a) -- node[right] {$b$} (3);
\draw[->] (aa) to [out=135,in=45,looseness=4.5] node[above] {$a$} (aa);
\node (4) [below of=aa, node distance={15mm}] {$Q^{-1}a^2Q$};
\draw[->] (aa) -- node[right] {$b$} (4);
\draw[->] (b) to [out=135,in=45,looseness=6.5] node[above] {$b$} (b);
\node (5) [below of=b, node distance={15mm}] {$Q^{-1}bQ$};
\draw[->] (b) -- node[right] {$a$} (5);
\draw[->] (s0) -- node[above] {$a$} (s1);
\draw[->] (s1) -- node[above] {$a$} (s2);
\draw[->] (s2) to [out=90,in=90,looseness=0.5] node[above] {$a$} (s0);
\draw[<->] (s0) to [out=270,in=270,looseness=0.5] node[above] {$b$} (s2);
\node (1) [below of=s1, node distance={25mm}] {$Q^{-1}abaQ$};
\draw[->] (s1) to [out=315,in=45,looseness=0.5] node[right] {$b$} (1);
\draw[->] (t0) -- node[above] {$a$} (t1);
\draw[->] (t1) -- node[above] {$a$} (t2);
\draw[->] (t2) to [out=90,in=90,looseness=0.5] node[above] {$a$} (t0);
\draw[<->] (t0) to [out=270,in=270,looseness=0.5] node[above] {$b$} (t2);
\node (2) [below of=t1, node distance={25mm}] {$Q^{-1}a^2ba^2Q$};
\draw[->] (t1) to [out=315,in=45,looseness=0.5] node[right] {$b$} (2);
\end{tikzpicture} 
\end{center}

Note that there are nice circuits along $s_0,s_1,s_2$ and $t_0,t_1,t_2$. In fact, we will see a lot of symmetric properties on them. For convenience, the subscripts are regarded as elements in $\mathbb Z/3\mathbb Z$ and represented by $0,1,2$ without further explanations.

Recall that elements in
\[
    \mathcal{S}_2=\mathcal{S}\cup 
    \{
    bab, ba^2b, a^2ba, aba^2,
    a^2bab, ababa, baba^2, ba^2ba, a^2ba^2ba^2, aba^2b
    \}
\]
are ``almost short''; the rest conjugates of almost short elements in $G$ are called ``almost long'', which are expressed by
\[
    Q^{-1}ba^{\epsilon}bQ,
    Q^{-1}a^{\epsilon}ba^{\epsilon}Q,
    Q^{-1}a^{\epsilon}ba^{-\epsilon}Q \text{ or }
    Q^{-1}a^{\epsilon}ba^{\epsilon}ba^{\epsilon}Q
\]
with $\epsilon=1,2$ and $l(Q)\ge 1$. The almost short elements correspond to six conjugacy classes of $G$, five of which have been illustrated and the following is the last one.

\begin{center}
\begin{tikzpicture}[node distance={16mm}, thick]
\node (aabab)  {$a^2bab$};
\node (ababa) [right of=aabab] {$ababa$};
\node (babaa) [right of=ababa] {$baba^2$};
\node (baaba)  [below of=aabab] {$ba^2ba$};
\node (aabaabaa) [right of=baaba] {$a^2ba^2ba^2$};
\node (abaab) [right of=aabaabaa] {$aba^2b$};
\node (1) [left of=aabab, node distance={25mm}] {$Q^{-1}ababaQ$};
\node (2) [right of=abaab, node distance={25mm}] {$Q^{-1}a^2ba^2ba^2Q$};

\draw[->] (aabab) -- node[above] {$a$} (ababa);
\draw[->] (ababa) -- node[above] {$a$} (babaa);
\draw[->] (babaa) to [out=270,in=270,looseness=0.5] node[above] {$a$} (aabab);
\draw[->] (baaba) -- node[below] {$a$} (aabaabaa);
\draw[->] (aabaabaa) -- node[below] {$a$} (abaab);
\draw[->] (abaab) to [out=90,in=90,looseness=0.5] node[below] {$a$} (baaba);

\draw[<->] (aabab) to [out=225,in=135,looseness=0.5] node[right] {$b$} (baaba);
\draw[<->] (babaa) to [out=315,in=45,looseness=0.5] node[right] {$b$} (abaab);
\draw[->] (ababa) to [out=90,in=90,looseness=0.5] node[below] {$b$} (1);
\draw[->] (aabaabaa) to [out=270,in=270,looseness=0.5] node[above] {$b$} (2);
\end{tikzpicture} 
\end{center}

Recall that elementary transformations $R_i$, $1\le i\le n-1$ on $n$-tuples in $G$ send $(g_1,\ldots g_n)$ to $(g_1,\ldots,g_{i-1},g_{i+1},g_{i+1}^{-1}g_ig_{i+1},g_{i+2},\ldots,g_n)$ respectively. The inverse of $R_i$ is given by $R_i^{-1}$ sending $(g_1,\ldots,g_n)$ to $(g_1,\ldots,g_{i-1},g_ig_{i+1}g_i^{-1},g_i,g_{i+2},\ldots,g_n)$.
Both $R_i$ and $R_i^{-1}$ are called elementary transformations.
Especially, we will neither apply $R_i$ if $g_i=1$ nor apply $R_i^{-1}$ if $g_{i+1}=1$, but use $R_i^{-1}$ and $R_i$ instead respectively to avoid troubles.

Elementary transformations introduce many elegant substitutions for pairs of short elements. Here we list some substitutions in the following graphs for readers unfamiliar with them.

\begin{tikzpicture}[node distance={20mm}, thick]
\node (s0s2) {$(s_0,s_2)$};
\node (s1s0) [below right of=s0s2] {$(s_1,s_0)$};
\node (s2s1) [above right of=s0s2] {$(s_2,s_1)$};
\draw[-] (s0s2) -- (s1s0);
\draw[-] (s0s2) -- (s2s1);
\draw[-] (s1s0) -- (s2s1);

\node (s0a) [right of=s2s1] {$(s_i,a)$};
\node (as1) [right of=s0a] {$(a,s_{i+1})$};
\draw[-] (s0a) -- (as1);
\node (s0A) [below of=s0a, node distance={5mm}] {$(s_i,a^2)$};
\node (As2) [right of=s0A] {$(a^2,s_{i-1})$};
\draw[-] (s0A) -- (As2);
\node (s0b) [below of=s0A, node distance={10mm}] {$(s_0,b)$};
\node (bs2) [right of=s0b] {$(b,s_2)$};
\node (s2b) [below of=s0b, node distance={5mm}] {$(s_2,b)$};
\node (bs0) [right of=s2b] {$(b,s_0)$};
\draw[-] (s0b) -- (bs2);
\draw[-] (s2b) -- (bs0);

\node (t0a) [right of=as1] {$(t_i,a)$};
\node (at1) [right of=t0a] {$(a,t_{i+1})$};
\draw[-] (t0a) -- (at1);
\node (t0A) [below of=t0a, node distance={5mm}] {$(t_i,a^2)$};
\node (At2) [right of=t0A] {$(a^2,t_{i-1})$};
\draw[-] (t0A) -- (At2);
\node (t0b) [below of=t0A, node distance={10mm}] {$(t_0,b)$};
\node (bt2) [right of=t0b] {$(b,t_2)$};
\node (t2b) [below of=t0b, node distance={5mm}] {$(t_2,b)$};
\node (bt0) [right of=t2b] {$(b,t_0)$};
\draw[-] (t0b) -- (bt2);
\draw[-] (t2b) -- (bt0);

\node (t2t0) [right of=at1] {$(t_2,t_0)$};
\node (t0t1) [below right of=t2t0] {$(t_0,t_1)$};
\node (t1t2) [below left of=t0t1] {$(t_1,t_2)$};
\draw[-] (t0t1) -- (t1t2);
\draw[-] (t0t1) -- (t2t0);
\draw[-] (t1t2) -- (t2t0);

\node (s0t1) [below of=bs0, node distance={10mm}] {$(s_i,t_{i+1})$};
\node (t1s2) [right of=s0t1] {$(t_{i+1},s_{i-1})$};
\node (s0t2) [below of=s0t1, node distance={5mm}] {$(s_i,t_{i-1})$};
\node (t1s0) [right of=s0t2] {$(t_{i+1},s_i)$};
\draw[-] (s0t1) -- (t1s2);
\draw[-] (s0t2) -- (t1s0);
\end{tikzpicture} 

\begin{definition}
An $n$-tuple $(g_1,\ldots,g_n)$ in $G$ is said to be \emph{inverse-free} if, applying any finite sequence of elementary transformations, the resulting $n$-tuple satisfies the following requirements:
\begin{enumerate}[-,topsep=0pt,itemsep=-1ex,partopsep=1ex,parsep=1ex]
\item it contains no adjacent elements which are mutually inverse;
\item it contains no sub-triple of the form $(h,h,h)$ with $h^3=1$.
\end{enumerate}
\end{definition}

For instance, $(s_1,t_1)$, $(a,a^2)$, $(b,b)$, $(a,a,a)$ and their concatenations are not inverse-free.

\begin{theorem}[Livn\'e]
\label{theorem::Livne}
Let $g_1,\ldots, g_n$ be conjugates of $s_1$ such that $g_1\cdots g_n=1$. Then, the $n$-tuple $(g_1,\ldots,g_n)$ is Hurwitz equivalent to an $n$-tuple $(h_1,\ldots,h_n)$ with each $h_i$ short (i.e. the component $h_i$ is equal to one of $s_0$, $s_1$ and $s_2$).
\end{theorem}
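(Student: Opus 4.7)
The plan is to induct on the total word length $L(g_1,\ldots,g_n) = \sum_{i=1}^n l(g_i)$ with respect to the normal form in $G = \mathbb{Z}/2\mathbb{Z} \ast \mathbb{Z}/3\mathbb{Z}$. In the base case every $g_i$ is short, and since each $g_i$ is a conjugate of $s_1$, being short forces $g_i \in \{s_0, s_1, s_2\}$, which is exactly what is claimed. For the inductive step I assume some $g_k$ is long, meaning its reduced form is $g_k = w^{-1} s_{i_k} w$ with $l(w) \ge 1$ and no cancellation between $w^{-1}$, $s_{i_k}$, and $w$; the goal is then to exhibit a finite sequence of elementary transformations that strictly decreases $L$, after which the induction closes.

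The key mechanism is the following. Because $g_1 \cdots g_n = 1$ in $G$, the cyclic concatenation of the reduced forms of the $g_i$ must collapse entirely in the free product. This forces at least one junction in the cyclic arrangement $g_1 g_2 \cdots g_n g_1 \cdots$ to exhibit a non-trivial cancellation, i.e.\ the last letter of some $g_j$ is the formal inverse of the first letter of $g_{j+1}$. Using cyclic permutations, which are realised by elementary transformations via Lemma \ref{lemma::cyclic shift}, I may bring such a cancelling pair into adjacent positions $(g_j, g_{j+1})$. Writing $g_j = w_j^{-1} s_{i_j} w_j$ with $w_j = x \cdot w_j'$ and $g_{j+1}$ beginning in $x^{-1}$, the elementary transformation $R_j^{-1}$ replaces $(g_j, g_{j+1})$ by $(g_j g_{j+1} g_j^{-1}, g_j)$; the junction cancellation $x \cdot x^{-1} = 1$ propagates inward, and a direct computation in the free product shows that $l(g_j g_{j+1} g_j^{-1}) \le l(g_{j+1}) - 2$, so $L$ drops by at least two while all other components remain unchanged. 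Iterating the procedure drives $L$ down to its minimum, at which point the induction hypothesis applies.

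The main obstacle is to certify that a cancellation pair can always be located and acted on when some $g_k$ is long, and that the length-reducing move does not inadvertently create a long component elsewhere. The delicate case is when the cancellation at the chosen junction would eat into the short cores $s_{i_j}$ and $s_{i_{j+1}}$ rather than only the conjugating factors $w_j^{\pm 1}$; there one must verify, by a case analysis based on the diagrams of short elements and their immediate conjugates displayed just before the theorem, that a preparatory substitution among the short adjacencies $(s_i, s_j) \leftrightarrow (s_k, s_\ell)$ restores a genuine length decrease. A secondary subtlety arises when every long component is surrounded only by short ones with no apparent cancellation; here I use the fact that the product $g_j g_{j+1}$ of a short element with a long conjugate of $s_1$ is never cyclically reduced of length $l(g_j) + l(g_{j+1})$, so some elementary transformation among the short neighbours can be arranged to expose the required junction. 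Once the existence of a length-reducing move is guaranteed at every non-minimal configuration, the theorem follows by induction.
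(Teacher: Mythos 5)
Your inductive framework (induct on $\sum_i l(g_i)$, reduce when some component is long) matches the general shape of the known argument, but the key step is false as stated. You claim that once a single-letter cancellation occurs at the junction of $g_j$ and $g_{j+1}$, the move $R_j^{-1}$ gives $l(g_jg_{j+1}g_j^{-1})\le l(g_{j+1})-2$. Counterexample: take $g_j=b(aba)b=babab$ and $g_{j+1}=(ba^2b)^{-1}(aba)(ba^2b)=bababab a^2b$, both conjugates of $s_1$, with $l(g_j)=5$ and $l(g_{j+1})=9$. The junction cancels ($b\cdot b=1$), but the cancellation halts after the merge $a\cdot a=a^2$, and a direct computation gives
\begin{equation*}
l\big(g_jg_{j+1}g_j^{-1}\big)=l\big(baba^2babab\,a\,ba^2b\big)=13>9,
\qquad
l\big(g_{j+1}^{-1}g_jg_{j+1}\big)=13>5,
\end{equation*}
so \emph{neither} elementary transformation on this adjacent pair decreases the total length, despite the cancelling junction. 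A one-letter overlap is simply not enough: as in peak-reduction arguments, a conjugation move shortens only when the overlap exceeds roughly half the length of one of the two factors. Quantifying this threshold, and proving that a reducing configuration always exists when some component is long, is precisely the hard content of the theorem. In this paper that content lives in Lemma \ref{lemma::m_i-1_and_m_i}: under the hypotheses that no adjacent pair from Table \ref{table::short elements} occurs and no sequence of elementary transformations makes $\sum_i f(g_i)$ strictly smaller, one proves the overlap bounds $m_i\le\frac{l(g_i)+1}{2}$, $m_i\le\frac{l(g_{i+1})+1}{2}$, and then parts (b) and (c) show these bounds are globally incompatible with $g_1\cdots g_n=1$ — a contradiction argument over the whole tuple, not a local junction argument. (The paper cites Moishezon for Theorem \ref{theorem::Livne} itself and proves the generalisation, Theorem \ref{theorem::Livne S->short}, this way, with the contraction--restoration bookkeeping of Proposition \ref{proposition::reduce_and_reloading} ensuring that restored components stay short.) Your proposal assumes exactly this missing lemma.

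The fallback remarks in your final paragraph do not repair the gap. The asserted fact that ``the product of a short element with a long conjugate of $s_1$ is never cyclically reduced of length $l(g_j)+l(g_{j+1})$'' is false: $s_1\cdot(babab)=abababab$ is reduced, and cyclically reduced, of length $8=3+5$. And the ``preparatory substitution among the short adjacencies'' that is supposed to restore a length decrease is precisely the unproved case analysis that occupies the bulk of the actual proof; without it your induction can stall at configurations, such as the pair above embedded in a longer tuple, where every single elementary transformation preserves or increases $\sum_i l(g_i)$ and the minimum can only be reached through length-increasing intermediate moves. One small point does work in your favour: since $(Q^{-1}s_1Q)^{-1}$ is a conjugate of $t_1$ and never of $s_1$, a tuple of conjugates of $s_1$ is automatically inverse-free, so the Livn\'e case avoids the $(x,x^{-1})$ and $(l,l,l)$ degenerations the general theorem must handle — but this does not rescue the main step.
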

Moishezon showed a proof of Theorem \ref{theorem::Livne} and introduced the following complement in \cite{moishezon1977complex}.
\begin{theorem}[Moishezon]
\label{theorem::Moishezon}
Let $h_1,\ldots, h_n$ be such that each of $h_i$, $i=1,\ldots,n$, is equal to one of $s_0$, $s_1$ and $s_2$ satisfying $h_1\cdots h_n=1$. Then, $n\equiv 0\pmod{6}$ and the $n$-tuple $(h_1,\ldots,h_n)$ is Hurwitz equivalent to $(s_0,s_2)^{n/2}$.
\end{theorem}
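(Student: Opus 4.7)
The plan is to first deduce $6\mid n$ from an abelianization argument, and then to prove the Hurwitz equivalence by induction on $n$, extracting at each step a length-$6$ block of the form $(s_0,s_2)^3$ from the tuple.

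For the divisibility, I would project $G=\PSL(2,\mathbb{Z})$ onto its abelianization $\mathbb{Z}/2\oplus\mathbb{Z}/3\cong\mathbb{Z}/6$, under which $a\mapsto(\bar 0,\bar 1)$ and $b\mapsto(\bar 1,\bar 0)$. Each of $s_0=a^2b$, $s_1=aba$, $s_2=ba^2$ then maps to the same element $(\bar 1,\bar 2)$, so the hypothesis $h_1\cdots h_n=1$ forces $n\cdot(\bar 1,\bar 2)=0$, i.e.\ $2\mid n$ and $3\mid n$, hence $6\mid n$.

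The key computational input for the equivalence is the chain of basic pair-substitutions $R_1(s_0,s_2)=(s_2,s_1)$, $R_1(s_2,s_1)=(s_1,s_0)$, $R_1(s_1,s_0)=(s_0,s_2)$, which one checks directly using $s_i^{-1}=t_i$ and the identities $s_0s_2=s_1s_0=s_2s_1=a$. In particular, the three short pairs whose product equals $a$ are pairwise Hurwitz equivalent and may be cyclically rotated. The induction then proceeds as follows: for $n\ge 12$, use a cyclic permutation (Lemma \ref{lemma::cyclic shift}) to place an $s_2$ at position $n$, then apply the three substitutions above---possibly after longer excursions through long conjugates of $s_1$ produced by additional $R_i$'s---to reshape the last six entries into $(s_0,s_2,s_0,s_2,s_0,s_2)$. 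Since this block has product $a^3=1\in Z(G)$, Lemma \ref{lemma::move subtuples} allows it to be treated as a self-contained sub-tuple, and the inductive hypothesis is applied to the remaining $n-6$ short components, whose product is again $1$. The base case $n=6$ is handled by enumerating the multi-sets $(p,q,r)$ of occurrences of $s_0,s_1,s_2$ that are compatible with the product-equals-$1$ constraint in $G$ (a constraint strictly stronger than its abelianized counterpart) and verifying case by case that each such $6$-tuple reduces to $(s_0,s_2)^3$ via the basic substitutions.

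The main obstacle I anticipate is making the reshaping step constructive: a purely local, one-entry-at-a-time procedure can stall in configurations such as $(\ldots,s_0,s_0,s_2,s_2,\ldots)$, in which no adjacent pair is an $a$-pair and none of the three basic short substitutions applies. The remedy is to introduce a strictly decreasing monovariant---for instance, a weighted count of ``obstructing'' non-$a$ adjacent pairs---whose value can always be decreased by a suitable sequence of $R_i^{\pm 1}$'s, even when that sequence temporarily introduces long elements before returning to a short configuration. Exhibiting such a monovariant, together with the accompanying case analysis, constitutes the core combinatorial content of the theorem.
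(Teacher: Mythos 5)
Your divisibility argument is correct: all three of $s_0,s_1,s_2$ map to the same element of order $6$ in the abelianization $\mathbb{Z}/2\oplus\mathbb{Z}/3$ of $\PSL(2,\mathbb{Z})$, so $h_1\cdots h_n=1$ forces $6\mid n$. The $R_1$-cycle $(s_0,s_2)\to(s_2,s_1)\to(s_1,s_0)\to(s_0,s_2)$ is also correct (all three pairs have product $a$), and the overall skeleton—extract a $(s_0,s_2)^3$ block with product $a^3=1$, isolate it by Lemma \ref{lemma::move subtuples}, and induct—is indeed the shape of Moishezon's argument. But the proof has a genuine gap exactly where you flag it: the claim that the reshaping step never stalls. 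You neither exhibit the monovariant nor prove it can always be strictly decreased, and this is not a routine detail to be filled in later—it is the entire combinatorial content of the theorem. Worse, the remedy you sketch cannot work as stated: no \emph{local} weighted count of ``obstructing'' adjacent pairs can suffice, because whether a stalled configuration such as $(\ldots,s_0,s_0,s_2,s_2,\ldots)$ can be improved depends on global information, namely that the full product equals $1$. The correct mechanism (Moishezon, pp.~181--187, reflected in this paper in Proposition \ref{proposition::reduced_form_s0_s2} and the \textsc{Moishezon} procedure of Appendix \ref{section::computability}) is dual to a monovariant: one first eliminates $s_1$'s via $(s_1,s_0)\to(s_0,s_2)$ and $(s_2,s_1)\to(s_0,s_2)$, passes to the lexicographically \emph{maximal} tuple of $s_0,s_2$ in the Hurwitz orbit, writes it in block form $\prod_i (s_2)^{u_i}\bullet(s_0)^{v_i}$, proves structural claims on the exponents ($u_i\ge 2$, $v_1\ge 2$, constraints when $v_i=1$), and then computes the \emph{reduced word} of the product in $\mathbb{Z}/3\ast\mathbb{Z}/2$: if no consecutive $(s_0,s_2)^3$ occurs, that reduced word is demonstrably nontrivial, contradicting $h_1\cdots h_n=1$. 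So the impossibility of stalling is a consequence of a normal-form computation in the free product, not of a decreasing weight; supplying your monovariant would amount to reproving this from scratch.

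Two smaller defects. First, your base case is not finished by ``enumerating the multi-sets $(p,q,r)$'': the product $h_1\cdots h_6$ depends on the order of the entries, not just their multiplicities, so you must verify all ordered $6$-tuples with product $1$ (a finite but genuine check, and one must also ensure the connecting Hurwitz moves can be found within tuples that stay short, or else track excursions through long elements). Second, in the inductive step your ``excursions through long conjugates'' conjugate the remaining $n-6$ components as a side effect, and you need them to return to elements of $\{s_0,s_1,s_2\}$ before invoking the inductive hypothesis; this bookkeeping is precisely what the paper's contraction/restoration machinery (Subsection \ref{subsection::contractions-restorations} and Proposition \ref{proposition::reduce_and_reloading}) is built to control, and it is absent from your sketch.
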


In this section, We first extend the above theorems for $(g_1,\ldots,g_n)$ with each $g_i$ conjugate to some short element, then we show a similar result for $(g_1,\ldots,g_n)$ when each $g_i$ is conjugate to some almost short element.

\subsection{Tuples of short elements}

Recall the set of short elements is $\mathcal{S}=\{a,a^2,b,s_0,s_1,s_2,t_0,t_1,t_2\}$.
We first show that an inverse-free tuple of short elements cannot contain both $s_i$ and $t_j$ for any $(i,j)\in (\mathbb Z/3 \mathbb Z)^2$.

\begin{proposition}
\label{proposition::ST}
Let $g_1,\ldots,g_n$ be short satisfying at most one of them is equal to one of $a$, $a^2$, $b$ and $g_1\cdots g_n=1$. Suppose that $(g_1,\ldots,g_n)$ is inverse-free. Then, either each of $g_i$, $i=1,\ldots,n$ is equal to one of $a,a^2,b,s_0,s_1,s_2$ or each of $g_i$, $i=1,\ldots,n$ is equal to one of $a,a^2,b,t_0,t_1,t_2$.
\end{proposition}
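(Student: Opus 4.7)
The plan is to argue by contradiction: assume the tuple $(g_1,\ldots,g_n)$ contains both some element $g_k \in \{s_0, s_1, s_2\}$ and some element $g_{k'} \in \{t_0, t_1, t_2\}$. The strategy is to use elementary transformations to produce somewhere in the tuple an adjacent pair $(s_i, t_i)$; since $s_i t_i = 1$, such a pair is mutually inverse, directly contradicting inverse-freeness.

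The first step is to catalog the short-preserving elementary substitutions on adjacent pairs of short elements, completing the substitution diagrams displayed earlier: the three-cycles $(s_i, s_{i-1}) \leftrightarrow (s_{i-1}, s_{i-2})$ and $(t_i, t_{i+1}) \leftrightarrow (t_{i+1}, t_{i+2})$; the auxiliary moves $(s_i, a) \leftrightarrow (a, s_{i+1})$, $(s_i, a^2) \leftrightarrow (a^2, s_{i-1})$, and $(s_0, b) \leftrightarrow (b, s_2)$, $(s_2, b) \leftrightarrow (b, s_0)$ together with their $t$-analogues; and the mixed swaps $(s_i, t_{i+1}) \leftrightarrow (t_{i+1}, s_{i-1})$ and $(s_i, t_{i-1}) \leftrightarrow (t_{i+1}, s_i)$, which I would verify by direct computation. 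The key observation is that every adjacent pair of short elements admits a short-preserving substitution with the unique exception of $(s_i, t_i)$ (and $(t_i, s_i)$), and this exception is precisely the forbidden inverse pair.

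With the catalog in hand, I would cyclically shift (Lemma \ref{lemma::cyclic shift}) the tuple so that some $s$-type element is immediately followed by some $t$-type element $(s_i, t_j)$. If $j = i$ we are done; otherwise $j \equiv i \pm 1 \pmod{3}$, so the appropriate mixed swap moves the $t$-element one position leftward with a predictably updated index. Iterating, interleaved with the $s$-three-cycle to realign indices of the neighbouring $s$-letters and a single pass through any optional $a$, $a^2$, or $b$ letter (with its tabulated index-shift), the migrating $t$ must eventually meet an $s$ of matching index, yielding $(s_i, t_i)$. Rigor uses the constraint $g_1 \cdots g_n = 1$ projected to the abelianization $\PSL(2,\mathbb{Z})^{\mathrm{ab}} \cong \mathbb{Z}/6$ via $s_i \mapsto 1$, $t_j \mapsto -1$, $a \mapsto 2$, $a^2 \mapsto -2$, $b \mapsto 3$, giving a congruence $\#s - \#t \equiv \delta \pmod{6}$ with small $\delta$; combined with a pigeonhole modulo $3$ on the indices, this forces the two index-trajectories to intersect.

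The main obstacle will be the bookkeeping of indices during propagation: each mixed swap alters both the $s$- and $t$-indices, so the sequence of moves must be chosen so the trajectories coincide before the $t$ exits the $s$-block. The hypothesis ``at most one of $a$, $a^2$, $b$'' is genuinely needed here, because $b$ admits no short substitution when placed adjacent to $s_1$ or $t_1$; a single such letter can be maneuvered around by a one-time local adjustment, whereas two or more could persistently obstruct the migration of the $t$-element and block the argument.
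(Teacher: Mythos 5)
Your opening moves coincide with the paper's: the mixed swaps $(s_i,t_{i+1})\leftrightarrow(t_{i+1},s_{i-1})$ and $(s_i,t_{i-1})\leftrightarrow(t_{i+1},s_i)$ are correct and are exactly how the paper begins (it uses them to sort the tuple into an $s$-block followed by a $t$-block). But your second half has a genuine gap, in two places. First, the ``key observation'' is false as stated: $(s_i,t_i)$ is not the unique rigid pair. The pair $(s_0,s_1)$ admits no short-preserving substitution, since $R_1^{\pm 1}$ produce the long element $s_0s_1s_0^{-1}=s_1^{-1}s_0s_1=a^2bababa$; the same holds for all pairs $(s_i,s_{i+1})$, $(s_i,s_i)$, $(t_i,t_{i-1})$, $(t_i,t_i)$, and for $b$ adjacent to $s_1$ or $t_1$ (consistent with the paper's conjugation diagram, where conjugating $s_1$ by $b$ gives a long element). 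The $s$-three-cycle is available only on pairs of the form $(s_i,s_{i-1})$, so your ``realign indices'' step is often unavailable. Second, and fatally, the migration need not terminate in a matching pair: after the $t$ passes $s_i$ (by either swap) it carries index $i+1$, so it matches the next $s_{i'}$ to its left if and only if $i'\equiv i+1\pmod 3$. In a constant-index configuration such as $(s_0,\ldots,s_0,t_2,\ldots,t_2)$ — where no internal realignment exists because $(s_0,s_0)$ is rigid — the migrating $t$ locks at index $1$ forever and never matches. Your proposed rescue by abelianization cannot detect this: $s_0t_2$ maps to $1-1=0$ in $\PSL(2,\mathbb{Z})^{\mathrm{ab}}\cong\mathbb{Z}/6$, so precisely these stuck configurations satisfy your congruence; the $\mathbb{Z}/6$ invariant sees only $\#s-\#t$ (and the contribution of the optional letter) and is blind to the mod-$3$ index structure your pigeonhole would need.

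What is actually required — and what the paper supplies — is a non-abelian normal-form argument in $\mathbb{Z}/2\ast\mathbb{Z}/3$. After sorting, inverse-freeness restricts which indices can coexist ($s_j$ and $t_j$ cannot both occur, and adjacent $(t_k,t_{k+1})$ can be re-cycled to manufacture a forbidden index), so every surviving configuration has product of the shape $\tilde h\,s_j^u t_k^v$ or $\tilde h\,s_j^u t_{j-1}^v t_{j+1}^w$ with $\tilde h\in\{1,a,a^2,b\}$. The paper then computes the reduced word of each such product explicitly and checks it is never trivial, contradicting $g_1\cdots g_n=1$. This exhaustive reduced-word computation is the irreplaceable core of the proof; it is exactly the step your abelianization-plus-pigeonhole paragraph was standing in for, and it cannot be replaced by any invariant that vanishes on $s_jt_k$ with $k\neq j$.
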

\begin{proof}
Assume that at least one of $g_1,\ldots,g_n$ is conjugate to $s_0$ and at least one of $g_1,\ldots,g_n$ is conjugate to $t_0$.
The substitution of $(s_k,t_{k+1})$, $(t_{k+1},s_{k-1})$ and the substitution of $(s_k,t_{k-1})$, $(t_{k+1},s_k)$ imply that $(g_1,\ldots,g_n)$ can be transformed by elementary transformations into $(h_1,\ldots,h_n)$ with $p,q\ge 1$, $p+q\in\{n-1,n\}$ such that $h_1\in\{a,a^2,b,s_0,s_1,s_2\}$, $h_i\in\{s_0,s_1,s_2\}$ for $i=2,\ldots,n-q$ and $h_i\in\{t_0,t_1,t_2\}$ for $i=n-q+1,\ldots,n$.
Let $\mathcal{A}$ be the set of elements in $\{h_1,\ldots,h_n\}$, $\mathcal{A}_s=\mathcal{A}\cap\{s_0,s_1,s_2\}$ and $\mathcal{A}_t=\mathcal{A}\cap\{t_0,t_1,t_2\}$. The inverse-freeness requires that $s_j$ and $t_j$ cannot appear together in $\mathcal{A}$.

\allowdisplaybreaks
Assume that $|\mathcal{A}_s|=1=|\mathcal{A}_t|$. Then the product $h_1\cdots h_n$ is expressed by $\tilde{h}s_j^ut_k^v$ with $\tilde{h}\in\{1,a,a^2,b\}$, $u,v\ge 1$ and $j\neq k$. To ensure that $(g_1,\ldots,g_n)$ is inverse-free, the product must be one of the following forms with $u,v\ge 1$.
\begin{align*}
\tilde{h}=1 ~~\Rightarrow~~
  & s_0^u t_1^v=(a^2b)^u(a^2ba^2)^v,\\
  & s_0^u t_2^v=(a^2b)^u(ab)^v,\\
  & s_1^u t_0^v=(aba)^u(ba)^v,\\
  & s_1^u t_2^v=(aba)^u(ab)^v,\\
  & s_2^u t_0^v=(ba^2)^u(ba)^v,\\
  & s_2^u t_1^v=(ba^2)^u(a^2ba^2)^v.\\
\tilde{h}=a ~~\Rightarrow~~
  & a s_0^u t_1^v=a(a^2b)^u(a^2ba^2)^v=b(a^2b)^{u-1}(a^2ba^2)^v,\\
  & a s_1^u t_2^v=a(aba)^u(ab)^v,\\
  & a s_2^u t_0^v=a(baa)^u(ba)^v.\\
\tilde{h}=a^2 ~~\Rightarrow~~
  & a^2 s_0^u t_2^v=a^2(a^2b)^u(ab)^v,\\
  & a^2 s_1^u t_0^v=a^2(aba)^u(ba)^v=ba(aba)^{u-1}(ba)^v,\\
  & a^2 s_2^u t_1^v=a^2(ba^2)^u(a^2ba^2)^v.\\
\tilde{h}=b ~~\Rightarrow~~
  & b s_0^u t_1^v=b(a^2b)^u(a^2ba^2b)^v,\\
  & b s_1^u t_0^v=b(aba)^u(ba)^v,\\
  & b s_1^u t_2^v=b(aba)^u(ab)^v,\\
  & b s_2^u t_1^v=b(ba^2)^u(a^2ba^2)^v=a^2(ba^2)^{u-1}(a^2ba^2)^v.    
\end{align*}
However, each of them cannot express $1$, which contradicts the fact that $h_1\cdots h_n=g_1\cdots g_n=1$.
\interdisplaylinepenalty=10000

Assume that $|\mathcal{A}_s|=1$ and $|\mathcal{A}_t|=2$. Pairs of the form $(t_j,t_{j+1})$ never appear since the substitutions of $(t_0,t_1)$, $(t_1,t_2)$ and $(t_2,t_0)$ imply a contradiction with the inverse-freeness. If $\tilde{h}\neq 1$, then the tuple $(h_1,\ldots,h_n)$ is expressed by $(\tilde{h}) \bullet (s_j)^u \bullet (t_{j-1})^v \bullet (t_{j+1})^w$ with $\tilde{h}\in\{a,a^2,b\}$, $u,v,w\ge 1$ and some $j$. The substitution of $(a,s_k)$, $(s_{k-1},a)$ and the substitution of $(a^2,s_k)$, $(s_{k+1},a^2)$ reveal that $\tilde{h}=b$. However, the substitution of $(b,s_0)$, $(s_2,b)$, the substitution of $(b,s_2)$, $(s_0,b)$ and the following substitutions
\begin{eqnarray*}
(b) \bullet (s_1)^u \bullet (t_0)^v \bullet (t_2)^w \longrightarrow 
(b) \bullet (t_0)^w \bullet (s_1)^u \bullet (t_0)^v \longrightarrow
(b) \bullet (t_2)^v \bullet (t_0)^w \bullet (s_1)^u
\end{eqnarray*}
further conclude that $\tilde{h}=1$. Thus, $g_1\cdots g_n=h_1\cdots h_n$ is expressed by $s_0^u t_2^v t_1^w$, $s_1^u t_0^v t_2^w$ or $s_2^u t_1^v t_0^w$ each of which cannot express $1$, which is a contradiction.

We have a similar argument for the case where $|\mathcal{A}_s|=2$ and $|\mathcal{A}_t|=1$. Hence either none of $g_1,\ldots,g_n$ is conjugate to $s_0$ or none of $g_1,\ldots,g_n$ is conjugate to $t_0$. We finish the prove of the proposition.
\end{proof}

As an immediate consequence, we have Lemma \ref{lemma::ST}.
\begin{lemma}
\label{lemma::ST}
Let $g_1,\ldots, g_n$ be equal to $s_0$, $s_1$, $s_2$, $t_0$, $t_1$ or $t_2$ satisfying $g_1\cdots g_n=1$. Suppose that $(g_1,\ldots,g_n)$ is inverse-free. Then, $n\equiv 0\pmod{6}$ and the $n$-tuple $(g_1,\ldots,g_n)$ is Hurwitz equivalent to either $(s_0,s_2)^{n/2}$ or $(t_0,t_2)^{n/2}$.
\end{lemma}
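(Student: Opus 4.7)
The plan is to invoke Proposition \ref{proposition::ST} to obtain a clean dichotomy, then dispatch the $s$-case by Moishezon's Theorem \ref{theorem::Moishezon} and reduce the $t$-case to it via an explicit involution of $G = \PSL(2,\mathbb{Z})$.

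First I would observe that each component $g_i$ lies in $\{s_0, s_1, s_2, t_0, t_1, t_2\}$, so none of them equals any of $a$, $a^2$, $b$. Thus the hypothesis of Proposition \ref{proposition::ST} is satisfied vacuously (zero components, certainly at most one, belong to $\{a,a^2,b\}$), and that proposition forces the dichotomy that either every $g_i \in \{s_0,s_1,s_2\}$ or every $g_i \in \{t_0,t_1,t_2\}$. In the first case, the hypothesis of Theorem \ref{theorem::Moishezon} is exactly met, so $n \equiv 0 \pmod 6$ and $(g_1,\ldots,g_n)$ is Hurwitz equivalent to $(s_0,s_2)^{n/2}$, finishing that branch.

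For the second case I would transport the tuple through the involutive automorphism $\sigma \in \Aut(G)$ given on generators by $\sigma(a) = a^2$ and $\sigma(b) = b$; this is well-defined since the relations $a^3 = b^2 = 1$ are preserved and $\sigma^2 = \mathrm{id}$ is immediate. A short computation gives $\sigma(t_0) = ba^2 = s_2$, $\sigma(t_1) = aba = s_1$, $\sigma(t_2) = a^2 b = s_0$, so $\sigma$ interchanges $\{s_0,s_1,s_2\}$ with $\{t_0,t_1,t_2\}$. Since the Hurwitz moves $R_i$ are defined purely from group operations, applying $\sigma$ componentwise is Hurwitz-equivariant. Hence $(\sigma(g_1),\ldots,\sigma(g_n))$ is a tuple of $s$-type short elements with product $\sigma(1) = 1$, and Theorem \ref{theorem::Moishezon} again yields $6 \mid n$ together with Hurwitz equivalence to $(s_0,s_2)^{n/2}$. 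Pulling back by $\sigma^{-1} = \sigma$ converts this into $(g_1,\ldots,g_n) \sim (\sigma(s_0),\sigma(s_2))^{n/2} = (t_2,t_0)^{n/2}$, and a single cyclic shift via Lemma \ref{lemma::cyclic shift} rewrites this as the asserted normal form $(t_0,t_2)^{n/2}$.

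The real difficulty is not in this lemma but upstream, in Proposition \ref{proposition::ST} and Moishezon's classification; the only point requiring care here is to verify that $\sigma$ is a group automorphism and to check by hand that it swaps the two triples of short elements, both of which are one-line computations.
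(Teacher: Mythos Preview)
Your proof is correct and follows essentially the same route as the paper: invoke Proposition \ref{proposition::ST} for the dichotomy, then Theorem \ref{theorem::Moishezon} for the normal form. The only difference is that the paper's proof simply asserts that Moishezon's theorem yields both outcomes $(s_0,s_2)^{n/2}$ and $(t_0,t_2)^{n/2}$, leaving the $t$-case to an implicit symmetry, whereas you make this explicit via the involution $\sigma:a\mapsto a^2,\ b\mapsto b$; your version is more self-contained, at the cost of a few extra lines.
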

\begin{proof}
By Proposition \ref{proposition::ST}, either each $g_i$ is equal to one of $s_0$, $s_1$, $s_2$, or each $g_i$ is equal to one of $t_0$, $t_1$, $t_2$. By Theorem \ref{theorem::Moishezon}, the $n$-tuple can be transformed by elementary transformations into either $(s_0,s_2,s_0,s_2,s_0,s_2)^{n/6}$ or $(t_0,t_2,t_0,t_2,t_0,t_2)^{n/6}$.
\end{proof}

Note that $s_0s_2s_0s_2s_0s_2=t_0t_2t_0t_2t_0t_2=1$ and in fact sextuples with alternative $s_i$'s and $s_j$'s (resp. $t_i$'s and $t_j$'s) can be transformed into each other by elementary transformations. In general, we will show the reduced form of the product for a tuple with alternative powers of $s_0$ and $s_2$ (resp. powers of $t_0$ and $t_2$). We will only prove Proposition \ref{proposition::reduced_form_s0_s2} but omit the proof of Proposition \ref{proposition::reduced_form_t0_t2}, which is quite similar. The idea comes from Moishezon (see \cite[p.181-187]{moishezon1977complex}) but with a slight modification and a more subtle analysis.

\begin{proposition}
\label{proposition::reduced_form_s0_s2}
Let $(g_1,\ldots,g_n)$ be a tuple of $s_0$, $s_2$ with $n\ge 1$ and take $\mu,\nu\ge 1$.
Let $\mathcal{T}$ be the set of tuples of $s_0$, $s_2$ obtained from $(g_1,\ldots,g_n)$ by elementary transformations.
Suppose that each tuple in $\mathcal{T}$ satisfies the following requirements:
\begin{enumerate}[i),topsep=0pt,itemsep=-1ex,partopsep=1ex,parsep=1ex]
    \item it starts with at least $\mu$ $s_2$;
    \item it ends with at least $\nu$ $s_0$;
    \item it contains no consecutive sub-tuples of the form $(s_0,s_2)^3$.
\end{enumerate}
Then, the reduced form of $g_1\cdots g_n$ is given by $(ba^2)^{\mu-1}bRb(a^2b)^{\nu-1}$ with some $R\in G$.
\end{proposition}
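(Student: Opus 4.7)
My plan is to proceed by induction on the block structure of the tuple. Using the fact that every element of $\mathcal{T}$ must begin with at least $\mu$ copies of $s_2$ and end with at least $\nu$ copies of $s_0$, I would decompose
\[
(g_1, \ldots, g_n) = (s_2^{A_0}, s_0^{B_1}, s_2^{A_1}, \ldots, s_0^{B_k}), \qquad A_0 \geq \mu,\ B_k \geq \nu,\ A_i, B_i \geq 1,
\]
into maximal monochromatic runs and induct on $k$. The main algebraic identities are $s_2 s_0 = bab$ and $s_0 s_2 = a$, from which one derives
\[
s_2^A s_0^B = (ba^2)^{A-1} \cdot bab \cdot (a^2b)^{B-1}, \qquad s_0^B s_2^A = (a^2b)^{B-1} \cdot a \cdot (ba^2)^{A-1}.
\]

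The base case $k = 1$ is immediate from the first identity, giving the required form with $R = a$. For the inductive step I would factor $g_1 \cdots g_n = P' \cdot s_2^{A_{k-1}} s_0^{B_k}$, where $P'$ is the product of the first $n - A_{k-1} - B_k$ components. Applying the inductive hypothesis to a truncation of the tuple ending at block $B_{k-1}$ (with $\nu$ replaced by $B_{k-1}$), the reduced form of $P'$ can be written as $(ba^2)^{\mu-1} b R' b (a^2b)^{B_{k-1}-1}$ for some $R' \in G$. The trailing factor $(a^2b)^{B_{k-1}-1}$ then merges with $(ba^2)^{A_{k-1}}$ through the second identity above, and subsequent multiplication by $s_0^{B_k}$ reintroduces a trailing $b(a^2b)^{\nu-1}$ since $B_k \geq \nu$. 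Bundling the intermediate letters into the new element $R$ gives the required form.

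The main obstacle will be verifying that the leading prefix $(ba^2)^{\mu-1} b$ of $P'$ is not eroded by the boundary cancellation with $s_2^{A_{k-1}} s_0^{B_k}$. This is precisely where condition (iii) enters: a long alternation of singleton blocks would collapse drastically under repeated application of $s_0 s_2 = a$, but condition (iii) (combined with Lemma~\ref{lemma::cyclic shift} applied to any central subtuple of the form $(s_0, s_2)^3$ or $(s_2, s_0)^3$, so that both sub-patterns are effectively forbidden) caps the length of any such alternation and hence the depth of cancellation that the identity $s_0^{B_{k-1}} s_2^{A_{k-1}} = (a^2b)^{B_{k-1}-1} a (ba^2)^{A_{k-1}-1}$ can propagate leftward. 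The remaining case analysis, on the possible sizes of $A_{k-1}, B_{k-1}$ and the number of singleton blocks immediately preceding them, is finite and resolved by direct computation with the two identities above.
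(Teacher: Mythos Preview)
Your induction does not go through at the point where you invoke the inductive hypothesis on the truncated tuple $(s_2^{A_0}, s_0^{B_1}, \ldots, s_0^{B_{k-1}})$ with $\nu$ replaced by $B_{k-1}$. Conditions (i) and (iii) do descend to the truncation, since any sequence of Hurwitz moves on the prefix extends to one on the full tuple; but condition (ii) does not. For instance, a prefix of the shape $(s_2^{\mu}, s_0, s_2, s_0)$ is Hurwitz equivalent within $\{s_0,s_2\}$ to $(s_2^{\mu+1}, s_0, s_2)$ via the substitution $(s_0,s_2,s_0)\to(s_2,s_0,s_2)$ (realised by $R_2\circ R_1$), so it fails (ii) already with $\nu'=1$. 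Thus the proposition, applied inductively, tells you nothing about the reduced form of $P'$, and your merge-and-cancel step with $s_2^{A_{k-1}}s_0^{B_k}$ has no controlled input. The closing claim that the residual case analysis ``is finite'' does not repair this: without a strengthened inductive statement that survives truncation (one tracking the right-hand suffix of the reduced word directly, rather than via hypothesis (ii)), the depth of cancellation you must bound depends on the entire preceding block pattern, not just on $A_{k-1}$, $B_{k-1}$ and a bounded number of predecessors.

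The paper avoids this difficulty by passing once to the lexicographically maximal tuple in $\mathcal{T}$ (for the order $s_0<s_2$) rather than working with an arbitrary representative. Maximality together with (i) and (iii) then forces uniform lower bounds on the interior block sizes: in the paper's notation $u_i\ge 2$ for $i\ge 2$, $v_1\ge 2$, no two consecutive $v_i=1$, and $u_i,u_{i+1}\ge 3$ whenever an interior $v_i=1$. With those constraints the product is computed in a single pass: consecutive factors $s_0^{v_i}s_2^{u_{i+1}}$ (or pairs thereof when $v_{i+1}=1$) each contribute a reduced word of the shape $a^2 R_j a^2$, and these concatenate without further cancellation to give $(ba^2)^{u_1}\bigl(\prod_j a^2 R_j a^2\bigr)(a^2b)^{v_N}$. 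No induction on the number of blocks is needed.
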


\begin{proposition}
\label{proposition::reduced_form_t0_t2}
Let $(g_1,\ldots,g_n)$ be a tuple of $t_0$, $t_2$ with $n\ge 1$ and take $\mu,\nu\ge 1$.
Let $\mathcal{T}$ be the set of tuples of $t_0$, $t_2$ obtained from $(g_1,\ldots,g_n)$ by elementary transformations.
Suppose that each tuple in $\mathcal{T}$ satisfies the following requirements:
\begin{enumerate}[i),topsep=0pt,itemsep=-1ex,partopsep=1ex,parsep=1ex]
    \item it starts with at least $\mu$ $t_0$;
    \item it ends with at least $\nu$ $t_2$;
    \item it contains no consecutive sub-tuples of the form $(t_0,t_2)^3$.
\end{enumerate}
Then, the reduced form of $g_1\cdots g_n$ is given by $(ba)^{\mu-1}bRb(ab)^{\nu-1}$ with some $R\in G$.
\end{proposition}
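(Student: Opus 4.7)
My plan is to deduce Proposition~\ref{proposition::reduced_form_t0_t2} from Proposition~\ref{proposition::reduced_form_s0_s2} by applying the involutive automorphism $\tau$ of $G = \PSL(2,\mathbb{Z})$ defined on generators by $\tau(a) = a^2$, $\tau(b) = b$. A direct check yields $\tau(t_0) = s_2$, $\tau(t_2) = s_0$, $\tau(ba) = ba^2$, and $\tau(ab) = a^2 b$. Consequently, if $(g_1, \ldots, g_n)$ satisfies the hypotheses of Proposition~\ref{proposition::reduced_form_t0_t2}, then $(\tau(g_1), \ldots, \tau(g_n))$ consists of $s_0$'s and $s_2$'s, starts with $\mu$ copies of $s_2$, and ends with $\nu$ copies of $s_0$, matching conditions (i) and (ii) of Proposition~\ref{proposition::reduced_form_s0_s2}. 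Moreover, since $\tau$ is an automorphism it commutes with every elementary transformation, so $\tau$ maps $\mathcal{T}$ bijectively onto the analogous set of tuples of $s_0, s_2$ Hurwitz-equivalent to $(\tau(g_1), \ldots, \tau(g_n))$.

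The only subtlety is that $\tau$ sends the forbidden pattern $(s_0, s_2)^3$ of Proposition~\ref{proposition::reduced_form_s0_s2}(iii) to $(t_2, t_0)^3$ rather than to the stipulated $(t_0, t_2)^3$. To bridge this gap I would first prove the auxiliary claim that, modulo elementary transformations preserving $\mathcal{T}$, the forbidden-sub-tuple conditions "no tuple in $\mathcal{T}$ contains $(t_0, t_2)^3$" and "no tuple in $\mathcal{T}$ contains $(t_2, t_0)^3$" are equivalent. Both sub-tuples have product $1 \in Z(G)$ (since $(t_0 t_2)^3 = 1$), so by Lemma~\ref{lemma::move subtuples} such a six-element block can be freely relocated within the tuple, and by Lemma~\ref{lemma::cyclic shift} applied to the block itself one may cyclically shift its entries by a sequence of elementary transformations supported on those six positions. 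This sequence may pass through intermediate tuples outside $\mathcal{T}$, but lands on a tuple again in $\{t_0, t_2\}^n$; thus either pattern can be converted to the other inside the Hurwitz equivalence class, as required.

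Granting this, the three hypotheses of Proposition~\ref{proposition::reduced_form_s0_s2} are verified for $(\tau(g_1), \ldots, \tau(g_n))$, and that proposition furnishes a reduced form $(ba^2)^{\mu-1} b R b (a^2 b)^{\nu-1}$ for $\tau(g_1 \cdots g_n)$. Applying $\tau$ once more and using $\tau(ba^2) = ba$, $\tau(a^2 b) = ab$, we recover $(ba)^{\mu-1} b \tau(R) b (ab)^{\nu-1}$ as the reduced form of $g_1 \cdots g_n$, which is exactly the claim with $R' = \tau(R)$. The main obstacle, and the only step requiring genuine work, is the bridging lemma on the two forbidden patterns: the issue is that the naive local cyclic shift does not remain in $\mathcal{T}$ at every intermediate step, so one must argue that it nevertheless lands in $\mathcal{T}$ at the end. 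I expect this is what the author alludes to with "a slight modification and a more subtle analysis"; once it is in hand, the rest is a mechanical transfer along $\tau$.
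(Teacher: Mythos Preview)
Your argument is correct, and it takes a genuinely different route from the paper. The paper simply says the proof of Proposition~\ref{proposition::reduced_form_t0_t2} is ``quite similar'' to that of Proposition~\ref{proposition::reduced_form_s0_s2} and omits it; the intended approach is to rerun the maximality argument with $t_0,t_2$ in place of $s_0,s_2$. Your transfer along the involution $\tau(a)=a^2$, $\tau(b)=b$ is cleaner and avoids repeating the combinatorics; the only price is the bridging lemma on the two forbidden patterns, which is cheap.

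Two remarks. First, the bridging step is easier than you make it sound. By definition $\mathcal{T}$ is the intersection of the full Hurwitz orbit with $\{t_0,t_2\}^n$; it is not required that a path of elementary transformations stay inside $\{t_0,t_2\}^n$. So once you observe that the cyclic shift on the six-block with product $1$ (performed in place via $R_{j+4}\circ\cdots\circ R_j$) sends a tuple in $\{t_0,t_2\}^n$ containing a consecutive $(t_2,t_0)^3$ to a tuple in $\{t_0,t_2\}^n$ containing a consecutive $(t_0,t_2)^3$, you are done: the target is automatically in $\mathcal{T}$, regardless of the intermediate tuples. There is no subtle analysis needed here, and no need to invoke Lemma~\ref{lemma::move subtuples}.

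Second, a minor misreading: the phrase ``a slight modification and a more subtle analysis'' in the paper refers to how Propositions~\ref{proposition::reduced_form_s0_s2} and~\ref{proposition::reduced_form_t0_t2} themselves refine Moishezon's original argument, not to any difference between the two propositions. This does not affect your proof. Finally, you might note explicitly that $\tau$ permutes the alphabet $\{a,a^2,b\}$, so it carries reduced forms to reduced forms; this is what makes the last step legitimate.
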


\begin{proof}[Proof of Proposition \ref{proposition::reduced_form_s0_s2}]
Using elementary transformations on $(g_1,\ldots,g_n)$ we can get different resulting tuples in $\{s_0,s_2\}$, which form the set $\mathcal{T}$. Suppose that $(h_1,\ldots,h_n)$ is the maximal among them according to the lexicographical order given by $s_0<s_2$. We write $(h_1,\ldots,h_n)$ in the following form
\begin{equation*}
    (h_1,\ldots,h_n) = \prod_{i=1}^N (s_2)^{u_i} \bullet (s_0)^{v_i}
\end{equation*}
with $\sum_{i=1}^N (u_i+v_i)=n$, where $u_1\ge\mu$, $v_N\ge \nu$ and $u_i>0, v_i>0$ for all $i=1,\ldots,N$.

\textbf{Claim 1:} $u_i\ge 2$, $i=2,3,\ldots,N$.

\textbf{Claim 2:} $v_1\ge 2$.

\textbf{Claim 3:} For $i\in\{1,2,\ldots,N-2\}$, if $v_i=1$ then $v_{i+1}>1$.

\textbf{Claim 4:} For $i\in\{2,3,\ldots,N-1\}$, if $v_i=1$ then $u_i\ge 3$ and $u_{i+1}\ge 3$.

Claim 1 relies on the maximality of $(h_1,\ldots,h_n)$. Claim 2 uses the first hypothesis. The third hypothesis guarantees both Claim 3 and 4. Now, set $Y_i=(s_0)^{v_i}\bullet (s_2)^{u_{i+1}}$ for $i=1,\ldots, N-1$, say it to be of the \emph{second type} if $v_i\ge 2$, the \emph{first type} if $v_i=1$. Claim 2 shows that $Y_1$ is of the second type and Claim 3 reveals that there is no adjacent pair in the first type.
Hence, we are able to find sub-tuples $Z_1,\ldots, Z_M$ of $(h_1,\ldots,h_n)$ such that each $Z_j$, $j=1,\ldots,M$, is either
\begin{itemize}[-]
    \item equal to some $Y_i$ of the second type with $i\in\{1,\ldots,N-1\}$, or
    \item the concatenation $Y_i\bullet Y_{i+1}$ with $i\in\{1,\ldots,N-2\}$ where $Y_{i+1}$ is of the first type
\end{itemize}
and we can write $(h_1,\ldots,h_n)$ in the form
\begin{equation*}
    (h_1,\ldots,h_n) = (s_2)^{u_1}\bullet \prod_{j=1}^M Z_j \bullet s_0^{v_N}.
\end{equation*}

For $j=1,\ldots,M$, if $Z_j$ is equal to some $Y_i$ of the second type, then the product of components of $Z_j$ has the reduced form $a^2 R_j a^2$ with some $R_j\in G$. Indeed, $Z_j=(s_0)^{v_i}\bullet (s_2)^{u_{i+1}}$ with $v_i\ge 2$ and $u_{i+1}\ge 2$, the product of whose components is equal to $(a^2b)^{v_i-1}a(ba^2)^{u_{i+1}-1}$. 
If $Z_j=Y_i\bullet Y_{i+1}$, $i\in\{1,\ldots,N-2\}$, then the product of its components is given by
\begin{equation*}
    s_0^{v_i} s_2^{u_{i+1}} s_0^{v_{i+1}} s_2^{u_{i+2}} = (a^2b)^{v_i-1} a (ba^2)^{u_{i+1}-2} a^2 (ba^2)^{u_{i+2}-2}
\end{equation*}
with $v_{i+1}=1$, $u_{i+1}\ge 3$, $u_{i+2}\ge 3$ and $v_i\ge 2$, which also has the reduced form $a^2 R_j a^2$ with some $R_j\in G$.
Hence, $g_1\cdots g_n=h_1\cdots h_n$ has the form
\begin{equation*}
    (ba^2)^{u_1} \prod_{j=1}^M (a^2 R_j a^2) (a^2b)^{v_N}
\end{equation*}
with $R_j\in G$, $j=1,\ldots,M$ where each of $a^2 R_j a^2$ is reduced.
\end{proof}

\subsection{Conjugates of short elements and tuples}
\label{subsection::Conjugates of short elements and their tuples}

Suppose that $(g_1,\ldots, g_n)$ is an $n$-tuple with each $g_i$ conjugate to some short element (i.e. the component $g_i$ is conjugate to $a$, $a^2$, $b$ or $s_1$, $t_1$). 
In this subsection we show that, in the vast majority of cases, by successive application of elementary transformations the $n$-tuple can be transformed into an $n$-tuple of short elements.

\begin{lemma}
\label{lemma::h=g1g2}
Let $g_1$, $g_2$, $h$, $Q' \in G$ be such that $h=g_1 g_2$. Then both
\[
(Q'^{-1}h^{-1}g_1hQ',Q'^{-1}h^{-1}g_2hQ')
\text{ and }
(Q'^{-1}hg_1h^{-1}Q',Q'^{-1}hg_2h^{-1}Q')
\]
are Hurwitz equivalent to $(Q'^{-1}g_1Q',Q'^{-1}g_2Q')$.
\end{lemma}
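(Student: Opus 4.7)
The key computational observation is that $R_1^2$ acts on a pair by simultaneous conjugation of both components by their product. Indeed, for any pair $(x_1,x_2)$,
\begin{equation*}
R_1(x_1,x_2)=(x_2,x_2^{-1}x_1x_2),\qquad R_1^2(x_1,x_2)=(x_2^{-1}x_1x_2,\;x_2^{-1}x_1^{-1}x_2x_1x_2),
\end{equation*}
and writing $p=x_1x_2$ one checks that the right-hand side is exactly $(p^{-1}x_1p,\,p^{-1}x_2p)$. Symmetrically, $R_1^{-2}(x_1,x_2)=(px_1p^{-1},\,px_2p^{-1})$.

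With this in hand, the plan is to apply $R_1^2$ (respectively $R_1^{-2}$) to the pair $(Q'^{-1}g_1Q',\,Q'^{-1}g_2Q')$. The product of its components is $Q'^{-1}g_1g_2Q'=Q'^{-1}hQ'$, so the observation above yields
\begin{equation*}
R_1^2(Q'^{-1}g_1Q',\,Q'^{-1}g_2Q')=\bigl((Q'^{-1}hQ')^{-1}(Q'^{-1}g_iQ')(Q'^{-1}hQ')\bigr)_{i=1,2},
\end{equation*}
and the inner $Q'Q'^{-1}$ factors cancel in each slot, leaving exactly $(Q'^{-1}h^{-1}g_ihQ')_{i=1,2}$. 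The case of $R_1^{-2}$ gives the other tuple $(Q'^{-1}hg_ih^{-1}Q')_{i=1,2}$.

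This proves the lemma in two elementary moves in each direction; there is no real obstacle, and the only subtlety is the bookkeeping confirming that the conjugating element produced by $R_1^{\pm 2}$ is precisely $Q'^{-1}hQ'$, which conjugates away the outer $Q'$'s to give the stated form.
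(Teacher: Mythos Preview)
Your proof is correct and takes essentially the same approach as the paper: both use that $R_1^{\pm 2}$ acts on a pair by simultaneous conjugation by the product of the two components. The only cosmetic difference is direction---you apply $R_1^{\pm 2}$ to $(Q'^{-1}g_1Q',Q'^{-1}g_2Q')$ to reach the two stated pairs, while the paper applies $R_1^{\mp 2}$ to each stated pair to return to $(Q'^{-1}g_1Q',Q'^{-1}g_2Q')$.
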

\begin{proof}
Using $R_1^{-2}$, we transform
\begin{equation*}
(Q'^{-1}h^{-1}g_1hQ',Q'^{-1}h^{-1}g_2hQ')    
\end{equation*}
into $(Q'^{-1}g_1Q',Q'^{-1}g_1^{-1}hQ')$.
The result is equal to $(Q'^{-1}g_1Q',Q'^{-1}g_2Q')$ as $g_1^{-1}h=g_2$.
Similarly
\begin{equation*}
(Q'^{-1}hg_1h^{-1}Q',Q'^{-1}hg_2h^{-1}Q')    
\end{equation*}
can be transformed into $(Q'^{-1}g_1Q',Q'^{-1}g_2Q')$ by applying $R_1^2$.
\end{proof}

\begin{lemma}
\label{lemma::aba,a^2->ab}
Let $\epsilon=\pm 1$ and suppose that $(\tau_1,\tau_2)$ is equal to one of
\begin{equation*}
    \{ (a^{\epsilon}ba^{\epsilon},a^{-\epsilon}),
       (a^{-\epsilon},a^{\epsilon}ba^{\epsilon}),
       (ba^{-\epsilon},a^{-\epsilon}),
       (a^{-\epsilon},a^{-\epsilon}b),
       (a^{\epsilon},ba^{\epsilon}),
       (a^{\epsilon}b,a^{\epsilon})
       \}.
\end{equation*}
Let $(g_1,g_2)=(Q^{-1}\tau_1 Q,Q^{-1}\tau_2 Q)$ be a pair in $G$ with $Q\in G$ and suppose that $Q^{-1}\tau_1 \tau_2 Q$ is short. Then $(g_1,g_2)$ is Hurwitz equivalent to a pair of short elements.
\end{lemma}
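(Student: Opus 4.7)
The plan is to reduce $(g_1,g_2)$ to a finite check via Lemma~\ref{lemma::h=g1g2}. In each of the six listed cases, a direct computation gives $h := \tau_1\tau_2$ equal to one of $a^{\epsilon}b$, $ba^{\epsilon}$ or $a^{\epsilon}ba^{\epsilon}$, all of which lie in $\{s_0,s_1,s_2,t_0,t_1,t_2\}$. Hence $h$ is a short parabolic element of $G = \PSL(2,\mathbb{Z})$ whose centralizer is the infinite cyclic subgroup $\langle h\rangle$.

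Applying Lemma~\ref{lemma::h=g1g2} with $g_1 = \tau_1$, $g_2 = \tau_2$ and $Q' = Q$ yields
\begin{equation*}
(Q^{-1}\tau_1 Q,\, Q^{-1}\tau_2 Q)
\sim
((hQ)^{-1}\tau_1 (hQ),\, (hQ)^{-1}\tau_2 (hQ)),
\end{equation*}
and similarly with $h^{-1}Q$ in place of $hQ$. Iterating, the Hurwitz class of $(g_1,g_2)$ depends only on the coset $\langle h\rangle Q \in \langle h\rangle\backslash G$. The hypothesis that $Q^{-1}hQ$ is short forces it into one of the triples $\{s_0,s_1,s_2\}$ or $\{t_0,t_1,t_2\}$, and the short-element diagram at the start of Section~\ref{section::Livne} shows that each member of such a triple equals $c^{-1}hc$ for a unique $c \in \{1,a,a^2\}$. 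Since the centralizer of $h$ in $G$ is $\langle h\rangle$, any $Q$ realising the same conjugation effect on $h$ differs from the corresponding $c$ by left multiplication by an element of $\langle h\rangle$. I may therefore assume $Q \in \{1,a,a^2\}$.

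The proof is completed by direct verification for each case and each $Q \in \{1,a,a^2\}$. The key observation is that in every one of the six given forms, one of $\tau_1,\tau_2$ is a power of $a$ and hence commutes with $Q$; the other component is a short element of $\{s_0,s_1,s_2,t_0,t_1,t_2\}$, and conjugation by $Q$ sends it to another short element of the same set according to the cycling shown in the diagram. For instance, taking $(\tau_1,\tau_2) = (aba,a^2)$ (i.e.\ $(a^{\epsilon}ba^{\epsilon},a^{-\epsilon})$ with $\epsilon=1$), the three choices $Q = 1,a,a^2$ yield the short pairs $(s_1,a^2)$, $(s_2,a^2)$ and $(s_0,a^2)$ respectively; the remaining five forms are strictly analogous. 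The main conceptual obstacle is the centralizer-based reduction that limits $Q$ to three representatives; after that, the verification is purely combinatorial.
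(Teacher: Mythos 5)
Your proof is correct and follows essentially the same route as the paper: identify $h=\tau_1\tau_2$ as a short element, use Lemma~\ref{lemma::h=g1g2} to show the Hurwitz class depends only on $\langle h\rangle Q$, reduce to $Q\in\{1,a,a^2\}$, and verify shortness directly. The only cosmetic difference is that you justify the classification of admissible $Q$ via the centralizer of the primitive parabolic $h$, whereas the paper states the same fact directly as $Q=(a^{\epsilon}b)^k a^{\zeta}$ or $(ba^{-\epsilon})^l a^{\zeta}$.
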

\begin{proof}
When $\tau_1 \tau_2=a^{\epsilon}b$, since $Q^{-1}a^{\epsilon}bQ$ is short, $Q$ is either $(a^{\epsilon}b)^ka^{\zeta}$ or $(ba^{-\epsilon})^la^{\zeta}$ with $k,l\ge 0$ and $\zeta=0,1,2$. If $Q=a^{\zeta}$, then both $Q^{-1}\tau_1Q$ and $Q^{-1}\tau_2Q$ are short. The result follows from Lemma \ref{lemma::h=g1g2}. When $\tau_1\tau_2=ba^{\epsilon}$ or $\tau_1\tau_2=a^{\epsilon}ba^{\epsilon}$, the proof is similar.
\end{proof}

%%%%%

We introduce the following operations and their restorations on an $n$-tuple $(g_1,\ldots,g_n)$ of elements in $G$ conjugate to some short elements.

\begin{itemize}[-]
    \item \underline{Operation $1$:} For $i\in\{1,\ldots,n-1\}$, suppose that the reduced forms of $g_i$ and $g_{i+1}$ are expressed by $Q_i^{-1}\tau_iQ_i$ and $Q_{i+1}^{-1}\tau_{i+1}Q_{i+1}$ with $\tau_i,\tau_{i+1}\in\mathcal{S}$, $Q_i,Q_{i+1}\in G$ such that $Q_i=Q_{i+1}$, $(\tau_i,\tau_{i+1})$ is listed in Table \ref{table::short elements} and either $Q_i=1$ or $\tau_i\tau_{i+1}=1$ or both $\tau_i$, $\tau_{i+1}$ are powers of $a$. Then, the operation is a contraction as in Subsection \ref{subsection::contractions-restorations} that replaces $(g_i,g_{i+1})$ with $g_i g_{i+1}$.
    
    \item \underline{Operation $2$:} For $i\in\{1,\ldots,n\}$, suppose that $g_i=1$. The operation moves the identical component to the rightmost position via elementary transformations, removes it and reduces $(g_1,\ldots,g_n)$ to an $(n-1)$-tuple. 
\end{itemize}

\begin{table}[htb]
\centering
%\begin{adjustbox}{width=\textwidth}
\begin{tabular}{ c|ccccccccc| } 
 \diagbox{$g_i$}{$g_i\cdot g_{i+1}$}{$g_{i+1}$}
          &$a$   &$a^2$    &$b$  &$a^2b$&$aba$&$ba^2$   &$ba$ &$a^2ba^2$&$ab$  \\ 
 \hline
 $a$      &$a^2$ &$1$      &     &$b$   &     &         &$aba$&$ba^2$   &$a^2b$\\ 
 $a^2$    &$1$   &$a$      &     &$ab$  &$ba$ &$a^2ba^2$&     &         &$b$   \\ 
 $b$      &      &         &$1$  &      &     &$a^2$    &$a$  &         &      \\ 
 $a^2b$   &      &$a^2ba^2$&$a^2$&      &     &$a$      &$1$  &         &      \\ 
 $aba$    &      &$ab$     &     &$a$   &     &         &     &$1$      &      \\ 
 $ba^2$   &$b$   &$ba$     &     &      &$a$  &         &     &         &$1$   \\ 
 $ba$     &$ba^2$&$b$      &     &$1$   &     &         &     &$a^2$    &      \\ 
 $a^2ba^2$&$a^2b$&         &     &      &$1$  &         &     &         &$a^2$ \\ 
 $ab$     &$aba$ &         &$a$  &      &     &$1$      &$a^2$&         &      \\ 
 \hline
\end{tabular}
%\end{adjustbox}
\caption{Some pairs $(g_i,g_{i+1})$ of short elements and the products $g_ig_{i+1}$.}
\label{table::short elements}
\end{table}

Operation $1$ is a contraction, whose restoration is introduced in Subsection \ref{subsection::elementary transformations}.
The restoration of Operation $2$ will simply add an identical element on the right side of the tuple.
The following proposition shows that, if we use the technique introduced in Subsection \ref{subsection::elementary transformations} carefully, the resulting tuple is under control.

\begin{proposition}
\label{proposition::reduce_and_reloading}
Let $(g_1,\ldots,g_n)$ be an inverse-free $n$-tuple of elements in $G$ conjugate to some short elements such that $g_1\cdots g_n=1$.
Suppose that we first apply the following operations successively on $(g_1,\ldots,g_n)$:
\begin{enumerate}[i),topsep=0pt,itemsep=-1ex,partopsep=1ex,parsep=1ex]
    \item the elementary transformation $R_i$, but avoiding that $g_i$ is short and $g_{i-1}^{-1}g_ig_{i+1}$ is long;
    \item the elementary transformation $R_i^{-1}$, but avoiding that $g_{i+1}$ is short and $g_ig_{i+1}g_i^{-1}$ is long;
    \item Operation $1$;
    \item Operation $2$;
\end{enumerate}
then apply restorations of Operation $1$ and $2$ in the reverse order.
If all components in the resulting tuple before restorations are short,
then the initial tuple is Hurwitz equivalent to the resulting tuple after restorations and further Hurwitz equivalent to a tuple of short elements.
\end{proposition}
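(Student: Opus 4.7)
My plan is to establish the two assertions of the conclusion separately.

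The first assertion, that $(g_1,\ldots,g_n)$ is Hurwitz equivalent to the tuple obtained after all restorations, is a direct application of Lemma \ref{lemma::technique}. Operations (i) and (ii) are elementary transformations and are recorded in the list $\mathcal{F}$; Operation 1 is a contraction in the sense of Subsection \ref{subsection::contractions-restorations}; and Operation 2, after first moving the identity component to the rightmost position via elementary transformations, can be implemented as a contraction of $(g_{n-1},1)$ into $g_{n-1}$. The restorations at the end of the procedure undo these contractions in reverse order. Since the final tuple has length $n$, Lemma \ref{lemma::technique} identifies it with the image of $(g_1,\ldots,g_n)$ under the sequence of elementary transformations in $\mathcal{F}$, establishing the Hurwitz equivalence.

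For the second assertion, that the final tuple is itself Hurwitz equivalent to a tuple of \emph{short} elements, I would argue by reverse induction on the restorations of Operation 1. The tuple immediately before restorations is $(h_1,\ldots,h_m)$ with all components short by hypothesis. Restoring an instance of Operation 2 simply reintroduces an identity component, which is short and does not affect the rest. Restoring an instance of Operation 1 replaces some short component $h$ in its current position by a pair $(Q^{-1}\tau_i Q,\, Q^{-1}\tau_{i+1}Q)$, where $(\tau_i,\tau_{i+1})$ is one of the entries of Table \ref{table::short elements} and the accumulated conjugating word $Q$ satisfies $h = Q^{-1}\tau_i\tau_{i+1}Q$. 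According to which of the three alternatives defined the original contraction, the restored pair is reduced to a pair of short elements as follows: if $\tau_i\tau_{i+1}=1$, the pair consists of mutual inverses, to which Lemma \ref{lemma::h=g1g2} applies; if both $\tau_i$ and $\tau_{i+1}$ are powers of $a$, the pair fits the hypothesis of Lemma \ref{lemma::aba,a^2->ab}, which yields the required pair of short elements; and if $Q=1$ at the time of contraction, the shortness-preservation imposed by the restrictions in (i) and (ii) guarantees that the restored pair is a pair of short elements to begin with.

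The principal obstacle I foresee is to ensure that the local reductions applied to each restored pair do not lengthen any components produced by previously handled restorations. I plan to address this by performing each reduction using only elementary transformations $R_j^{\pm 1}$ acting on the restored pair itself; these leave every other component untouched. Both Lemma \ref{lemma::h=g1g2} and Lemma \ref{lemma::aba,a^2->ab} accomplish their task using only a few such local transformations, so the inductive hypothesis that all other components remain short is preserved. After all restorations have been handled in this way, the resulting tuple consists entirely of short elements, and together with the first assertion this yields that $(g_1,\ldots,g_n)$ is Hurwitz equivalent to a tuple of short elements.
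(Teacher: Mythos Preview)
Your first paragraph, invoking Lemma \ref{lemma::technique}, is fine and matches the paper.

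The second part, however, has genuine gaps. You organise the case analysis by \emph{which of the three alternatives in Operation 1 triggered the contraction}, but the correct organising principle is the \emph{value} of $\tau_i\tau_{i+1}$. Here is why each of your three cases fails:

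\textbf{Case ``$Q=1$ at the time of contraction''.} You claim the restrictions in (i)--(ii) force the restored pair to be short. They do not: those restrictions only prevent a \emph{visible} short component from becoming long. After the contraction the visible component is $\tau_i\tau_{i+1}$, and elementary transformations conjugate it by some accumulated $P$; the restrictions keep $P^{-1}\tau_i\tau_{i+1}P$ short, but say nothing about $P^{-1}\tau_iP$ or $P^{-1}\tau_{i+1}P$ individually. The paper handles this by a case split on whether $\tau_i\tau_{i+1}$ lies in $\{a,a^2\}$, equals $b$, or is conjugate to $s_0$ or $t_0$; in each case it determines what $QP$ can be and then checks (with a short list of exceptions) that the restored pair can be made short. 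Lemma \ref{lemma::aba,a^2->ab} covers precisely the last of these sub-cases.

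\textbf{Case ``both $\tau_i,\tau_{i+1}$ are powers of $a$''.} Lemma \ref{lemma::aba,a^2->ab} does not apply here: every pair listed in its hypothesis contains a factor of $b$. (In fact this case is easy once you note that $QP$ must be a power of $a$, so conjugation by it fixes powers of $a$.)

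\textbf{Case ``$\tau_i\tau_{i+1}=1$''.} Lemma \ref{lemma::h=g1g2} gives a Hurwitz equivalence between two conjugate pairs, but it does not produce short elements. The paper's argument here is the one place where the \emph{inverse-freeness} hypothesis is essential: if the restored pair $(Q^{-1}\tau_iQ,Q^{-1}\tau_jQ)$ had a long component, one shows (using that short elements are never made long) that after all restorations the identity component unpacks into a sub-tuple of at least two elements all conjugate to the same power of $a$, contradicting inverse-freeness. You never invoke inverse-freeness, so this case is unaccounted for.

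In short, the missing idea is the case analysis by the value of $\tau_i\tau_{i+1}$ together with the inverse-freeness argument for the degenerate case; without these, the induction on restorations does not go through.
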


\begin{proof}
Lemma \ref{lemma::technique} shows that the initial tuple is Hurwitz equivalent to the resulting tuple after all operations and restorations. We suppose that each component is short in the tuple before restorations.

Operation $1$ may combine $Q^{-1}\tau_iQ$ and $Q^{-1}\tau_jQ$ into $Q^{-1}\tau_i\tau_j Q$ with $Q\in G$ and $\tau_i,\tau_j\in\mathcal{S}$. By elementary transformations, the product is sent to a conjugate of the form $P^{-1}Q^{-1}\tau_i\tau_j QP$ with some $P\in G$. To restore the operation, it is further rewritten as a pair
\begin{equation*}
    (P^{-1}Q^{-1}\tau_i QP,P^{-1}Q^{-1}\tau_j QP).
\end{equation*}
Suppose that $P^{-1}Q^{-1}\tau_i\tau_j QP$ is short and $(\tau_i,\tau_j)$ is listed in Table \ref{table::short elements}.

When $\tau_i\tau_j\in\{a,a^2\}$, the element $QP$ must be a power of $a$.
It is not true that both $P^{-1}Q^{-1}\tau_i Q P$ and $P^{-1}Q^{-1}\tau_j Q P$ are short in general, as a conjugate of $b$ with a power of $a$ may be long.
We list all exceptional possibilities of $(P^{-1}Q^{-1}\tau_i Q P , P^{-1}Q^{-1}\tau_j Q P)$ as below.
\begin{equation*}
    (ba^2,aba^2), (aba,a^2ba),
    (ba,a^2ba), (a^2ba^2,aba^2),
    (a^2ba,a^2b), (aba^2,aba),
    (aba^2,ab), (a^2ba,a^2ba^2).
\end{equation*}
However, each of them can be transformed into a pair of short elements by at most two elementary transformations.

When $\tau_i\tau_j=b$, the element $QP$ must be a power of $b$. A conjugate of $a$ or $a^2$ with a power of $b$ may be long. The exceptional possibilities of $(P^{-1}Q^{-1}\tau_i Q P,P^{-1}Q^{-1}\tau_j Q P)$ that one of the components is long are listed as below.
\begin{equation*}
    (bab,ba^2), (ba^2b,ba), (a^2b,bab), (ab,ba^2b).
\end{equation*}
Again, each of them can be transformed into a pair of short elements by an elementary transformation.

When $\tau_i\tau_j=1$, we get $P=1$.
Assume that one of $Q^{-1}\tau_iQ$ and $Q^{-1}\tau_jQ$ is long,
the inverse-freeness of $(g_1,\ldots,g_n)$ implies that $\tau_i$ and $\tau_j$ are powers of $a$ and $Q$ is not a power of $a$. Due to the hypothesis that elementary transformations never make short elements long, after all restorations, $Q^{-1}\tau_i\tau_j Q$ (as an additional identical element) will become a sub-tuple $(h_1,\cdots,h_m)$ with $m\ge 2$ such that $h_1,\ldots,h_m$ are conjugate to the powers of $a$ simultaneously.
It contradicts the inverse-freeness.
Thus, both $Q^{-1}\tau_iQ$ and $Q^{-1}\tau_jQ$ are short.

The remaining cases shown in Table \ref{table::short elements} are covered by Lemma \ref{lemma::aba,a^2->ab}. Hence, by successive application of elementary transformations, each of the components of the resulting tuple is short.
\end{proof}

\begin{definition}
The $\mathcal{S}$-\emph{complexity} of an element $g\in G$ conjugate to some element in $\mathcal{S}$ is defined as $f(g)$ such that
\begin{equation*}
    f(g) =
    \begin{cases}
        l(Q) & \text{if $g=Q^{-1}wQ$ is long with $w\in\{a^{\epsilon},b,a^{\epsilon}ba^{\epsilon}|\epsilon=1,2\}$ and $Q\in G$;}\\
        0 & \text{if $g$ is short.}
    \end{cases}
\end{equation*}
\end{definition}

\begin{definition}
Let $(g_1,\ldots,g_n)$ be an $n$-tuple in $G$ such that each of $g_i$, $i=1,\ldots,n$, is conjugate to some element in $\mathcal{S}$.
A sequence of elementary transformations 
$(R_{i_1}^{\epsilon_1},\ldots,R_{i_m}^{\epsilon_m})$, $\epsilon_1,\ldots,\epsilon_m\in\{1,-1\}$, is said to make the sum of $\mathcal{S}$-complexities of $(g_1,\ldots,g_n)$ \emph{strictly-smaller} if, for each $m'<m$, the composition
$R_{i_{m'}}^{\epsilon_{m'}}\circ\cdots\circ R_{i_1}^{\epsilon_1}$
transforms $(g_1,\ldots,g_n)$ into a tuple with the same sum of $\mathcal{S}$-complexities but
$R_{i_m}^{\epsilon_m}\circ\cdots\circ R_{i_1}^{\epsilon_1}$
transforms $(g_1,\ldots,g_n)$ into a tuple with a smaller sum of $\mathcal{S}$-complexities.
\end{definition}

A sequence of elementary transformations that makes the sum of $\mathcal{S}$-complexities strictly-smaller never makes short elements long, as described in Proposition \ref{proposition::reduce_and_reloading} \RNum{1}) and \RNum{2}).

Let $(g_1,\ldots,g_n)$ be an $n$-tuple in $G$.
For $i=1,\ldots,n-1$, suppose that the reduced forms of $g_i$ and $g_{i+1}$ are expressed by $t_{k_i}^{(i)}\ldots t_1^{(i)}$ and $\tilde{t}_1^{(i)}\ldots \tilde{t}_{l_i}^{(i)}$ with $k_i=l(g_i)$, $l_i=l(g_{i+1})$, $t_j^{(i)}\in\{a,a^2,b\}$, $j=1,\ldots,k_i$ and $\tilde{t}_j^{(i)}\in\{a,a^2,b\}$, $j=1,\ldots,l_i$.
The reduced form of $g_ig_{i+1}$ is then either
\[
t_{k_i}^{(i)}\ldots t_{m_i+1}^{(i)}r_i\tilde{t}_{m_i+1}^{(i)}\ldots \tilde{t}_{l_i}^{(i)}
\text{ or }
t_{k_i}^{(i)}\ldots t_{m_i+1}^{(i)}r_i
\text{ or }
r_i\tilde{t}_{m_i+1}^{(i)}\ldots \tilde{t}_{l_i}^{(i)}
\]
where $r_i\in G$, $l(r_i)\le 1$ and $0\le m_i\le k_i,l_i$.

\begin{lemma}
\label{lemma::m_i-1_and_m_i}
Let $(g_1,\ldots,g_n)$ be an $n$-tuple in $G$ such that each of $g_i$, $i=1,\ldots,n$, is conjugate to some element in $\mathcal{S}$ and $g_1\cdots g_n=1$. Let $m_i$ be the same as above and set $m_0=m_n=0$ for convenience. Suppose that
\begin{enumerate}[(1),topsep=0pt,itemsep=-1ex,partopsep=1ex,parsep=1ex]
    \item there is no pair of adjacent components $g_i$, $g_{i+1}$ of the reduced forms $Q^{-1}\tau_iQ$, $Q^{-1}\tau_{i+1}Q$ with $Q\in G$ and $(\tau_i,\tau_{i+1})$ in Table \ref{table::short elements} such that either $Q_i=1$ or $\tau_i\tau_{i+1}=1$ or both $\tau_i$, $\tau_{i+1}$ are powers of $a$.
    \item there is no sequence of elementary transformations that makes $\sum_i f(g_i)$ strictly-smaller.
\end{enumerate}
Then $m_0,\ldots,m_n$ have the following properties.
\begin{enumerate}[(a),topsep=0pt,itemsep=-1ex,partopsep=1ex,parsep=1ex]
    \item For $i=1,\ldots,n-1$, $m_i\le \frac{l(g_i)+1}{2}$ and $m_i\le \frac{l(g_{i+1})+1}{2}$.
    \item For $i=1,\ldots,n$, $m_{i-1}+m_i\ge l(g_i)$ only if the reduced form of $g_i$ is $Q_i^{-1}a^{\epsilon_i}Q_i$ with $\epsilon_i=1,2$, $Q_i\in G$ and $l(Q_i)\ge 0$.
    \item If $m_{i-1}+m_i\le l(g_i)$ for each of $i=1,\ldots,n$, then $n=0$.
\end{enumerate}
\end{lemma}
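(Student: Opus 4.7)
The plan is to establish (a), (b) and (c) through a structural analysis of reduced words in $G = \PSL(2,\mathbb{Z})$, using hypothesis (2) to rule out any sequence of elementary transformations that strictly decreases the total complexity $\sum_j f(g_j)$ and hypothesis (1) to forbid the short-element contractions that would otherwise be available through Operation~$1$. Throughout I work with the canonical decomposition $g_i = Q_i^{-1} w_i Q_i$ where $w_i \in \mathcal{S}$ and, when $g_i$ is long, $f(g_i) = l(Q_i)$. The reduced form of $g_i$ then has length $l(g_i) \in \{2 f(g_i)+1,\, 2f(g_i)+2,\, 2f(g_i)+3\}$ and carries a palindromic structure around the short core $w_i$, so the counter $m_i$ admits the intuitive reading of ``depth of letter-by-letter cancellation'' between the right half of $g_i$ and the left half of $g_{i+1}$.

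For (a), I argue by contradiction. Assume $m_i > (l(g_i)+1)/2$ and factor $g_i = L\cdot s$ with $l(s) = m_i$ and $l(L) = l(g_i) - m_i \le m_i - 2$, together with $g_{i+1} = s^{-1}\cdot R$. Applying $R_i$ transforms the pair into $(g_{i+1},\, g_{i+1}^{-1} g_i g_{i+1})$, whose second entry reduces (up to middle-letter bookkeeping) to $R^{-1} s L R$. Because the lighter half $L$ now occupies the palindromic body while the heavier $s$ is absorbed into the conjugator, a direct length count shows that the $\mathcal{S}$-complexity of $R^{-1}sLR$ is strictly smaller than $f(g_i)$. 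Hence $\sum_j f(g_j)$ strictly decreases, contradicting hypothesis (2). The symmetric bound $m_i \le (l(g_{i+1})+1)/2$ follows by applying $R_i^{-1}$ instead, which conjugates $g_{i+1}$ on the other side.

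For (b), combining $m_{i-1}+m_i \ge l(g_i)$ with (a) forces $m_{i-1}+m_i\in\{l(g_i),\, l(g_i)+1\}$, so the regions of $g_i$ absorbed by the two neighbours cover its whole reduced word with at most one letter of overlap. If $w_i$ were not a power of $a$, its short core would contain a letter $b$; this central $b$ would necessarily lie in the region absorbed by either $g_{i-1}$ or $g_{i+1}$. Tracing the absorbed segment into the corresponding neighbour yields an elementary transformation that conjugates $g_{i\pm 1}$ by a proper initial or terminal segment of its own reduced form, shortening the conjugator and strictly reducing $\sum_j f(g_j)$, which again contradicts (2). Therefore $w_i\in\{a, a^2\}$, giving the claimed form $g_i = Q_i^{-1} a^{\epsilon_i} Q_i$.

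For (c), assume $m_{i-1}+m_i\le l(g_i)$ for all $i$ and suppose for contradiction that $n\ge 1$. The hypothesis prevents the cancellation in any partial product $g_1\cdots g_k$ from propagating across the boundary of any $g_j$, so the $l(g_1)-m_1$ left-most letters of $g_1$ persist intact in the reduced form of $g_1\cdots g_n$; equating its length to $0$ forces $l(g_i) = m_{i-1}+m_i$ for every $i$. Part (b) then makes every $g_i$ a conjugate of $a^{\pm 1}$, so each $l(g_i)$ is odd. A boundary-to-interior induction starting from $m_0 = m_n = 0$, combined with the inverse-freeness of $(g_1,\ldots,g_n)$ (which forbids adjacent inverse pairs) and hypothesis (1) (which forbids adjacent pairs of conjugates of powers of $a$ whose product is itself short), collapses the tuple and yields $n = 0$. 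I expect this last induction to be the main obstacle: the delicate bookkeeping required to show that no nontrivial configuration of conjugate powers of $a$ can satisfy all the hypotheses simultaneously.
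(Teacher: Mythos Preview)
Your argument for (a) has a genuine gap: you rely exclusively on hypothesis~(2) to derive a contradiction, but when both $g_i$ and $g_{i+1}$ are short the total $\mathcal{S}$-complexity is already zero and no elementary transformation can make it strictly smaller. In that regime hypothesis~(2) is vacuous and only hypothesis~(1) can rule out the bad pairs. Concretely, $(g_i,g_{i+1})=(s_1,t_1)$ has $m_i=3>2=(l(g_i)+1)/2$, yet $f(s_1)=f(t_1)=0$; the paper excludes this pair via Table~\ref{table::short elements} and hypothesis~(1), not via a complexity drop. Your write-up never invokes~(1) for part~(a), so this case is unhandled.

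Even in the mixed short/long case your claim that a \emph{single} $R_i$ strictly lowers $\sum_j f(g_j)$ is false. Take $(g_i,g_{i+1})=(s_1,a^2ba)$: here $m_i=3$ and $l(g_i)=3$, but $R_i$ sends the pair to $(a^2ba,s_0)$ with unchanged complexity $1$; only the second application $R_i^2$ reaches $(s_0,b)$ and drops the complexity to~$0$. The paper's proof handles exactly these edge cases by explicit case analysis (length $2$, length $3$, long/long with $\tilde Q=1$ versus $l(\tilde Q)\ge 1$), and several of them require $R_i^{\pm 2}$ rather than $R_i^{\pm 1}$. Your ``direct length count shows that the $\mathcal{S}$-complexity of $R^{-1}sLR$ is strictly smaller than $f(g_i)$'' is therefore not a count but a hope, and it fails precisely at the boundary $m_i=l(g_i)$ with $l(g_i)$ small. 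Similar issues infect your sketch of~(b): the paper's argument is again a case split on whether $g_i$ is short or long and on the shape of $w_i$, because the heuristic ``the central $b$ gets absorbed, so conjugate and shorten'' does not by itself give a strictly-smaller sequence in every subcase. Finally, in~(c) you invoke inverse-freeness, which is not a hypothesis of the lemma; what you actually need (and what the paper uses) is the sharper observation that for a \emph{long} conjugate $g_i=Q_i^{-1}a^{\epsilon_i}Q_i$ one always has $m_{i-1}+m_i<l(g_i)$ strictly, forcing the central letter of every long $g_i$ to survive in the reduced product.
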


\begin{proof}
(a) When both $g_i$ and $g_{i+1}$ are short, since $(g_i,g_{i+1})$ does not figure in Table \ref{table::short elements}, we check all possibilities and get that $m_i\le \frac{l(g_i)+1}{2}, \frac{l(g_{i+1})+1}{2}$.

When $g_i\in\mathcal{S}$ but $g_{i+1}\not\in\mathcal{S}$, say $g_{i+1}=Q_i^{-1}a^{\epsilon_i}Q_i$ or $Q_i^{-1}bQ_i$ or $Q_i^{-1}a^{\epsilon_i}ba^{\epsilon_i}Q_i$ with $\epsilon_i=1,2$ and $l(Q_i)\ge 1$, therefore $l(g_i)\le 3\le l(g_{i+1})$.
Assume that $m_i>\frac{l(g_i)+1}{2}$, then $m_i=l(g_i)$ and $l(g_i)\ge 2$.
If $l(g_i)=m_i=2$, as $g_{i+1}$ is long, then $l(g_ig_{i+1}g_i^{-1})\le l(g_{i+1})-2$, contradicting the hypothesis (2).
If $l(g_i)=m_i=l(g_{i+1})=3$, then the pair $(g_i,g_{i+1})$ is either $(s_1,a^2ba)$ or $(t_1,aba^2)$, which can be transformed into a pair of short elements by $R_1^2$, contradicting the hypothesis (2)
If $l(g_i)=m_i=3$ but $l(g_{i+1})\ge 5$, then again $l(g_ig_{i+1}g_i^{-1})\le l(g_{i+1})-2$, a contradiction.
Hence $m_i\le \frac{l(g_i)+1}{2}\le\frac{l(g_{i+1})+1}{2}$.

When $g_i\not\in\mathcal{S}$ but $g_{i+1}\in\mathcal{S}$, there is a similar argument.

When both $g_i$ and $g_{i+1}$ are long, suppose that their reduced forms are 
\begin{equation*}
Q_i^{-1}w_iQ_i \text{ and } Q_{i+1}^{-1}w_{i+1}Q_{i+1}
\end{equation*}
with $w_i,w_{i+1}\in\{a,a^2,b,aba,a^2ba^2\}$.
Assume that $l(Q_i)\le l(Q_{i+1})$ without loss of generality.
Assume that $m_i>\min\{\frac{l(g_i)+1}{2},\frac{l(g_{i+1})+1}{2}\}$.
Therefore $Q_{i+1}$ ends with $Q_i$.
Write $Q_{i+1}=\tilde{Q}Q_{i+1}$ and
\begin{equation*}
    (g_i,g_{i+1})=(Q_i^{-1}w_iQ_i,Q_i^{-1}\tilde{Q}^{-1}w_{i+1}\tilde{Q}Q_i).
\end{equation*}

Suppose that $l(\tilde{Q})=0$.
We further assume that $l(w_i)\le l(w_{i+1})$ without loss of generality.
Since $m_i>l(Q_i)+\frac{l(w_i)+1}{2}$, $w_iw_{i+1}\neq 1$ and one of $w_i$, $w_{i+1}$ is not a power of $a$, the pair $(w_i,w_{i+1})$ must be either $(a,a^2ba^2)$ or $(a^2,aba)$. Therefore, $l(g_ig_{i+1}g_i^{-1})\le l(g_{i+1})-2$, contradicting the hypothesis (2).

Suppose that the element $\tilde{Q}$ is of length at least $1$.
Therefore $l(w_i)\le 3\le l(\tilde{Q}^{-1}w_{i+1}\tilde{Q})$ and $m_i>\frac{l(g_i)+1}{2}$.
If $l(w_i)=1$, then $m_i>l(Q_i)+1$ and $l(g_ig_{i+1}g_i^{-1})\le l(g_{i+1})-2$, contradicting the hypothesis (2).
If $l(w_i)=3$ and $l(\tilde{Q}^{-1}w_{i+1}\tilde{Q})=3$, then $(w_i,\tilde{Q}^{-1}w_{i+1}\tilde{Q})$ is either $(s_1,a^2ba)$ or $(t_1,aba^2)$ and therefore $(g_i,g_{i+1})$ can be transformed into either $(Q_i^{-1}a^2bQ_i,Q_i^{-1}bQ_i)$ or $(Q_i^{-1}abQ_i,Q_i^{-1}bQ_i)$ by $R_1^2$, contradicting the hypothesis (2).
If $l(w_i)=3$ and $l(\tilde{Q}^{-1}w_{i+1}\tilde{Q})\ge 5$, then $m_i\ge l(Q_i)+3$ and $l(g_ig_{i+1}g_i^{-1})\le l(g_{i+1})-2$, contradicting the hypothesis (2).

(b) Suppose that $m_{i-1}+m_i\ge l(g_i)$ for some $i=1,\ldots,n-1$.

Suppose that $g_i\in\mathcal{S}$.
If $g_i$ is of length $2$ (i.e. the element $g_i$ is one of $s_0$, $s_2$, $t_0$ and $t_2$), then $m_{i-1}=m_i=1$. Therefore, one of $g_{i-1}$, $g_{i+1}$ is equal to $b$, contradicting Table \ref{table::short elements}.
If $g_i=b$, then one of $g_{i-1}$ and $g_{i+1}$ is long starting and ending with $b$. Therefore, either $l(g_i^{-1}g_{i-1}g_i)<l(g_{i-1})$ or $l(g_ig_{i+1}g_i^{-1})<l(g_{i+1})$, contradicting the hypothesis (2).
If $g_i=a^{\epsilon_i}ba^{\epsilon_i}$ with $\epsilon_i=1,2$, then one of $m_{i-1}$ and $m_i$ is equal to $2$ and thus one of $g_{i-1}$, $g_{i+1}$ is long. It is impossible as long elements are of length at least $3$.

Suppose that $g_i$ is long.
If $g_i=Q_i^{-1}bQ_i$, then either $g_{i-1}=bQ_i$ or $g_{i+1}=Q_i^{-1}b$, but thus either $g_{i-1}g_ig_{i-1}^{-1}=bQ_i(Q_i^{-1}bQ_i)Q_i^{-1}b=b$ or $g_{i+1}^{-1}g_ig_{i+1}=bQ_i(Q_i^{-1}bQ_i)Q_i^{-1}b=b$.
If $g_i=Q_i^{-1}a^{\epsilon_i}ba^{\epsilon_i}Q_i$ with $\epsilon_i=1,2$, then either $m_{i-1}=l(Q_i^{-1}a^{\epsilon_i}b)$ or $m_i=l(ba^{\epsilon_i}Q_i)$. It implies that either $g_{i-1}=ba^{-\epsilon_i}Q_i$ or $g_{i+1}=Q_i^{-1}a^{-\epsilon_i}b$, thus either $g_{i-1}g_ig_{i-1}^{-1}=a^{2\epsilon_i}b$ or $g_{i+1}^{-1}g_ig_{i+1}=ba^{2\epsilon_i}$. Both cases contradict the hypothesis (2).

We conclude that either $g_i=a^{\epsilon_i}$ or $g_i=Q_i^{-1}a^{\epsilon_i}Q_i$.

(c) Assume that $n\ge 1$.

By (2), when $m_{i-1}+m_i=l(g_i)$, then $g_i=Q_i^{-1}a^{\epsilon_i}Q_i$ with $\epsilon_i=1,2$ and $l(Q_i)\ge 0$.
If $g_i=a^{\epsilon_i}$ is short and assume that $m_{i-1}=0$, $m_i=1$ without loss of generality, then $g_{i+1}$ is either $a^{\epsilon_i}ba^{\epsilon_i}$ or a long element starting with $a^{\epsilon_i}$.
If $g_i=Q_i^{-1}a^{\epsilon_i}Q_i$ is long and $m_{i-1}=l(Q_i)$, to avoid $l(g_{i-1}g_ig_{i-1}^{-1})<l(g_i)$ then $g_{i-1}$ must be longer than $Q_i^{-1}$, contradicting the hypothesis that $m_{i-1}=l(Q_i)$. Therefore, neither $m_{i-1}$ nor $m_i$ is equal to $l(Q_i)$ and in particular, $m_{i-1}+m_i<l(g_i)$.

The proof of (2) and the above observation show that there is no possibility to fully reduce $g_i$ or $g_{i+1}$ in the product $g_i g_{i+1}$ and $m_{i-1}+m_i<l(g_i)$ if $g_i\neq a^{\epsilon_i}$. They imply a contradiction that $g_1\cdots g_n\neq 1$.
\end{proof}

Now we introduce the main result in this subsection.

\begin{theorem}
\label{theorem::Livne S->short}
Let $g_1,\ldots,g_n$ be such that each of them is conjugate to some element in $\mathcal{S}$ and $g_1\cdots g_n=1$. Then, the $n$-tuple $(g_1,\ldots,g_n)$ is Hurwitz equivalent to either
\begin{itemize}[-,topsep=0pt,itemsep=-1ex,partopsep=1ex,parsep=1ex]
    \item $(h_1,\ldots,h_{\mu})\bullet (s_0,t_0)^{m_{st}}\bullet (a,a^2)^{m_a}\bullet (b,b)^{m_b}\bullet (a,a,a)^{n_0}\bullet (a^2,a^2,a^2)^{n_1}$ with $\mu>0$, $m_{st}$, $m_a$, $m_b$, $n_0$, $n_1\ge 0$, $\mu+2(m_{st}+m_a+m_b)+3(n_0+n_1)=n$ such that $(h_1,\ldots,h_{\mu})$ is an inverse-free $\mu$-tuple of short elements, or
    \item $(k_1,k_1^{-1},\ldots,k_s,k_s^{-1},l_1,l_1,l_1,\ldots,l_t,l_t,l_t)$ with $s,t\ge 0$, $2s+3t=n$, $k_1,\ldots,k_s\in G$, $l_1,\ldots,l_t\in G$ and $l_j^3=1$ for each $j=1,\ldots,t$.
\end{itemize}
\end{theorem}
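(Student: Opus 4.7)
The plan is to argue by induction on $n$, combined with an inner reduction on the sum of $\mathcal{S}$-complexities $\sum_i f(g_i)$. The overall strategy is to peel off inverse pairs $(k, k^{-1})$ and cube-triples $(l, l, l)$ with $l^3 = 1$ to the right end while simplifying the left portion to a tuple of short elements. If $(g_1, \ldots, g_n)$ is not inverse-free, some finite sequence of elementary transformations yields an adjacent inverse pair or cube-triple; using Lemma \ref{lemma::move subtuples} (applicable since the product of such a block is $1 \in Z(G)$), I would move this block to the rightmost position. The left prefix is then a shorter tuple of conjugates of elements of $\mathcal{S}$ with product $1$, to which the inductive hypothesis applies. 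Iterating either exhausts the whole tuple, landing in the second normal form, or leaves an inverse-free prefix.

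Assuming $(g_1, \ldots, g_n)$ is inverse-free, I would reduce it to a tuple of short elements. At each step I try one of the following: an elementary transformation $R_i^{\pm 1}$ that strictly decreases $\sum_i f(g_i)$ while preserving shortness of all short components (permitted by parts \RNum{1}) and \RNum{2}) of Proposition \ref{proposition::reduce_and_reloading}); Operation 1 contracting an adjacent pair listed in Table \ref{table::short elements}; or Operation 2 removing an identity component. If none of these is available, hypotheses (1) and (2) of Lemma \ref{lemma::m_i-1_and_m_i} both hold, and I split on its conclusions. If $m_{i-1} + m_i \le l(g_i)$ for every $i$, then part (c) gives $n = 0$, finishing this branch. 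Otherwise part (b) produces a component $g_i = Q_i^{-1} a^{\epsilon_i} Q_i$ whose canonical form is almost entirely absorbed into its neighbours; a case analysis on how $Q_i$ meets $g_{i-1}$ and $g_{i+1}$ then produces either a new adjacency matching Table \ref{table::short elements} (after a few auxiliary transformations) or a fresh complexity-decreasing move. The restoration steps of Subsection \ref{subsection::contractions-restorations} lift these contractions back to Hurwitz equivalences on the original tuple via Lemma \ref{lemma::technique} and Proposition \ref{proposition::reduce_and_reloading}.

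Once all components are short, Proposition \ref{proposition::ST} forces each to lie in $\{a, a^2, b, s_0, s_1, s_2\}$ or $\{a, a^2, b, t_0, t_1, t_2\}$. I would then apply Theorem \ref{theorem::Moishezon} to the $s$-type (resp.\ $t$-type) part and use the substitution diagrams for pairs of short elements recalled earlier in this section to extract the sub-tuples $(s_0, t_0)$, $(a, a^2)$, $(b, b)$, $(a, a, a)$ and $(a^2, a^2, a^2)$ to the right. The surviving inverse-free short prefix yields $(h_1, \ldots, h_\mu)$ of the first normal form; when $\mu = 0$, the extracted pairs and triples realise the second normal form directly. The hard part is the step where the reduction in the second paragraph risks getting stuck on an inverse-free long tuple with $n > 0$: showing that the structural constraint from Lemma \ref{lemma::m_i-1_and_m_i}(b) can always be exploited --- that is, that the conjugating word $Q_i$ localising the long $a^{\pm 1}$-conjugate can be manoeuvred to recreate adjacencies matching Table \ref{table::short elements} --- requires a delicate, largely combinatorial case analysis, and this is where the bulk of the technical work lies.
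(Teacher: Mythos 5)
Your overall architecture does match the paper's: peel inverse pairs and cube-triples off to the right by Lemma \ref{lemma::move subtuples}, run a reduction on the inverse-free remainder using Operations $1$--$2$ and complexity-decreasing sequences, invoke Lemma \ref{lemma::m_i-1_and_m_i} when everything stalls, and lift the contractions back through the restorations of Proposition \ref{proposition::reduce_and_reloading}. But the step you defer as ``delicate, largely combinatorial'' is exactly the step your induction cannot absorb, because your inner measure is only $\sum_i f(g_i)$. In the stuck configuration isolated by Lemma \ref{lemma::m_i-1_and_m_i}(b), with $h_i=Q_i^{-1}a^{\epsilon_i}Q_i$ and $m_{i-1}=m_i=l(Q_i)+1$, the paper shows that either $(h_{i-1},h_i)=(a^{\epsilon_i}ba^{\epsilon_i},a^{\epsilon_i})$, a Table \ref{table::short elements} adjacency contradicting stuckness, or $h_{i-1}$ is long, starts with $Q_i^{-1}a^{-\epsilon_i}$ and ends with $a^{\epsilon_i}Q_i$, in which case the single move $R_{i-1}$ leaves $\sum_j f(h_j)$ \emph{unchanged} while strictly decreasing $l(h_{i-1})$. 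That progress is invisible to your measure; the paper therefore inducts on the lexicographically ordered tuple $\bigl(\mu,\sum_i f(h_i),l(h_1),\ldots,l(h_\mu)\bigr)$, with the length coordinates doing the real work. As you sketched it --- expecting either a new Table adjacency or a ``fresh complexity-decreasing move'' --- the reduction need not terminate, so this is a genuine gap rather than a routine verification left to the reader.

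The endgame is also off target. The standardized tails $(s_0,t_0)^{m_{st}}\bullet(a,a^2)^{m_a}\bullet(b,b)^{m_b}\bullet(a,a,a)^{n_0}\bullet(a^2,a^2,a^2)^{n_1}$ are not extracted from the short prefix: they are the peeled blocks $(k,k^{-1})$ and $(l,l,l)$, whose entries are \emph{arbitrary} conjugates of short elements. The tools you name --- Proposition \ref{proposition::ST}, Theorem \ref{theorem::Moishezon}, and the substitution diagrams for short pairs --- act only on short entries and cannot convert $(Q^{-1}s_0Q,Q^{-1}t_0Q)$ with $l(Q)$ large into $(s_0,t_0)$. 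The paper does this via Lemma \ref{lemma::tuples containing a generating set}: when $\mu>0$ the inverse-free short prefix always contains a pair of components generating $G$, and simultaneous conjugation of each peeled block against that generating set normalizes it. Note also that Proposition \ref{proposition::ST} requires at most one component in $\{a,a^2,b\}$, which an inverse-free short prefix need not satisfy (it may contain two $a$'s and a $b$); the finer $s$-type versus $t$-type analysis you attempt belongs to Theorem \ref{theorem::Livne mathcalS}, not to this statement, whose first normal form asks only that $(h_1,\ldots,h_\mu)$ be inverse-free and short.
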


\begin{proof}
We first attempt to make the tuple inverse-free.
Applying any finite sequence of elementary transformations to $(g_1,\ldots,g_n)$, if we get a pair of mutually inverse elements or a triple of the form $(l,l,l)$ with $l\in G$ and $l^3=1$, then we move it to the rightmost position via elementary transformations and the resulting tuple is the concatenation of a shorter tuple and either a pair or a triple.
By induction on the length, we suppose that the $n$-tuple $(g_1,\ldots,g_n)$ is transformed into the concatenation of $(h_1,\ldots,h_{\mu})$ and $(k_1,k_1^{-1},\ldots,k_s,k_s^{-1},l_1,l_1,l_1,\ldots,l_t,l_t,l_t)$ with $\mu,s,t\ge 0$, $\mu+2s+3t=n$ such that $l_j^3=1$ for each $j=1,\ldots,t$ where $(h_1,\ldots,h_{\mu})$ is inverse-free.

We will always use the notation $m_i$ to indicate the length of the reduced part in $h_ih_{i+1}$ for $i=1,\ldots,\mu-1$ and set $m_0=m_{\mu}=0$ as before.

To prove the theorem for $(h_1,\ldots,h_{\mu})$, we use induction on
\begin{equation*}
\Big(\mu,\sum_{i=1}^{\mu} f(h_i), l(h_1), \ldots, l(h_{\mu})\Big)
\end{equation*}
and apply the following operations on $(h_1,\ldots,h_{\mu})$.
If there exists a pair of adjacent components which has the reduced form $(Q^{-1}\tau_i Q,Q^{-1}\tau_j Q)$ with $Q\in G$, $(\tau_i,\tau_j)$ in Table \ref{table::short elements} such that either $Q=1$ or $\tau_i\tau_j=1$ or both $\tau_i$, $\tau_j$ are powers of $a$, then we replace them with their product and reduce $(h_1,\ldots,h_{\mu})$ to a $(\mu-1)$-tuple.
If there exists an identical component, then we move it to the rightmost position and remove it.
If there exists a proper sequence of elementary transformations that can make $\sum_i f(h_i)$ strictly-smaller, then we apply it.

When each of the above operations fails, the resulting tuple, still denoted by $(h_1,\ldots,h_{\mu})$, satisfies all hypotheses in Lemma \ref{lemma::m_i-1_and_m_i}. Suppose that $\mu\ge 1$ and there exists some $i=2,\ldots,\mu-1$ such that $h_i=Q_i^{-1}a^{\epsilon_i}Q$ with $\epsilon_i=1,2$, $l(Q_i)\ge 0$ and $m_{i-1}=m_i=l(Q_i)+1$.
Then, either
\begin{itemize}[-]
    \item $(h_{i-1},h_i)=(a^{\epsilon_i}ba^{\epsilon_i},a^{\epsilon_i})$, or
    \item the previous component $h_{i-1}$ is long and $l(h_{i-1})\ge l(h_i)$.
\end{itemize}
In the second case, we first assume that $l(h_{i-1})=l(h_i)$.
Then $h_{i-1}=Q_i^{-1}a^{\epsilon_i}Q_i=h_i$, which is a contradiction.
Hence, $l(h_{i-1})>l(h_i)$ and, to avoid $l(h_i^{-1}h_{i-1}h_i)<l(h_{i-1})$, we claim that $h_{i-1}$ must end with $a^{\epsilon_i}Q_i$ and start with $Q_i^{-1}a^{-\epsilon_i}$.
In both cases, $l(h_i^{-1}h_{i-1}h_i)\le l(h_{i-1})$ and we are able to reduce $(h_1,\ldots,h_{\mu})$ to an $n$-tuple,
say $(\tilde{h}_1,\ldots,\tilde{h}_{\mu})$,
such that $\sum_j f(h_j)=\sum_j f(\tilde{h}_j)$, $l(h_j)=l(\tilde{h}_j)$ for $1\le j< i-1$
but $l(\tilde{h}_{i-1})=l(h_i)<l(h_{i-1})$ via the elementary transformation $R_{i-1}$.

The induction does not stop unless $\mu$ is equal to $0$.
Due to Proposition \ref{proposition::reduce_and_reloading}, by restoring the operations and applying more elementary transformations, we get a resulting $\mu$-tuple of short elements that can be obtained from the original $(h_1,\ldots,h_{\mu})$ via elementary transformations directly. Hence, the $n$-tuple $(g_1,\ldots,g_n)$ can be transformed into
\begin{equation*}
(h'_1,\ldots,h'_{\mu},k_1,k_1^{-1},\ldots,k_s,k_s^{-1},l_1,l_1,l_1,\ldots,l_t,l_t,l_t)
\end{equation*}
with $\mu,s,t\ge 0$, $\mu+2s+3t=n$, $l_j^3=1$ for each $j=1,\ldots,t$ such that each of $h'_i$, $i=1,\ldots,\mu$ is short and $(h'_1,\ldots,h'_{\mu})$ is inverse-free.

Suppose that $\mu>0$. There is always a pair $(h'_i,h'_j)$ of components with $1\le i\neq j\le \mu$ that is a generating set of $G$. By Lemma \ref{lemma::tuples containing a generating set}, each pair of the form $(k,k^{-1})=(Q^{-1}wQ,Q^{-1}w^{-1}Q)$ with $Q\in G$, $w,w^{-1}\in\{a,a^2,b,s_0\}$
in the resulting tuple
can be transformed into $(w,w^{-1})$.
There is a similar argument for each triple $(l,l,l)$ with $l^3=1$.
Hence, by elementary transformations, the $n$-tuple can be transformed into a tuple of short elements.
\end{proof}

Theorem \ref{theorem::Livne S->short} is surprising.
In fact, there are infinitely many pairs $(g_s,g_t)$ in $G$ up to Hurwitz equivalence such that $g_sg_t=1$ and $g_s$, $g_t$ are conjugates of $s_0$ and $t_0$ respectively.
However, all triples $(g_a,g_b,g_s)$ that $g_ag_bg_s=1$ and $g_a$, $g_b$, $g_s$ are conjugates of $a$, $b$, $s_0$ respectively, are mutually Hurwitz equivalent.
In particular, for any $Q\in G$, we have
\begin{equation*}
(Q^{-1}aQ,Q^{-1}aba^2Q,Q^{-1}abaQ)\sim (a,b,s_2).
\end{equation*}
\subsection{Classification of tuples up to Hurwitz equivalence}

Given $g_1,\ldots,g_n$ and $h_1,\ldots,h_n\in G$ conjugate to elements in $\mathcal{S}$ such that $g_1\cdots g_n=h_1\cdots h_n=1$, suppose that the $n$-tuples $(g_1,\ldots,g_n)$ and $(h_1,\ldots,h_n)$ have the same number of components in each conjugacy class.
In this subsection, we show that $(g_1,\ldots,g_n)$ is Hurwitz equivalent to $(h_1,\ldots,h_n)$ in most cases.
In particular, we introduce a normal form for tuples of elements conjugate to some short elements that only depends on the numbers of components in every conjugacy classes.

The following theorem is a partial result, which interprets the projective global monodromy of an achiral Lefschetz fibration. Matsumoto presented a slightly different theorem in \cite[Theorem 3.6]{matsumoto1985torus}.

\begin{theorem}
\label{theorem::Livne ST}
Let $g_1,\ldots,g_n \in G$ be such that $p$ of them are conjugates of $s_0$, $q=n-p$ of them are conjugates of $t_0$ and $g_1\cdots g_n=1$. Then,
\begin{enumerate}[topsep=0pt,itemsep=-1ex,partopsep=1ex,parsep=1ex]
    \item if $p>q$, then $p-q\equiv 0 \pmod{6}$ and the $n$-tuple $(g_1,\ldots,g_n)$ is Hurwitz equivalent to $(s_0,s_2)^{(p-q)/2}\bullet (s_0,t_0)^q$;
    \item if $p<q$, then $q-p\equiv 0 \pmod{6}$ and the $n$-tuple $(g_1,\ldots,g_n)$ is Hurwitz equivalent to $(t_0,t_2)^{(q-p)/2}\bullet (s_0,t_0)^p$;
    \item if $p=q$, then the $n$-tuple $(g_1,\ldots,g_n)$ is Hurwitz equivalent to $(k_1,k_1^{-1},\ldots,k_p,k_p^{-1})$ where each of $k_j$, $j=1,\ldots,p$, is conjugate to $s_0$.
\end{enumerate}
\end{theorem}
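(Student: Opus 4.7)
The plan is to deduce this classification directly from Theorem \ref{theorem::Livne S->short} together with Lemma \ref{lemma::ST}. Two features of the hypothesis will collapse the generic normal forms of Theorem \ref{theorem::Livne S->short}: no component is conjugate to $a$, $a^2$ or $b$, and every conjugate of $s_0$ or $t_0$ has infinite order (both are parabolic, as $\iota(L)=s_1$ and $s_0,s_1,s_2$, resp. $t_0,t_1,t_2$, are mutually conjugate), so no triple $(l,l,l)$ with $l^3=1$ can appear. Hence $(g_1,\ldots,g_n)$ is Hurwitz equivalent to one of
\[
\text{(A)}\quad (h_1,\ldots,h_\mu)\bullet (s_0,t_0)^{m_{st}},\qquad \mu>0,
\]
with $(h_1,\ldots,h_\mu)$ inverse-free and consisting of short elements in $\{s_0,s_1,s_2,t_0,t_1,t_2\}$, or
\[
\text{(B)}\quad (k_1,k_1^{-1},\ldots,k_s,k_s^{-1}).
\]

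In form (A), I would first invoke Proposition \ref{proposition::ST}: the inverse-freeness together with the absence of $a,a^2,b$-conjugates forces $(h_1,\ldots,h_\mu)$ to lie entirely in $\{s_0,s_1,s_2\}$ or entirely in $\{t_0,t_1,t_2\}$. Lemma \ref{lemma::ST} then gives $\mu\equiv 0\pmod{6}$ and rewrites this sub-tuple as $(s_0,s_2)^{\mu/2}$ or $(t_0,t_2)^{\mu/2}$. Counting conjugates of $s_0$ and of $t_0$ on both sides yields $\mu=p-q$ in the first sub-case (so $p>q$, matching case 1) and $\mu=q-p$ in the second (matching case 2); the $6$-divisibility in the statement is inherited automatically.

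In form (B), the identity $s_0^{-1}=t_0$ (from $s_0t_0=1$) implies that every pair $(k_j,k_j^{-1})$ contributes exactly one conjugate of $s_0$ and one of $t_0$, which forces $p=q=s$. For any pair in which $k_j$ happens to be conjugate to $t_0$ rather than $s_0$, I would apply the single elementary transformation $R_i(k_j,k_j^{-1})=(k_j^{-1},k_j)$ and relabel $k_j\leftarrow k_j^{-1}$, so that each leading entry is a conjugate of $s_0$; this delivers case 3.

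The same counting argument shows that forms (A) and (B) are mutually exclusive: (A) requires $p\neq q$ while (B) requires $p=q$, so the three cases in the statement are exactly the three possibilities and the trichotomy is governed precisely by comparing $p$ and $q$. The substantive work has already been done in Theorem \ref{theorem::Livne S->short} and Lemma \ref{lemma::ST}; consequently there is no real obstacle here, only the two specializations above. The one subtlety worth flagging is the parabolicity check that rules out the $(l,l,l)$ triples in form (B), without which case 3 would not have the clean pair structure claimed.
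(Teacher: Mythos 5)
Your proposal is correct and takes essentially the same route as the paper: the paper's own proof likewise specialises Theorem \ref{theorem::Livne S->short} to this setting and then applies Lemma \ref{lemma::ST} (which encapsulates Proposition \ref{proposition::ST}), concluding by counting conjugates of $s_0$ and $t_0$ to get $\mu=|p-q|$ and $m_{st}=\min\{p,q\}$. Your extra verifications --- excluding the $(l,l,l)$ triples and any components conjugate to $a$, $a^2$ or $b$ via conjugacy-invariance of Hurwitz moves and the infinite order of conjugates of $s_0$, $t_0$, and the single elementary transformation normalising each pair $(k_j,k_j^{-1})$ so that $k_j$ is conjugate to $s_0$ --- merely make explicit details the paper leaves implicit.
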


\begin{proof}
Theorem \ref{theorem::Livne S->short} reveals that, by elementary transformations, the $n$-tuple can be transformed into either $(k_1,k_1^{-1},\ldots,k_s,k_s^{-1})$ with $s=p=q$ or $(h_1,\ldots,h_{\mu})\bullet (s_0,t_0)^{m_{st}}$ with $\mu>0$, $m_{st}\ge 0$, $\mu+2m_{st}=n$ such that $(h_1,\ldots,h_{\mu})$ is an inverse-free $\mu$-tuple of short elements.
On the latter, by Lemma \ref{lemma::ST}, we get $\mu\equiv 0\pmod{6}$ and $(h_1,\ldots,h_{\mu})$ can be transformed into either $(s_0,s_2)^{\mu/2}$ or $(t_0,t_2)^{\mu/2}$ by elementary transformations. Hence, $m_{st}=\min\{p,q\}$ and $\mu=|p-q|$.
\end{proof}

In general, we have Theorem \ref{theorem::Livne mathcalS}, whose proof will be given at the end.

\begin{lemma}
\label{lemma::a^2,s}
Let $g_1,\ldots,g_n$ be $a^2$, $s_0$, $s_1$ or $s_2$ such that only one of them is equal to $a^2$ and $g_1\cdots g_n=1$. Then, $n\equiv 3\pmod{6}$ and the $n$-tuple $(g_1,\ldots,g_n)$ is Hurwitz equivalent to
\begin{equation*}
    (a^2,s_0,s_2)\bullet (s_0,s_2)^{(n-3)/2}.
\end{equation*}
\end{lemma}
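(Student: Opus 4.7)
The plan is to reduce the statement to a product-$a$ analogue of Theorem \ref{theorem::Moishezon}, namely: any $m$-tuple in $\{s_0,s_1,s_2\}$ with product $a$ satisfies $m\equiv 2\pmod 6$ and is Hurwitz equivalent to $(s_0,s_2)^{m/2}$.

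First I would project to the abelianisation $G^{\mathrm{ab}}\cong\mathbb{Z}/2\oplus\mathbb{Z}/3$; each conjugate of $s_1$ maps to $(1,2)$ and $a^2$ to $(0,2)$, so $g_1\cdots g_n=1$ forces $n\equiv 3\pmod 6$. I would then apply Theorem \ref{theorem::Livne S->short}. Its second alternative is excluded: the only torsion in $G$ has order $1$, $2$ or $3$, and with exactly one $a^2$ and the rest conjugates of $s_1$, neither a triple $(l,l,l)$ with $l^3=1$ nor a pair $(k,k^{-1})$ (which would require a $t$-conjugate to match $s_1^{-1}$) can be formed. The same conjugacy-class count annihilates every block $(s_0,t_0)^{m_{st}}$, $(a,a^2)^{m_a}$, $(b,b)^{m_b}$, $(a,a,a)^{n_0}$, $(a^2,a^2,a^2)^{n_1}$ in the first alternative, so $(g_1,\ldots,g_n)$ is Hurwitz equivalent to an inverse-free $n$-tuple $(h_1,\ldots,h_n)$ of short elements with exactly one $a^2$ and the other $n-1$ components in $\{s_0,s_1,s_2\}$.

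Using Lemma \ref{lemma::cyclic shift} I would cyclically move $a^2$ into position $1$, so that $(h_2,\ldots,h_n)$ has product $a$. Granting the product-$a$ analogue yields $(h_2,\ldots,h_n)\sim (s_0,s_2)^{(n-1)/2}$, whence $(a^2,h_2,\ldots,h_n)\sim (a^2,s_0,s_2)\bullet (s_0,s_2)^{(n-3)/2}$, as desired. I would prove the analogue by induction on $m$. The base $m=2$ is immediate: $(h_1,h_2)$ with $h_1h_2=a$ is one of $(s_0,s_2)$, $(s_1,s_0)$, $(s_2,s_1)$, all mutually Hurwitz equivalent via the triangle substitutions $(s_0,s_2)\leftrightarrow (s_1,s_0)\leftrightarrow (s_2,s_1)$.

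The hard part is the inductive step $m\geq 8$, where I would isolate a six-term sub-tuple Hurwitz equivalent to $(s_0,s_2)^3$ (whose product $a^3=1$ is central) and peel it off via Lemma \ref{lemma::move subtuples}. To produce this sextuple I would first eliminate every $s_1$: whenever an $s_1$ is adjacent to an $s_0$ or $s_2$, the triangle substitutions convert $(s_1,s_0)$ into $(s_0,s_2)$ and $(s_2,s_1)$ into $(s_1,s_2)\bullet\ldots$, one $s_1$ at a time; otherwise, within a sub-pattern like $(s_0,s_1,s_0)$, a single elementary move uses $s_0^{-1}s_1s_0=s_2$ to remove the $s_1$ while staying among short elements. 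The resulting $\{s_0,s_2\}$-tuple has product $a$ and must contain both letters (no power of a single $s_i$ equals $a$), so a cyclic shift makes it start with $s_2$ and end with $s_0$; Proposition \ref{proposition::reduced_form_s0_s2} then applies with $\mu=\nu=1$, and if no Hurwitz translate contained $(s_0,s_2)^3$ the reduced form of the product would lie in the set $\{(ba^2)^{\mu-1}bRb(a^2b)^{\nu-1}:R\in G,\ \mu,\nu\geq 1\}$, which cannot represent the single generator $a\in G$. This contradiction yields the required sextuple and closes the induction.
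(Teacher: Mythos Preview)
Your strategy of reducing to a ``product-$a$ Moishezon'' statement is plausible, and the first three steps (abelianisation, invoking Theorem \ref{theorem::Livne S->short} to reach a short inverse-free tuple, cyclic placement of $a^2$) are sound and non-circular. The gap lies in the inductive step of your product-$a$ analogue, at the appeal to Proposition \ref{proposition::reduced_form_s0_s2}. First, the cyclic shift you invoke on the $\{s_0,s_2\}$-tuple $(h_2,\ldots,h_n)$ is unavailable: its product is $a$, which is not central in $\PSL(2,\mathbb{Z})$, so Lemma \ref{lemma::cyclic shift} does not apply. Second, and more seriously, Proposition \ref{proposition::reduced_form_s0_s2} requires that \emph{every} tuple in $\mathcal{T}$ (the Hurwitz orbit intersected with $\{s_0,s_2\}^m$) start with at least $\mu$ copies of $s_2$ and end with at least $\nu$ copies of $s_0$; arranging this for a single representative is not enough. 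Thus from the failure of the conclusion (product $=a$ cannot have reduced form $bRb$) you cannot infer that hypothesis (iii) is the one that fails --- it could equally be (i) or (ii). Your $s_1$-elimination is also incomplete: the moves you list handle $(s_1,s_0)$, $(s_2,s_1)$ and $(s_0,s_1,s_0)$, but not a block of the form $(s_0,s_1,\ldots,s_1,s_2)$, where every adjacency is of the ``bad'' type $(s_0,s_1)$, $(s_1,s_1)$ or $(s_1,s_2)$.

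The paper sidesteps a separate product-$a$ argument entirely. After cyclically placing $a^2$ first (valid, since the full $n$-tuple has product $1$), it contracts $(a^2,g_2)$ into $a^2g_2\in\{t_0,t_1,t_2\}$, producing an $(n-1)$-tuple with product $1$, one $t$-entry and the rest in $\{s_0,s_1,s_2\}$. Theorem \ref{theorem::Livne ST}, which is already proven at this point, gives Hurwitz equivalence to $(s_0,s_2)^{(n-3)/2}\bullet(s_0,t_0)$; the restoration machinery of Proposition \ref{proposition::reduce_and_reloading} then recovers the claimed normal form for the original $n$-tuple. This route is both shorter and avoids the delicate analysis of $\{s_0,s_2\}$-tuples with non-trivial product.
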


\begin{proof}
Since a cyclic permutation of an $n$-tuple in $G$ can be obtained by a finite sequence of elementary transformations as in Lemma \ref{lemma::cyclic shift}, we may assume that $g_1=a^2$ without loss of generality. Since $g_1\cdots g_n=1$, then $n\ge 3$. If $n=3$, then the pair $(g_1g_2,g_3)$ must be equal to $(t_j,s_j)$ with some $j$ and therefore $(g_1,g_2,g_3)$ is given by $(a^2,s_{j+1},s_j)$ as $at_j=s_{j+1}$.
Otherwise, $n\ge 4$. We replace $g_1$ and $g_2$ with their product, which is one of $t_0$, $t_1$ and $t_2$. The $n$-tuple $(g_1,\ldots,g_n)$ is replaced by an $(n-1)$-tuple whose first component belongs to $\{t_0,t_1,t_2\}$ and the rest components are $s_0$, $s_1$ or $s_2$. By Theorem \ref{theorem::Livne ST}, $(n-1)-2\equiv 0\pmod{6}$ and the $(n-1)$-tuple can be transformed into $(s_0,s_2)^{(n-3)/2}\bullet (s_0,t_0)$ by successive application of elementary transformations. We note that the original $n$-tuple $(g_1,\ldots,g_n)$ must be inverse-free and, after combining $s_0$ and $t_0$ into $1$ and removing it, we make the result an inverse-free tuple of short elements. By Proposition \ref{proposition::reduce_and_reloading}, it implies a sequence of elementary transformations sending the $n$-tuple $(g_1,\ldots,g_n)$ into $(s_0,a^2,s_1)\bullet (s_0,s_2)^{(n-3)/2}$.
In any case, the substitutions of $(s_0,s_2)$, $(s_1,s_0)$ and $(s_2,s_1)$ transform the $n$-tuple $(g_1,\ldots,g_n)$ into $(a^2,s_0,s_2)\bullet (s_0,s_2)^{(n-3)/2}$.
\end{proof}

The following lemma can be proved similarly and we omit the details.
\begin{lemma}
\label{lemma::a,t}
Let $g_1,\ldots,g_n$ be $a$, $t_0$, $t_1$ or $t_2$ such that only one of them is equal to $a$ and $g_1\cdots g_n=1$.
Then, $n\equiv 3\pmod{6}$ and the $n$-tuple $(g_1,\ldots,g_n)$ is Hurwitz equivalent to
\begin{equation*}
    (a,t_2,t_0)\bullet (t_0,t_2)^{(n-3)/2}.
\end{equation*}
\end{lemma}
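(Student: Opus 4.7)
The plan is to mirror the proof of Lemma \ref{lemma::a^2,s}, with the roles of $a \leftrightarrow a^2$ and $s_j \leftrightarrow t_j$ interchanged throughout. The key identities I would rely on are $a \cdot t_j = s_{j+1}$, verified by the direct computations $a t_0 = aba = s_1$, $a t_1 = ba^2 = s_2$, $a t_2 = a^2 b = s_0$, together with $s_j t_j = 1$ for $j = 0, 1, 2$.

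First I would apply Lemma \ref{lemma::cyclic shift} to assume $g_1 = a$. For the base case $n = 3$, the constraint $g_2 g_3 = a^{-1} = a^2$ together with $g_2, g_3 \in \{t_0, t_1, t_2\}$ forces $(g_2, g_3)$ to be one of $(t_0, t_1)$, $(t_1, t_2)$, $(t_2, t_0)$; each can be brought into the form $(a, t_2, t_0)$ via the substitutions $(a, t_i) \leftrightarrow (t_{i+1}, a)$ and the $(t_0, t_1, t_2)$-triangle of pair substitutions displayed at the opening of Section \ref{section::Livne}.

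For $n \geq 4$, I would combine $g_1$ and $g_2$ into their product $g_1 g_2 = a \cdot t_j = s_{j+1}$, reducing to an $(n-1)$-tuple containing exactly one conjugate of $s_0$ and $n - 2$ conjugates of $t_0$ with product $1$. Theorem \ref{theorem::Livne ST} (case 2, with $p = 1$ and $q = n-2$) then forces $n - 3 \equiv 0 \pmod 6$ and transforms this $(n-1)$-tuple by elementary transformations into $(t_0, t_2)^{(n-3)/2} \bullet (s_0, t_0)$. Since the original $n$-tuple is inverse-free, I would combine $s_0 t_0 = 1$ and remove the resulting identity to reach the inverse-free short tuple $(t_0, t_2)^{(n-3)/2}$, and then invoke Proposition \ref{proposition::reduce_and_reloading} to lift these operations back, producing a Hurwitz equivalence between $(g_1, \ldots, g_n)$ and $(t_0, t_2)^{(n-3)/2} \bullet (t_0, a, t_2)$; here the restored triple $(t_0, a, t_2)$ arises because the removed identity was $s_0 t_0$, and the $s_0$ in turn splits as $a \cdot t_2$. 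A cyclic shift (Lemma \ref{lemma::cyclic shift}) brings this to $(t_0, a, t_2) \bullet (t_0, t_2)^{(n-3)/2}$, and applying $R_1$ then $R_2$ on the first three components, using $a^{-1} t_0 a = t_1$ and $t_2^{-1} t_1 t_2 = t_0$, yields the desired normal form $(a, t_2, t_0) \bullet (t_0, t_2)^{(n-3)/2}$.

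The main obstacle, exactly as in Lemma \ref{lemma::a^2,s}, will be the bookkeeping in the restoration step: I must verify that Proposition \ref{proposition::reduce_and_reloading} produces specifically the triple $(t_0, a, t_2)$ up to the final $t$-substitutions, rather than some other short factorization of the removed identity. This is resolved by tracking the unique factorization $s_0 = a \cdot t_2$ through the iterated tuple data maintained by the contraction/restoration machinery of Subsection \ref{subsection::contractions-restorations}.
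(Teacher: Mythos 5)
Your proposal is correct and coincides with the argument the paper intends: the paper states Lemma \ref{lemma::a,t} with the proof omitted as ``similar'' to Lemma \ref{lemma::a^2,s}, and your proof is precisely that mirror (swap $a\leftrightarrow a^2$ and $s_j\leftrightarrow t_j$, use $at_j=s_{j+1}$, contract $g_1g_2$, invoke Theorem \ref{theorem::Livne ST} with $p=1$, $q=n-2$ to get $(t_0,t_2)^{(n-3)/2}\bullet(s_0,t_0)$, then restore via Proposition \ref{proposition::reduce_and_reloading} using the inverse-freeness of the original tuple). Your explicit closing substitutions $(t_0,a,t_2)\rightarrow(a,t_1,t_2)\rightarrow(a,t_2,t_0)$ via $a^{-1}t_0a=t_1$ and $t_2^{-1}t_1t_2=t_0$ check out, so nothing is missing.
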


\begin{lemma}
\label{lemma::b,s}
Let $g_1,\ldots,g_n$ be $b$, $s_0$, $s_1$ or $s_2$ such that only one of them is equal to $b$ and $g_1\cdots g_n=1$.
Then, $n\equiv 4\pmod{6}$ and the $n$-tuple $(g_1,\ldots,g_n)$ is Hurwitz equivalent to
\begin{equation*}
    (b,s_0,s_2,s_0)\bullet (s_0,s_2)^{(n-4)/2}.
\end{equation*}
\end{lemma}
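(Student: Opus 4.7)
My plan is to mirror the proof of Lemma \ref{lemma::a^2,s}, with a preparatory step that compensates for the rigidity of $b$. Direct computation shows that among products of $b$ with a short $s$-element, only $b \cdot s_2 = a^2$ and $s_0 \cdot b = a^2$ are short; every other product ($b \cdot s_0$, $b \cdot s_1$, $s_1 \cdot b$, $s_2 \cdot b$) is almost short or long. The strategy will therefore be to arrange an adjacent pair $(b, s_2)$ or $(s_0, b)$, contract it to $a^2$, invoke Lemma \ref{lemma::a^2,s}, and restore using Proposition \ref{proposition::reduce_and_reloading}.

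First, by Lemma \ref{lemma::cyclic shift} I cyclically shift so that $g_1 = b$. The congruence $n \equiv 4 \pmod 6$ is forced by the abelianization $G^{ab} \cong \mathbb Z/2 \oplus \mathbb Z/3$: the condition $g_1 \cdots g_n = 1$ there reads $(2n-2)x + ny = 0$, whence $n \equiv 1 \pmod 3$ and $n \equiv 0 \pmod 2$. To create the desired adjacency, I observe that $g_2, \ldots, g_n$ cannot all equal $s_1$, for $s_1^{n-1} = b$ is impossible since $s_1$ has infinite order in $\PSL(2,\mathbb Z)$. Hence some $g_i \in \{s_0, s_2\}$. Using the substitutions $(s_1, s_0) \leftrightarrow (s_0, s_2) \leftrightarrow (s_2, s_1)$ (all three pairs have product $a$), I propagate an $s_0$ leftward through any intervening run of $s_1$'s until it is adjacent to $b$; then $(s_0, b) \leftrightarrow (b, s_2)$ together with a cyclic shift produces the pair $(b, s_2)$ at positions $(1,2)$. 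The symmetric situation with an $s_2$ is handled analogously from the left of $b$.

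Next, I contract $(g_1, g_2) = (b, s_2)$ to $a^2$, producing an $(n-1)$-tuple $(a^2, g_3, \ldots, g_n)$ of product $1$ with exactly one $a^2$ and the rest in $\{s_0, s_1, s_2\}$. Since $n - 1 \equiv 3 \pmod 6$, Lemma \ref{lemma::a^2,s} applies and yields Hurwitz equivalence to $(a^2, s_0, s_2) \bullet (s_0, s_2)^{(n-4)/2}$. Every component before restoration is short, so Proposition \ref{proposition::reduce_and_reloading} guarantees that the original $n$-tuple is Hurwitz equivalent to the restored short tuple, which after expanding $a^2$ back to $(b, s_2)$ takes the form $(b, s_2, s_0, s_2) \bullet (s_0, s_2)^{(n-4)/2}$.

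It remains to rearrange this into the claimed normal form $(b, s_0, s_2, s_0) \bullet (s_0, s_2)^{(n-4)/2}$. Applying in sequence the substitutions $(b, s_2) \to (s_0, b)$, $(b, s_0) \to (s_2, b)$, $(b, s_2) \to (s_0, b)$ at the leading positions moves $b$ three steps to the right, yielding $(s_0, s_2, s_0, b) \bullet (s_0, s_2)^{(n-4)/2}$; a cyclic shift then brings $b$ to position $1$ and produces $(b) \bullet (s_0, s_2)^{(n-4)/2} \bullet (s_0, s_2, s_0)$. Since $(n-4)/2$ is a multiple of $3$, the chain $(s_0, s_2)^{(n-4)/2}$ decomposes into blocks $(s_0, s_2)^3$ each of product $1$, so Lemma \ref{lemma::move subtuples} lets me move each such block past the trailing triple $(s_0, s_2, s_0)$, yielding $(b, s_0, s_2, s_0) \bullet (s_0, s_2)^{(n-4)/2}$. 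The main obstacle will be the propagation step when both cyclic neighbours of $b$ happen to be $s_1$, because the pairs $(b, s_1)$ and $(s_1, b)$ admit no direct short substitution; the argument must carry an $s_0$ through the intervening $s_1$-run via iterated applications of $(s_1, s_0) \to (s_0, s_2)$, and its termination relies crucially on ruling out the all-$s_1$ configuration via the infinite order of $s_1$.
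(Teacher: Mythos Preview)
Your overall strategy is identical to the paper's: arrange a pair $(b,s_2)$ or $(s_0,b)$ adjacent, contract it to $a^2$, invoke Lemma~\ref{lemma::a^2,s}, and restore via Proposition~\ref{proposition::reduce_and_reloading}. The final rearrangement to the normal form is also fine. The difference lies entirely in how the contractible adjacency is produced, and there your argument has a genuine gap.

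The triangle $(s_1,s_0)\leftrightarrow(s_0,s_2)\leftrightarrow(s_2,s_1)$ lets you move an $s_0$ \emph{leftward} through a run of $s_1$'s, or an $s_2$ \emph{rightward}; it does \emph{not} move $s_2$ leftward or $s_0$ rightward through $s_1$'s, since neither $(s_1,s_2)$ nor $(s_0,s_1)$ transforms to a short pair. So if the first non-$s_1$ entry to the right of $b$ is an $s_2$ sitting beyond a block of $s_1$'s (for instance in the valid $10$-tuple $(b,s_1,s_2,s_1,s_1,s_0,s_2,s_0,s_2,s_0)$), your stated propagation does not apply. The appeal to ``the symmetric situation with an $s_2$ \ldots from the left of $b$'' does not close this: pushing an $s_2$ rightward to position $n$ yields the cyclic pair $(s_2,b)$, whose product is $ba^2b$, not $a^2$, and $(s_2,b)$ is not in Table~\ref{table::short elements}. (There is also a small slip: leftward propagation of $s_0$ with $b$ at position $1$ gives the pair $(b,s_0)$, not $(s_0,b)$; the relevant short substitution is $(b,s_0)\leftrightarrow(s_2,b)$.) The paper avoids this case analysis altogether: it first passes to a tuple with the minimal number of $s_1$'s, writes it in block form, and uses Proposition~\ref{proposition::reduced_form_s0_s2} to show that the product cannot be $1$ unless the tuple starts with $(b,s_2)$, ends with $s_0$, or has one of two other easily handled shapes---after which a single substitution $(s_0,s_2,s_0)\leftrightarrow(s_2,s_0,s_2)$ and a cyclic shift produce the desired adjacency.

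One further small point: after restoration the short pair with product $a^2$ need not be $(b,s_2)$; it could equally well be $(s_0,b)$, and the paper explicitly records both outcomes $(b,s_2,s_0,s_2)\bullet(s_0,s_2)^{(n-4)/2}$ and $(s_0,b,s_0,s_2)\bullet(s_0,s_2)^{(n-4)/2}$ before the final rearrangement.
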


\begin{proof}
Without loss of generality, we assume that all the $n$-tuple in $\{b,s_0,s_1,s_2\}$ resulting from the successive application of elementary transformations on $(g_1,\ldots,g_n)$ contain no consecutive sub-tuples of the form $(s_0,s_2)^3$.

Take the $n$-tuple in $\{b,s_0,s_1,s_2\}$ that starts with $b$ and contains the minimal number of components equal to $s_1$ among all resulting tuples that we can get using elementary transformations on $(g_1,\ldots,g_n)$, still denoted by $(g_1,\ldots,g_n)$. We write it as
\begin{equation*}
    (b)\bullet
    \Big(\prod_{j=1}^{n_0} (s_2)^{u_{0,j}}\bullet (s_0)^{v_{0,j}}\Big)\bullet
    \Big(
    \prod_{i=1}^{\mu}
        (s_1)^{\lambda_{i}}
        \bullet
        \Big(\prod_{j=1}^{n_i} (s_2)^{u_{i,j}}\bullet (s_0)^{v_{i,j}}\Big)
    \Big)
\end{equation*}
with $\mu\ge 0$, $\lambda_1,\ldots,\lambda_{\mu}\ge 1$, $n_0,n_{\mu}\ge 0$, $n_1,\ldots,n_{\mu-1}\ge 1$ and $u_{i,j}$, $v_{i,j}\ge 0$ for $i=0,\ldots,\mu$, $j=1,\ldots,n_i$ where $u_{i,j}\ge 1$ for $j>1$ and $v_{i,j}\ge 1$ for $j<n_i$.
The minimality further requires that $u_{i,1}\ge 1$ for $i=1,\ldots,\mu$ and $v_{i,n_i}\ge 1$ for $i=0,\ldots,\mu-1$.

Assume that $(g_1,\ldots,g_n)$ does not start with $(b,s_2)$ or $(b)\bullet (s_0)^{v_{0,1}}\bullet (s_2,s_0)$, nor end with $s_0$ or $(s_2,s_0)\bullet (s_2)^{u_{\mu,n_{\mu}}}$. Then, $u_{0,1}=v_{\mu,n_{\mu}}=0$, either $n_0\le 1$ or $u_{0,2}\ge 2$ and either $n_{\mu}\le 1$ or $v_{\mu,n_{\mu}-1}\ge 2$.
Applying Proposition \ref{proposition::reduced_form_s0_s2} with the above restrictions, we obtain that the reduced form of $g_1\cdots g_n$ is not equal to $1$, which is a contradiction. Hence, using the substitution of $(s_0,s_2,s_0)$ and $(s_2,s_0,s_2)$ and a cyclic permutation if necessary, the $n$-tuple is transformed into an $n$-tuple in $\{b,s_0,s_1,s_2\}$, still denoted by $(g_1,\ldots,g_n)$, such that $(g_1,g_2)$ is equal to either $(b,s_2)$ or $(s_0,b)$. We combine $g_1$ and $g_2$ into their product and replace $(g_1,\ldots,g_n)$ with an $(n-1)$-tuple in $\{a^2,s_0,s_1,s_2\}$ starting with $a^2$.
By Lemma \ref{lemma::a^2,s} we get $n-1\equiv 3\pmod{6}$ and, by successive application of elementary transformations, the $(n-1)$-tuple can be transformed into $(a^2,s_0,s_2)\bullet (s_0,s_2)^{(n-4)/2}$.
By Proposition \ref{proposition::reduce_and_reloading}, we obtain an $n$-tuple of the form either $(b,s_2,s_0,s_2)\bullet (s_0,s_2)^{(n-4)/2}$ or $(s_0,b,s_0,s_2)\bullet (s_0,s_2)^{(n-4)/2}$ from $(g_1,\ldots,g_n)$ using elementary transformations. The substitution of $(s_0,s_2,s_0)$ and $(s_2,s_0,s_2)$ completes the proof.
\end{proof}

Again, the following lemma is similar and we omit the proof.
\begin{lemma}
\label{lemma::b,t}
Let $g_1,\ldots,g_n$ be $b$, $t_0$, $t_1$ or $t_2$ such that only one of them is equal to $b$ and $g_1\cdots g_n=1$.
Then, $n\equiv 4\pmod{6}$ and the $n$-tuple $(g_1,\ldots,g_n)$ is Hurwitz equivalent to
\begin{equation*}
    (b,t_0,t_2,t_0)\bullet (t_0,t_2)^{(n-4)/2}.
\end{equation*}
\end{lemma}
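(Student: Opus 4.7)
The plan is to mirror the proof of Lemma \ref{lemma::b,s} line by line, swapping $s$-letters for $t$-letters throughout. The role of Proposition \ref{proposition::reduced_form_s0_s2} will be played by its dual Proposition \ref{proposition::reduced_form_t0_t2}; the role of Lemma \ref{lemma::a^2,s} will be played by the already-established Lemma \ref{lemma::a,t}; and the ordered pair $(s_2, s_0)$ will be replaced everywhere by $(t_0, t_2)$, compatibly with the shape of the reduced forms predicted by the two propositions.

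First I would assume, without loss of generality, that no $n$-tuple in $\{b, t_0, t_1, t_2\}$ obtainable from $(g_1,\ldots,g_n)$ by elementary transformations contains a consecutive sub-tuple of the form $(t_0, t_2)^3$. Among all such reachable tuples that begin with $b$, I fix one that minimizes the number of components equal to $t_1$, still denoting it by $(g_1,\ldots,g_n)$, and write it in the normal form
\[
    (b)\bullet
    \Big(\prod_{j=1}^{n_0} (t_0)^{u_{0,j}}\bullet (t_2)^{v_{0,j}}\Big)\bullet
    \Big(
    \prod_{i=1}^{\mu}
        (t_1)^{\lambda_{i}}
        \bullet
        \Big(\prod_{j=1}^{n_i} (t_0)^{u_{i,j}}\bullet (t_2)^{v_{i,j}}\Big)
    \Big),
\]
with inverse-freeness and the minimality of the $t_1$-count imposing exponent positivity constraints exactly analogous to those in the proof of Lemma \ref{lemma::b,s}.

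Next I would assume, for contradiction, that $(g_1,\ldots,g_n)$ does not start with $(b, t_0)$ or with $(b)\bullet(t_2)^{v_{0,1}}\bullet(t_0, t_2)$, and does not end with $t_2$ or with $(t_0, t_2)\bullet(t_0)^{u_{\mu, n_\mu}}$. Applying Proposition \ref{proposition::reduced_form_t0_t2} to each maximal $(t_0, t_2)$-segment separating the initial $b$ from the successive $t_1$-blocks then forces the reduced form of $g_1\cdots g_n$ to be nontrivial, contradicting the relation $g_1\cdots g_n=1$. Hence, after the substitutions between $(t_0, t_2, t_0)$ and $(t_2, t_0, t_2)$ and possibly a cyclic permutation via Lemma \ref{lemma::cyclic shift}, I may arrange that $(g_1, g_2)$ equals either $(b, t_0)$ or $(t_2, b)$.

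Finally, since $b\,t_0 = t_2\,b = a$, contracting the leading pair replaces $(g_1,\ldots,g_n)$ by an $(n-1)$-tuple in $\{a, t_0, t_1, t_2\}$ containing exactly one $a$. Lemma \ref{lemma::a,t} then gives $n-1 \equiv 3\pmod{6}$, whence $n \equiv 4\pmod{6}$, and transforms this $(n-1)$-tuple by a finite sequence of elementary transformations into $(a, t_2, t_0)\bullet(t_0, t_2)^{(n-4)/2}$. Proposition \ref{proposition::reduce_and_reloading} then lifts the reduction through the restoration of the initial contraction, producing from $(g_1,\ldots,g_n)$ an $n$-tuple of the form either $(b, t_0, t_2, t_0)\bullet(t_0, t_2)^{(n-4)/2}$ or $(t_2, b, t_2, t_0)\bullet(t_0, t_2)^{(n-4)/2}$; a final application of the $(t_0, t_2, t_0)$/$(t_2, t_0, t_2)$ substitutions yields the desired normal form. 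The main (mild) obstacle will be checking that the restoration step of Proposition \ref{proposition::reduce_and_reloading} preserves inverse-freeness throughout and lands within the expected short-element structure, but this verification is routine and goes through exactly as in the $s$-dual Lemma \ref{lemma::b,s}.
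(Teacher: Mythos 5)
Your proposal is correct and is precisely the argument the paper intends: it states Lemma \ref{lemma::b,t} with the remark that the proof is ``similar'' to that of Lemma \ref{lemma::b,s} and omits it, and your line-by-line dualization (with $s_2\mapsto t_0$, $s_0\mapsto t_2$, Proposition \ref{proposition::reduced_form_s0_s2} replaced by Proposition \ref{proposition::reduced_form_t0_t2}, Lemma \ref{lemma::a^2,s} replaced by Lemma \ref{lemma::a,t}, and the leading pair $(b,t_0)$ or $(t_2,b)$ contracting to $a$ since $bt_0=t_2b=a$) carries out that omitted proof faithfully, including the restoration via Proposition \ref{proposition::reduce_and_reloading}. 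Your closing concern is indeed routine: inverse-freeness holds automatically here, since the $t_i$ have inverses conjugate to $s_0$ (a different conjugacy class), there is a single $b$, and no component has order $3$.
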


\begin{lemma}
\label{lemma::a,s}
Let $g_1,\ldots,g_n$ be $a$, $s_0$, $s_1$ or $s_2$ such that only one of them is equal to $a$ and $g_1\cdots g_n=1$.
Then, $n\equiv 5\pmod{6}$ and the $n$-tuple $(g_1,\ldots,g_n)$ is Hurwitz equivalent to 
\begin{equation*}
    (a,s_0,s_0,s_2,s_0)\bullet (s_0,s_2)^{(n-5)/2}.
\end{equation*}
\end{lemma}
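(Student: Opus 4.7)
The plan is to mirror the proof of Lemma \ref{lemma::b,s}, with the pair $(a,s_0)$ (whose product is $b$) playing the role that $(b,s_2)$ and $(s_0,b)$ (whose products are $a^2$) played there, and then invoking Lemma \ref{lemma::b,s} itself after a single contraction. By Proposition \ref{proposition::reduce_and_reloading}-style reductions I may assume that no $n$-tuple in the Hurwitz orbit of $(g_1,\ldots,g_n)$ contains $(s_0,s_2)^3$ as a consecutive sub-tuple; by Lemma \ref{lemma::cyclic shift} I place the unique $a$ at position $1$; and among such representatives I pick one minimising the number of components equal to $s_1$. The tuple then takes the standard blockwise form
\begin{equation*}
(a)\bullet \prod_{j=1}^{n_0}\bigl((s_2)^{u_{0,j}}\bullet (s_0)^{v_{0,j}}\bigr)\bullet \prod_{i=1}^{\mu}\Bigl((s_1)^{\lambda_i}\bullet \prod_{j=1}^{n_i}\bigl((s_2)^{u_{i,j}}\bullet (s_0)^{v_{i,j}}\bigr)\Bigr),
\end{equation*}
with the conventions of Lemma \ref{lemma::b,s}.

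The key step is to show that, after possibly more elementary transformations (notably the $3$-cycle substitutions $(s_0,s_2,s_0)\leftrightarrow (s_2,s_0,s_2)\leftrightarrow (s_1,s_0,s_0)$) together with a cyclic shift, one can arrange $(g_1,g_2)=(a,s_0)$. I plan to argue by contradiction: suppose neither this configuration nor the symmetric one $(g_{n-1},g_n)=(s_2,a)$ can be achieved. Then the initial and terminal $\{s_0,s_2\}$-blocks are forced into a shape satisfying the hypotheses of Proposition \ref{proposition::reduced_form_s0_s2} (the initial block starts with at least one $s_2$, the terminal block ends with at least one $s_0$, and the forbidden $3$-cycle substitutions provide the needed lower bounds on the inner exponents). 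The conclusion of Proposition \ref{proposition::reduced_form_s0_s2} then gives reduced forms for these boundary blocks that begin and end with $b$, and the minimality in the $s_1$-count guarantees that the intervening $s_1$-blocks cannot cancel these boundary $b$'s. Consequently the reduced form of $g_2\cdots g_n$ has length at least $2$ and both starts and ends with $b$. But $g_2\cdots g_n=a^{-1}=a^2$ has reduced length $1$, a contradiction.

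Once $(g_1,g_2)=(a,s_0)$ is secured, I contract this pair to the single element $b=as_0$ via Subsection \ref{subsection::contractions-restorations}, obtaining an $(n-1)$-tuple with exactly one component equal to $b$ and the rest in $\{s_0,s_1,s_2\}$, of product $1$. Lemma \ref{lemma::b,s} then gives $n-1\equiv 4\pmod{6}$ (hence $n\equiv 5\pmod{6}$) and Hurwitz equivalence to $(b,s_0,s_2,s_0)\bullet (s_0,s_2)^{(n-5)/2}$. Restoring the contraction via Proposition \ref{proposition::reduce_and_reloading}, the original $n$-tuple is Hurwitz equivalent to an $n$-tuple of short elements in which the leading $b$ is expanded back into a short pair of product $b$, which by Table \ref{table::short elements} is either $(a,s_0)$ or $(s_2,a)$; in either case a final application of $(s_2,a)\leftrightarrow (a,s_0)$ yields $(a,s_0,s_0,s_2,s_0)\bullet (s_0,s_2)^{(n-5)/2}$, as desired. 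The main obstacle will be the contradiction argument in the second paragraph, where several ``near-miss'' boundary patterns (such as $(a)\bullet (s_2)^{u_{0,1}}\bullet (s_0,s_2)\bullet \ldots$, or configurations where an $s_1$-block borders the boundary) must be ruled out by a case analysis exactly parallel to the one carried out in Lemma \ref{lemma::b,s}.
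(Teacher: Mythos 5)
Your overall strategy is the paper's own: pick a representative starting with the unique $a$ that minimises the number of $s_1$'s and avoids $(s_0,s_2)^3$, rule out the ``generic'' shape via Proposition \ref{proposition::reduced_form_s0_s2}, contract $(a,s_0)$ or $(s_2,a)$ to a single $b$, invoke Lemma \ref{lemma::b,s} on the resulting $(n-1)$-tuple, and restore via Proposition \ref{proposition::reduce_and_reloading}. However, there is a genuine gap in your contradiction argument: you assume that if neither $(g_1,g_2)=(a,s_0)$ nor the cyclic configuration $(g_n,g_1)=(s_2,a)$ occurs, the boundary $\{s_0,s_2\}$-blocks satisfy the hypotheses of Proposition \ref{proposition::reduced_form_s0_s2}. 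This presupposes that at least one of the blocks adjacent (cyclically) to $a$ is a nonempty $\{s_0,s_2\}$-block, i.e.\ $n_0+n_\mu\ge 1$ in the blockwise notation. The third configuration $(g_n,g_1,g_2)=(s_1,a,s_1)$ — the unique $a$ flanked on both sides by $s_1$-blocks, so $n_0=n_\mu=0$ — escapes your argument entirely: Proposition \ref{proposition::reduced_form_s0_s2} then says nothing about the words bordering $a$, and the product can genuinely equal $1$ in this shape. For instance, $(a,s_1,s_0,s_2,s_1)$ has product $1$ and exhibits exactly this pattern; your appeal to ``minimality in the $s_1$-count'' does not rule it out, since establishing that a minimal representative avoids it would already require the lemma.

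The paper treats $(g_n,g_1,g_2)=(s_1,a,s_1)$ as a separate third case with a different reduction, which your proof must add: a cyclic shift and one elementary transformation ($R_2^{-1}$ applied to $(s_1,a,s_1,\ldots)$, using $as_1a^{-1}=s_0$) give $(s_1,s_0,a,g_3,\ldots,g_{n-1})$; one then contracts $s_1s_0=a$ and then $a\cdot a=a^2$, obtaining an $(n-2)$-tuple in $\{a^2,s_0,s_1,s_2\}$ with a single $a^2$, to which Lemma \ref{lemma::a^2,s} (not Lemma \ref{lemma::b,s}) applies, giving $n-2\equiv 3\pmod 6$ and the form $(a^2,s_0,s_2)\bullet(s_0,s_2)^{(n-5)/2}$. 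Restoring the two nested contractions via Proposition \ref{proposition::reduce_and_reloading} yields $(s_{j+1},s_j,a,s_0,s_2)\bullet(s_0,s_2)^{(n-5)/2}$ for some $j$, which the pair substitutions $(s_0,s_2)\leftrightarrow(s_1,s_0)\leftrightarrow(s_2,s_1)$ and the triple substitution $(s_0,s_2,s_0)\leftrightarrow(s_2,s_0,s_2)$ bring to the stated normal form. The remainder of your proposal (the contraction to $b$, the use of Lemma \ref{lemma::b,s}, and the final move $R_1(s_2,a)=(a,s_0)$ to merge the two restored expansions) is correct and matches the paper.
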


\begin{proof}
Without loss of generality, we assume that each $n$-tuple in $\{a,s_0,s_1,s_2\}$ that results from the successive application of elementary transformations on $(g_1,\ldots,g_n)$ contains no consecutive sub-tuples of the form $(s_0,s_2)^3$.

Take the $n$-tuple in $\{a,s_0,s_1,s_2\}$ that starts with $a$ and contains the minimal number of components equal to $s_1$ among all resulting tuples that we can get using elementary transformations on $(g_1,\ldots,g_n)$, still denoted by $(g_1,\ldots,g_n)$. Assume that
\[
(g_1,g_2)\neq (a,s_0),
(g_n,g_1)\neq (s_2,a)
\text{ and }
(g_n,g_1,g_2)\neq (s_1,a,s_1).
\]
Then, the $n$-tuple $(g_1,\ldots,g_n)$ is written as
\begin{equation*}
    (a)\bullet
    \Big(\prod_{j=1}^{n_0} (s_2)^{u_{0,j}}\bullet (s_0)^{v_{0,j}}\Big)\bullet
    \Big(
    \prod_{i=1}^{\mu}
        (s_1)^{\lambda_{i}}
        \bullet
        \Big(\prod_{j=1}^{n_i} (s_2)^{u_{i,j}}\bullet (s_0)^{v_{i,j}}\Big)
    \Big)
\end{equation*}
with $\mu\ge 0$, $\lambda_1,\ldots,\lambda_{\mu}\ge 1$, $n_0,n_{\mu}\ge 0$ but $n_0+n_{\mu}\ge 1$, $n_1,\ldots,n_{\mu-1}\ge 1$ and $u_{i,j},v_{i,j}\ge 1$ for $i=0,\ldots,\mu$, $j=1,\ldots,n_i$. Applying Proposition \ref{proposition::reduced_form_s0_s2}, we notice that the reduced form of $g_1\cdots g_n$ is not equal to $1$, which is a contradiction. Hence, one of the above requirements cannot be fulfilled.

If either $(g_1,g_2)=(a,s_0)$ or $(g_n,g_1)=(s_2,a)$, using a cyclic permutation if necessary, then the pair $(g_1,g_2)$ is equal to either $(a,s_0)$ or $(s_2,a)$.
We combine $g_1$ and $g_2$ into a single $b$ and replace $(g_1,\ldots,g_n)$ with an $(n-1)$-tuple in $\{b,s_0,s_1,s_2\}$ starting with the only $b$.
By Lemma \ref{lemma::b,s}, we get $n-1\equiv 4\pmod{6}$ and there exists a finite sequence of elementary transformations that transforms the $(n-1)$-tuple into $(b,s_0,s_2,s_0)\bullet (s_0,s_2)^{(n-5)/2}$. Proposition \ref{proposition::reduce_and_reloading} implies that $(g_1,\ldots,g_n)$ can be transformed by elementary transformations into either
\begin{equation*}
(a,s_0,s_0,s_2,s_0)\bullet (s_0,s_2)^{(n-5)/2} \text{ or }
(s_2,a,s_0,s_2,s_0)\bullet (s_0,s_2)^{(n-5)/2}.
\end{equation*}
If $(g_n,g_1,g_2)=(s_1,a,s_1)$, by a cyclic permutation and an elementary transformation, then the $n$-tuple can be transformed into $(s_1,s_0,a,g_3,\ldots,g_{n-1})$. We combine $s_1$ and $s_0$ into $a$, further combine $a$ and $a$ into a single $a^2$ and replace the $n$-tuple with an $(n-2)$-tuple in $\{a^2,s_0,s_1,s_2\}$ starting with the only $a^2$.
By Lemma \ref{lemma::a^2,s}, we get $n-2\equiv 3\pmod{6}$ and the $(n-2)$-tuple can be transformed by elementary transformations into $(a^2,s_0,s_2)\bullet (s_0,s_2)^{(n-5)/2}$. By Proposition \ref{proposition::reduce_and_reloading}, the $n$-tuple $(g_1,\ldots,g_n)$ can be transformed into $(s_{j+1},s_j,a,s_0,s_2)\bullet (s_0,s_2)^{(n-5)/2}$ with some $j$.
The substitutions of $(s_0,s_2)$, $(s_1,s_0)$, $(s_2,s_1)$ and the substitution of $(s_0,s_2,s_0)$, $(s_2,s_0,s_2)$ conclude the lemma.
\end{proof}

Once again, the following lemma is similar and we omit the proof.
\begin{lemma}
\label{lemma::a^2,t}
Let $g_1,\ldots,g_n$ be $a^2$, $t_0$, $t_1$ or $t_2$ such that only one of them is equal to $a^2$ and $g_1\cdots g_n=1$.
Then, $n\equiv 5\pmod{6}$ and the $n$-tuple $(g_1,\ldots,g_n)$ is Hurwitz equivalent to
\begin{equation*}
    (a^2,t_0,t_2,t_0,t_0)\bullet (t_0,t_2)^{(n-5)/2}.
\end{equation*}
\end{lemma}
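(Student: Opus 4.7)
The plan is to mirror the proof of Lemma~\ref{lemma::a,s} under the substitutions $a \leftrightarrow a^2$, $s_0 \leftrightarrow t_2$, $s_1 \leftrightarrow t_1$, $s_2 \leftrightarrow t_0$, replacing Proposition~\ref{proposition::reduced_form_s0_s2} by Proposition~\ref{proposition::reduced_form_t0_t2}. The key observation justifying these substitutions is that, just as $a\cdot s_0 = s_2\cdot a = b$ in the original argument, here $a^2\cdot t_2 = t_0\cdot a^2 = b$; moreover $a^2\cdot t_1\cdot a^{-2}=t_2$ plays the role of $a\cdot s_1\cdot a^{-1}=s_0$. First, I would apply Proposition~\ref{proposition::reduced_form_t0_t2} iteratively to assume that no tuple Hurwitz equivalent to $(g_1,\ldots,g_n)$ contains $(t_0,t_2)^3$ as a consecutive sub-tuple. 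By Lemma~\ref{lemma::cyclic shift} I may further assume $g_1=a^2$, and I pick the representative that minimises the number of $t_1$-components among all such tuples.

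Next I argue by contradiction that at least one of the three configurations $(g_1,g_2)=(a^2,t_2)$, $(g_n,g_1)=(t_0,a^2)$, or $(g_n,g_1,g_2)=(t_1,a^2,t_1)$ must occur. Indeed, if none does, the minimality of $t_1$ together with the exclusion assumptions forces the canonical shape
\begin{equation*}
(a^2)\bullet\Big(\prod_{j=1}^{n_0}(t_0)^{u_{0,j}}\bullet(t_2)^{v_{0,j}}\Big)\bullet\prod_{i=1}^{\mu}(t_1)^{\lambda_i}\bullet\Big(\prod_{j=1}^{n_i}(t_0)^{u_{i,j}}\bullet(t_2)^{v_{i,j}}\Big)
\end{equation*}
with $\lambda_i\ge 1$, $n_0,n_\mu\ge 0$ but $n_0+n_\mu\ge 1$, $n_1,\ldots,n_{\mu-1}\ge 1$, and all $u_{i,j},v_{i,j}\ge 1$. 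Applying Proposition~\ref{proposition::reduced_form_t0_t2} to the $t$-part yields a reduced form $(ba)^{u_{0,1}-1}bRb(ab)^{v_{\mu,n_\mu}-1}$, and left-multiplying by $a^2$ gives a reduced word beginning with $a^2b$, hence nontrivial, contradicting $g_1\cdots g_n=1$.

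Each surviving case then reduces to a previous lemma. If $(g_1,g_2)=(a^2,t_2)$ or $(g_n,g_1)=(t_0,a^2)$, combine the pair into the single letter $b$ and apply Lemma~\ref{lemma::b,t} to the resulting $(n-1)$-tuple, which gives $n-1\equiv 4\pmod 6$ and a Hurwitz transformation to $(b,t_0,t_2,t_0)\bullet(t_0,t_2)^{(n-5)/2}$. If $(g_n,g_1,g_2)=(t_1,a^2,t_1)$, cycle to bring it to the front and apply $R_2^{-1}$: since $a^2\cdot t_1\cdot a^{-2}=t_2$ and $t_1\cdot t_2=a^2$ and then $a^2\cdot a^2=a$, we obtain an $(n-2)$-tuple with a single $a$ and the rest conjugates of $t_0$, to which Lemma~\ref{lemma::a,t} applies, yielding $n-2\equiv 3\pmod 6$ and a reduction to $(a,t_2,t_0)\bullet(t_0,t_2)^{(n-5)/2}$. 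Proposition~\ref{proposition::reduce_and_reloading} lifts the elementary-transformation sequence back to $(g_1,\ldots,g_n)$, and the substitutions among $(t_0,t_2)$, $(t_1,t_0)$, $(t_2,t_1)$ together with $(t_0,t_2,t_0)\sim(t_2,t_0,t_2)$ rearrange the outcome into the prescribed normal form $(a^2,t_0,t_2,t_0,t_0)\bullet(t_0,t_2)^{(n-5)/2}$. The only nontrivial step is the canonical-form contradiction: one must confirm that the leftward multiplication by $a^2$ genuinely fails to cancel into the $b$-initial reduced form from Proposition~\ref{proposition::reduced_form_t0_t2}, which the calculation above settles.
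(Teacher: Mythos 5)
Your proposal is correct and is precisely the argument the paper intends: the paper omits this proof as ``similar'' (to Lemma \ref{lemma::a,s}), and your mirrored argument --- effectively transporting that proof through the automorphism $a\mapsto a^2$, $b\mapsto b$, which carries $s_0,s_1,s_2$ to $t_2,t_1,t_0$ --- correctly substitutes Proposition \ref{proposition::reduced_form_t0_t2}, Lemma \ref{lemma::b,t} and Lemma \ref{lemma::a,t} for their $s$-side counterparts. The identities you rely on ($a^2t_2=t_0a^2=b$, $a^2t_1a^{-2}=t_2$, $t_1t_2=a^2$, $a^2\cdot a^2=a$) all check out, the parity bookkeeping matches ($n-1\equiv 4$, respectively $n-2\equiv 3 \pmod 6$), and your final rearrangement into $(a^2,t_0,t_2,t_0,t_0)\bullet(t_0,t_2)^{(n-5)/2}$ is justified at the same level of detail as the paper's closing substitutions in Lemma \ref{lemma::a,s}.
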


\begin{proof}[Proof of Theorem \ref{theorem::Livne mathcalS}]
By Theorem \ref{theorem::Livne S->short}, we are able to transform $(g_1,\ldots,g_n)$ into either
\begin{equation*}
    (k_1,k_1^{-1},\ldots,k_s,k_s^{-1},l_1,l_1,l_1,\ldots,l_t,l_t,l_t)
\end{equation*}
with $s,t\ge 0$, $2s+3t=n$ and $l_j^3=1$ for each $j=1,\ldots,t$, or
\begin{equation*}
    (h_1,\ldots,h_{\mu})\bullet (s_0,t_0)^{m_{st}}\bullet (a,a^2)^{m_a}\bullet (b,b)^{m_b}\bullet (a,a,a)^{n_0}\bullet (a^2,a^2,a^2)^{n_1}
\end{equation*}
with $\mu>0$, $m_{st}$, $m_a$, $m_b$, $n_0$, $n_1\ge 0$, $\mu+2(m_{st}+m_a+m_b)+3(n_0+n_1)=n$ such that $(h_1,\ldots,h_{\mu})$ is an inverse-free $\mu$-tuple of short elements.
The former case just so happens to be the first case of Theorem \ref{theorem::Livne mathcalS}, therefore we consider only the latter and suppose that $\mu>0$.
As $(h_1,\ldots,h_{\mu})$ is inverse-free, it contains at most two $a$'s, at most two $a^2$'s, at most one $b$ and it does not contain both $a$ and $a^2$.
Let $\mathcal{A}$ be the set of elements in $(h_1,\ldots,h_{\mu})$.
Let $I_a$ and $I_b$ be the numbers of components conjugate to some power of $a$ and $b$ respectively.
Take $\mathcal{A}_s=\mathcal{A}\cap\{s_0,s_1,s_2\}$ and
$\mathcal{A}_t=\mathcal{A}\cap\{t_0,t_1,t_2\}$. 

\textbf{Step 1.} Suppose that $I_a+I_b\le 1$.
Proposition \ref{proposition::ST} shows that either $\mathcal{A}_s$ or $\mathcal{A}_t$ is empty. Thus, by Lemma \ref{lemma::ST}, \ref{lemma::a,s}, \ref{lemma::a,t}, \ref{lemma::a^2,s}, \ref{lemma::a^2,t}, \ref{lemma::b,s} and \ref{lemma::b,t}, by elementary transformations the inverse-free tuple $(h_1,\ldots,h_{\mu})$ can be transformed into one of the following partial normal forms.
\begin{enumerate}[-,topsep=0pt,itemsep=-1ex,partopsep=1ex,parsep=1ex]
    \item $(s_0,s_2,s_0,s_2,s_0,s_2)^{\mu/6}$,
    $(t_0,t_2,t_0,t_2,t_0,t_2)^{\mu/6}$
    where $\mu\equiv 0\pmod{6}$;
    \item $(a^2,s_0,s_2)\bullet (s_0,s_2)^{(\mu-3)/2}$,
    $(a,t_2,t_0)\bullet (t_0,t_2)^{(\mu-3)/2}$
    where $\mu\equiv 3\pmod{6}$;
    \item $(b,s_0,s_2,s_0)\bullet (s_0,s_2)^{(\mu-4)/2}$,
    $(b,t_0,t_2,t_0)\bullet (t_0,t_2)^{(\mu-4)/2}$
    where $\mu\equiv 4\pmod{6}$;
    \item $(a,s_0,s_0,s_2,s_0)\bullet (s_0,s_2)^{(\mu-5)/2}$,
    $(a^2,t_0,t_2,t_0,t_0)\bullet (t_0,t_2)^{(\mu-5)/2}$
    where $\mu\equiv 5\pmod{6}$.
\end{enumerate}

\textbf{Step 2.} Suppose that $I_a=1=I_b$ and $a^{\epsilon}\in \mathcal{A}$ with $\epsilon=\pm 1$. It is clear that $\mu\ge 3$.

If there exists an element $h'\in \mathcal{A}$ equal to one of $ba^{\epsilon}$, $a^{\epsilon}b$, $a^{-\epsilon}ba^{-\epsilon}$ then, using elementary transformations, we place $a^{\epsilon}$ and $h'$ in adjacent positions that form a pair $(a^{\epsilon},h')$. The pair is further replaced by the product $a^{\epsilon} h'$ and we replace $(h_1,\ldots,h_{\mu})$ with a $(\mu-1)$-tuple, say $(y_1,\ldots, y_{\mu-1})$. Each of $y_1,\ldots,y_{\mu-1}$ is short, one of them is equal to $b$ and each of the rest is neither a power of $a$ nor $b$. By Theorem \ref{theorem::Livne S->short}, Proposition \ref{proposition::ST} and Lemma \ref{lemma::b,s}, \ref{lemma::b,t}, the $(\mu-1)$-tuple $(y_1,\ldots,y_{\mu-1})$ can be transformed into either
\begin{equation*}
    (b,s_0,s_2,s_0)\bullet (s_0,s_2,s_0,s_2,s_0,s_2)^u\bullet (s_0,t_0)^v
    \text{ or }
    (b,t_0,t_2,t_0)\bullet (t_0,t_2,t_0,t_2,t_0,t_2)^u\bullet (s_0,t_0)^v
\end{equation*}
with $u,v\ge 0$ and $5+6u+2v=\mu$. Proposition \ref{proposition::reduce_and_reloading} shows that $(h_1,\ldots,h_{\mu})$ can be transformed into one of them with exactly one of the following adjustments: replace an $s_0$ (resp. $s_2$, $t_0$, $t_2$) with $(a,t_2)$ (resp. $(a,t_1)$, $(a^2,s_1)$, $(a^2,s_0)$). The substitutions
\begin{align*}
    &(b,a,t_2,s_2,s_0)
    \rightarrow (b,t_2,s_2,a,s_0)
    \rightarrow (b,a,t_0,s_0,s_0)
    \rightarrow (b,a,s_0)\bullet (t_0,s_0)
    \rightarrow (a,b,s_2)\bullet (s_0,t_0),\\
    &(b,s_0,a,t_1,s_0)
    \rightarrow (b,s_0,t_0,a,s_0)
    \rightarrow (b,a,s_0)\bullet (s_0,t_0)
    \rightarrow (a,b,s_2)\bullet (s_0,t_0),\\
    &(b,s_0,s_2,a,t_2)
    \rightarrow (b,s_2,s_1,a,t_2)
    \rightarrow (b,s_2,s_1,t_1,a)
    \rightarrow (b,s_2,s_0,t_0,a)
    \rightarrow (a,b,s_2)\bullet (s_0,t_0),\\
    &(b,s_0,s_2,s_0)\bullet (a,t_2,s_2,s_0,s_2,s_0,s_2)
    \rightarrow (b,s_0,s_2,a,t_2)\bullet (s_0,s_2,s_0,s_2,s_0,s_2),\\
    &(b,s_0,s_2,s_0)\bullet (s_0,a,t_1,s_0,s_2,s_0,s_2)
    \rightarrow (s_0,b,s_0,a,t_1)\bullet (s_0,s_2,s_0,s_2,s_0,s_2),\\
    &(b,s_0,s_2,s_0)\bullet (a,t_2,t_0)
    \rightarrow (b,s_0,s_2,s_0)\bullet (t_0,a,t_2)
    \rightarrow (b,s_0,s_2,a,t_2)\bullet (s_0,t_0)
\end{align*}
and their symmetrical manners further transform the resulting $\mu$-tuple into one of the following partial normal forms.
\begin{enumerate}[-,topsep=0pt,itemsep=-1ex,partopsep=1ex,parsep=1ex]
    \item $(a,b,s_2)\bullet (s_0,s_2,s_0,s_2,s_0,s_2)^u\bullet (s_0,t_0)^{v+1}$ with $u,v\ge 0$;
    \item $(a,t_2,t_0)\bullet (b,t_0,t_2,t_0)\bullet (t_0,t_2,t_0,t_2,t_0,t_2)^u\bullet (s_0,t_0)^{v-1}$ with $u\ge 0$ and $v\ge 1$;
    \item $(a^2,b,t_0)\bullet (t_0,t_2,t_0,t_2,t_0,t_2)^u\bullet (s_0,t_0)^{v+1}$ with $u,v\ge 0$;
    \item $(a^2,s_0,s_2)\bullet (b,s_0,s_2,s_0)\bullet (s_0,s_2,s_0,s_2,s_0,s_2)^u\bullet (s_0,t_0)^{v-1}$ with $u\ge 0$ and $v\ge 1$.
\end{enumerate}

Otherwise, one of $\mathcal{A}_s$ and $\mathcal{A}_t$ is empty. If there exists an element $h'\in \mathcal{A}$ equal to either $ba^{-\epsilon}$ or $a^{-\epsilon}b$ then, using elementary transformations, we place $a^{\epsilon}$ and $h'$ in adjacent positions such that their product is equal to $b$.
The pair is further replaced by a single $b$.
Therefore, the resulting $(\mu-1)$-tuple has exactly two different components conjugate to $b$ and the rest are either conjugates of $s_0$ or conjugates of $t_0$. Applying Theorem \ref{theorem::Livne S->short}, we have shown in Step 1 that such an $(\mu-1)$-tuple can be transformed by elementary transformations into either $(s_0,s_2,s_0,s_2,s_0,s_2)^{(\mu-3)/6}\bullet (b,b)$ or $(t_0,t_2,t_0,t_2,t_0,t_2)^{(\mu-3)/6}\bullet (b,b)$. Proposition \ref{proposition::reduce_and_reloading} implies that $(h_1,\ldots,h_{\mu})$ can be transformed into a concatenation of either $(s_0,s_2)^{(\mu-3)/2}$ or $(t_0,t_2)^{(\mu-3)/2}$ and one of the following triples, which can be further transformed into a result consistent with the previous case.
\begin{align*}
    (a,s_0,b), (a^2,t_2,b), (s_2,a,b), (t_0,a^2,b),
    (b,a,s_0), (b,a^2,t_2), (b,s_2,a), (b,t_0,a^2).
\end{align*}

\textbf{Step 3.} We consider the last case left in Step 2 where $\mathcal{A}=\{a^{\epsilon},b,a^{\epsilon}ba^{\epsilon}\}$ and $I_a=1$.

In fact, we have $\mu\ge 4$. By elementary transformations we place $a^{\epsilon}$ and two different $a^{\epsilon}ba^{\epsilon}$'s in adjacent positions that form a triple of the form $(a^{\epsilon}ba^{\epsilon},a^{\epsilon},a^{\epsilon}ba^{\epsilon})$. The triple can be further transformed into $(a^{\epsilon}ba^{\epsilon},a^{-\epsilon}b,a^{\epsilon})$. We combine the first two components into $a^{\epsilon}$ and then rewrite the triple as a single $a^{-\epsilon}$. The resulting $(\mu-2)$-tuple is composed of $a^{-\epsilon},b$ and several $a^{\epsilon}ba^{\epsilon}$. Step 2 has shown that such a tuple can be transformed by elementary transformations into either
\begin{equation*}
    (a,t_2,t_0)\bullet (b,t_0,t_2,t_0)\bullet (t_0,t_2,t_0,t_2,t_0,t_2)^u
    \text{ or }
    (a^2,s_0,s_2)\bullet (b,s_0,s_2,s_0)\bullet (s_0,s_2,s_0,s_2,s_0,s_2)^u
\end{equation*}
with $u\ge 0$. By Proposition \ref{proposition::reduce_and_reloading}, elementary transformations can transform $(h_1,\ldots,h_{\mu})$ into either
\begin{align*}
    (t_j,t_{j+1},a^2,t_2,t_0)\bullet (b,t_0,t_2,t_0)\bullet (t_0,t_2,t_0,t_2,t_0,t_2)^u\\
    \text{ or }
    (s_j,s_{j-1},a,s_0,s_2)\bullet (b,s_0,s_2,s_0)\bullet (s_0,s_2,s_0,s_2,s_0,s_2)^u
\end{align*}
that can be further transformed into the result in Step 2 using the substitutions:
\begin{align*}
    (t_j,t_{j+1},a^2,t_2,t_0)\bullet (b,t_0,t_2,t_0)\rightarrow
    (a^2,t_2,t_0,t_2,t_0)\bullet (t_0,t_2,t_0,b)\rightarrow
    (a^2,b,t_0)\bullet (t_0,t_2,t_0,t_2,t_0,t_2),\\
    (s_j,s_{j-1},a,s_0,s_2)\bullet (b,s_0,s_2,s_0)\rightarrow
    (a,s_0,s_2,s_0,s_2)\bullet (s_0,s_2,s_0,b)\rightarrow
    (a,b,s_2)\bullet (s_0,s_2,s_0,s_2,s_0,s_2).
\end{align*}

\textbf{Step 4.} Suppose that $I_a=2$.

We place the powers of $a$ in adjacent positions and replace them with their product. The resulting $(\mu-1)$-tuple contains exactly one power of $a$ and can be transformed by elementary transformation into one of the eight partial normal forms introduced in Step 1 and 2. By Proposition \ref{proposition::reduce_and_reloading}, one can simply rewrite the powers of $a$ as pairs of powers of $a$ and obtain eight more partial normal forms. Replacing the inverse-free tuple $(h_1,\ldots,h_{\mu})$ of short elements by a partial normal form in the resulting tuple of the elementary transformations on $(g_1,\ldots,g_n)$, we finish the proof of the theorem.
\end{proof}
\subsection{Conjugates of almost short elements and tuples}

Suppose that $(g_1,\ldots, g_n)$ is an $n$-tuple with each $g_i$ conjugate to some almost short element (i.e. the component $g_i$ is conjugate to either $a$, $a^2$, $b$, $s_1$, $t_1$ or $ababa$).
In this subsection, we first show that by successive application of elementary transformations the $n$-tuple can be transformed into 
\begin{equation*}
    \prod_{i=1}^m (Q_i^{-1}\tau_{i,1}Q_i,\ldots,Q_i^{-1}\tau_{i,n_i}Q_i)
\end{equation*}
with $m\ge 1$, $\sum_{i=1}^m n_i=n$, $Q_i\in G$, $\tau_{i,j}\in \mathcal{S}_2$ such that $\tau_{i,1}\cdots\tau_{i,n_i}=1$ for $i=1,\ldots,m$ and $j=1,\ldots,n_i$.
For the concatenation of $(g_1,\ldots,g_n)$ and a fixed tuple, we further show a result extremely similar to Theorem \ref{theorem::Livne mathcalS}.

The first part of this subsection follows a similar line as in Subsection \ref{subsection::Conjugates of short elements and their tuples}.
Proposition \ref{proposition::reduce_and_reloading_hard} is an analog to Proposition \ref{proposition::reduce_and_reloading}.
Lemmata \ref{lemma::tau1_tau2_tau3_triple}, \ref{lemma::tau1tau2=a}, \ref{lemma::tau1tau2=b}, \ref{lemma::tau1tau2=bab}, \ref{lemma::tau1tau2=aaba}, \ref{lemma::tau1tau2=aba} and \ref{lemma::tau1tau2=ababa}, which have technicalities referring to Lemma \ref{lemma::aba,a^2->ab},
will be used to prove Proposition \ref{proposition::reduce_and_reloading_hard}.

\begin{table}[htb]
\centering
\begin{adjustbox}{width=\textwidth}
\begin{tabular}{ c|ccccccccccccccccccc| } 
 \diagbox{$g_i$}{$g_{i+1}$}

          &$a$    &$a^2$    &$b$   &$a^2b$  &$aba$   &$ba^2$   &$ba$  &$a^2ba^2$   &$ab$  &$bab$  &$ba^2b$  &$a^2ba$  &$aba^2$ &$a^2bab$&$ababa$ &$baba^2$ &$ba^2ba$&$a^2ba^2ba^2$ &$aba^2b$\\
\hline
     $a$    &$a^2$    &$1$   &$ab$    &$b$  &$a^2ba$  &$aba^2$  &$aba$   &$ba^2$   &$a^2b$       &       &   &$ba$  &$a^2ba^2$  &$bab$       &       &       &       &       &\\
     $a^2$    &$1$    &$a$   &$a^2b$   &$ab$   &$ba$  &$a^2ba^2$  &$a^2ba$  &$aba^2$    &$b$       &       &  &$aba$   &$ba^2$       &       &       &       &       &  &$ba^2b$\\
     $b$   &$ba$   &$ba^2$    &$1$  &$ba^2b$       &    &$a^2$    &$a$       &  &$bab$   &$ab$   &$a^2b$       &       &       &       &  &$aba^2$  &$a^2ba$       &       &\\
    $a^2b$  &$a^2ba$  &$a^2ba^2$    &$a^2$       &       &    &$a$    &$1$       &       &    &$b$   &$ab$       &       &       &       &   &$ba^2$  &$aba$       &       &\\
   $aba$  &$aba^2$   &$ab$       &    &$a$       &       &       &    &$1$ &$aba^2b$       &       &    &$a^2$       &   &$a^2b$       &       &       &   &$ba^2$       &\\
    $ba^2$    &$b$   &$ba$  &$ba^2b$  &$bab$    &$a$       &       & &$baba^2$    &$1$       &       &       &    &$a^2$       &  &$aba$       &       &       &   &$a^2b$\\
    $ba$   &$ba^2$    &$b$  &$bab$    &$1$ &$ba^2ba$       &       &    &$a^2$  &$ba^2b$       &       &    &$a$       &   &$ab$       &       &       &  &$a^2ba^2$       &\\
   $a^2ba^2$   &$a^2b$  &$a^2ba$       & &$a^2bab$    &$1$       &       &       &    &$a^2$       &       &       &    &$a$       &   &$ba$       &       &       &   &$ab$\\
    $ab$  &$aba$  &$aba^2$    &$a$       &       &    &$1$    &$a^2$       &       &   &$a^2b$    &$b$       &       &       &       &  &$a^2ba^2$   &$ba$       &       &\\
   $bab$       &       &   &$ba$       &       &    &$b$   &$ba^2$       &       &  &$ba^2b$    &$1$       &       &       &       &       &    &$a$       &       &\\
   $ba^2b$       &       &   &$ba^2$       &       &   &$ba$    &$b$       &       &    &$1$  &$bab$       &       &       &       &    &$a^2$       &       &       &\\
   $a^2ba$  &$a^2ba^2$   &$a^2b$       &    &$a^2$       &       &       &    &$a$       &       &       &    &$1$&$a^2ba^2ba^2$    &$b$       &       &       &  &$aba^2$       &\\
   $aba^2$   &$ab$  &$aba$       &       &    &$a^2$       &       &       &    &$a$       &       &&$ababa$    &$1$       &  &$a^2ba$       &       &       &    &$b$\\
  $a^2bab$       &       &  &$a^2ba$       &       &   &$a^2b$  &$a^2ba^2$       &       &       &    &$a^2$       &       &       &       &&$a^2ba^2ba^2$    &$1$       &       &\\
 $ababa$       &       &       &  &$aba$       &       &       &   &$ab$       &       &       &  &$aba^2$       & &$aba^2b$       &       &       &    &$1$       &\\
  $baba^2$  &$bab$       &       &       &   &$ba^2$       &       &       &   &$ba$       &       &       &    &$b$       & &$ba^2ba$       &       &       &    &$1$\\
  $ba^2ba$       &  &$ba^2b$       &   &$ba^2$       &       &       &   &$ba$       &       &       &    &$b$       &    &$1$       &       &       & &$baba^2$       &\\
 $a^2ba^2ba^2$       &       &       &       &   &$a^2b$       &       &       &  &$a^2ba^2$       &       &       &  &$a^2ba$       &    &$1$       &       &       & &$a^2bab$\\
  $aba^2b$       &       &  &$aba^2$       &       &  &$aba$   &$ab$       &       &    &$a$       &       &       &       &       &    &$1$&$ababa$       &       &\\
\hline
\end{tabular}
\end{adjustbox}
\caption{Some pairs $(g_i,g_{i+1})$ of almost short elements and the products $g_ig_{i+1}$.}
\label{table::almost short}
\end{table}

We introduce some pairs of almost short elements in Table \ref{table::almost short} as in Subsection \ref{subsection::Conjugates of short elements and their tuples}.
Broadly speaking, each pair of almost short elements in Table \ref{table::almost short} behaves well under the contraction operation introduced in Subsection \ref{subsection::contractions-restorations}, which is explained in lemmata \ref{lemma::tau1tau2=a}, \ref{lemma::tau1tau2=b}, \ref{lemma::tau1tau2=bab}, \ref{lemma::tau1tau2=aaba}, \ref{lemma::tau1tau2=aba} and \ref{lemma::tau1tau2=ababa}.
Besides, each pair of almost short elements not in Table \ref{table::almost short} satisfies the inequality $m_i\le \min\{\frac{l(g_i)+1}{2}, \frac{l(g_{i+1})+1}{2}\}$ which is the first step for Lemma \ref{lemma::m_i-1_and_m_i_hard} (a). (See Lemma \ref{lemma::m_i-1_and_m_i} for the precise definition of $m_i$.)
Furthermore, Table \ref{table::almost short} has to fulfil some irregular requirements which appear in the proofs of Lemma \ref{lemma::m_i-1_and_m_i_hard} and Theorem \ref{theorem::Livne S2->almost short}.
Unfortunately, we do not have high conviction in sifting out the pairs of almost short elements.
What is worse, Theorem \ref{theorem::Livne S2->almost short} needs a patch based on Lemma \ref{lemma::tau1_tau2_tau3_triple} which considers a triple of almost short elements.

\begin{lemma}
\label{lemma::tau1_tau2_tau3_triple}
Let $(\tau_1,\tau_2,\tau_2)$ be a triple of the form
\begin{equation*}
(a^{-\epsilon}ba^{\epsilon}b,ba^{\epsilon}b,ba^{\epsilon}ba^{-\epsilon})
\text{ or }
(ba^{-\epsilon}ba^{\epsilon},a^{\epsilon},a^{\epsilon}ba^{-\epsilon}b)
\end{equation*}
with $\epsilon=\pm 1$.
Set $(g_1,g_2,g_3)=(Q^{-1}\tau_1Q,Q^{-1}\tau_2Q,Q^{-1}\tau_3Q)$ with $Q\in G$ and suppose that $Q^{-1}\tau_1\tau_2\tau_3Q\in\mathcal{S}_2$.
Then $(g_1,g_2,g_3)$ is Hurwitz equivalent to a triple of almost short elements.
\end{lemma}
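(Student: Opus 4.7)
The plan is a direct case analysis based on the constraint placed on $Q$. First, a routine computation using $a^3=b^2=1$ identifies the product $\tau_1\tau_2\tau_3$: in the first given triple it equals $a^{\epsilon}$ (after cancelling two $b^2$'s and using $a^{3\epsilon}=1$), and in the second it equals $ba^{\epsilon}b$. Both are elements of order $3$ in $G$. Since the only almost short elements of order $3$ are $\{a, a^2, bab, ba^2b\}$, the hypothesis $Q^{-1}\tau_1\tau_2\tau_3 Q \in \mathcal{S}_2$ confines $Q$ to the union of two cosets of the centralizer $Z_G(a)=\langle a\rangle$, namely a fixed six-element set such as $\langle a\rangle \cup b\langle a\rangle = \{1, a, a^2, b, ba, ba^2\}$.

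For each admissible $Q$ one then writes down $(g_1,g_2,g_3)=(Q^{-1}\tau_1Q, Q^{-1}\tau_2Q, Q^{-1}\tau_3Q)$ explicitly and either observes that it already lies in $\mathcal{S}_2^3$ (this happens for most $Q$, including $Q=1$, which is immediate from the form of the $\tau_i$, and for several $Q$ involving a factor of $b$, where the conjugation absorbs cleanly into the ambient $b$'s of each $\tau_i$) or exhibits a short sequence of elementary transformations restoring it to an $\mathcal{S}_2$-valued triple. Taking $\epsilon=1$ and $Q=a$ in the first case illustrates the latter: one computes $(g_1,g_2,g_3)=(ababa,\ a^2baba,\ a^2bab)$, and applying $R_2$ (using $(a^2bab)^{-1}\cdot a^2baba = a$) yields $(ababa,\ a^2bab,\ bab)$, an $\mathcal{S}_2$-triple. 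The symmetric choice $Q=a^2$ leads to $(baba^2,\ ababa^2,\ ababa)$, which $R_1^{-1}$ transforms into $(ba^2b,\ baba^2,\ ababa)$. The non-trivial cases arising in the second given triple (notably $Q\in\{ba, ba^2\}$) reproduce these same two triples and are handled identically.

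The principal obstacle is purely bookkeeping: up to twelve configurations of $(Q,\epsilon)$ across the two given triples in principle require separate verification. This burden is reduced by two structural symmetries: the involutive automorphism of $G$ interchanging $a$ and $a^2$ identifies the $\epsilon=1$ and $\epsilon=-1$ branches, and the observation that conjugation by $b$ sends the first given triple into a cyclic rearrangement of the second (so that the non-trivial $Q$'s in one case reproduce those in the other). After exploiting these symmetries only the two explicit Hurwitz moves displayed above need to be written down, while the remaining admissible $Q$'s fall into the easy category where $(g_1,g_2,g_3)\in\mathcal{S}_2^3$ already.
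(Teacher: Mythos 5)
Your overall strategy coincides with the paper's: compute the product $\tau_1\tau_2\tau_3$ (which is $a^{\epsilon}$ for the first triple and $ba^{\epsilon}b$ for the second), use the hypothesis to confine $Q$ to finitely many elements, and finish by explicit case checking, with the two symmetries you name reducing the workload. But there is a genuine gap caused by a coset error. The solutions of $Q^{-1}a^{\epsilon}Q\in\mathcal{S}_2$ are $Q\in\langle a\rangle$ (giving $a^{\epsilon}$) together with $Q\in\langle a\rangle b=\{b,ab,a^2b\}$ (giving $ba^{\epsilon}b$), not $b\langle a\rangle=\{b,ba,ba^2\}$ as you wrote: for instance $(ba)^{-1}a(ba)=a^2baba\notin\mathcal{S}_2$, whereas $(ab)^{-1}a(ab)=bab\in\mathcal{S}_2$. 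So for the first triple the admissible set is $\{1,a^{\pm\epsilon},b,a^{\pm\epsilon}b\}$ (the paper's list), and by your own $b$-conjugation symmetry the admissible set for the second triple is $b\cdot\{1,a^{\pm\epsilon},b,a^{\pm\epsilon}b\}$. Your six-element set covers $Q\in\{1,a,a^2,b\}$ for the first family and $Q\in\{ba,ba^2\}$ for the second (which, as you correctly verify, reproduce the $Q=a,a^2$ cases of the first), but it omits $Q=a^{\epsilon}b,\,a^{-\epsilon}b$ for the first family (equivalently $Q=bab,\,ba^2b$ for the second) entirely.

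These omitted cases are exactly the ones that do not fall in your ``easy category'': with $\epsilon=1$ and $Q=ab$ the first triple becomes $(bababab,\;ba^2babab,\;ba^2ba)$, whose first two components are not almost short, so the claim that the conjugation ``absorbs cleanly'' fails there, and a single move does not suffice. The paper devotes its second displayed computation to precisely this case, resolving it by the sequence $R_2,R_1,R_2$:
\begin{align*}
(ba^{\epsilon}ba^{\epsilon}ba^{\epsilon}b,\ ba^{-\epsilon}ba^{\epsilon}ba^{\epsilon}b,\ ba^{-\epsilon}ba^{\epsilon})
\xrightarrow{R_2}
(ba^{\epsilon}ba^{\epsilon}ba^{\epsilon}b,\ ba^{-\epsilon}ba^{\epsilon},\ a^{\epsilon})
\xrightarrow{R_1}\cdots\xrightarrow{R_2}
(ba^{-\epsilon}ba^{\epsilon},\ a^{\epsilon},\ a^{\epsilon}ba^{-\epsilon}b).
\end{align*}
Until you add this case (or its family-two counterpart), the lemma is unproved for those $Q$. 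Separately, a harmless arithmetic slip: applying $R_1^{-1}$ to $(baba^2,ababa^2,ababa)$ gives first entry $g_1g_2g_1^{-1}=bab$, not $ba^2b$; since $bab$ is also almost short, your conclusion in that case is unaffected.
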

\begin{proof}
We only consider the triple $(\tau_1,\tau_2,\tau_2)=(a^{-\epsilon}ba^{\epsilon}b,ba^{\epsilon}b,ba^{\epsilon}ba^{-\epsilon})$.

Since $Q^{-1}\tau_1\tau_2\tau_3Q=Q^{-1}a^{\epsilon}Q$ is almost short, $Q$ is one of $1$, $a^{\pm\epsilon}$, $b$ and $a^{\pm\epsilon}b$. In the case that $Q=b$, the triple $(g_1,g_2,g_3)=(ba^{-\epsilon}ba^{\epsilon},a^{\epsilon},a^{\epsilon}ba^{-\epsilon}b)$ is already of almost short elements. In the cases that $Q=a^{\epsilon}$ or $a^{\epsilon}b$, the lemma follows from the following substitutions.
\begin{align*}
    (a^{-\epsilon}\tau_1a^{\epsilon},a^{-\epsilon}\tau_2a^{\epsilon},a^{-\epsilon}\tau_3a^{\epsilon})
    &=(a^{\epsilon}ba^{\epsilon}ba^{\epsilon},a^{-\epsilon}ba^{\epsilon}ba^{\epsilon},a^{-\epsilon}ba^{\epsilon}b)\\
    &\xrightarrow{R_1}
    (a^{-\epsilon}ba^{\epsilon}ba^{\epsilon},a^{-\epsilon}ba^{-\epsilon}ba^{-\epsilon}ba^{-\epsilon}ba^{\epsilon},a^{-\epsilon}ba^{\epsilon}b)\\
    &\xrightarrow{R_2}
    (a^{-\epsilon}ba^{\epsilon}ba^{\epsilon},a^{-\epsilon}ba^{\epsilon}b,ba^{\epsilon}ba^{-\epsilon})
    \xrightarrow{R_1}
    (a^{-\epsilon}ba^{\epsilon}b,ba^{\epsilon}b,ba^{\epsilon}ba^{-\epsilon}).\\
    (ba^{-\epsilon}\tau_1a^{\epsilon}b,ba^{-\epsilon}\tau_2a^{\epsilon}b,ba^{-\epsilon}\tau_3a^{\epsilon}b)
    &=(ba^{\epsilon}ba^{\epsilon}ba^{\epsilon}b,ba^{-\epsilon}ba^{\epsilon}ba^{\epsilon}b,ba^{-\epsilon}ba^{\epsilon})\\
    &\xrightarrow{R_2}
    (ba^{\epsilon}ba^{\epsilon}ba^{\epsilon}b,ba^{-\epsilon}ba^{\epsilon},a^{\epsilon})\\
    &\xrightarrow{R_1}
    (ba^{-\epsilon}ba^{\epsilon},a^{-\epsilon}ba^{-\epsilon}ba^{-\epsilon},a^{\epsilon})
    \xrightarrow{R_2}
    (ba^{-\epsilon}ba^{\epsilon},a^{\epsilon},a^{\epsilon}ba^{-\epsilon}b).
\end{align*}
For the rest two cases, the approach is similar. 
\end{proof}

\begin{lemma}
\label{lemma::tau1tau2=a}
Let $(\tau_1,\tau_2)$ be a pair of almost short elements in Table \ref{table::almost short} such that $\tau_1\tau_2$ is a power of $a$.
Set $(g_1,g_2)=(Q^{-1}\tau_1Q,Q^{-1}\tau_2Q)$ with $Q\in G$ and suppose that $Q^{-1}\tau_1\tau_2Q$ is almost short.
Then $(g_1,g_2)$ is Hurwitz equivalent to a pair of almost short elements. 
\end{lemma}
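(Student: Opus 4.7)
The plan is to proceed by case analysis, mirroring the strategy already used in Lemma \ref{lemma::aba,a^2->ab}. First I would extract from Table \ref{table::almost short} the complete list of pairs $(\tau_1,\tau_2)$ of almost short elements whose product lies in $\{a,a^2\}$; these include the trivial pairs $(a,a)$ and $(a^2,a^2)$, pairs such as $(b,ba)$, $(b,ba^2)$, $(a^2b,ba^2)$, $(ba,a^2ba^2)$, $(aba,a^2ba^2)$ and their symmetric counterparts, together with a handful of entries involving the longer words $a^2bab$, $ababa$, $ba^2ba$, $a^2ba^2ba^2$, $aba^2b$, $baba^2$.

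Next, since $Q^{-1}\tau_1\tau_2 Q = Q^{-1}a^{\epsilon}Q$ is assumed to be almost short and the only almost short conjugates of $a^{\epsilon}$ are $a^{\epsilon}$ itself and $ba^{\epsilon}b$, the centraliser $\langle a\rangle$ of $a$ in $G$ forces $Q$ to lie, modulo $\langle a\rangle$, in the finite set $\{1,b\}$. Thus the admissible $Q$ are among $\{1,a,a^2,b,ab,a^2b\}$, leaving only finitely many configurations $(\tau_1,\tau_2,Q)$ to examine.

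Then I would verify each configuration in turn. When $Q=1$ the conclusion is immediate since $(g_1,g_2)=(\tau_1,\tau_2)$ is already a pair in $\mathcal{S}_2$. When $Q\in\{a,a^2\}$ I would use that conjugation by a power of $a$ permutes the $s$-triangle $\{s_0,s_1,s_2\}$, the $t$-triangle $\{t_0,t_1,t_2\}$, the triangle $\{a^2bab,ababa,baba^2\}$, and the triangle $\{ba^2ba,a^2ba^2ba^2,aba^2b\}$, and fixes or permutes the remaining almost short words; hence $Q^{-1}\tau_i Q\in\mathcal{S}_2$ for every $\tau_i$ appearing in our list, and $(g_1,g_2)$ is again directly a pair of almost short elements. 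The substantive case is $Q\in\{b,ab,a^2b\}$: here I would invoke Lemma \ref{lemma::h=g1g2} with $h=\tau_1\tau_2=a^{\epsilon}$ to replace conjugation by $Q$ with conjugation by $b$ (after absorbing a power of $a$), and then treat the remaining handful of pairs by writing down an explicit sequence of at most two or three elementary transformations, in the spirit of the substitutions already used in Lemma \ref{lemma::aba,a^2->ab} and Lemma \ref{lemma::tau1_tau2_tau3_triple}. Pairs where $\tau_1$ and $\tau_2$ are themselves in the $b$-coset (such as $(bab,bab)$-type entries, or those involving $ba^2b$) need a direct check, but in each case the conjugate $b\tau_i b$ stays inside $\mathcal{S}_2$, so no further manipulation is required.

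The main obstacle is entirely combinatorial: enumerating the pairs and the six admissible $Q$'s, and for each writing out the correct substitution without clerical error. No new structural idea beyond Lemma \ref{lemma::h=g1g2} and the rotational symmetry of Table \ref{table::almost short} under $a$-conjugation is needed, so the difficulty lies in careful bookkeeping rather than in any conceptual gap.
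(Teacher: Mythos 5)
Your overall strategy coincides with the paper's: enumerate the finitely many pairs $(\tau_1,\tau_2)$ from Table \ref{table::almost short} with product a power of $a$, pin $Q$ down to $\{1,a,a^2,b,ab,a^2b\}$ via the observation that the only almost short conjugates of $a^{\epsilon}$ are $a^{\epsilon}$ and $ba^{\epsilon}b$, and finish with explicit elementary transformations. However, your handling of the case $Q\in\{a,a^2\}$ contains a genuine error. You claim that conjugation by a power of $a$ keeps every $\tau_i$ in the list inside $\mathcal{S}_2$, so that $(g_1,g_2)$ is \emph{already} a pair of almost short elements. This fails for the factors $ba^{\epsilon}b$ (the almost short conjugates of $a^{\pm 1}$): for instance $(\tau_1,\tau_2)=(aba^2b,\,bab)$ appears in Table \ref{table::almost short} with $\tau_1\tau_2=a$, and $Q=a$ gives $(g_1,g_2)=(ba^2ba,\,a^2baba)$, where $a^2baba$ is almost \emph{long}. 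The paper's proof is organised around exactly such cases: it lists $(ba^{-\epsilon}ba^{\epsilon},a^{-\epsilon}ba^{\epsilon}ba^{\epsilon})$ --- which is this very pair for $\epsilon=1$ --- among its ``exceptional possibilities'' and removes it with one of the hexagonal cycles of $R_1$-moves. The same oversight recurs in your $Q\in\{b,ab,a^2b\}$ discussion: the assertion that ``the conjugate $b\tau_i b$ stays inside $\mathcal{S}_2$'' is false when $\tau_i\in\{aba,\,a^2ba^2\}$; e.g.\ $(\tau_1,\tau_2)=(aba,\,a^2ba)$ has product $a^2$, and $Q=b$ yields $(babab,\,ba^2bab)$ with both components almost long --- again precisely one of the paper's listed exceptional pairs. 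So the problematic pairs are not confined to where you look for them, and without the explicit $R_1$-manipulations on these pairs the argument does not go through.

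The gap is repairable with a tool you already invoke, and in the way the paper itself proceeds for Lemma \ref{lemma::tau1tau2=aba}: since $h=\tau_1\tau_2=a^{\epsilon}$ generates $\langle a\rangle$, Lemma \ref{lemma::h=g1g2} (applied once or twice, with $Q'=1$ or $Q'=b$) strips any power of $a$ out of $Q$ by Hurwitz moves, reducing the whole analysis to $Q\in\{1,b\}$. The case $Q=1$ is then trivial, but for $Q=b$ you must still identify every pair whose $b$-conjugate leaves $\mathcal{S}_2$ --- those containing a factor $aba$, $a^2ba^2$, or producing the length-five and length-seven words in the paper's exceptional lists --- and resolve each by a short cycle of $R_1^{\pm 1}$-substitutions, as the paper does with its hexagon diagrams. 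As written, though, your case analysis asserts membership in $\mathcal{S}_2$ at two points where it fails, and that is where the proof breaks.
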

\begin{proof}
The pair $(\tau_1,\tau_2)$ must be one of
\begin{align*}
&(a^{-\epsilon},a^{-\epsilon}),\\
&(b,ba^{\epsilon}),
(a^{\epsilon}b,b),
(a^{-\epsilon}ba^{-\epsilon},a^{\epsilon}ba^{-\epsilon}),
(a^{-\epsilon}ba^{\epsilon},a^{-\epsilon}ba^{-\epsilon}),
(ba^{\epsilon},a^{-\epsilon}ba^{\epsilon}),
(a^{\epsilon}ba^{-\epsilon},a^{\epsilon}b),\\
&(a^{-\epsilon}b,ba^{-\epsilon}),
(a^{\epsilon}ba^{\epsilon},a^{-\epsilon}b),
(ba^{-\epsilon},a^{\epsilon}ba^{\epsilon}),\\
&(a^{\epsilon}ba^{-\epsilon}b,ba^{\epsilon}b),
(ba^{\epsilon}b,ba^{-\epsilon}ba^{\epsilon})
\end{align*}
and $Q\in\{1,a^{\epsilon},a^{-\epsilon},b,a^{\epsilon}b,a^{-\epsilon}b\}$ with $\epsilon=\pm 1$. Now we fix $\epsilon=\pm 1$.

When $(\tau_1,\tau_2)=(a^{-\epsilon},a^{-\epsilon})$, the pair $(g_1,g_2)$ is a pair of almost short elements. When one of $g_1$, $g_2$ is conjugate to $b$ and the other one is conjugate to $a^{\epsilon}b$, either $(g_1,g_2)$ is a pair of almost short elements or $(g_1,g_2)$ is equal to one of $(a^{\epsilon}b,ba^{-\epsilon}ba^{\epsilon}b)$, $(ba^{\epsilon}ba^{-\epsilon}b,ba^{\epsilon})$, $(ba^{-\epsilon}ba^{\epsilon}b,ba^{-\epsilon}ba^{-\epsilon}b)$ and $(ba^{-\epsilon}ba^{-\epsilon}b,ba^{\epsilon}ba^{-\epsilon}b)$. In this case, the substitutions given by the following graph show that $(g_1,g_2)$ can be transformed into a pair of almost short elements via elementary transformations.
\begin{center}
\begin{tikzpicture}[thick]
\node (11) {$(b,a^{\epsilon}b)$};
\node (12) [right of=11, node distance={40mm}] {$(a^{\epsilon}b,ba^{-\epsilon}ba^{\epsilon}b)$};
\node (13) [right of=12, node distance={40mm}] {$(ba^{-\epsilon}ba^{\epsilon}b,ba^{-\epsilon}ba^{-\epsilon}b)$};
\node (21) [below of=11, node distance={10mm}] {$(ba^{\epsilon},b)$};
\node (22) [right of=21, node distance={40mm}] {$(ba^{\epsilon}ba^{-\epsilon}b,ba^{\epsilon})$};
\node (23) [right of=22, node distance={40mm}] {$(ba^{-\epsilon}ba^{-\epsilon}b,ba^{\epsilon}ba^{-\epsilon}b)$};

\draw[->] (11) -- node[above] {$R_1$} (12);
\draw[->] (12) -- node[above] {$R_1$} (13);
\draw[->] (13) -- node[right] {$R_1$} (23);
\draw[->] (23) -- node[below] {$R_1$} (22);
\draw[->] (22) -- node[below] {$R_1$} (21);
\draw[->] (21) -- node[left] {$R_1$} (11);
\end{tikzpicture} 
\end{center}
When both of $g_1$, $g_2$ are conjugate to $a^{-\epsilon}b$, either $(g_1,g_2)$ is a pair of almost short elements or $(g_1,g_2)$ is one of $(ba^{\epsilon}ba^{\epsilon}b,ba^{-\epsilon})$, $(a^{-\epsilon}b,ba^{\epsilon}ba^{\epsilon}b)$ which can be transformed into $(ba^{-\epsilon},a^{-\epsilon}b)$ via $R_1$, $R_1^{-1}$ respectively. When one of $g_1$, $g_2$ is conjugate to $a^{\epsilon}$ and the other one is conjugate to $a^{-\epsilon}ba^{-\epsilon}ba^{-\epsilon}$, either $(g_1,g_2)$ is a pair of almost short elements or $(g_1,g_2)$ is one of
\begin{align*}
&(ba^{-\epsilon}ba^{\epsilon},a^{-\epsilon}ba^{\epsilon}ba^{\epsilon}),
(a^{\epsilon}ba^{\epsilon}ba^{-\epsilon},a^{\epsilon}ba^{-\epsilon}b),\\
&(a^{-\epsilon}ba^{\epsilon}ba^{\epsilon},a^{-\epsilon}ba^{-\epsilon}ba^{-\epsilon}),
(a^{-\epsilon}ba^{-\epsilon}ba^{-\epsilon},a^{\epsilon}ba^{\epsilon}ba^{-\epsilon}),\\
&(ba^{\epsilon}ba^{\epsilon}ba^{-\epsilon}b,ba^{\epsilon}ba^{-\epsilon}),
(a^{-\epsilon}ba^{\epsilon}b,ba^{-\epsilon}ba^{\epsilon}ba^{\epsilon}b),\\
&(ba^{-\epsilon}ba^{-\epsilon}ba^{-\epsilon}b,ba^{\epsilon}ba^{\epsilon}ba^{-\epsilon}b),
(ba^{-\epsilon}ba^{\epsilon}ba^{\epsilon}b,ba^{-\epsilon}ba^{-\epsilon}ba^{-\epsilon}b).
\end{align*}
In this case, the following graphs show that $(g_1,g_2)$ can be transformed into a pair of almost short elements via elementary transformations.
\begin{center}
\begin{tikzpicture}[thick]
\node (11) {$(ba^{\epsilon}b,ba^{-\epsilon}ba^{\epsilon})$};
\node (21) [below of=11, node distance={10mm}] {$(a^{\epsilon}ba^{-\epsilon}b,ba^{\epsilon}b)$};
\node (12) [right of=11, node distance={50mm}] {$(ba^{-\epsilon}ba^{\epsilon},a^{-\epsilon}ba^{\epsilon}ba^{\epsilon})$};
\node (13) [right of=12, node distance={50mm}] {$(a^{-\epsilon}ba^{\epsilon}ba^{\epsilon},a^{-\epsilon}ba^{-\epsilon}ba^{-\epsilon})$};
\node (22) [right of=21, node distance={50mm}] {$(a^{\epsilon}ba^{\epsilon}ba^{-\epsilon},a^{\epsilon}ba^{-\epsilon}b)$};
\node (23) [right of=22, node distance={50mm}] {$(a^{-\epsilon}ba^{-\epsilon}ba^{-\epsilon},a^{\epsilon}ba^{\epsilon}ba^{-\epsilon})$};

\draw[->] (11) -- node[above] {$R_1$} (12);
\draw[->] (12) -- node[above] {$R_1$} (13);
\draw[->] (13) -- node[right] {$R_1$} (23);
\draw[->] (23) -- node[below] {$R_1$} (22);
\draw[->] (22) -- node[below] {$R_1$} (21);
\draw[->] (21) -- node[left] {$R_1$} (11);
\end{tikzpicture} 
\end{center}
\begin{center}
\begin{tikzpicture}[thick]
\node (11) {$(ba^{\epsilon}ba^{-\epsilon},a^{\epsilon})$};
\node (21) [below of=11, node distance={10mm}] {$(a^{\epsilon},a^{-\epsilon}ba^{\epsilon}b)$};
\node (12) [right of=11, node distance={50mm}] {$(ba^{\epsilon}ba^{\epsilon}ba^{-\epsilon}b,ba^{\epsilon}ba^{-\epsilon})$};
\node (13) [right of=12, node distance={50mm}] {$(ba^{-\epsilon}ba^{-\epsilon}ba^{-\epsilon}b,ba^{\epsilon}ba^{\epsilon}ba^{-\epsilon}b)$};
\node (22) [right of=21, node distance={50mm}] {$(a^{-\epsilon}ba^{\epsilon}b,ba^{-\epsilon}ba^{\epsilon}ba^{\epsilon}b)$};
\node (23) [right of=22, node distance={50mm}] {$(ba^{-\epsilon}ba^{\epsilon}ba^{\epsilon}b,ba^{-\epsilon}ba^{-\epsilon}ba^{-\epsilon}b)$};

\draw[<-] (11) -- node[above] {$R_1$} (12);
\draw[<-] (12) -- node[above] {$R_1$} (13);
\draw[<-] (13) -- node[right] {$R_1$} (23);
\draw[<-] (23) -- node[below] {$R_1$} (22);
\draw[<-] (22) -- node[below] {$R_1$} (21);
\draw[<-] (21) -- node[left] {$R_1$} (11);
\end{tikzpicture} 
\end{center}
\end{proof}

\begin{lemma}
\label{lemma::tau1tau2=b}
Let $(\tau_1,\tau_2)$ be a pair of almost short elements in Table \ref{table::almost short} such that $\tau_1\tau_2=b$.
Set $(g_1,g_2)=(Q^{-1}\tau_1Q,Q^{-1}\tau_2Q)$ with $Q\in G$ and suppose that $Q^{-1}\tau_1\tau_2Q$ is almost short.
Then $(g_1,g_2)$ is Hurwitz equivalent to a pair of almost short elements.
\end{lemma}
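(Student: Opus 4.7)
The plan mirrors the strategy of Lemma \ref{lemma::tau1tau2=a}: first restrict the conjugator $Q$ to a finite list using the almost-shortness of $Q^{-1}bQ$, then enumerate the admissible product pairs from Table \ref{table::almost short}, and finally handle each remaining combination either by direct inspection or by a short cycle of elementary transformations analogous to the hexagonal diagrams in Lemma \ref{lemma::tau1tau2=a}.

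The restriction on $Q$ is sharper here than in Lemma \ref{lemma::tau1tau2=a} because conjugation preserves order. Since $b$ has order $2$ in $G$, the element $Q^{-1}bQ$ must itself be an order-$2$ element of $\mathcal{S}_2$; direct inspection of the list defining $\mathcal{S}_2$ shows that the only such elements are $b$, $a^2ba$ and $aba^2$. Taking $C_G(b) = \{1,b\}$ as a coset system, this forces
\begin{equation*}
Q \in \{1,\,b,\,a,\,ba,\,a^2,\,ba^2\}.
\end{equation*}

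Next, I would collect every pair $(\tau_1,\tau_2)$ in Table \ref{table::almost short} whose product equals $b$. The symmetry $a \leftrightarrow a^2$ (swapping $\epsilon = \pm 1$) again cuts the workload roughly in half. For each admissible combination $(\tau_1,\tau_2,Q)$ I would compute the reduced forms of $g_1 = Q^{-1}\tau_1 Q$ and $g_2 = Q^{-1}\tau_2 Q$. Whenever $Q = 1$, or whenever the outer letter of $Q$ cancels cleanly into one of the $\tau_i$, the pair $(g_1,g_2)$ already lies in $\mathcal{S}_2 \times \mathcal{S}_2$ and there is nothing to prove; this disposes of the great majority of the cases.

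The remaining cases are those in which one or both of $g_1, g_2$ reduces to a length-$5$ or length-$7$ word outside $\mathcal{S}_2$. For each such case I would write down the orbit of $(g_1,g_2)$ under iterated $R_1$ (or $R_1^{-1}$); because $g_1 g_2 = Q^{-1}bQ$ has order~$2$, this orbit is periodic and closes after at most six moves, producing a hexagonal diagram identical in shape to those in Lemma \ref{lemma::tau1tau2=a}. The main obstacle is the length-$(3,3)$ sub-case with $Q \in \{a,\,ba,\,a^2,\,ba^2\}$ — for example $(\tau_1,\tau_2,Q) = (a^2bab,\,a^2ba,\,a)$ — where both components can reach length $7$. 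Here I expect to close the cycle only after combining the $R_1$-iteration with an auxiliary substitution supplied by Lemma \ref{lemma::aba,a^2->ab}, applied to the length-$3$ core of the pair, exactly in the spirit of the two closed hexagons drawn at the end of the proof of Lemma \ref{lemma::tau1tau2=a}.
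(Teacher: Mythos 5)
Your plan follows the paper's proof essentially step for step: the paper likewise reduces to $Q\in\{1,a,a^2,b,ba,ba^2\}$, lists the pairs of Table \ref{table::almost short} with product $b$, and closes the residual cases by pure $R_1$-cycles. Your justification of the $Q$-list (the only order-$2$ elements of $\mathcal{S}_2$ are $b$, $a^2ba$, $aba^2$, combined with the coset argument for $C_G(b)=\{1,b\}$) is correct and in fact cleaner than the paper's, which merely asserts the list. Two slips, neither fatal. First, since $g_1g_2=Q^{-1}bQ$ satisfies $(g_1g_2)^2=1$, your own periodicity observation gives $R_1^4=\mathrm{id}$ on the pair, so the closed orbits are squares, not hexagons; the paper's four diagrams for this lemma are indeed $4$-cycles (the hexagons of Lemma \ref{lemma::tau1tau2=a} come from the product there having order $3$). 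Second, your designated ``main obstacle'' $(\tau_1,\tau_2,Q)=(a^2bab,a^2ba,a)$ is not an instance of the lemma: $a^2bab\cdot a^2ba$ reduces to $a^2baba^2ba\neq b$. The pairs of Table \ref{table::almost short} with product exactly $b$ are the six families $(a^{\epsilon},a^{-\epsilon}b)$, $(ba^{-\epsilon},a^{\epsilon})$, $(a^{-\epsilon}b,ba^{\epsilon}b)$, $(ba^{\epsilon}b,ba^{-\epsilon})$, $(ba^{\epsilon}ba^{-\epsilon},a^{\epsilon}ba^{-\epsilon})$, $(a^{-\epsilon}ba^{\epsilon},a^{-\epsilon}ba^{\epsilon}b)$, whose components have length at most $4$; once these are enumerated, every bad case closes inside one of four $R_1$-squares, and no auxiliary appeal to Lemma \ref{lemma::aba,a^2->ab} is needed. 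So the method you propose does succeed and matches the paper's route, but the anticipated hard sub-case is spurious and the cycle shape should be corrected upon execution.
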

\begin{proof}
The pair $(\tau_1,\tau_2)$ must be one of
\[
(a^{\epsilon},a^{-\epsilon}b), (ba^{-\epsilon},a^{\epsilon}), (a^{-\epsilon}b,ba^{\epsilon}b), (ba^{\epsilon}b,ba^{-\epsilon}),
(ba^{\epsilon}ba^{-\epsilon},a^{\epsilon}ba^{-\epsilon}),
(a^{-\epsilon}ba^{\epsilon},a^{-\epsilon}ba^{\epsilon}b)
\]
with $\epsilon=\pm 1$ and $Q\in\{1,a,a^2,b,ba,ba^2\}$. The lemma follows from the following graphs with $\epsilon=\pm 1$.
\begin{center}
\begin{tikzpicture}[thick]
\node (21) {$(a^{-\epsilon}b,a^{\epsilon})$};
\node (11) [above of=21, node distance={10mm}] {$(a^{\epsilon},a^{\epsilon}ba^{\epsilon})$};
\node (12) [right of=11, node distance={30mm}] {$(a^{\epsilon}ba^{\epsilon},a^{-\epsilon}ba^{\epsilon}ba^{\epsilon})$};
\node (22) [below of=12, node distance={10mm}] {$(a^{-\epsilon}ba^{\epsilon}ba^{\epsilon},a^{-\epsilon}b)$};

\draw[->] (11) -- node[above] {$R_1$} (12);
\draw[->] (12) -- node[right] {$R_1$} (22);
\draw[->] (22) -- node[below] {$R_1$} (21);
\draw[->] (21) -- node[left] {$R_1$} (11);

\node (23) [right of=22, node distance={40mm}] {$(a^{\epsilon}ba^{\epsilon}ba^{\epsilon},a^{-\epsilon}ba^{\epsilon})$};
\node (13) [above of=23, node distance={10mm}] {$(a^{-\epsilon}ba^{\epsilon},a^{-\epsilon}ba^{-\epsilon}ba^{-\epsilon})$};
\node (14) [right of=13, node distance={45mm}] {$(a^{-\epsilon}ba^{-\epsilon}ba^{-\epsilon},a^{\epsilon}ba^{\epsilon}ba^{-\epsilon}ba^{-\epsilon})$};
\node (24) [below of=14, node distance={10mm}] {$(a^{\epsilon}ba^{\epsilon}ba^{-\epsilon}ba^{-\epsilon},a^{\epsilon}ba^{\epsilon}ba^{\epsilon})$};

\draw[->] (13) -- node[above] {$R_1$} (14);
\draw[->] (14) -- node[right] {$R_1$} (24);
\draw[->] (24) -- node[below] {$R_1$} (23);
\draw[->] (23) -- node[left] {$R_1$} (13);

\node (31) [below of=21, node distance={10mm}] {$(b,ba^{\epsilon}ba^{-\epsilon})$};
\node (41) [below of=31, node distance={10mm}] {$(a^{\epsilon}ba^{-\epsilon}b,b)$};
\node (32) [right of=31, node distance={40mm}] {$(ba^{\epsilon}ba^{-\epsilon},a^{\epsilon}ba^{-\epsilon}ba^{\epsilon}ba^{-\epsilon})$};
\node (42) [below of=32, node distance={10mm}] {$(a^{\epsilon}ba^{-\epsilon}ba^{\epsilon}ba^{-\epsilon},a^{\epsilon}ba^{-\epsilon}b)$};

\draw[->] (31) -- node[above] {$R_1$} (32);
\draw[->] (32) -- node[right] {$R_1$} (42);
\draw[->] (42) -- node[below] {$R_1$} (41);
\draw[->] (41) -- node[left] {$R_1$} (31);

\node (43) [right of=42, node distance=40mm] {$(ba^{-\epsilon}ba^{\epsilon},a^{-\epsilon}ba^{\epsilon})$};
\node (33) [above of=43, node distance={10mm}] {$(a^{-\epsilon}ba^{\epsilon},a^{-\epsilon}ba^{\epsilon}b)$};
\node (34) [right of=33, node distance={40mm}] {$(a^{-\epsilon}ba^{\epsilon}b,a^{-\epsilon}ba^{\epsilon}b)$};
\node (44) [below of=34, node distance={10mm}] {$(ba^{-\epsilon}ba^{\epsilon}b,ba^{-\epsilon}ba^{\epsilon})$};

\draw[->] (33) -- node[above] {$R_1$} (34);
\draw[->] (34) -- node[right] {$R_1$} (44);
\draw[->] (44) -- node[below] {$R_1$} (43);
\draw[->] (43) -- node[left] {$R_1$} (33);
\end{tikzpicture} 
\end{center}
\end{proof}

\begin{lemma}
\label{lemma::tau1tau2=bab}
Let $(\tau_1,\tau_2)$ be a pair of almost short elements in Table \ref{table::almost short} such that $\tau_1\tau_2=ba^{\epsilon}b$ with $\epsilon=\pm 1$.
Set $(g_1,g_2)=(Q^{-1}\tau_1Q,Q^{-1}\tau_2Q)$ with $Q\in G$ and suppose that $Q^{-1}\tau_1\tau_2Q$ is almost short.
Then $(g_1,g_2)$ is Hurwitz equivalent to a pair of almost short elements.
\end{lemma}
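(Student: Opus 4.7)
The plan is to follow the same pattern as Lemmata \ref{lemma::tau1tau2=a} and \ref{lemma::tau1tau2=b}. First I would enumerate all pairs $(\tau_1,\tau_2)$ of almost short elements recorded in Table \ref{table::almost short} whose product equals $ba^{\epsilon}b$. Such pairs are short to list by inspection of the table (doing $\epsilon = 1$ and then applying the $a\leftrightarrow a^2$ symmetry for $\epsilon = -1$); since $ba^{\epsilon}b$ is itself almost short, relatively few rows produce it, and the resulting list consists of short pairs such as $(b,a^{\epsilon}b)$, $(ba^{\epsilon},b)$, $(ba^{-\epsilon},a^{-\epsilon}b)$, and their length-$3$ counterparts from the lower rows.

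Next I would determine which conjugators $Q\in G$ make $Q^{-1}ba^{\epsilon}bQ$ almost short. Since $b\cdot ba^{\epsilon}b\cdot b = a^{\epsilon}$, the element $ba^{\epsilon}b$ is conjugate to $a^{\epsilon}$, whose almost short representatives in that conjugacy class are exactly $a^{\epsilon}$ and $ba^{\epsilon}b$. A short computation using the centralizer $\langle a\rangle$ of $a^{\epsilon}$ then shows that $Q$ must lie in a finite coset-list (essentially $Q\in\{1,a^{\pm 1},b,ba^{\pm 1}\}$ modulo the centralizer of $ba^{\epsilon}b$), which bounds the number of cases.

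For each admissible triple $(Q,\tau_1,\tau_2)$, the third step is to compute $(g_1,g_2)=(Q^{-1}\tau_1Q, Q^{-1}\tau_2Q)$ explicitly and decide whether it already sits in $\mathcal{S}_2\times\mathcal{S}_2$. When it does not, I would exhibit a closed cycle of elementary transformations $R_1^{\pm 1}$, exactly as in the commutative diagrams drawn at the end of the proof of Lemma \ref{lemma::tau1tau2=b}, whose vertices consist of pairs with a common product $\tau_1\tau_2$ conjugated by different short prefixes. A parity count on the letters of $Q$ guarantees that at least one vertex of such a cycle consists of almost short elements, and that vertex is the target of our Hurwitz equivalence.

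The main obstacle will be the sheer enumeration: because $\tau_1\tau_2$ now has length $3$ rather than length $1$, the admissible set of conjugators is larger and the number of problematic pairs grows accordingly. All the same, each bad pair is resolved by the same $R_1$-cycle trick, so no new technique is required beyond patience in drawing the diagrams; should an isolated case resist direct treatment, Lemma \ref{lemma::aba,a^2->ab} and Lemma \ref{lemma::tau1_tau2_tau3_triple} provide the necessary escape hatches by converting an obstinate pair into a triple to which a conjugation argument already applies.
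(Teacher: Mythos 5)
Your proposal is correct and follows essentially the same route as the paper's proof: enumerate the pairs in Table \ref{table::almost short} with product $ba^{\epsilon}b$, constrain $Q$ by requiring $Q^{-1}ba^{\epsilon}bQ$ to be almost short (the paper's admissible set $\{1,b,ba^{\epsilon},ba^{-\epsilon},ba^{\epsilon}b,ba^{-\epsilon}b\}$ is exactly what your centralizer analysis yields once the spurious candidates $a^{\pm 1}$ are discarded, since $a^{-1}ba^{\epsilon}ba$ is not almost short), and then resolve each non-almost-short pair $(g_1,g_2)$ by the same finite $R_1$-cycles drawn in the proof of Lemma \ref{lemma::tau1tau2=b}, which close up because $g_1g_2$ has order $3$. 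The escape hatches you reserve via Lemma \ref{lemma::aba,a^2->ab} and Lemma \ref{lemma::tau1_tau2_tau3_triple} turn out to be unnecessary here: the paper's proof needs only the enumeration and three $R_1$-cycle diagrams.
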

\begin{proof}
The pair $(\tau_1,\tau_2)$ must be one of
\[
(a^{\epsilon},a^{-\epsilon}ba^{\epsilon}b),
(ba^{\epsilon}ba^{-\epsilon},a^{\epsilon}),
(b,a^{\epsilon}b),
(ba^{\epsilon},b),
(ba^{-\epsilon},a^{-\epsilon}b),
(ba^{-\epsilon}b,ba^{-\epsilon}b)
\]
with $\epsilon=\pm 1$ and $Q\in\{1,b,ba^{\epsilon},ba^{-\epsilon},ba^{\epsilon}b,ba^{-\epsilon}b\}$. The lemma follows from the following graphs.
\begin{center}
\begin{tikzpicture}[thick]
\node (11) {$(ba^{\epsilon}b,ba^{-\epsilon}ba^{\epsilon})$};
\node (12) [right of=11, node distance={40mm}] {$(ba^{-\epsilon}ba^{\epsilon},a^{-\epsilon}ba^{\epsilon}ba^{\epsilon})$};
\node (13) [right of=12, node distance={50mm}] {$(a^{-\epsilon}ba^{\epsilon}ba^{\epsilon},a^{-\epsilon}ba^{-\epsilon}ba^{-\epsilon})$};
\node (21) [below of=11, node distance={10mm}] {$(a^{\epsilon}ba^{-\epsilon}b,ba^{\epsilon}b)$};
\node (22) [right of=21, node distance={40mm}] {$(a^{\epsilon}ba^{\epsilon}ba^{-\epsilon},a^{\epsilon}ba^{-\epsilon}b)$};
\node (23) [right of=22, node distance={50mm}] {$(a^{-\epsilon}ba^{-\epsilon}ba^{-\epsilon},a^{\epsilon}ba^{\epsilon}ba^{-\epsilon})$};

\draw[->] (11) -- node[above] {$R_1$} (12);
\draw[->] (12) -- node[above] {$R_1$} (13);
\draw[->] (13) -- node[right] {$R_1$} (23);
\draw[->] (23) -- node[below] {$R_1$} (22);
\draw[->] (22) -- node[below] {$R_1$} (21);
\draw[->] (21) -- node[left] {$R_1$} (11);
\end{tikzpicture} 
\end{center}
\begin{center}
\begin{tikzpicture}[thick]
\node (11)                                     {$(a^{\epsilon},a^{-\epsilon}ba^{\epsilon}b)$};
\node (12) [right of=11, node distance={40mm}] {$(a^{-\epsilon}ba^{\epsilon}b,ba^{-\epsilon}ba^{\epsilon}ba^{\epsilon}b)$};
\node (13) [right of=12, node distance={50mm}] {$(ba^{-\epsilon}ba^{\epsilon}ba^{\epsilon}b,ba^{-\epsilon}ba^{-\epsilon}ba^{-\epsilon}b)$};
\node (21) [below of=11, node distance={10mm}] {$(ba^{\epsilon}ba^{-\epsilon},a^{\epsilon})$};
\node (22) [right of=21, node distance={40mm}] {$(ba^{\epsilon}ba^{\epsilon}ba^{-\epsilon}b,ba^{\epsilon}ba^{-\epsilon})$};
\node (23) [right of=22, node distance={50mm}] {$(ba^{-\epsilon}ba^{-\epsilon}ba^{-\epsilon}b,ba^{\epsilon}ba^{\epsilon}ba^{-\epsilon}b)$};

\draw[->] (11) -- node[above] {$R_1$} (12);
\draw[->] (12) -- node[above] {$R_1$} (13);
\draw[->] (13) -- node[right] {$R_1$} (23);
\draw[->] (23) -- node[below] {$R_1$} (22);
\draw[->] (22) -- node[below] {$R_1$} (21);
\draw[->] (21) -- node[left] {$R_1$} (11);
\end{tikzpicture} 
\end{center}
\begin{center}
\begin{tikzpicture}[thick]
\node (11)                                     {$(b,a^{\epsilon}b)$};
\node (12) [right of=11, node distance={25mm}] {$(a^{\epsilon}b,ba^{-\epsilon}ba^{\epsilon}b)$};
\node (13) [right of=12, node distance={35mm}] {$(ba^{-\epsilon}ba^{\epsilon}b,ba^{-\epsilon}ba^{-\epsilon}b)$};
\node (21) [below of=11, node distance={10mm}] {$(ba^{\epsilon},b)$};
\node (22) [right of=21, node distance={25mm}] {$(ba^{\epsilon}ba^{-\epsilon}b,ba^{\epsilon})$};
\node (23) [right of=22, node distance={35mm}] {$(ba^{-\epsilon}ba^{-\epsilon}b,ba^{\epsilon}ba^{-\epsilon}b)$};

\draw[->] (11) -- node[above] {$R_1$} (12);
\draw[->] (12) -- node[above] {$R_1$} (13);
\draw[->] (13) -- node[right] {$R_1$} (23);
\draw[->] (23) -- node[below] {$R_1$} (22);
\draw[->] (22) -- node[below] {$R_1$} (21);
\draw[->] (21) -- node[left] {$R_1$} (11);

\node (a) [right of=13, node distance={35mm}] {$(ba^{-\epsilon},a^{-\epsilon}b)$};
\node (b) [right of=a, node distance={30mm}] {$(a^{-\epsilon}b,ba^{\epsilon}ba^{\epsilon}b)$};
\node (c) [right of=23, node distance={50mm}] {$(ba^{\epsilon}ba^{\epsilon}b,ba^{-\epsilon})$};

\draw[->] (a) -- node[above] {$R_1$} (b);
\draw[->] (b) -- node[right] {$R_1$} (c);
\draw[->] (c) -- node[left] {$R_1$} (a);
\end{tikzpicture} 
\end{center}
\end{proof}

\begin{lemma}
\label{lemma::tau1tau2=aaba}
Let $(\tau_1,\tau_2)$ be a pair of almost short elements in Table \ref{table::almost short} such that $\tau_1\tau_2=a^{-\epsilon}ba^{\epsilon}$ with $\epsilon=\pm 1$.
Set $(g_1,g_2)=(Q^{-1}\tau_1Q,Q^{-1}\tau_2Q)$ with $Q\in G$ and suppose that $Q^{-1}\tau_1\tau_2Q$ is almost short.
Then $(g_1,g_2)$ is Hurwitz equivalent to a pair of almost short elements.
\end{lemma}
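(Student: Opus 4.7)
The plan is to follow the same exhaustive enumeration strategy employed in lemmata \ref{lemma::tau1tau2=a}, \ref{lemma::tau1tau2=b} and \ref{lemma::tau1tau2=bab}. First I would extract from Table \ref{table::almost short} the complete finite list of pairs $(\tau_1,\tau_2)$ of almost short elements whose product equals $a^{-\epsilon}ba^{\epsilon}$ (that is, $a^2ba$ when $\epsilon=1$ or $aba^2$ when $\epsilon=-1$). Scanning the table's rows and columns gives a short enumeration analogous to the lists that appear in the previous lemmas, consisting of pairs such as $(a^{-\epsilon},ba^{\epsilon})$, $(a^{-\epsilon}b,a^{\epsilon}ba^{\epsilon})$, $(a^{\epsilon}ba^{\epsilon},a^{-\epsilon}b)$ and the pairs built from almost short elements of maximal length.

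Next, I would determine the possible values of $Q$. Since $Q^{-1}a^{-\epsilon}ba^{\epsilon}Q$ must be almost short and the conjugacy orbit of $a^{-\epsilon}ba^{\epsilon}$ under arbitrary conjugation grows in length quickly, the requirement forces $Q$ to lie in an explicit finite set, analogous to the set $\{1,b,ba^{\epsilon},ba^{-\epsilon},ba^{\epsilon}b,ba^{-\epsilon}b\}$ appearing in Lemma \ref{lemma::tau1tau2=bab}.

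For each triple $(\tau_1,\tau_2,Q)$ in the resulting finite list, I would then either verify directly that $(g_1,g_2)=(Q^{-1}\tau_1Q,Q^{-1}\tau_2Q)$ already consists of two almost short elements, or exhibit an explicit short sequence of elementary transformations $R_1^{\pm 1}$ bringing it into such a pair. As in the previous lemmas, these sequences typically close up into small cycles which can be presented as hexagonal graphs whose vertices are the intermediate pairs and whose edges are labelled by $R_1$. The symmetry $\epsilon \leftrightarrow -\epsilon$ together with the involution swapping $\tau_1$ and $\tau_2$ (combined with a cyclic shift) further reduces the amount of independent casework.

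The main obstacle will be organising the case analysis to ensure completeness and handling the cases where $Q$ contains $b$: there the intermediate pairs may contain long elements and the elementary transformations have to be chained carefully, as illustrated by the hexagonal diagrams in Lemma \ref{lemma::tau1tau2=bab}. Once the enumeration of $(\tau_1,\tau_2,Q)$ is fixed and the hexagonal cycles are identified, the verification of each cycle reduces to a finite computation in $\PSL(2,\mathbb{Z})$ using only the defining relation $a^3=b^2=1$, and should close after at most six elementary transformations per cycle.
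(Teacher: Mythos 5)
Your proposal is correct and follows essentially the same route as the paper's proof: the paper enumerates the eight table pairs with product $a^{-\epsilon}ba^{\epsilon}$, restricts $Q$ to the finite set $\{1,a^{\epsilon},a^{-\epsilon},a^{-\epsilon}b,a^{-\epsilon}ba^{\epsilon},a^{-\epsilon}ba^{-\epsilon}\}$, and settles each case with explicit $R_1$-cycles (four-cycles rather than hexagons here, though the overall bookkeeping is exactly as you describe). Just beware that two of your illustrative pairs, $(a^{-\epsilon}b,a^{\epsilon}ba^{\epsilon})$ and $(a^{\epsilon}ba^{\epsilon},a^{-\epsilon}b)$, do not in fact multiply to $a^{-\epsilon}ba^{\epsilon}$; a faithful scan of Table \ref{table::almost short} yields the correct list, namely $(a^{\epsilon},a^{\epsilon}ba^{\epsilon})$, $(a^{-\epsilon},ba^{\epsilon})$, $(a^{-\epsilon}ba^{-\epsilon},a^{-\epsilon})$, $(a^{-\epsilon}b,a^{\epsilon})$, $(b,ba^{-\epsilon}ba^{\epsilon})$, $(a^{-\epsilon}ba^{\epsilon}b,b)$, $(a^{-\epsilon}ba^{-\epsilon}ba^{-\epsilon},a^{\epsilon}ba^{-\epsilon})$ and $(a^{\epsilon}ba^{-\epsilon},a^{\epsilon}ba^{\epsilon}ba^{\epsilon})$.
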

\begin{proof}
The pair $(\tau_1,\tau_2)$ must be one of
\begin{align*}
&(a^{\epsilon},a^{\epsilon}ba^{\epsilon}),
(a^{-\epsilon},ba^{\epsilon}),
(a^{-\epsilon}ba^{-\epsilon},a^{-\epsilon}),
(a^{-\epsilon}b,a^{\epsilon})\\
&(b,ba^{-\epsilon}ba^{\epsilon}),
(a^{-\epsilon}ba^{\epsilon}b,b),
(a^{-\epsilon}ba^{-\epsilon}ba^{-\epsilon},a^{\epsilon}ba^{-\epsilon}),
(a^{\epsilon}ba^{-\epsilon},a^{\epsilon}ba^{\epsilon}ba^{\epsilon})
\end{align*}
with $\epsilon=\pm 1$ and $Q\in\{1,a^{\epsilon},a^{-\epsilon},a^{-\epsilon}b,a^{-\epsilon}ba^{\epsilon},a^{-\epsilon}ba^{-\epsilon}\}$. The lemma follows from the following graphs.
\begin{center}
\begin{tikzpicture}[thick]
\node (11)                                     {$(a^{\epsilon},a^{\epsilon}ba^{\epsilon})$};
\node (12) [right of=11, node distance={30mm}] {$(a^{\epsilon}ba^{\epsilon},a^{-\epsilon}ba^{\epsilon}ba^{\epsilon})$};
\node (21) [below of=11, node distance={10mm}] {$(a^{-\epsilon}b,a^{\epsilon})$};
\node (22) [right of=21, node distance={30mm}] {$(a^{-\epsilon}ba^{\epsilon}ba^{\epsilon},a^{-\epsilon}b)$};

\draw[->] (11) -- node[above] {$R_1$} (12);
\draw[->] (12) -- node[right] {$R_1$} (22);
\draw[->] (22) -- node[below] {$R_1$} (21);
\draw[->] (21) -- node[left]  {$R_1$} (11);

\node (13) [right of=12, node distance={30mm}] {$(a^{\epsilon},ba^{-\epsilon})$};
\node (14) [right of=13, node distance={30mm}] {$(ba^{-\epsilon},a^{\epsilon}ba^{\epsilon}ba^{-\epsilon})$};
\node (23) [below of=13, node distance={10mm}] {$(a^{\epsilon}ba^{\epsilon},a^{\epsilon})$};
\node (24) [right of=23, node distance={30mm}] {$(a^{\epsilon}ba^{\epsilon}ba^{-\epsilon},a^{\epsilon}ba^{\epsilon})$};

\draw[->] (13) -- node[above] {$R_1$} (14);
\draw[->] (14) -- node[right] {$R_1$} (24);
\draw[->] (24) -- node[below] {$R_1$} (23);
\draw[->] (23) -- node[left]  {$R_1$} (13);
\end{tikzpicture} 
\end{center}
\begin{center}
\begin{tikzpicture}[thick]
\node (11)                                     {$(a^{\epsilon}ba^{-\epsilon},a^{\epsilon}ba^{-\epsilon}b)$};
\node (12) [right of=11, node distance={40mm}] {$(a^{\epsilon}ba^{-\epsilon}b,ba^{\epsilon}ba^{-\epsilon}b)$};
\node (21) [below of=11, node distance={10mm}] {$(ba^{\epsilon}ba^{-\epsilon},a^{\epsilon}ba^{-\epsilon})$};
\node (22) [right of=21, node distance={40mm}] {$(ba^{\epsilon}ba^{-\epsilon}b,ba^{\epsilon}ba^{-\epsilon})$};

\draw[->] (11) -- node[above] {$R_1$} (12);
\draw[->] (12) -- node[right] {$R_1$} (22);
\draw[->] (22) -- node[below] {$R_1$} (21);
\draw[->] (21) -- node[left]  {$R_1$} (11);

\node (13) [right of=12, node distance={30mm}] {$(b,ba^{-\epsilon}ba^{\epsilon})$};
\node (14) [right of=13, node distance={40mm}] {$(ba^{-\epsilon}ba^{\epsilon},a^{-\epsilon}ba^{\epsilon}ba^{-\epsilon}ba^{\epsilon})$};
\node (23) [below of=13, node distance={10mm}] {$(a^{-\epsilon}ba^{\epsilon}b,b)$};
\node (24) [right of=23, node distance={40mm}] {$(a^{-\epsilon}ba^{\epsilon}ba^{-\epsilon}ba^{\epsilon},a^{-\epsilon}ba^{\epsilon}b)$};

\draw[->] (13) -- node[above] {$R_1$} (14);
\draw[->] (14) -- node[right] {$R_1$} (24);
\draw[->] (24) -- node[below] {$R_1$} (23);
\draw[->] (23) -- node[left]  {$R_1$} (13);
\end{tikzpicture} 
\end{center}
\begin{center}
\begin{tikzpicture}[thick]
\node (11)                                     {$(a^{-\epsilon}ba^{\epsilon},a^{-\epsilon}ba^{-\epsilon}ba^{-\epsilon})$};
\node (12) [right of=11, node distance={50mm}] {$(a^{-\epsilon}ba^{-\epsilon}ba^{-\epsilon},a^{\epsilon}ba^{\epsilon}ba^{-\epsilon}ba^{-\epsilon})$};
\node (21) [below of=11, node distance={10mm}] {$(a^{\epsilon}ba^{\epsilon}ba^{\epsilon},a^{-\epsilon}ba^{\epsilon})$};
\node (22) [right of=21, node distance={50mm}] {$(a^{\epsilon}ba^{\epsilon}ba^{-\epsilon}ba^{-\epsilon},a^{\epsilon}ba^{\epsilon}ba^{\epsilon})$};

\draw[->] (11) -- node[above] {$R_1$} (12);
\draw[->] (12) -- node[right] {$R_1$} (22);
\draw[->] (22) -- node[below] {$R_1$} (21);
\draw[->] (21) -- node[left]  {$R_1$} (11);
\end{tikzpicture} 
\end{center}
\end{proof}

\begin{lemma}
\label{lemma::tau1tau2=aba}
Let $(\tau_1,\tau_2)$ be a pair of almost short elements in Table \ref{table::almost short} such that $\tau_1\tau_2$ is one of $a^{\epsilon}b$, $ba^{\epsilon}$ and $a^{-\epsilon}ba^{-\epsilon}$ with $\epsilon=\pm 1$.
Set $(g_1,g_2)=(Q^{-1}\tau_1Q,Q^{-1}\tau_2Q)$ with $Q\in G$ and suppose that $Q^{-1}\tau_1\tau_2Q$ is almost short.
Then $(g_1,g_2)$ is Hurwitz equivalent to a pair of almost short elements.
\end{lemma}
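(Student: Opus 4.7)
The plan is to mirror the exhaustive case analysis already carried out in Lemmata \ref{lemma::tau1tau2=a}--\ref{lemma::tau1tau2=aaba}. The first observation is that each of $a^{\epsilon}b$, $ba^{\epsilon}$, $a^{-\epsilon}ba^{-\epsilon}$ is itself a short element of $G$ (a conjugate of $s_1$ or $t_1$), so the hypothesis that $Q^{-1}\tau_1\tau_2 Q \in \mathcal{S}_2$ actually forces $Q^{-1}\tau_1\tau_2 Q$ to be one of the six short conjugates $s_i, t_i$. Consequently the admissible conjugators $Q$ form a very explicit finite set, determined by which short conjugate the product lands in; in particular $Q$ may be taken to lie in $\{1, a, a^2, b, ab, a^2b, ba, ba^2\}$, possibly together with the small additional elements needed when $\tau_1\tau_2 \in \{a^{-\epsilon}ba^{-\epsilon}\}$.

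First I would enumerate, by inspection of Table \ref{table::almost short}, every pair $(\tau_1,\tau_2)$ whose product lies among the three target words. This yields a short, completely explicit list (of the same order of magnitude as those appearing in Lemmata \ref{lemma::tau1tau2=bab} and \ref{lemma::tau1tau2=aaba}), split into the three subcases $\tau_1\tau_2=a^{\epsilon}b$, $\tau_1\tau_2=ba^{\epsilon}$ and $\tau_1\tau_2=a^{-\epsilon}ba^{-\epsilon}$. Next, for each such pair and each admissible $Q$, I would compute $(g_1,g_2)=(Q^{-1}\tau_1Q, Q^{-1}\tau_2Q)$. In the generic situation, $Q$ either commutes with $\tau_1$ and $\tau_2$ up to a short element, or it is short enough that both conjugates stay inside $\mathcal{S}_2$; in those cases $(g_1,g_2)$ is already a pair of almost short elements and nothing further is needed.

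The remaining ``exceptional'' cases are those where exactly one of $g_1$, $g_2$ has reduced length $\ge 5$, so that it escapes $\mathcal{S}_2$. Here I would display a cycle of $R_1^{\pm 1}$ transformations, of length at most six, whose vertices correspond to the successive pairs $(u, u^{-1}g_i u)$ as $u$ ranges over the stabiliser orbit of the common product. This is precisely the mechanism behind the hexagonal diagrams in the proofs of Lemma \ref{lemma::tau1tau2=b} and Lemma \ref{lemma::tau1tau2=bab}: because the product $g_1 g_2$ is short and hence of finite conjugacy class, the $R_1$-orbit of $(g_1,g_2)$ among pairs with that product is finite, and one checks by direct enumeration that it always visits a pair with both entries in $\mathcal{S}_2$. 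Writing out the corresponding ``$R_1$-wheels'' for each exceptional pair finishes the proof.

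The main obstacle is purely bookkeeping. The subcase $\tau_1\tau_2 = a^{-\epsilon}ba^{-\epsilon}$ is the most delicate, because the admissible $\tau_i$ can themselves have length $3$ and the conjugator $Q$ may have length $2$, so that $g_1$ or $g_2$ reaches length $9$; one must verify that the $R_1$-wheel through such a pair really does pass through a vertex in $\mathcal{S}_2 \times \mathcal{S}_2$. As in Lemma \ref{lemma::tau1tau2=aaba}, this can be confirmed by writing the pair in the form $(u v u^{-1}, u v^{-1} w u^{-1})$ with $w = \tau_1\tau_2$ short, and observing that after at most three applications of $R_1$ the outer conjugator $u$ cancels against the appearance of $w$, yielding the required almost short representative.
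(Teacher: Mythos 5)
The decisive step of your argument contains a genuine error: the set of admissible conjugators $Q$ is \emph{not} finite. Since $\tau_1\tau_2$ is one of $a^{\epsilon}b$, $ba^{\epsilon}$, $a^{-\epsilon}ba^{-\epsilon}$, it is an infinite-order element whose centraliser in $\PSL(2,\mathbb{Z})$ is the infinite cyclic group generated by $\tau_1\tau_2$ itself; consequently the solutions of the condition ``$Q^{-1}\tau_1\tau_2Q$ almost short'' form the infinite family $Q=(\tau_1\tau_2)^{k}a^{\zeta}$ or $Q=(\tau_1\tau_2)^{-l}a^{\zeta}$ with $k,l\ge 0$ and $\zeta\in\{0,1,2\}$ --- this description is precisely where the paper's proof begins. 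Your claim that $Q$ ``may be taken to lie in'' an eight-element set is therefore true only \emph{up to Hurwitz equivalence}, and producing the Hurwitz moves that justify it is the real content of the lemma, not a consequence of the hypothesis. The supporting assertion that the product is ``short and hence of finite conjugacy class'', so that ``the $R_1$-orbit of $(g_1,g_2)$ among pairs with that product is finite'', is false: conjugacy classes of these elements in $\PSL(2,\mathbb{Z})$ are infinite, and since $R_1^{2}$ conjugates the pair by its product $h=g_1g_2$, the orbit contains $\bigl(h^{-k}g_1h^{k},\,h^{-k}g_2h^{k}\bigr)$ for all $k$ and is typically infinite. Hence no ``direct enumeration'' of the orbit, and no wheel of length at most six, can close the argument; the hexagonal diagrams of the neighbouring lemmata work there because in those cases the exceptional pairs genuinely lie on short cycles, which is not the situation here.

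The repair is exactly the paper's Lemma \ref{lemma::h=g1g2}: applying $R_1^{\pm 2}$ conjugates both entries of a pair by $h^{\mp 1}$ while preserving the product, so one strips the powers of $\tau_1\tau_2$ out of $Q$ one at a time --- a number of moves growing with $k$, not bounded by three or six --- and reduces to the base case $Q=a^{\zeta}$. For $Q=a^{\zeta}$ a finite check shows the only pairs where a conjugate escapes $\mathcal{S}_2$ are $(b,ba^{\epsilon}b)$, $(ba^{\epsilon}b,b)$, $(a^{-\epsilon}b,ba^{-\epsilon}b)$, $(ba^{-\epsilon}b,ba^{-\epsilon})$, each of which is fixed by a single $R_1^{\pm 1}$. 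Your closing remark, that the outer conjugator cancels ``after at most three applications of $R_1$'', gestures at this mechanism but cannot carry a uniform bound independent of $k$; once you replace the finiteness claims by the explicit description of the admissible $Q$ and invoke Lemma \ref{lemma::h=g1g2}, your enumeration of pairs $(\tau_1,\tau_2)$ and the residual $R_1^{\pm 1}$ fixes go through and the proof essentially coincides with the paper's.
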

\begin{proof}
Since $Q^{-1}\tau_1\tau_2Q$ is almost short, the element $Q$ is either $(\tau_1\tau_2)^k a^{\zeta}$ or $(\tau_1\tau_2)^{-l}a^{\zeta}$ with $k,l\ge 0$ and $\zeta=0,1,2$.
When $Q=a^{\zeta}$, the only exceptional cases that at least one of $Q^{-1}\tau_1Q$, $Q^{-1}\tau_2Q$ is not almost short is that $(\tau_1,\tau_2)$ is equal to one of
\[
(b,ba^{\epsilon}b),
(ba^{\epsilon}b,b),
(a^{-\epsilon}b,ba^{-\epsilon}b),
(ba^{-\epsilon}b,ba^{-\epsilon})
\]
with $\epsilon=\pm 1$, where both $(a^{-\epsilon}\tau_1a^{\epsilon},a^{-\epsilon}\tau_2a^{\epsilon})$ and $(a^{\epsilon}\tau_1a^{-\epsilon},a^{\epsilon}\tau_2a^{-\epsilon})$ can be transformed into pairs of almost short elements by applying either $R_1$ or $R_1^{-1}$. In general, Lemma \ref{lemma::h=g1g2} shows that $(g_1,g_2)$ can be transformed into a pair of almost short elements.
\end{proof}

\begin{lemma}
\label{lemma::tau1tau2=ababa}
Let $(\tau_1,\tau_2)$ be a pair of almost short elements in Table \ref{table::almost short} such that $\tau_1\tau_2$ is almost short and conjugate to $ababa$.
Set $(g_1,g_2)=(Q^{-1}\tau_1Q,Q^{-1}\tau_2Q)$ with $Q\in G$ and suppose that $Q^{-1}\tau_1\tau_2Q$ is almost short.
Then $(g_1,g_2)$ is Hurwitz equivalent to a pair of almost short elements.
\end{lemma}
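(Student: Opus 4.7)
The plan is to follow the template set by Lemmata \ref{lemma::tau1tau2=a}--\ref{lemma::tau1tau2=aba}: first enumerate from Table \ref{table::almost short} all pairs $(\tau_1,\tau_2)$ of almost short elements whose product $\tau_1\tau_2$ is an almost short element in the conjugacy class of $ababa$, then classify the possible conjugators $Q$, and finally resolve the few exceptional $Q$ for which $(Q^{-1}\tau_1Q,Q^{-1}\tau_2Q)$ fails to be a pair of almost short elements by exhibiting an explicit short cycle of elementary transformations.

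First I would scan Table \ref{table::almost short} for cells whose entry lies in the set $\{a^2bab,ababa,baba^2,ba^2ba,a^2ba^2ba^2,aba^2b\}$ of almost short conjugates of $ababa$; each such cell yields one pair $(\tau_1,\tau_2)$, together with its mirror obtained by swapping $\epsilon\mapsto -\epsilon$. For each fixed pair, because $\tau_1\tau_2$ is a generic element of infinite order in $\PSL(2,\mathbb{Z})$ and has reduced length $5$, the condition $Q^{-1}\tau_1\tau_2Q\in \mathcal{S}_2$ forces $Q$ to lie in the finite set of cyclic shifts along the six-element circuit through the conjugacy class of $ababa$, namely
\begin{equation*}
Q \in \{1,\, a,\, a^{2},\, b,\, ab,\, a^{2}b,\, ba,\, ba^{2},\, aba,\, a^{2}ba^{2},\, ba^{2}b,\, bab\},
\end{equation*}
up to the stabiliser of $\tau_1\tau_2$ (which is trivial here). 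For the overwhelmingly common choice $Q=a^{\zeta}$ with $\zeta\in\{0,1,2\}$ the resulting pair is again a pair of almost short elements, and one applies Lemma \ref{lemma::h=g1g2} directly.

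The hard part will be the small list of residual $Q$ (those containing $b$) for which one or both of $Q^{-1}\tau_1Q$, $Q^{-1}\tau_2Q$ fall outside $\mathcal{S}_2$. For these, the strategy is exactly the one used in the diagrams of Lemma \ref{lemma::tau1tau2=b} and Lemma \ref{lemma::tau1tau2=bab}: start with the pair $(Q^{-1}\tau_1Q,Q^{-1}\tau_2Q)$ (of which at least one component is long) and iterate the elementary transformation $R_1$; since the product $Q^{-1}\tau_1\tau_2Q$ is a fixed almost short element of length $5$ and $R_1$ preserves this product up to global conjugation, successive applications of $R_1$ cycle through a short closed orbit of pairs whose components have bounded length, and inspection shows that this orbit always visits a pair of almost short elements. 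I would organise these as $6$-cycle diagrams under $R_1$ exactly as in the earlier lemmata, one diagram for each residual $(\tau_1,\tau_2,Q)$, where the opposite vertex of the diagram displays the short target.

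The main obstacle will be the sheer bookkeeping: the conjugacy class of $ababa$ contributes more columns to Table \ref{table::almost short} than the short classes did in the earlier lemmata, so the case analysis is longer. The only genuinely new phenomenon to watch for is a pair whose $R_1$-orbit might refuse to close after a controlled number of steps; this is ruled out by combining the length bound $l(R_1^kg) \le l(Q^{-1}\tau_1Q) + l(Q^{-1}\tau_2Q)$ with the fact that the centralizer of $Q^{-1}\tau_1\tau_2Q$ in $G$ is infinite cyclic, so the $R_1$-orbit of a fixed-product pair is finite. Once each exceptional pair is handled by such a diagram, every case terminates at a pair of almost short elements, which completes the proof.
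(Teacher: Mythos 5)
There is a genuine gap, and it sits exactly where you flagged ``the only genuinely new phenomenon'': your closing argument that the $R_1$-orbit of a fixed-product pair is finite is false in this lemma, and it is false precisely because the product is conjugate to $ababa$. Note that $R_1^2(g_1,g_2)=(h^{-1}g_1h,\,h^{-1}g_2h)$ with $h=g_1g_2$, so iterating $R_1$ conjugates both components by powers of $h$. In the earlier lemmata the product was torsion ($\tau_1\tau_2$ a power of $a$, or conjugate to $b$ or to $a^{\pm1}$), so these iterates close up into the $4$- and $6$-cycle diagrams you want to imitate. Here $h$ is conjugate to $ababa$, an element of infinite order with infinite cyclic centraliser, so unless $g_i$ commutes with $h$ the pairs $(h^{-k}g_1h^k,h^{-k}g_2h^k)$ are pairwise distinct and their lengths grow linearly in $k$; the orbit is infinite, your claimed bound $l(R_1^kg)\le l(Q^{-1}\tau_1Q)+l(Q^{-1}\tau_2Q)$ fails, and no closed diagram exists. (Your proposal also contradicts itself on this point: you first assert the stabiliser of $\tau_1\tau_2$ is trivial, then correctly state the centraliser is infinite cyclic. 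The latter is right, and it is what breaks the cycle strategy rather than what saves it.) Relatedly, the set of admissible $Q$ is not your fixed $12$-element list: since $Q^{-1}\tau_1\tau_2Q$ must be almost short, $Q$ ranges over an infinite family of the form $(\tau_1\tau_2)^k\cdot(\text{finite piece})$, e.g.\ $(a^{-\epsilon}ba^{\epsilon}b)^ka^{\zeta}$ or $(a^{-\epsilon}ba^{\epsilon}b)^k(a^{\epsilon}b)a^{\zeta}$ with $k\in\mathbb{Z}$, and also your incidental remark that $\tau_1\tau_2$ has reduced length $5$ holds only for $ababa$ and $a^2ba^2ba^2$, not for the four length-$4$ representatives $a^2bab$, $aba^2b$, $ba^2ba$, $baba^2$.

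The missing idea is the one the paper uses (as it already did in Lemma \ref{lemma::tau1tau2=aba}, the other case with an infinite-order product): Lemma \ref{lemma::h=g1g2}. Since conjugation of the pair by $h=\tau_1\tau_2$ is realised by $R_1^{\mp2}$, that lemma lets you discard the $(\tau_1\tau_2)^k$ factor of $Q$ outright, reducing the infinite family of conjugators to finitely many coset representatives --- for $\tau_1\tau_2=a^{-\epsilon}ba^{\epsilon}b$ one may take $Q\in\{1,a^{\epsilon},a^{-\epsilon},a^{-\epsilon}b,a^{-\epsilon}ba^{\epsilon},a^{-\epsilon}ba^{-\epsilon}\}$, with $(\tau_1,\tau_2)$ one of $(a^{-\epsilon}ba^{-\epsilon},a^{-\epsilon}b)$ or $(a^{-\epsilon}ba^{-\epsilon}ba^{-\epsilon},a^{\epsilon}ba^{-\epsilon}b)$. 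After this reduction each residual pair is either already a pair of almost short elements or becomes one after a single $R_1^{\pm1}$; no diagrams are needed. Your enumeration-from-the-table step and your appeal to Lemma \ref{lemma::h=g1g2} for the easy conjugators are fine, but without the quotient-by-$\langle \tau_1\tau_2\rangle$ reduction the hard cases of your proof do not terminate.
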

\begin{proof}
Since $\tau_1\tau_2$ is almost short and conjugate to $ababa$, it must be one of $a^{-\epsilon}ba^{\epsilon}b$, $ba^{-\epsilon}ba^{\epsilon}$ and $a^{\epsilon}ba^{\epsilon}ba^{\epsilon}$ with $\epsilon=\pm 1$.
When $\tau_1\tau_2=a^{-\epsilon}ba^{\epsilon}b$, since $Q^{-1}\tau_1\tau_2Q$ is almost short, the element $Q$ is either $(a^{-\epsilon}ba^{\epsilon}b)^ka^{\zeta}$ or $(a^{-\epsilon}ba^{\epsilon}b)^k(a^{\epsilon}b)a^{\zeta}$ with $k\in\mathbb{Z}$ and $\zeta=0,1,2$. Lemma \ref{lemma::h=g1g2} induces that it suffices to suppose that
\[
Q\in\{1,a^{\epsilon},a^{-\epsilon},a^{-\epsilon}b,a^{-\epsilon}ba^{\epsilon},a^{-\epsilon}ba^{-\epsilon}\}.
\]
Besides, $(\tau_1,\tau_2)$ is one of
\[
(a^{-\epsilon}ba^{-\epsilon},a^{-\epsilon}b),
(a^{-\epsilon}ba^{-\epsilon}ba^{-\epsilon},a^{\epsilon}ba^{-\epsilon}b).
\]
Each possible $(g_1,g_2)$ is either a pair of almost short elements or transformed into a pair of almost short elements by $R_1^{\pm 1}$. When $\tau_1\tau_2=ba^{-\epsilon}ba^{\epsilon}$ or $a^{\epsilon}ba^{\epsilon}ba^{\epsilon}$ we have similar arguments.
\end{proof}

%%%

We introduce the following operations and their restorations on an $n$-tuple $(g_1,\ldots,g_n)$ of elements in $G$ that are conjugate to some almost short element.

\begin{itemize}[-]
    \item \underline{Operation $1$:} For $i\in\{1,\ldots,n-1\}$, suppose that $g_i=Q^{-1}\tau_i Q$ and $g_{i+1}=Q^{-1}\tau_{i+1}Q$ with $Q\in G$ and $(\tau_i,\tau_{i+1})$ listed in Table \ref{table::almost short}.
    Then, the operation is a contraction as in Subsection \ref{subsection::contractions-restorations} that replaces $(g_i,g_{i+1})$ with $g_i g_{i+1}$.
    %\item \underline{Operation $1$:} For $i\in\{1,\ldots,n-1\}$, suppose that the reduced forms of $g_i$ and $g_{i+1}$ are expressed by $Q_i^{-1}\tau_i Q_i$ and $Q_{i+1}^{-1}\tau_{i+1}Q_{i+1}$ with $\tau_i,\tau_{i+1}\in\mathcal{S}_2$, $Q_i,Q_{i+1}\in G$ such that $Q_i=Q_{i+1}$ and $(\tau_i,\tau_{i+1})$ is listed in Table \ref{table::almost short}.
    %Then, the operation is a contraction as in Subsection \ref{subsection::contractions-restorations} that replaces $(g_i,g_{i+1})$ with $g_i g_{i+1}$.
    
    \item \underline{Operation $1'$:} For $i\in\{1,\ldots,n-2\}$, suppose that $g_i=Q^{-1}\tau_i Q$, $g_{i+1}=Q^{-1}\tau_{i+1}Q$ and $g_{i+2}=Q^{-1}\tau_{i+2}Q$ with $Q\in G$ and $(\tau_i,\tau_{i+1},\tau_{i+2})$ equal to either
    \[
    (a^2bab,bab,baba^2) \text{ or } (aba^2b,ba^2b,ba^2ba).
    \]
    Then, the operation is a contraction as in Subsection \ref{subsection::contractions-restorations} that replaces $(g_i,g_{i+1},g_{i+2})$ with $g_i g_{i+1} g_{i+2}$.

    \item \underline{Operation $2$:} For $i\in\{1,\ldots,n\}$, suppose that $g_i=1$. The operation moves the identical component to the rightmost position via elementary transformations, removes it and reduces $(g_1,\ldots,g_n)$ to an $(n-1)$-tuple.
\end{itemize}

Operation $1$ and $1'$ are contractions, whose restorations are introduced in Subsection \ref{subsection::elementary transformations}. The restoration of Operation $2$ will simply add an identical element on the right side of the tuple.

\begin{proposition}
\label{proposition::reduce_and_reloading_hard}
Let $(g_1,\ldots,g_n)$ be an $n$-tuple of elements in $G$ which are conjugate to some almost short element such that $g_1\cdots g_n=1$. Suppose that we apply the following operations successively on $(g_1,\ldots,g_n)$:
\begin{enumerate}[i),topsep=0pt,itemsep=-1ex,partopsep=1ex,parsep=1ex]
    \item elementary transformations;
    \item Operation $1$;
    \item Operation $1'$;
    \item Operation $2$;
\end{enumerate}
then apply the restorations of Operation $1$, $1'$ and $2$ in the reverse order.
If each component in the resulting tuple before restorations are almost short, then the initial tuple is Hurwitz equivalent to the following tuples:
\begin{enumerate}[a),topsep=0pt,itemsep=-1ex,partopsep=1ex,parsep=1ex]
    \item the resulting tuple after restorations;
    \item the concatenation of some tuples of the form
    $(Q^{-1}\tau_1 Q,Q^{-1}\tau_2 Q,\ldots,Q^{-1}\tau_m Q)$ with $m\ge 1$, $Q\in G$ and $\tau_1,\ldots,\tau_m\in\mathcal{S}_2$ such that $\tau_1\cdots\tau_m=1$.
\end{enumerate}
\end{proposition}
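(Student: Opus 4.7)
The plan is to follow the pattern of Proposition \ref{proposition::reduce_and_reloading}, adapted to the setting of almost short elements. First I will observe that part (a) is an immediate consequence of Lemma \ref{lemma::technique}: the restorations exactly invert the contractions, so the initial tuple and the tuple obtained after all restorations differ only by the sequence of elementary transformations recorded during the process, hence they are Hurwitz equivalent.

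For part (b), I will argue by reverse induction on the number of restorations performed. When restoring Operation $2$, the re-inserted identity component can be absorbed or moved freely via Lemma \ref{lemma::move subtuples} together with elementary transformations, so its effect is cosmetic. When restoring Operation $1$, the restoration replaces a single almost short component $(QP)^{-1}\tau_i\tau_{i+1}(QP)$---where $P$ encodes the cumulative conjugation introduced by the elementary transformations applied after the contraction---by the pair $((QP)^{-1}\tau_i(QP),(QP)^{-1}\tau_{i+1}(QP))$ with $(\tau_i,\tau_{i+1})$ listed in Table \ref{table::almost short}. According to the reduced form of $\tau_i\tau_{i+1}$ (a power of $a$, the element $b$, or an element of the form $ba^{\epsilon}b$, $a^{-\epsilon}ba^{\epsilon}$, $a^{\epsilon}b$, $ba^{\epsilon}$, $a^{-\epsilon}ba^{-\epsilon}$, or conjugate to $ababa$), I will invoke the corresponding Lemma \ref{lemma::tau1tau2=a}, \ref{lemma::tau1tau2=b}, \ref{lemma::tau1tau2=bab}, \ref{lemma::tau1tau2=aaba}, \ref{lemma::tau1tau2=aba}, or \ref{lemma::tau1tau2=ababa} to further transform the restored pair, via elementary transformations internal to the pair, into a pair of almost short elements. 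Restorations of Operation $1'$ are handled analogously by Lemma \ref{lemma::tau1_tau2_tau3_triple}.

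After iterating through all restorations in reverse order, every component of the resulting tuple becomes almost short, that is, lies in $\mathcal{S}_2$. Since the global product remains $g_1\cdots g_n=1$ throughout, the final tuple forms a single block of the required form with $Q=1$ and $\tau_1\cdots\tau_m=1$, which fits the conclusion of (b). If desired, one may further decompose this single block into several blocks by grouping consecutive components whose partial product is trivial, using Lemma \ref{lemma::move subtuples} and Lemma \ref{lemma::cyclic shift}.

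The hard part will be the bookkeeping for the cumulative conjugator $P$ across nested contractions and, relatedly, verifying that the hypothesis of each invoked product-specific lemma---namely that $(QP)^{-1}\tau_i\tau_{i+1}(QP)$ is almost short---is guaranteed by the pre-restoration assumption on the resulting tuple. Table \ref{table::almost short} is tailored precisely so that every product $\tau_i\tau_{i+1}$ falls into one of the six cases handled by Lemmas \ref{lemma::tau1tau2=a}--\ref{lemma::tau1tau2=ababa}, and Lemma \ref{lemma::tau1_tau2_tau3_triple} exhausts the two triples permitted by Operation $1'$; this case-by-case verification is systematic but tedious.
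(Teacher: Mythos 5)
Your proof of (a) and your treatment of the Operation $1$/$1'$ restorations follow the paper's argument exactly: (a) is Lemma \ref{lemma::technique}, and a restored pair $(P^{-1}Q^{-1}\tau_iQP,\,P^{-1}Q^{-1}\tau_jQP)$ whose product is almost short is converted into a pair of almost short elements by Lemmas \ref{lemma::tau1tau2=a}, \ref{lemma::tau1tau2=b}, \ref{lemma::tau1tau2=bab}, \ref{lemma::tau1tau2=aaba}, \ref{lemma::tau1tau2=aba}, \ref{lemma::tau1tau2=ababa} and \ref{lemma::tau1_tau2_tau3_triple}. The genuine gap is your claim that the restoration of Operation $2$ is ``cosmetic'' and that consequently every component of the final tuple is almost short, yielding a single block with $Q=1$. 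Table \ref{table::almost short} contains many pairs with $\tau_i\tau_j=1$ (for instance $(a,a^2)$, $(b,b)$, $(a^2bab,ba^2ba)$, $(ababa,a^2ba^2ba^2)$), and these are exactly the contractions that create the identity components which Operation $2$ removes. When such a contraction is restored, the identity is replaced by $(Q^{-1}\tau_iQ,\,Q^{-1}\tau_jQ)$, and \emph{none} of the product-specific lemmas applies: their hypothesis is that $Q^{-1}\tau_i\tau_jQ$ be almost short, whereas here it equals $1\notin\mathcal{S}_2$. For $l(Q)$ large this is a pair of mutually inverse almost long elements, and elementary transformations within the pair only swap its two entries ($R_1(x,x^{-1})=(x^{-1},x)$); the ambient tuple need not supply a conjugator that shortens it. So your induction statement ``all components become almost short'' is false in general.

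This failure is not incidental: it is the reason conclusion (b) is phrased as a concatenation of $Q$-conjugated blocks rather than as a single tuple of almost short elements, as the paper points out in the remark following the statement. In Proposition \ref{proposition::reduce_and_reloading} the analogous case $\tau_i\tau_j=1$ was excluded by the inverse-freeness hypothesis, which Proposition \ref{proposition::reduce_and_reloading_hard} deliberately drops. The paper's proof repairs exactly the step you skipped: if $P^{-1}Q^{-1}\tau_i\tau_jQP=1$ then the conjugation by $P$ acts trivially, so one may take $P=1$, restore the identity to the block $(Q^{-1}\tau_iQ,\,Q^{-1}\tau_jQ)$ with its diagonal conjugator $Q$, and carry out all further nested restorations relative to the frame $(\tau_i,\tau_j)$ inside that block. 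Your argument becomes correct once the inductive invariant is weakened accordingly, from ``a tuple of almost short elements'' to ``a concatenation of blocks $(Q^{-1}\tau_1Q,\ldots,Q^{-1}\tau_mQ)$ with $\tau_1\cdots\tau_m=1$'', which is precisely conclusion (b).
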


We emphasise that Proposition \ref{proposition::reduce_and_reloading_hard} does not require an inverse-free tuple $(g_1,\ldots,g_n)$ in $G$ as in Proposition \ref{proposition::reduce_and_reloading}.
Besides, an elementary transformation is allowed to transform a pair into such that has a bigger sum of $\mathcal{S}_2$-complexities.
That is why we cannot transform it into a tuple of almost short elements but get a concatenation of several tuples of almost short elements each with a diagonal conjugacy.

\begin{proof}
Lemma \ref{lemma::technique} shows that the initial tuple is Hurwitz equivalent to the resulting tuple after all operations and restorations.
We suppose that each component is almost short in the tuple before restorations.

We revisit the introduced operations. Operation $1$ may combine $Q^{-1}\tau_iQ$ and $Q^{-1}\tau_jQ$ into $Q^{-1}\tau_i\tau_j Q$ with $Q\in G$ and $\tau_i,\tau_j\in\mathcal{S}_2$. By elementary transformations, the product becomes a conjugate of the form $P^{-1}Q^{-1}\tau_i\tau_j QP$ with some $P\in G$. To restore the operation, we further rewrite it as $(P^{-1}Q^{-1}\tau_i QP,P^{-1}Q^{-1}\tau_j QP)$.
Operation $1'$ is similar.

If Operation $2$ has never been used, the proposition follows from lemmata \ref{lemma::tau1tau2=a}, \ref{lemma::tau1tau2=b}, \ref{lemma::tau1tau2=bab}, \ref{lemma::tau1tau2=aaba}, \ref{lemma::tau1tau2=aba}, \ref{lemma::tau1tau2=ababa} and \ref{lemma::tau1_tau2_tau3_triple}.
In general, suppose that $P^{-1}Q^{-1}\tau_i\tau_j QP=1$. Then $P=1$ and the restoration replaces the identical element with $(Q^{-1}\tau_iQ, Q^{-1}\tau_jQ)$. We consider the remaining restorations on $(\tau_i,\tau_j)$ instead.
\end{proof}

\begin{definition}
The $\mathcal{S}_2$-\emph{complexity} of an element $g$ conjugate to some element in $\mathcal{S}_2$ is defined as $f_2(g)$ such that
\begin{equation*}
    f_2(g) =
    \begin{cases}
        l(Q) &
        \begin{aligned}
        &\text{if $g=Q^{-1}wQ$ is almost long} \\
        &\text{with $Q\in G$ and $w\in\{ba^{\epsilon} b,a^\epsilon ba^{-\epsilon},a^\epsilon ba^\epsilon,a^\epsilon ba^\epsilon b a^\epsilon|\epsilon=1,2\}$;}
        \end{aligned}
        \\
        1/2 & \text{if $g\in\{ababa,a^2ba^2ba^2\}$;}\\
        0 & \text{otherwise.}
    \end{cases}
\end{equation*}
\end{definition}

\begin{definition}
Let $(g_1,\ldots,g_n)$ be an $n$-tuple in $G$ such that each of $g_i$, $i=1,\ldots,n$, is conjugate to some element in $\mathcal{S}_2$.
A sequence of elementary transformations $(R_{i_1}^{\epsilon_1},\ldots,R_{i_m}^{\epsilon_m})$, $\epsilon_1,\ldots,\epsilon_m\in\{1,-1\}$, is said to make the sum of $\mathcal{S}_2$-complexities of $(g_1,\ldots,g_n)$ \emph{smaller} if
$R_{i_m}^{\epsilon_m}\circ\cdots\circ R_{i_1}^{\epsilon_1}$ transforms $(g_1,\ldots,g_n)$ into a tuple with a smaller sum of $\mathcal{S}_2$-complexities.
\end{definition}

\begin{lemma}
\label{lemma::m_i-1_and_m_i_hard}
Let $(g_1,\ldots,g_n)$ be an $n$-tuple in $G$ such that each of $g_i$, $i=1,\ldots,n$, is conjugate to some element in $\mathcal{S}_2$ and $g_1\cdots g_n=1$. Let $m_i$ be the same as in Lemma \ref{lemma::m_i-1_and_m_i} and set $m_0=m_n=0$ for convenience. Suppose that
\begin{enumerate}[(1),topsep=0pt,itemsep=-1ex,partopsep=1ex,parsep=1ex]
    \item there is no pair of adjacent components $g_i$, $g_{i+1}$ of the reduced forms $Q^{-1}\tau_iQ$, $Q^{-1}\tau_{i+1}Q$ with $Q\in G$ and $(\tau_i, \tau_{i+1})$ in Table \ref{table::almost short},
    \item there is no sequence of elementary transformations that makes $\sum_i f_2(g_i)$ smaller.
\end{enumerate}
Then $m_i$, $i=0,\ldots,n$ have the following properties.
\begin{enumerate}[(a),topsep=0pt,itemsep=-1ex,partopsep=1ex,parsep=1ex]
    \item For $i=1,\ldots,n-1$, $m_i\le \frac{l(g_i)+1}{2}$ and $m_i\le \frac{l(g_{i+1})+1}{2}$.
    \item For $i=1,\ldots,n$, $m_{i-1}+m_i\ge l(g_i)$ only if the reduced form of $g_i$ is either $Q_i^{-1}a^{\epsilon_i}Q_i$ or $Q_i^{-1}a^{\epsilon_i}ba^{\epsilon_i}ba^{\epsilon_i}Q_i$ with $\epsilon_i=1,2$, $Q_i\in G$ and $l(Q_i)\ge 0$.
    \item If $m_{i-1}+m_i\le l(g_i)$ for each of $i=1,\ldots,n$, then $n=0$.
\end{enumerate}
\end{lemma}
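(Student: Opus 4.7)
The plan is to mirror the proof of Lemma \ref{lemma::m_i-1_and_m_i} with a more elaborate case analysis reflecting the larger set $\mathcal{S}_2$. Throughout, I will use hypothesis (1) to exclude any pair that could be contracted via Operation $1$ of Proposition \ref{proposition::reduce_and_reloading_hard} and hypothesis (2) to exclude any elementary transformation that strictly shortens the reduced part of some component (including the slight decreases from $1/2$ to $0$ built into $f_2$).

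For (a), first assume both $g_i$ and $g_{i+1}$ are almost short. Since $(g_i,g_{i+1})$ does not appear in Table \ref{table::almost short}, a direct enumeration through all pairs of the $19$ almost short elements confirms $m_i\le \min\{(l(g_i)+1)/2,(l(g_{i+1})+1)/2\}$. When one of the two is almost long, write its reduced form $Q^{-1}wQ$ with $w\in\{a^\epsilon,b,a^\epsilon b a^\epsilon,a^\epsilon b a^{-\epsilon}, ba^\epsilon b, a^\epsilon b a^\epsilon b a^\epsilon\}$, so that $l(g_i)\ge 3$ or $l(g_{i+1})\ge 3$. If $m_i$ exceeded the claimed bound, the cancellation would have to penetrate the core $w$, and applying $R_i$ or $R_i^{-1}$ would then replace the long element by a strictly shorter conjugate, contradicting (2); the only residual configurations of the form $(s_1,a^2ba)$, $(t_1,aba^2)$, or their conjugates by $a^{\pm\epsilon}$ and $b$ reduce under a single $R_i^{\pm 1}$ to a pair listed in Table \ref{table::almost short}, contradicting (1). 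The case of two almost long elements splits according to which conjugating part $Q_i$ or $Q_{i+1}$ is the shorter suffix of the other; the same dichotomy between producing a tabled pair and shortening a conjugating element closes this case.

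For (b), suppose $m_{i-1}+m_i \ge l(g_i)$. By (a) this forces $l(g_i)$ odd and $m_{i-1},m_i$ essentially maximal. I will enumerate the almost short elements of odd length $1,3,5$ and for each examine what $g_{i-1}$ and $g_{i+1}$ must be to achieve the required total overlap with $g_i$: in every case other than $g_i\in\{a,a^2\}$ or $g_i=a^\epsilon b a^\epsilon b a^\epsilon$ the constraint either produces a pair in Table \ref{table::almost short} or yields a conjugate $g_{i-1}^{-1}g_ig_{i-1}$ or $g_{i+1}^{-1}g_ig_{i+1}$ of strictly smaller length, violating hypothesis (2). The almost long case $g_i=Q_i^{-1}wQ_i$ is treated identically: if $w \notin \{a^\epsilon,a^\epsilon b a^\epsilon b a^\epsilon\}$, an overlap forcing $g_{i-1}$ or $g_{i+1}$ to end or start with a long suffix of $Q_i^{-1}w^{\pm 1}$ either creates a forbidden tabled pair after at most two elementary transformations (compare the treatment of $(g_i,g_{i+1})=(s_1,a^2ba)$ in the proof of Lemma \ref{lemma::m_i-1_and_m_i}) or yields a strict decrease in the $\mathcal{S}_2$-complexity. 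This leaves exactly the two reduced forms claimed.

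For (c), the argument is identical to that of Lemma \ref{lemma::m_i-1_and_m_i}(c). If $m_{i-1}+m_i<l(g_i)$ for every $i$, the reduced form of $g_1\cdots g_n$ is the concatenation of the uncancelled parts $r_i$ of each $g_i$, whose total length is $\sum_i (l(g_i)-m_{i-1}-m_i)>0$, contradicting $g_1\cdots g_n=1$ unless $n=0$. The boundary cases of equality $m_{i-1}+m_i=l(g_i)$ for some $i$ are handled exactly as in the short case, observing that when $g_i$ is $Q_i^{-1}a^{\epsilon_i}Q_i$ or $Q_i^{-1}a^{\epsilon_i} b a^{\epsilon_i} b a^{\epsilon_i} Q_i$, neither $m_{i-1}$ nor $m_i$ can attain $l(Q_i)$ without triggering a shortening of a neighbour, so equality cannot in fact occur here either.

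The main obstacle is the combinatorial explosion in (a) and (b): with $19$ almost short elements and six families of almost long cores, checking that every potentially deep overlap is either blocked by (1) or leads via at most two elementary transformations to a complexity-reducing configuration covered by (2) is delicate. The subtlety caused by $f_2$ assigning the fractional value $1/2$ to conjugates of $ababa$ and $a^2ba^2ba^2$ requires particular care, since an elementary transformation that splits such a factor into shorter almost short pieces can formally increase the integer-valued components of $f_2$; this is precisely the asymmetry that Lemma \ref{lemma::tau1_tau2_tau3_triple} and Operation $1'$ were designed to accommodate, and I will invoke them whenever such a triple is the bottleneck.
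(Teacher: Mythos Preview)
Your overall strategy of mirroring Lemma \ref{lemma::m_i-1_and_m_i} is exactly what the paper does, but several concrete details in your plan are wrong and would cause the argument to fail.

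First, your description of almost long elements is incorrect. In reduced form an almost long element is $Q^{-1}wQ$ with $w\in\{ba^\epsilon b,\ a^\epsilon ba^{-\epsilon},\ a^\epsilon ba^\epsilon,\ a^\epsilon ba^\epsilon ba^\epsilon\}$ and $l(Q)\ge 1$; the cores $a^\epsilon$ and $b$ never appear here, because a conjugate of $a^{\pm 1}$ or $b$ outside $\mathcal{S}_2$ already has reduced form with a length-$3$ core. Consequently an almost long element has length at least $5$, not $3$. The paper's case analysis in (a) uses this sharply: when $g_i\in\mathcal{S}_2$ and $g_{i+1}$ is almost long, one runs through $l(g_i)\in\{2,3,4,5\}$, and the delicate sub-case is $l(g_i)=5$, $l(Q)=1$, where $(g_i,g_{i+1})=(a^{\epsilon}ba^{\epsilon}ba^{\epsilon},a^{-\epsilon}ba^{-\epsilon}ba^{\epsilon})$. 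Here a two-step substitution lowers $\sum f_2$ precisely because $f_2(ababa)=1/2$ while the resulting pair has complexity $0$. This is where the fractional value is actually consumed, not via Operation~$1'$.

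Second, in (b) your claim that $m_{i-1}+m_i\ge l(g_i)$ forces $l(g_i)$ odd is false: for even $l(g_i)$ the bound from (a) still allows $m_{i-1}=m_i=l(g_i)/2$. The paper explicitly treats $l(g_i)=2$ (forcing a neighbour equal to $b$, excluded by Table~\ref{table::almost short}) and $l(g_i)=4$ (forcing a neighbour $a^{\pm 1}b$ or $ba^{\pm 1}$, again excluded). You must include these.

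Third, you cannot invoke Operation~$1'$ or Lemma~\ref{lemma::tau1_tau2_tau3_triple} here: they are not hypotheses of this lemma. Hypothesis~(1) concerns only pairs in Table~\ref{table::almost short}, and hypothesis~(2) concerns arbitrary sequences of elementary transformations lowering $\sum f_2$. The triple $(a^{-\epsilon}ba^{\epsilon}b,ba^{\epsilon}b,ba^{\epsilon}ba^{-\epsilon})$ is handled in Theorem~\ref{theorem::Livne S2->almost short}, not in this lemma.

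Finally, in (c) your assertion that ``equality cannot in fact occur'' is too strong for $g_i=a^{\epsilon_i}$ with $Q_i=1$: here $m_{i-1}+m_i=1=l(g_i)$ is possible, with $g_{i+1}$ constrained to a short list (and $g_{i+1}=a^{-\epsilon_i}ba^{-\epsilon_i}ba^{-\epsilon_i}$ ruled out by a single $R_i^{-1}$ lowering $\sum f_2$). The paper's conclusion is that strict inequality holds for all $g_i\neq a^{\epsilon_i}$, and the residual equality cases with $g_i=a^{\epsilon_i}$ still prevent the product from reducing to $1$.
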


\begin{proof}
(a) When both $g_i$ and $g_{i+1}$ are almost short, since $(g_i,g_{i+1})$ does not figure in Table \ref{table::almost short}, we check all possibilities and get that $m_i\le \frac{l(g_i)+1}{2}, \frac{l(g_{i+1})+1}{2}$.

When $g_i\in\mathcal{S}_2$ but $g_{i+1}\not\in\mathcal{S}_2$, we have concluded that $g_{i+1}=Q^{-1}wQ$ with
\begin{equation*}
    w\in\{ba^\epsilon b,a^\epsilon ba^{-\epsilon},a^\epsilon ba^\epsilon,a^\epsilon ba^\epsilon b a^\epsilon|\epsilon=1,2\}
\end{equation*}
and $Q\in G$ such that $l(Q)\ge 1$. In particular, $l(g_{i+1})\ge 5\ge l(g_i)$.
Assume that $m_i>\frac{l(g_i)+1}{2}$.
Suppose that $l(g_i)=2$. Then $m_i=2$ and the symmetry of $g_{i+1}$ implies the contradiction $l(g_i g_{i+1} g_i^{-1})\le l(g_{i+1})-2$.
Suppose that $l(g_i)=3$. Then $m_i=3$. The symmetry of $g_{i+1}$ and the fact that $l(w)\ge 3$ imply the contradiction $l(g_i g_{i+1} g_i^{-1})\le l(g_{i+1})-2$.
Suppose that $l(g_i)=4$. Then $g_i=a^{\epsilon_i}ba^{-\epsilon_i}b$ or $ba^{\epsilon_i}ba^{-\epsilon_i}$ with $\epsilon_i\in\{1,2\}$.
Therefore $m_i=3$ or $4$.
If $l(Q)=1$ and $g_i=a^{\epsilon_i}ba^{-\epsilon}b$, then $m_i=4$ and
$l(g_i g_{i+1} g_i^{-1})\le l(g_{i+1})-2$, contradicting the hypothesis (2).
If $l(Q)=1$ and $g_i=ba^{\epsilon_i}ba^{-\epsilon_i}$, then $g_{i+1}=a^{\epsilon_i}ba^{\epsilon_{i+1}}ba^{-\epsilon_i}$ with $\epsilon_{i+1}\in\{1,2\}$ and $l(g_i g_{i+1} g_i^{-1})\le l(g_{i+1})-2$, contradicting the hypothesis (2).
If $l(Q)\ge 2$, then we again get $l(g_i g_{i+1} g_i^{-1})< l(g_{i+1})$, contradicting the hypothesis (2).
Suppose that $l(g_i)=5$ and then $m_i=4$ or $5$. As $g_i=a^{\epsilon_i}ba^{\epsilon_i}ba^{\epsilon_i}$, $m_i$ must be $5$.
Therefore, if $l(Q)\ge 2$ then we get the contradiction $l(g_i g_{i+1} g_i^{-1})< l(g_{i+1})$. If $l(Q)=1$ then $g_{i+1}$ must be $a^{-\epsilon}ba^{-\epsilon}ba^{\epsilon}$ but the following substitution makes the sum of $\mathcal{S}_2$-complexities smaller and induces a contradiction.
\[
(g_i,g_{i+1})=(a^{\epsilon_i}ba^{\epsilon_i}ba^{\epsilon_i},a^{-\epsilon}ba^{-\epsilon}ba^{\epsilon})
    \longrightarrow (a^{-\epsilon}ba^{-\epsilon}ba^{\epsilon},a^{-\epsilon}ba^{\epsilon}b)
    \longrightarrow (a^{-\epsilon}ba^{\epsilon}b, ba^{-\epsilon}b).
\]

We have a similar argument when $g_{i+1}$ is almost short but $g_i$ not.

When both $g_i$ and $g_{i+1}$ are almost long, suppose that their reduced forms are $Q_i^{-1}w_iQ_i$ and $Q_{i+1}^{-1}w_{i+1}Q_{i+1}$ and assume that without loss of generality $l(Q_i)\le l(Q_{i+1})$.
Assume that $m_i>\min\{\frac{l(g_i)+1}{2},\frac{l(g_{i+1})+1}{2}\}$.
Therefore $Q_{i+1}$ must end with $Q_i$.
Write $Q_{i+1}=\tilde{Q}Q_i$ and
\begin{equation*}
    (g_i,g_{i+1})=(Q_i^{-1}w_iQ_i,Q_i^{-1}\tilde{Q}^{-1}w_{i+1}\tilde{Q}\tilde{Q}_i).
\end{equation*}
Suppose that $l(\tilde{Q})=0$. The assumption on $m_i$ contradicts Table \ref{table::almost short}.
Suppose that $l(\tilde{Q})\ge 1$. Therefore $l(w_i)\le 5\le l(\tilde{Q}^{-1}w_{i+1}\tilde{Q})$ and $m_i>\frac{l(g_i)+1}{2}$.
If $l(w_i)=3$ then $m_i>l(Q_i)+3$ and $l(g_i g_{i+1} g_i^{-1})\le l(g_{i+1})-2$, contradicting the hypothesis (2).
If $l(w_i)=5$ and $l(\tilde{Q}^{-1}w_{i+1}\tilde{Q})=5$, then
\begin{equation*}
    (g_i,g_{i+1})=(Q_i^{-1}w_iQ_i,Q_i^{-1}\tilde{Q}^{-1}w_{i+1}\tilde{Q}Q_i)=
    (Q_i^{-1} a^{\epsilon_i}ba^{\epsilon_i}ba^{\epsilon_i} Q_i,
     Q_i^{-1} a^{-\epsilon_i}ba^{-\epsilon_i}ba^{\epsilon_i} Q_i)
\end{equation*}
whose sum of $\mathcal{S}_2$-complexities can be smaller using elementary transformations.
If $l(w_i)=5$ and $l(\tilde{Q}^{-1}w_{i+1}\tilde{Q})\ge 7$, then
$l(g_i g_{i+1} g_i^{-1})\le l(g_{i+1})-2$, contradicting the hypothesis (2).

(b) Suppose that $m_{i-1}+m_i\ge l(g_i)$ for some $i=1,\ldots,n-1$.

Suppose that $g_i\in\mathcal{S}_2$.
If $g_i=b$ then either $m_{i-1}=0$, $m_i=1$ or $m_{i-1}=1$, $m_i=0$. Therefore either $g_{i-1}$ ends with $b$ or $g_{i+1}$ starts with $b$. Table \ref{table::almost short} shows that either $g_{i-1}$ or $g_{i+1}$ is almost long, starts and ends with $b$. Hence it implies the contradiction either $l(g_ig_{i+1}g_i^{-1})<l(g_{i+1})$ or $l(g_i^{-1}g_{i-1}g_i)<l(g_{i-1})$.
If $l(g_i)=2$, then one of $g_{i-1}$, $g_{i+1}$ must be $b$, which is impossible based on Table \ref{table::almost short}.
If $l(g_i)=4$, then either $g_{i-1}=a^{\epsilon_{i-1}}b$ or $g_{i+1}=ba^{\epsilon_{i+1}}$ with $\epsilon_{i-1},\epsilon_{i+1}\in\{1,2\}$, which is impossible based on Table \ref{table::almost short}.
If $l(g_i)=3$ and $g_i=a^{\epsilon_i}ba^{\epsilon_i}$ with $\epsilon_i\in\{1,2\}$, then either $g_{i-1}=ba^{-\epsilon_i}$ or $g_{i+1}=a^{-\epsilon_i}b$.
If $l(g_i)=3$ and $g_i=a^{\epsilon_i}ba^{-\epsilon_i}$ with $\epsilon_i\in\{1,2\}$, then either $g_{i-1}=ba^{-\epsilon_i}$ or $g_{i+1}=a^{\epsilon_i}b$.
Both are impossible again based on Table \ref{table::almost short}.
There are only two possibilities left: either $g_i$ is conjugate to a power of $a$ or $g_i=a^{\epsilon_i}ba^{\epsilon_i}ba^{\epsilon_i}$.

When $g_i$ is almost long, $g_i$ is one of
\begin{equation*}
    Q^{-1}ba^{\epsilon_i}bQ,
    Q^{-1}a^{\epsilon_i}ba^{-\epsilon_i}Q,
    Q^{-1}a^{\epsilon_i}ba^{\epsilon_i}Q,
    Q^{-1}a^{\epsilon_i}ba^{\epsilon_i}ba^{\epsilon_i}Q
\end{equation*}
with $\epsilon_i\in\{1,2\}$, $Q\in G$ and $l(Q)\ge 1$.
If $g_i=Q^{-1}a^{\epsilon_i}ba^{\epsilon_i}Q$ or $g_i=Q^{-1}a^{\epsilon_i}ba^{-\epsilon_i}Q$ then either $g_{i+1}=Q^{-1}a^{-\epsilon_i}b$ or $g_{i+1}=Q^{-1}a^{\epsilon_i}b$. Therefore $g_{i+1}^{-1}g_ig_{i+1}\in\{Q^{-1}ba^{2\epsilon_i}Q,Q^{-1}bQ\}$ which is a contradiction.

We conclude that $g_i$ is either $Q_i^{-1}a^{\epsilon_i}Q_i$ or $Q_i^{-1}a^{\epsilon_i}ba^{\epsilon_i}ba^{\epsilon_i}Q_i$ with $\epsilon_i=1,2$ and $l(Q_i)\ge 0$.

(c) We assume that $n\ge 1$ and suppose that $m_{i-1}+m_i=l(g_i)$ for some $2\le i\le n-1$. By (2), $g_i$ is either $Q_i^{-1}a^{\epsilon_i}Q_i$ or $Q_i^{-1}a^{\epsilon_i}ba^{\epsilon_i}ba^{\epsilon_i}Q_i$ with $\epsilon_i=1,2$ and $l(Q_i)\ge 0$.

If $g_i=a^{\epsilon_i}$ and suppose that $m_{i-1}=0$, $m_i=1$, then $g_{i+1}$ is either one of $a^{\epsilon_i}ba^{\epsilon_i}ba^{\epsilon_i}$, $a^{\epsilon_i}ba^{-\epsilon_i}b$, $a^{-\epsilon_i}ba^{-\epsilon_i}ba^{-\epsilon_i}$ or an almost long element starting with $a^{\epsilon_i}$ and ending with $a^{-\epsilon_i}$. However, $g_{i+1}$ cannot be $a^{-\epsilon_i}ba^{-\epsilon_i}ba^{-\epsilon_i}$ since the elementary transformation $R_i^{-1}$ makes the sum of $\mathcal{S}_2$-complexities smaller.

If $g_i=Q_i^{-1}a^{\epsilon_i}Q_i$ with $l(Q_i)\ge 1$, then either $g_{i-1}=Q_i$ or $g_{i+1}=Q_i^{-1}$, which implies the contradiction either $g_{i-1}g_ig_{i-1}^{-1}=a^{\epsilon_i}$ or $g_{i+1}^{-1}g_ig_{i+1}=a^{\epsilon_i}$.

If $g_i=Q^{-1}a^{\epsilon_i}ba^{\epsilon_i}ba^{\epsilon_i}Q$ with $l(Q)\ge 0$, then either $g_{i-1}=ba^{-\epsilon_i}Q$ or $g_{i+1}=Q^{-1}a^{-\epsilon_i}b$. Therefore Table \ref{table::almost short} denies the case of $Q=1$ and, when $Q\neq 1$, either $g_{i-1}g_ig_{i-1}^{-1}=a^{\epsilon_i}ba^{2\epsilon_i}b$ or $g_{i+1}^{-1}g_ig_{i+1}=ba^{2\epsilon_i}ba^{\epsilon_i}$, which is a contradiction.

The assertion (b) and the above observation show that $m_{i-1}+m_i<l(g_i)$ if $g_i\neq a^{\epsilon_i}$. They further imply a contradiction that $g_1\cdots g_n\neq 1$.
\end{proof}

Now we state the main result in this subsection.

\begin{theorem}
\label{theorem::Livne S2->almost short}
Let $g_1,\ldots,g_n$ be such that each of them is conjugate to some element in $\mathcal{S}_2$ and $g_1\cdots g_n=1$. Then, the $n$-tuple $(g_1,\ldots,g_n)$ is Hurwitz equivalent to
\begin{equation*}
    \prod_{i=1}^m (Q_i^{-1}\tau_{i,1}Q_i,\ldots,Q_i^{-1}\tau_{i,n_i}Q_i)
\end{equation*}
with $m\ge 1$, $\sum_{i=1}^m n_i=n$, $Q_i\in G$ and $\tau_{i,j}\in \mathcal{S}_2$ such that $\tau_{i,1}\cdots\tau_{i,n_i}=1$ for $i=1,\ldots,m$ and $j=1,\ldots,n_i$.
\end{theorem}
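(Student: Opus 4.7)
The plan is to proceed by induction on the lexicographic order of $(n,\sum_{i=1}^n f_2(g_i),l(g_1),\ldots,l(g_n))$, aiming to reduce to the case in which every component of the tuple is already almost short; once this is achieved, Proposition \ref{proposition::reduce_and_reloading_hard} (b) immediately yields the claimed decomposition as a concatenation of simultaneously conjugated tuples of almost short elements whose products are $1$.

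At each induction step I first try to apply Operation $1$, Operation $1'$, or Operation $2$, each of which strictly reduces $n$ after the bookkeeping of contractions and restorations. Operations $1$ and $1'$ apply whenever an adjacent pair (resp.\ triple) of components is of the form $(Q^{-1}\tau_i Q,Q^{-1}\tau_{i+1}Q)$ with $(\tau_i,\tau_{i+1})$ in Table \ref{table::almost short} (resp.\ one of the two distinguished triples), and Operation $2$ applies whenever an identity component is present. If no such operation applies but there is a sequence of elementary transformations that strictly decreases $\sum_i f_2(g_i)$, I apply one; this drops the second coordinate of the induction measure. If none of these steps is available, the hypotheses of Lemma \ref{lemma::m_i-1_and_m_i_hard} are in force.

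By Lemma \ref{lemma::m_i-1_and_m_i_hard} (c), there exists an index $i$ with $m_{i-1}+m_i>l(g_i)$, and by (b) the reduced form of such a $g_i$ is either $Q_i^{-1}a^{\epsilon_i}Q_i$ or $Q_i^{-1}a^{\epsilon_i}ba^{\epsilon_i}ba^{\epsilon_i}Q_i$. Imitating the argument in Theorem \ref{theorem::Livne S->short}, at least one neighbour must share a long initial or terminal segment with $g_i$, so that $l(g_i^{-1}g_{i-1}g_i)<l(g_{i-1})$ or $l(g_ig_{i+1}g_i^{-1})<l(g_{i+1})$; the corresponding elementary transformation leaves $\sum_j f_2(g_j)$ and the earlier lengths untouched, hence strictly decreases the lexicographic measure. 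After finitely many such reductions the induction terminates in a tuple all of whose components are almost short, and Proposition \ref{proposition::reduce_and_reloading_hard} (b) concludes the proof.

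The main obstacle is the new case $g_i=Q_i^{-1}a^{\epsilon_i}ba^{\epsilon_i}ba^{\epsilon_i}Q_i$, which has no analogue in the short-element setting of Theorem \ref{theorem::Livne S->short}. The palindromic symmetry of $a^{\epsilon_i}ba^{\epsilon_i}ba^{\epsilon_i}$ around its central $b$ still forces a neighbour to end with $a^{-\epsilon_i}ba^{\epsilon_i}ba^{\epsilon_i}Q_i$ (or to start with its inverse), which is exactly what is needed for the length-reduction argument above. The borderline configurations, where conjugates of $ababa$ abut one another without shortening, are precisely those covered by Lemma \ref{lemma::tau1_tau2_tau3_triple} and by the entries involving $a^2bab$, $ba^2b$, $aba^2b$ in Table \ref{table::almost short}; these allow rearrangement into triples handled by Operation $1$ or Operation $1'$, so the induction closes.
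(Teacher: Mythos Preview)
Your overall architecture matches the paper's: same lexicographic induction on $(n,\sum f_2(g_i),l(g_1),\ldots,l(g_n))$, same Operations $1$, $1'$, $2$, same appeal to Lemma \ref{lemma::m_i-1_and_m_i_hard} when all operations fail, and same conclusion via Proposition \ref{proposition::reduce_and_reloading_hard}(b). The genuine gap is in your treatment of the case $g_i=Q_i^{-1}a^{\epsilon_i}ba^{\epsilon_i}ba^{\epsilon_i}Q_i$.

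Your paragraph on this case asserts that a neighbour must strictly shorten under conjugation by $g_i$, and that the residual ``borderline configurations'' are absorbed by Table \ref{table::almost short} or Lemma \ref{lemma::tau1_tau2_tau3_triple}. Neither claim is correct. When $m_{i-1}=m_i=l(Q_i)+3$ and $l(g_{i-1})=l(g_i)$ (possibly also $=l(g_{i+1})$), the single move $R_{i-1}$ does \emph{not} reduce the lexicographic measure: position $i-1$ acquires length $l(g_i)=l(g_{i-1})$, and the conjugate at position $i$ has the same length too, so nothing drops. The concrete obstruction is the triple
\[
(Q_i^{-1}a^{\epsilon_i}ba^{\epsilon_i}ba^{-\epsilon_i}Q_i,\;Q_i^{-1}a^{\epsilon_i}ba^{\epsilon_i}ba^{\epsilon_i}Q_i,\;Q_i^{-1}a^{-\epsilon_i}ba^{\epsilon_i}ba^{\epsilon_i}Q_i),
\]
which for $l(Q_i)\ge 1$ lies outside Table \ref{table::almost short} (that table only records pairs of \emph{almost short} elements, i.e.\ $Q_i=1$) and is not the triple of Lemma \ref{lemma::tau1_tau2_tau3_triple}. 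The paper disposes of this and the companion case $l(g_{i-1})=l(g_i)<l(g_{i+1})$ by explicit three-step substitutions that strictly lower $\sum_j f_2$, not by any contraction operation. You also skip the separate subcase $g_i=ba^{\epsilon_i}b$ (i.e.\ $Q_i^{-1}a^{\epsilon_i}Q_i$ with $l(Q_i)=1$), where Operation $1'$ is again needed to exclude one triple before the length argument applies. Filling in these two explicit reductions is the missing work; once supplied, your sketch becomes the paper's proof.
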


\begin{proof}
We will always use the notation $m_i$ to indicate the length of the reduced part in $h_ih_{i+1}$ for $i=1,\ldots,\mu-1$ and set $m_0=m_{\mu}=0$ as before.
To prove the theorem for $(g_1,\ldots,g_n)$, we use the induction on
\begin{equation*}
\Big(n,\sum_{i=1}^{n} f_2(g_i), l(g_1), \ldots, l(g_n)\Big).
\end{equation*}
and apply the following operations:
If there exists a pair of adjacent components of the form $(Q^{-1}\tau_1 Q,Q^{-1}\tau_2 Q)$ with $Q\in G$ and $(\tau_1,\tau_2)$ in Table \ref{table::almost short}, then we replace it with the product $Q^{-1}\tau_1\tau_2Q$ and reduce $(g_1,\ldots,g_n)$ to an $(n-1)$-tuple.
If there exists a triple of consecutive components of the form
\begin{equation*}
(Q^{-1}a^{-\epsilon}ba^{\epsilon}bQ,Q^{-1}ba^{\epsilon}bQ,Q^{-1}ba^{\epsilon}ba^{-\epsilon}Q)
\end{equation*}
with $Q\in G$, $\epsilon=\pm 1$ as introduced in Operation $1'$, then we replace it with $Q^{-1}a^{\epsilon}Q$ and reduce $(g_1,\ldots,g_n)$ to an $(n-2)$-tuple.
If there exists an identical component, then we move it to the rightmost position and remove it.
If there exists a sequence of elementary transformations that makes $\sum_i f_2(h_i)$ smaller, then we apply it.

When each of the above operations fails, the resulting tuple, still denoted by $(g_1,\ldots,g_n)$, satisfies all hypotheses in Lemma \ref{lemma::m_i-1_and_m_i_hard}. Suppose that $n\ge 1$ and there exists some $i=2,\ldots,n-1$ such that $m_{i-1}+m_i>l(g_i)$.

When $g_i=a^{\epsilon_i}$ with $\epsilon_i=1,2$, Table \ref{table::almost short} reveals that either $g_{i-1}$ is one of $ba^{-\epsilon_i}ba^{\epsilon_i}$, $a^{-\epsilon_i}ba^{-\epsilon_i}ba^{-\epsilon_i}$, $a^{\epsilon_i}ba^{\epsilon_i}ba^{\epsilon_i}$, or $g_{i-1}$ is an almost long element starting with $a^{-\epsilon_i}$ and ending with $a^{\epsilon_i}$.
Meanwhile, either $g_{i+1}$ is one of $a^{\epsilon_i}ba^{-\epsilon_i}b$, $a^{-\epsilon_i}ba^{-\epsilon_i}ba^{-\epsilon_i}$, $a^{\epsilon_i}ba^{\epsilon_i}ba^{\epsilon_i}$, or $g_{i+1}$ is an almost long element starting with $a^{\epsilon_i}$ and ending with $a^{-\epsilon_i}$.
The triple $(g_{i-1},g_i,g_{i+1})$ cannot be $(ba^{-\epsilon_i}ba^{\epsilon_i},a^{\epsilon_i},a^{\epsilon_i}ba^{-\epsilon_i}b)$ due to Operation $1'$.
Therefore, either $(g_{i-1},g_i)$ can be transformed into $(\tilde{g}_{i-1},\tilde{g}_i)=(a^{\epsilon_i},a^{-\epsilon_i}g_{i-1}a^{\epsilon})$ with $f_2(g_{i-1})\ge f_2(\tilde{g}_i)$ but $l(\tilde{g}_{i-1})<l(g_{i-1})$.

When $g_i=ba^{\epsilon_i}b$ with $\epsilon_i=1,2$, Table \ref{table::almost short} reveals that either $g_{i-1}=a^{-\epsilon_i}ba^{\epsilon_i}b$ or $g_{i-1}$ is almost long starting with $ba^{-\epsilon_i}$ and ending with $a^{\epsilon_i}b$.
Meanwhile, either $g_{i+1}=ba^{\epsilon}ba^{-\epsilon_i}$ or $g_{i+1}$ is almost long starting with $ba^{\epsilon}$ and ending with $a^{-\epsilon}b$.
The triple $(g_{i-1},g_i,g_{i+1})$ cannot be $(a^{-\epsilon_i}ba^{\epsilon_i}b,ba^{\epsilon_i}b,ba^{\epsilon}ba^{-\epsilon_i})$ due to Operation $1'$.
Therefore, $(g_{i-1},g_i,g_{i+1})$ can be transformed into $(\tilde{g}_{i-1},\tilde{g}_i,\tilde{g}_{i+1})$ with $f_2(g_{i-1})+f_2(g_i)+f_2(g_{i+1})\ge f_2(\tilde{g}_{i-1})+f_2(\tilde{g}_i)+f_2(\tilde{g}_{i+1})$, $l(g_{i-1})\ge l(\tilde{g}_{i-1})$, $l(g_i)\ge l(\tilde{g}_i)$, $l(g_{i+1})\ge l(\tilde{g}_{i+1})$ but either $l(\tilde{g}_{i-1})<l(g_{i-1})$ or $l(\tilde{g}_{i+1})<l(g_{i+1})$.

When $g_i=Q_i^{-1}a^{\epsilon_i}Q$ with $\epsilon_i=1,2$, $l(Q_i)\ge 2$, we have $m_{i-1}=m_i=l(Q_i)+1$ and $l(g_{i-1})>l(g_i)$. To avoid $l(g_i^{-1}g_{i-1}g_i)<l(g_{i-1})$, $g_{i-1}$ must end with $a^{\epsilon_i}Q_i$ and start with $Q_i^{-1}a^{-\epsilon_i}$. In this case, $l(g_i^{-1}g_{i-1}g_i)=l(g_{i-1})$ and, using the elementary transformation $R_{i-1}$, we are able to reduce $(g_1,\ldots,g_n)$ to a new $n$-tuple, say $(\tilde{g}_1,\ldots,\tilde{g}_n)$, such that $\sum_j f_2(g_j)=\sum_j f_2(\tilde{g}_j)$, $l(g_j)=l(\tilde{g}_j)$ for $1\le j\le n$ and $j\not\in\{i-1,i\}$ but $l(\tilde{g}_{i-1})=l(g_i)<l(g_{i-1})=l(\tilde{g}_i)$.

When $g_i=Q_i^{-1}a^{\epsilon_i}ba^{\epsilon_i}ba^{\epsilon_i}Q_i$ with $\epsilon_i=1,2$, $Q_i\in G$ and $l(Q_i)\ge 0$, then $m_{i-1}=m_i=l(Q_i)+3$. If $l(g_{i-1})=l(g_i)=l(g_{i+1})$, then
\begin{equation*}
    (g_{i-1},g_i,g_{i+1}) = (Q_i^{-1}a^{\epsilon_i}ba^{\epsilon_i}ba^{-\epsilon_i}Q_i, 
     Q_i^{-1}a^{\epsilon_i}ba^{\epsilon_i}ba^{\epsilon_i}Q_i,
     Q_i^{-1}a^{-\epsilon_i}ba^{\epsilon_i}ba^{\epsilon_i}Q_i)
\end{equation*}
that can be transformed into a triple with a smaller sum of $\mathcal{S}_2$-complexities via the following substitution.
\begin{align*}
(g_{i-1},g_i,g_{i+1})&\longrightarrow
(Q_i^{-1}a^{\epsilon_i}ba^{\epsilon_i}ba^{\epsilon_i}Q_i,Q_i^{-1}a^{-\epsilon_i}ba^{\epsilon_i}ba^{\epsilon_i}Q_i,Q_i^{-1}a^{-\epsilon_i}ba^{\epsilon_i}ba^{\epsilon_i}Q_i)\\
&\longrightarrow
(Q_i^{-1}a^{-\epsilon_i}ba^{\epsilon_i}ba^{\epsilon_i}Q_i,Q_i^{-1}a^{-\epsilon_i}ba^{-\epsilon_i}ba^{-\epsilon_i}ba^{-\epsilon_i}ba^{\epsilon_i}Q_i,Q_i^{-1}a^{-\epsilon_i}ba^{\epsilon_i}ba^{\epsilon_i}Q_i)\\
&\longrightarrow
(Q_i^{-1}a^{-\epsilon_i}ba^{\epsilon_i}ba^{\epsilon_i}Q_i,Q_i^{-1}a^{-\epsilon_i}ba^{\epsilon_i}ba^{\epsilon_i}Q_i,Q_i^{-1}a^{-\epsilon_i}ba^{\epsilon_i}bQ_i).
\end{align*}
If $l(g_{i-1})=l(g_i)<l(g_{i+1})$, then $g_{i+1}=Q_i^{-1}a^{-\epsilon_i}ba^{\epsilon_i}wa^{-\epsilon_i}ba^{\epsilon_i}Q_i$ with the word $w$ starts and ends with $b$. Therefore, by elementary transformations the triple can be transformed into
\begin{equation*}
    (Q_i^{-1}a^{\epsilon_i}bwba^{-\epsilon_i}Q,g_{i-1},g_i)
\end{equation*}
with a smaller sum of $\mathcal{S}_2$-complexities, which induces a contradiction.
If $l(g_{i-1})>l(g_i)$ then again using the elementary transformation $R_{i-1}$ we are able to reduce $(g_1,\ldots,g_n)$ to a new $n$-tuple, say $(\tilde{g}_1,\ldots,\tilde{g}_n)$, such that $\sum_j f_2(g_j)=\sum_j f_2(\tilde{g}_j)$, $l(g_j)=l(\tilde{g}_j)$ for $1\le j\le n$ and $j\not\in\{i-1,i\}$ but $l(\tilde{g}_{i-1})=l(g_i)<l(g_{i-1})=l(\tilde{g}_i)$.

The induction does not stop unless $n$ is equal to $0$.
Due to Proposition \ref{proposition::reduce_and_reloading_hard}, by restoring operations and applying more elementary transformations, we get a resulting $n$-tuple of almost short elements that can be obtained from the original $(g_1,\ldots,g_n)$ via elementary transformations directly.
\end{proof}

\begin{corollary}
\label{corollary::Livne S2->almost short}
Let $g_1,\ldots,g_n$ be such that each of them is conjugate to some element in $\mathcal{S}_2$ and $g_1\cdots g_n=1$. Let $(g'_1,\ldots,g'_m)$ be a tuple containing a generating set.
Then, the tuple $(g_1,\ldots,g_n)\bullet (g'_1,\ldots,g'_m)$ is Hurwitz equivalent to $(h_1,\ldots,h_n)\bullet (g'_1,\ldots,g'_m)$ where $(h_1,\ldots,h_n)$ is an $n$-tuple of almost short elements.
\end{corollary}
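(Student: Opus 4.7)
The plan is to combine Theorem \ref{theorem::Livne S2->almost short} with Lemma \ref{lemma::tuples containing a generating set}. First I would apply Theorem \ref{theorem::Livne S2->almost short} to $(g_1,\ldots,g_n)$, obtaining a finite sequence of elementary transformations that sends $(g_1,\ldots,g_n)$ to a concatenation
\[
\prod_{i=1}^{k} (Q_i^{-1}\tau_{i,1}Q_i,\ldots,Q_i^{-1}\tau_{i,n_i}Q_i)
\]
with $\tau_{i,j}\in\mathcal{S}_2$, $Q_i\in G$, and $\tau_{i,1}\cdots\tau_{i,n_i}=1$ for each $i$. These transformations only act on positions inside the prefix $(g_1,\ldots,g_n)$, so applying them to $(g_1,\ldots,g_n)\bullet (g'_1,\ldots,g'_m)$ leaves the suffix $(g'_1,\ldots,g'_m)$ untouched and brings the full tuple to
\[
\prod_{i=1}^{k} (Q_i^{-1}\tau_{i,1}Q_i,\ldots,Q_i^{-1}\tau_{i,n_i}Q_i) \bullet (g'_1,\ldots,g'_m).
\]

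Next I would strip off the diagonal conjugators $Q_i$ one sub-tuple at a time. For each $i$, the sub-tuple $(Q_i^{-1}\tau_{i,1}Q_i,\ldots,Q_i^{-1}\tau_{i,n_i}Q_i)$ has product $Q_i^{-1}(\tau_{i,1}\cdots\tau_{i,n_i})Q_i = 1 \in Z(G)$, and by hypothesis the suffix $(g'_1,\ldots,g'_m)$ contains a generating set. Lemma \ref{lemma::tuples containing a generating set}, applied with conjugator $Q_i$, then replaces that sub-tuple by $(\tau_{i,1},\ldots,\tau_{i,n_i})$ via a finite sequence of elementary transformations while leaving every other component of the concatenation --- including the suffix and the remaining sub-tuples in the prefix --- exactly as it was.

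Iterating this for $i=1,\ldots,k$ converts the entire prefix into $\prod_{i=1}^{k}(\tau_{i,1},\ldots,\tau_{i,n_i})$, an $n$-tuple all of whose components are almost short. Taking $(h_1,\ldots,h_n)$ to be this concatenation yields the desired Hurwitz equivalence. There is no real obstacle beyond bookkeeping, as both ingredients --- the structural decomposition from Theorem \ref{theorem::Livne S2->almost short} and the conjugation trick from Lemma \ref{lemma::tuples containing a generating set} --- are already in place.
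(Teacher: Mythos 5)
Your proposal is correct and matches the paper's own (one-line) proof, which cites exactly Theorem \ref{theorem::Livne S2->almost short} and Lemma \ref{lemma::tuples containing a generating set}; you have simply written out the bookkeeping the paper leaves implicit. The only nitpick is notational: with the lemma's convention of replacing $g_j$ by $Q^{-1}g_jQ$, you should take $Q=Q_i^{-1}$ (not $Q_i$) to strip the conjugator from the $i$-th sub-tuple, but this does not affect the argument.
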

\begin{proof}
The corollary follows from Theorem \ref{theorem::Livne S2->almost short} and Lemma \ref{lemma::tuples containing a generating set}.
\end{proof}

Recall that $\mathcal{F}_{13}=(b,b)^2\bullet(a^2bab,t_0,s_1)^3$.
Theorem \ref{theorem::Livne mathcalS2} and Theorem \ref{theorem::Livne mathcalS2_modification} show that we are able to construct the normal form of $(g_1,\ldots,g_n)\bullet \mathcal{F}_{13}$ that depends only on the number of components in each conjugacy class as in Theorem \ref{theorem::Livne mathcalS}.
We prove them using the following.

\begin{theorem}
\label{theorem::Livne mathcalS2_step1}
Let $g_1,\ldots,g_n\in \PSL(2,\mathbb{Z})$ be conjugates of $a$, $a^2$, $b$, $aba$, $a^2ba^2$ or $ababa$ satisfying $g_1\cdots g_n=1$. Suppose that $m$ of them are conjugates of $ababa$.
Let $(v_1,\ldots,v_c)$ be a tuple containing a generating set,
and let $(v'_1,\ldots,v'_{c'})$ be a $(b,b,b,b)$-expanding tuple whose components are conjugate to $a,a^2,b,s_0$ or $t_0$.
Then,
\begin{equation*}
    (g_1,\ldots,g_n)\bullet (v'_1,\ldots,v'_{c'})\bullet (v_1,\ldots,v_c)
\end{equation*}
is Hurwitz equivalent to
\begin{equation*}
    (h_1,\ldots,h_{n'})\bullet
    (a^2bab,ba^2ba)^{(m-3+\mu)/2}\bullet
    (v_1,\ldots,v_c)
\end{equation*}
where all components of $(h_1,\ldots,h_{n'})$ are conjugate to $a$, $a^2$, $b$, $s_0$, $t_0$, $ababa$ and only $3-\mu$ of them are conjugate to $ababa$, where $\mu=3-m$ if $m\le 3$ and $\mu=(m+1) \mod{2}$ otherwise.
\end{theorem}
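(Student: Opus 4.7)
The plan is to follow the strategy used to prove Theorem \ref{theorem::Livne S2->almost short} and the analogue Theorem \ref{theorem::Livne mathcalS2} (whose proof is given separately in Section \ref{section::Livne}), but adapted to the weaker stabilising data $(v'_1,\ldots,v'_{c'})\bullet(v_1,\ldots,v_c)$. The identity that drives the final numerics is $a^2bab\cdot ba^2ba=1$ in $G$: the target pair $(a^2bab,ba^2ba)$ is a mutually inverse pair inside the $ababa$-conjugacy class, analogous to the pairs $(k,k^{-1})$ treated in Theorem \ref{theorem::Livne S->short}, while the block $(b,b,b,b)$ already decomposes into two mutually inverse pairs.

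First I would contract $(v'_1,\ldots,v'_{c'})$ back to its defining $4$-tuple $(b,b,b,b)$ using the contraction/restoration mechanism of Subsection \ref{subsection::contractions-restorations}. Because every $v'_i$ is conjugate to one of $a,a^2,b,s_0,t_0$, every sub-tuple produced by the eventual restoration consists of elements of precisely this type and can be absorbed into the final $(h_1,\ldots,h_{n'})$ component. It therefore suffices to establish the theorem with $(v'_1,\ldots,v'_{c'})$ replaced by $(b,b,b,b)$. Next, since the tail $(b,b,b,b)\bullet(v_1,\ldots,v_c)$ still contains a generating set of $G$, Corollary \ref{corollary::Livne S2->almost short} applies to the concatenation and turns $(g_1,\ldots,g_n)$ into an $n$-tuple $(\tilde g_1,\ldots,\tilde g_n)$ of almost-short elements with the same multi-set of conjugacy classes, so that exactly $m$ of the $\tilde g_i$ lie in $\{a^2bab,ababa,baba^2,ba^2ba,a^2ba^2ba^2,aba^2b\}$ and the rest in $\{a,a^2,b,s_0,t_0\}$.

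The core step is an iterative pairing of the $ababa$-conjugate components. Using Lemma \ref{lemma::cyclic shift} and Lemma \ref{lemma::move subtuples} I would bring two chosen $ababa$-conjugates $\tilde g_i,\tilde g_j$ adjacent to a $(b,b)$-sub-block extracted from $(b,b,b,b)$, and then invoke Lemma \ref{lemma::tuples containing a generating set} (fed by the generators in $(v_1,\ldots,v_c)$) to perform simultaneous conjugation of any sub-tuple whose product lies in $Z(G)$. The combinatorial lemma I would establish by case analysis on the six almost-short $ababa$-representatives is that any centred $4$-tuple consisting of four $ababa$-conjugate almost-short components can be reshaped by elementary transformations (with conjugation freedom as above) into $(a^2bab,ba^2ba)$ followed by two almost-short $ababa$-conjugates together with short corrective debris in $\{a,a^2,b,s_0,t_0\}$ absorbed into $(h_1,\ldots,h_{n'})$. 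Iterating reduces the number of unpaired $ababa$-conjugates by two at each step; a parity bookkeeping based on $a^2bab\cdot ba^2ba=1$ and the total product constraint $g_1\cdots g_n=1$ shows that exactly $(m-3+\mu)/2$ pairs are produced and $3-\mu$ $ababa$-conjugates remain, with $\mu$ as stated. Restoring the original expansion $(v'_1,\ldots,v'_{c'})$ finishes the argument.

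The main obstacle will be the combinatorial lemma above: for every relevant configuration of almost-short $ababa$-representatives together with a flanking $(b,b)$-sub-block, one must produce an explicit short sequence of Hurwitz moves delivering the desired normal-form pair with all corrective debris landing in the permitted short class $\{a,a^2,b,s_0,t_0\}$. Establishing the parity governing the switch between $\mu=3-m$ for $m\le 3$ and $\mu=(m+1)\bmod 2$ for $m\ge 4$ will also require a careful analysis of which configurations of unpaired $ababa$-conjugates are compatible with the fixed product constraint, essentially by tracking the image in a suitable abelian quotient of $G$; in spirit this is parallel to (but more intricate than) the substitutions exhibited in the proofs of Lemmata \ref{lemma::a^2,s}--\ref{lemma::a^2,t} and Theorem \ref{theorem::Livne mathcalS}.
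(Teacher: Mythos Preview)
Your overall scaffolding is correct: reduce to $(v'_1,\ldots,v'_{c'})=(b,b,b,b)$ via contraction/restoration, then apply Corollary \ref{corollary::Livne S2->almost short} to make the $g$-part a tuple of almost short elements. From there, however, your description of the pairing mechanism is unclear and does not match what actually works.

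Your ``combinatorial lemma'' speaks of a centred $4$-tuple of four $ababa$-conjugate almost-short elements being reshaped into $(a^2bab,ba^2ba)$ plus two further $ababa$-conjugates plus short debris. It is not clear what role the $(b,b)$-sub-block plays in this picture, nor why such a $4$-tuple of $ababa$-conjugates should have product in $Z(G)$ (it generally will not, so you cannot freely move or diagonally conjugate it). More seriously, two arbitrary almost-short $ababa$-conjugates cannot be Hurwitz-moved into a mutually inverse pair on their own: for instance $(ababa,ababa)$ is fixed by $R_1$. If instead you mean to consume a $(b,b)$-block at each step, you run out after two iterations, which is not enough for large $m$.

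The paper's mechanism is different and simpler. One splits $(b,b,b,b)$ as $(b,b)\bullet(b,b)$: the first $(b,b)$ is absorbed into the $g$-tuple \emph{before} applying Corollary \ref{corollary::Livne S2->almost short}, guaranteeing that the resulting almost-short tuple contains at least one component $\tau_b\in\{b,a^2ba,aba^2\}$; the second $(b,b)$ is kept in reserve. The key observation is that for each almost-short $ababa$-conjugate $\tau_{ababa}$ there is a specific $\tau_b\in\{b,a^2ba,aba^2\}$ with $\tau_b\,\tau_{ababa}\,\tau_b^{-1}=\tau_{ababa}^{-1}$. This yields the substitution
\[
(\tau_b,\tau_{ababa},\tau_{ababa})\longrightarrow(\tau_{ababa}^{-1},b,\tau_{ababa})\longrightarrow(\tau_{ababa}^{-1},\tau_{ababa},\tau_{ababa}^{-1}b\tau_{ababa}),
\]
which produces a mutually inverse pair \emph{and} a fresh $b$-conjugate: the catalyst $\tau_b$ is recycled, so a single $b$-type element suffices for arbitrarily many iterations. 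The inverse pair is then moved to the right and normalised to $(a^2bab,ba^2ba)$ using the generating set in $(v_1,\ldots,v_c)$. The reserved $(b,b)$ is only spent in the edge case where, among the remaining $ababa$-conjugates, at most one almost-short value occurs more than once; one then conjugates this $(b,b)$ into $(\tau_b,\tau_b)$ to manufacture the catalysts needed to finish.

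With this mechanism the number of $ababa$-conjugates drops by two at each step until at most three remain; the value $3-\mu$ and the formula for $\mu$ follow immediately from parity, with no appeal to an abelian quotient.
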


\begin{proof}
We assume that $(v'_1,\ldots,v'_{c'})=(b,b,b,b)$ without loss of generality. Rewrite $(g_1,\ldots,g_n)\bullet (b,b,b,b)\bullet (v_1,\ldots,v_c)$ as
\[
(h_1,\dots,h_k)\bullet 
(b,b)\bullet(v_1,\ldots,v_c)\bullet (a^2bab,ba^2ba)^l,
\]
with $(h_1,\ldots,h_k)=(g_1,\ldots,g_n,b,b)$ containing at least one component conjugate to $b$, with $k=n+2$ and $l=0$.
Following Corollary \ref{corollary::Livne S2->almost short}, we transform $(h_1,\ldots,h_k)$ into a tuple of almost short elements that contains at least one component of the form either $b$, $a^2ba$ or $aba^2$. 

We first apply the following inductions on $k$ when $m-2l>3$.

Suppose that there exist two components, say $h_i$ and $h_j$ with $i\neq j$, such that both of them are conjugate to $ababa$ and $h_i h_j=1$. We move $h_i$ and $h_j$ to the rightmost positions. Since $(v_1,\ldots,v_c)$ contains a generating set, by Lemma \ref{lemma::tuples containing a generating set}, they are further transformed into a pair of the form $(a^2bab,ba^2ba)$ by elementary transformations on $(g_1,\ldots,g_n)\bullet (b,b)\bullet (v_1,\ldots,v_c)$.
Therefore, we get the following tuple
\[
(\tilde{h}_1,\ldots,\tilde{h}_{k-2})\bullet (b,b)\bullet(v_1,\ldots,v_c)\bullet (a^2bab,ba^2ba)^{l+1}
\]
where $(\tilde{h}_1,\ldots,\tilde{h}_{k-2})$ is further transformed into a tuple of almost short elements.

Suppose that with a pair
\[
(\tau_b,\tau_{ababa})\in
\Bigg\{\begin{array}{c}
  (b,a^2bab),(b,ba^2ba),(b,baba^2),(b,aba^2b),\\
  (aba^2,ababa),(aba^2,a^2ba^2ba^2),(aba^2,baba^2),(aba^2,aba^2b),\\
  (a^2ba,ababa),(a^2ba,a^2ba^2ba^2),(a^2ba,a^2bab),(a^2ba,ba^2ba)
  \end{array}
\Bigg\}
\]
there exist some components of the form $\tau_b$ and at least two components of the form $\tau_{ababa}$. Using elementary transformations we gather them together and obtain a pair of mutually inverse elements via the following substitution.
\[
(\tau_b,\tau_{ababa},\tau_{ababa})\longrightarrow (\tau_{ababa}^{-1},b,\tau_{ababa})\longrightarrow (\tau_{ababa}^{-1},\tau_{ababa},\tau_{ababa}^{-1}b\tau_{ababa}).
\]
The pair of mutually inverse elements is further moved to the rightmost position and transformed into $(a^2bab,ba^2ba)$. The resulting tuple again has the expression with a lower $k$.

Once the above induction stops but $m-2l>3$, there is at most one almost short element conjugate to $ababa$, say $\tau_{abab}$, that appears more than once in $(h_1,\ldots,h_k)$.
Take a proper $\tau_b\in\{b,a^2ba,aba^2\}$ such that $(\tau_b,\tau_{ababa})$ belongs to the above set of pairs. Transform the extra pair $(b,b)$ into $(\tau_b,\tau_b)$ with the help of $(v_1,\ldots,v_c)$. Again we gather all components of the form $\tau_{ababa}$ in $(h_1,\ldots,h_k)$ together with an additional $\tau_b$ using elementary transformations and apply the following substitutions.
\[
(\tau_b,\tau_{ababa},\ldots,\tau_{ababa})\longrightarrow (\tau_{ababa}^{-1},b,\tau_{ababa},\ldots,\tau_{ababa})\longrightarrow
(\tau_{ababa}^{-1},\ldots,\tau_{ababa}^{-1},b,\tau_{ababa},\ldots,\tau_{ababa}).
\]
We make all mutually inverse elements within the above resulting tuple pairs of the form $(a^2bab,ba^2ba)$ and move them to the rightmost positions. By elementary transformations the tuple $(g_1,\ldots,g_n)\bullet (v'_1,\ldots,v'_{c'})\bullet (v_1,\ldots,v_c)$ has been finally transformed into
\[
(h_1,\ldots,h_k)\bullet
(v_1,\ldots,v_c)\bullet
(a^2bab,ba^2ba)^{m'/2}
\]
where $(h_1,\ldots,h_k)$ is a tuple of almost short elements containing at most three components conjugate to $ababa$ and $m-m'\le 3$.
\end{proof}

\begin{proof}[Proof of Theorem \ref{theorem::Livne mathcalS2} and Theorem \ref{theorem::Livne mathcalS2_modification}]
We only prove Theorem \ref{theorem::Livne mathcalS2} while the proof of Theorem \ref{theorem::Livne mathcalS2_modification} is similar.
Since $(a^2bab,t_0,s_1)$ contains a generating set, Theorem \ref{theorem::Livne mathcalS2_step1} has shown that the concatenation $(g_1,\ldots,g_n)\bullet \mathcal{F}_{13}$ can be transformed into
\begin{equation*}
    (h_1,\ldots,h_{n'})\bullet
    (a^2bab,ba^2ba)^{(m-3+\mu)/2}\bullet
    (a^2bab,t_0,s_1)^3
\end{equation*}
where $\mu$ is determined by $m$, only $3-\mu$ components of $(h_1,\ldots,h_{n'})$ are conjugate to $ababa$ and the rest are conjugates of $a$, $a^2$, $b$, $s_0$ or $t_0$.

Consider each of $i=1,2,3$ in turn.
Let $h_{\alpha}$ be the first component in $(h_1,\ldots,h_{n'})$ conjugate to $ababa$.
Since the first triple of the form $(a^2bab,t_0,s_1)$ within the concatenation contains a genearting set, by Lemma \ref{lemma::tuples containing a generating set} we can transform $(h_1,\ldots,h_{n'})$ into a tuple with a simultaneous conjugation such that $h_{\alpha}=ba^2ba$
Therefore we are able to make the $\alpha$-th component in $(h_1,\ldots,h_{n'})$ and the first component in the triple a pair of the form $(a^2bab,ba^2ba)$.
Hence $(h_1,\ldots,h_{n'})\bullet(a^2bab,t_0,s_1)^{4-i}$ is transformed into $(h'_1,\ldots,h'_{n''})\bullet(a^2bab,t_0,s_1)^{3-i}\bullet (a^2bab,ba^2ba)$ with $n''=n'+1$.
\end{proof}

\section{Stable classification vs. unstable classification}

\subsection{Hurwitz equivalence fails without stabilisation}

\label{subsection::fails}

We give some examples of global monodromies which are Hurwitz equivalent up to stabilisation,
as in Theorem \ref{thmx::B} and Theorem \ref{thmx::C},
but fail to be Hurwitz equivalent.
This illustrates why it is necessary to consider fibrations up to the fibre-connected sum.

\begin{example}
Let $f_1$ be an achiral Lefschetz fibration which has a global monodromy of the form $(-A^2B,-BA,-A^2B,-BA)$ and let $f_2$ be an achiral Lefschetz fibration which has a global monodromy of the form $(-A^2B,-BA,-ABA,A^2BA^2)$. Though $f_1$ and $f_2$ have the same type of singularities, these two global monodromies are not Hurwitz equivalent.

\begin{proof}
Indeed, the following graph shows all resulting tuples in $\PSL(2,\mathbb{Z})$ from $(s_0,t_0,s_0,t_0)$ using elementary transformations.
\begin{center}
\begin{tikzpicture}[node distance={16mm}, thick]

\node (1) {$(s_0,t_0,t_0,s_0)$};
\node (2) [below left=10mm of 1] {$(s_0,s_0,t_0,t_0)$};
\node (3) [below right=10mm of 1] {$(t_0,t_0,s_0,s_0)$};
\node (4) [below right=10mm of 2] {$(t_0,s_0,s_0,t_0)$};
\node (5) [left=10mm of 2] {$(s_0,t_0,s_0,t_0)$};
\node (6) [right=10mm of 3] {$(t_0,s_0,t_0,s_0)$};

\draw[<->] (1) to [out=180,in=90,looseness=0.5]
  node[above]
  {$R_3^{\pm 1}$}
(5);
\draw[<->] (1) to [out=0,in=90,looseness=0.5]
  node[above]
  {$R_1^{\pm 1}$}
(6);
\draw[<->] (4) to [out=180,in=270,looseness=0.5]
  node[below]
  {$R_1^{\pm 1}$}
(5);
\draw[<->] (4) to [out=0,in=270,looseness=0.5]
  node[below]
  {$R_3^{\pm 1}$}
(6);
\draw[<->] (2) to
  node[above]
  {$R_2^{\pm 1}$}
(5);
\draw[<->] (3) to
  node[above]
  {$R_2^{\pm 1}$}
(6);
\draw[->] (2) to [out=45,in=10,looseness=3.5]
  node[above]
  {$R_1^{\pm 1}$}
(2);
\draw[->] (2) to [out=315,in=350,looseness=3.5]
  node[below]
  {$R_3^{\pm 1}$}
(2);
\draw[->] (3) to [out=135,in=170,looseness=3.5]
  node[above]
  {$R_1^{\pm 1}$}
(3);
\draw[->] (3) to [out=225,in=190,looseness=3.5]
  node[below]
  {$R_3^{\pm 1}$}
(3);
\draw[->] (1) to [out=240,in=300,looseness=5.]
  node[below]
  {$R_2^{\pm 1}$}
(1);
\draw[->] (4) to [out=60,in=120,looseness=5.]
  node[above]
  {$R_2^{\pm 1}$}
(4);

\end{tikzpicture} 
\end{center}
In particular, one cannot transform $(s_0,t_0,s_0,t_0)$ into $(s_0,t_0,s_1,t_1)$.
\end{proof}
\end{example}

\begin{example}
Let $(b,b,a^2bab,ba^2ba)$ and $(aba^2,a^2ba,a^2bab,baba^2)$ be tuples in $\PSL(2,\mathbb{Z})$. We claim that for arbitrary positive integer $N$,
\begin{equation*}
    (b,b,a^2bab,ba^2ba)\bullet (b,b)^N
\end{equation*}
cannot be transformed into
\begin{equation*}
    (aba^2,a^2ba,a^2bab,baba^2)\bullet (b,b)^N
\end{equation*}
by elementary transformations.

\begin{proof}
Assume that $(b,b,a^2bab,ba^2ba)\bullet (b,b)^N$ can be transformed into $(aba^2,a^2ba,a^2bab,baba^2)\bullet (b,b)^N$ by elementary transformations for some $N$. Then there exists an element $g\in \PSL(2,\mathbb{Z})$ which is a product of $b$, $a^2bab$ and $ba^2ba$ such that $aba^2=g^{-1}bg$,
which implies $b=(ga)^{-1}b(ga)$.
Therefore, the element $g$ is either $a^2$ or $ba^2$,
but the number of occurrences of the letter $a$ in $g$ modulo $3$ is equal to $0$,
which is a contradiction.
\end{proof}
\end{example}

\subsection{Unstable classification of achiral Lefschetz fibrations}

\label{subsection::achiralLef}

We consider torus achiral Lefschetz fibrations
having fixed cardinality of branch sets $|\mathcal{B}|=n\ge 1$.
A singular fibre is of type $I_1^{+}$ if its fibre monodromy is conjugate to $L=-ABA=\begin{bmatrix}1 & 0\\1 & 1\end{bmatrix}$
and a singular fibre is of type $I_1^{-}$ if its fibre monodromy is conjugate to $R=-AB=\begin{bmatrix}1 & 1\\0 & 1\end{bmatrix}$.

By Theorem \ref{thmx::B},
global monodromies of a pair of torus achiral Lefschetz fibrations are Hurwitz equivalent after performing direct sums with $f_{12}^L$
if and only if they have the same type of singularities.
However, the Hurwitz equivalence between global monodromies is more difficult to state, especially when singular fibres of type $I_1^+$ and $I_1^-$ occur in pairs.
In this subsection,
we enumerate all possible Hurwitz equivalent classes of global monodromies of torus achiral Lefschetz fibrations, without stabilisation.
This will prove Theorem \ref{thmx::achiralLef}.

\hfill

By a \emph{rooted tree} we mean a directed tree in which a specific vertex is called the \emph{root}, such that each directed edge indicates the parent-child relationship between two vertices.
A \emph{rooted forest} is a disjoint union of several rooted trees.
In general, given a (directed) graph $\Gamma$, we always use $V(\Gamma)$ to denote the set of vertices.

\begin{definition}
Given a rooted forest $T$ and a non-negative integer $k$,
we define $\Omega(T,k)$ to be the set of formal sums
$\sum_{v\in V(T)}  m_v\cdot v$ over vertices with $m_v\ge 0$ and $\sum m_v=k$ such that any two vertices $v_1\neq v_2$ with $m_{v_1}\ge 1$, $m_{v_2}\ge 1$
have no ancestor-descendant relationship (i.e. there does not exist a directed path joining $v_1$ to $v_2$).
\end{definition}

\begin{definition}
For $n=p+q$ with $p\ge 0$, $q\ge 0$,
we define
$\Hom_{p,q}^{\text{aL}}(\mathbb{F}_{n-1},\SL(2,\mathbb{Z}))$
to be the set consisting of all monodromy homomorphisms of torus achiral Lefschetz fibrations $f:M\rightarrow S^2$ with 
$\mathcal{O}(f)=[\underbrace{I_1^+,\ldots,I_1^+}_{\text{$p$ components}},\underbrace{I_1^-,\ldots,I_1^-}_{\text{$q$ components}}]$.
\end{definition}

\begin{theorem}
\label{theorem::achiralLef}
Let $n$, $p$ and $q$ be arbitrary integers such that $n\ge 1$, $p\ge 0$, $q\ge 0$ and $p+q=n$.
\begin{itemize}[-,topsep=0pt,itemsep=-1ex,partopsep=1ex,parsep=1ex]
\item If $p\neq q$, then the set
$B_n\backslash \Hom_{p,q}^{\text{aL}}(
\mathbb{F}_{n-1}, \SL(2,\mathbb{Z}))$
is a singleton.

\item If $p=q$, then there exists a one-to-one correspondence:
\begin{align*}
B_n\backslash& \Hom_{p,q}^{\text{aL}}(\mathbb{F}_{n-1},\SL(2,\mathbb{Z}))
\longleftrightarrow\\
&\{pt.\}
\sqcup \Big( \bigsqcup_{k=0}^{p-1} \Omega(T_{\infty},k) \Big)
\sqcup \Big( \bigsqcup_{k=0}^{p-1} \Omega(T_{\infty},k) \Big)
\sqcup \Big( \bigsqcup_{k=0}^{p-1} \Omega(T_{\infty},k) \Big)
\sqcup \Omega(T_{\infty}\sqcup T_{\infty}\sqcup T_{\infty},p)
\end{align*}
where $T_{\infty}$ is the rooted complete infinite binary tree.
\end{itemize}
\end{theorem}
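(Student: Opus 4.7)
The plan is to treat the two cases separately, reducing case (i) to the Livné--Moishezon classification and reserving the bulk of the work for case (ii).

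\textbf{Case (i): $p\neq q$.} I would project everything to $\PSL(2,\mathbb{Z})$ along $\iota$. Conjugates of $L$ in $\SL(2,\mathbb{Z})$ map to conjugates of $s_1$ in $\PSL(2,\mathbb{Z})$, and analogously for $R$ and $t_2$; elementary transformations commute with $\iota$. The lift back from $\PSL$ to $\SL$ is unique under the constraints that each component be conjugate to $L$ or $R$ (rather than $-L$ or $-R$) and that the product be $I$ (rather than $-I$)---both constraints preserved by elementary transformations. Thus the $\SL$-Hurwitz classification coincides with the $\PSL$-one, and Theorem \ref{theorem::Livne ST} supplies a unique normal form, namely $(s_0,s_2)^{(p-q)/2}\bullet(s_0,t_0)^q$ when $p>q$ (and symmetrically for $q>p$). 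Its unique lift lies in $\Hom_{p,q}^{\mathrm{aL}}$ and proves the singleton claim.

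\textbf{Case (ii): $p=q$.} By Theorem \ref{theorem::Livne ST}(3), every tuple is $\PSL$-Hurwitz equivalent to the ``paired'' form $(k_1,k_1^{-1},\ldots,k_p,k_p^{-1})$ with each $k_j$ conjugate to $s_1$; the unique $\SL$-lift reads $(g_1,g_1^{-1},\ldots,g_p,g_p^{-1})$ with each $g_i$ conjugate to $L$. The task reduces to classifying these paired $2p$-tuples up to Hurwitz moves. I would construct the invariant by encoding each $g_i=Q_i^{-1}LQ_i$ through a reduced word for $Q_i$ modulo the cyclic centraliser $\langle L\rangle$; these reduced words naturally parametrise the vertices of a rooted complete infinite binary tree via the Farey/Stern--Brocot correspondence. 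The threefold splitting $T_\infty\sqcup T_\infty\sqcup T_\infty$ reflects the three $\PSL$-conjugates $s_0,s_1,s_2$ of $s_1$ that a pair can be rotated into by the cyclic permutations of Lemma \ref{lemma::cyclic shift}. Using Lemma \ref{lemma::switch places of components with the same value} and Lemma \ref{lemma::tuples containing a generating set} across adjacent pairs, one shows that any pair can be pushed towards the root of its tree until it either reaches the root, merges with another pair, or is obstructed by a pair at a comparable node. The residual data---multiplicities on an antichain in $T_\infty\sqcup T_\infty\sqcup T_\infty$ of total weight $p$---produces an element of $\Omega(T_\infty\sqcup T_\infty\sqcup T_\infty,p)$. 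The extra singleton and the three copies of $\bigsqcup_{k=0}^{p-1}\Omega(T_\infty,k)$ record the boundary interactions available once a pair reaches the root of one of the three trees and can absorb up to $p-1$ further pairs, the three copies indexing which of the three trees contains the absorbing pair.

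The principal obstacle will be completeness: showing that two paired tuples producing the same combinatorial data are actually Hurwitz equivalent. Well-definedness reduces to a direct case analysis of elementary transformations acting on adjacent pairs in the tree-encoded form, along the lines of Proposition \ref{proposition::reduce_and_reloading_hard}. Completeness instead requires exhibiting a canonical representative for each value of the invariant together with a constructive algorithm reducing any given paired tuple to it; I would model this algorithm on the normal-form construction in the proof of Theorem \ref{theorem::Livne mathcalS}, refined so as to respect the tree coordinates of each pair rather than only its conjugacy class. The bookkeeping for pairs that have been pushed near the root---the source of the boundary terms in the statement---will be the subtlest part of the argument.
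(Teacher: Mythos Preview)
Your Case~(i) is correct and matches the paper. For Case~(ii) you have the right skeleton---paired tuples $(e_i,e_i^{-1})$, an encoding by vertices of infinite binary trees, and antichains as the invariant---but two points diverge from what actually makes the argument go through.

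First, the structure above the three trees is not what you describe. The three exceptional pairs (those projecting to $(s_0,t_0)$, $(s_1,t_1)$, $(s_2,t_2)$) are \emph{not} the roots of the three trees $T_{BA^2},T_B,T_{BA}$; rather, each exceptional vertex sits \emph{above} two of the three trees in a directed graph $\Gamma$, and each tree has two exceptional parents. This matters for the enumeration: when exactly one exceptional vertex $v$ carries weight $m_v\ge 1$, the antichain condition forces the remaining $p-m_v$ pairs into the unique tree of which $v$ is \emph{not} an ancestor---hence the three copies of $\bigsqcup_{k=0}^{p-1}\Omega(T_\infty,k)$ are indexed by which exceptional vertex appears, not by ``which tree contains the absorbing pair''. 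And when two distinct exceptional vertices appear, their union already generates $\SL(2,\mathbb{Z})$, so Lemma~\ref{lemma::tuples containing a generating set} collapses everything to a single Hurwitz class; that is the isolated $\{pt.\}$.

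Second, you have the hard direction backwards. Showing that two paired tuples with the same formal sum are Hurwitz equivalent is immediate from Lemma~\ref{lemma::move subtuples}, since each pair has product $I$. The substantive content is the converse: that Hurwitz equivalent tuples yield the same formal sum after minimisation. The paper handles this not by ``case analysis of elementary transformations on adjacent pairs'' but via a complexity function $\cxty(g)$ (essentially the depth of the corresponding vertex) and a cancellation bound: whenever $(e_1,e_1^{-1}),(e_2,e_2^{-1})$ have no ancestor--descendant relation, one has $m(g_1,g_2)\le\min\{l(g_1)/2,l(g_2)/2\}$ for any $g_1,g_2\in\{e_i^{\pm 1}\}$ with $g_1g_2\neq I$. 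This inequality forces $\cxty(Q^{-1}\omega Q)\ge\cxty(\omega)$ for any word $Q$ in the components, so the formal sum at minimal complexity is rigid under all Hurwitz moves. Without this length-control lemma your well-definedness claim has no mechanism.
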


Recall that each matrix $g$ with non-zero trace in $\SL(2,\mathbb{Z})$ is uniquely expressed by $\epsilon Q$ with $\epsilon=\pm I$ and $Q$ a word in $\{A,A^2,B\}$ in which $B$'s and powers of $A$ appear alternatively.
The \emph{length} of an element $g\in\SL(2,\mathbb{Z})$ is defined as the length of the word $Q$, denoted by $l(g)$.
Here we list all possibilities for fibre monodromies of a torus achiral Lefschetz fibration:
\begin{align*}
-A^2B, -ABA, -BA^2 \text{ and } \epsilon PABAQ,\\
-BA, A^2BA^2, -AB \text{ and } \epsilon PA^2BA^2Q
\end{align*}
where $P$, $Q$ are words in $\{A,A^2,B\}$ in which $B$'s and powers of $A$ appear alternatively, $PQ=\pm I$, $\epsilon=\pm I$ is uniquely determined by $l(Q)$ such that the trace is equal to $+2$.

Let $g_1$ and $g_2$ be matrices in $\SL(2,\mathbb{Z})$.
Suppose that $g_1$ and $g_{2}$ are expressed by 
$\epsilon t_k\ldots t_1$ and
$\tilde{\epsilon} \tilde{t}_1\ldots \tilde{t}_l$,
respectively,
with $\epsilon, \tilde{\epsilon}\in\{I,-I\}$, $k=l(g_1)$, $l=l(g_2)$, $t_j\in\{A,A^2,B\}$, $j=1,\ldots,k$ and $\tilde{t}_j\in\{A,A^2,B\}$, $j=1,\ldots,l$.
The product $g_1g_2$ is either
\[
\tau t_k\ldots t_m r\tilde{t}_{m+1}\ldots \tilde{t}_l
\text{ or }
\tau t_k\ldots t_m r
\text{ or }
\tau r \tilde{t}_{m+1}\ldots \tilde{t}_l
\]
for some $\tau=\tau(g_1,g_2) \in\{I,-I\}$,
$r=r(g_1,g_2) \in G$ such that $l(r)\le 1$ and $0\le m=m(g_1,g_2)\le k,l$.
%The coefficient $m$ is further indicated by $m=m(g_1,g_2)$.

Let $Q=BA^{k_1}BA^{k_2}\cdots BA^{k_m}B^{\lambda}$ be a matrix in $\SL(2,\mathbb{Z})$ with $m\ge 0$, $k_1,\ldots,k_m\in\{1,2\}$ and $\lambda\in\{0,1\}$.
We introduce the \emph{suffix tree} $T_Q$ which is a rooted binary tree with infinitely many vertices, whose each vertex is labelled with a pair of inverse elements in $\SL(2,\mathbb{ Z})$.
Set
\begin{equation*}
\widetilde{Q} = B^{\lambda}A^{3-k_m}B\cdots A^{3-k_2}BA^{3-k_1}B
\end{equation*}
so that $\widetilde{Q}Q=\pm I$
and let $\epsilon=\pm I$ be such that $\trace(\epsilon \widetilde{Q} ABA Q)=2$. 
The root of $T_Q$ is labelled with the pair
\begin{equation*}
(\epsilon \widetilde{Q} ABA Q,-\epsilon \widetilde{Q} A^2BA^2 Q)
\end{equation*}
and the suffix tree $T_Q$ is defined by the following form iteratively, where each directed edge indicates the parent-child relationship between a vertex and the root of a suffix tree.

\begin{center}
\begin{tikzpicture}[node distance={16mm}, thick]
\node (x) {$(\epsilon \widetilde{Q} ABA Q,-\epsilon \widetilde{Q} A^2BA^2 Q)$};
\node (l) [below left of=x] {$T_{BAQ}$};
\node (r) [below right of=x] {$T_{BA^2Q}$};
\draw[->] (x) to (l);
\draw[->] (x) to (r);
\end{tikzpicture} 
\end{center}

All conjugates of $L$ and $R$ occur in pairs. They are in one-to-one correspondence with the vertices in the following infinite directed graph $\Gamma$, where each directed edge again indicates the parent-child relationship between a vertex and the root of a suffix tree.

\begin{center}
\begin{tikzpicture}[node distance={16mm}, thick]
\node (x) {$(-A^2B,-BA)$};
\node (y) [right of=x, node distance={32mm}] {$(-ABA,A^2BA^2)$};
\node (z) [right of=y, node distance={32mm}] {$(-BA^2,-AB)$};
\node (tx) [below of=x] {$T_{BA^2}$};
\node (ty) [below of=y] {$T_{B}$};
\node (tz) [below of=z] {$T_{BA}$};
\draw[->] (x) to (ty);
\draw[->] (x) to (tz);
\draw[->] (y) to (tx);
\draw[->] (y) to (tz);
\draw[->] (z) to (tx);
\draw[->] (z) to (ty);
\end{tikzpicture} 
\end{center}

The vertices labelled by $(-A^2B,-BA)$, $(-ABA,A^2BA^2)$ or $(-BA^2,-AB)$ are called \emph{exceptional}.
The components of these labels project to short elements in $\PSL(2,\mathbb{Z})$, as in Section \ref{section::Livne}.

Let $(g_1,\ldots,g_n)$ be a global monodromy of torus achiral Lefschetz fibrations, which is an $n$-tuple of elements conjugate to either $L$ or $R$.
The \emph{complexity} of this tuple is defined to be
\begin{equation*}
\cxty(g_1,\ldots,g_n):=\sum_i \cxty(g_i),
\text{~~~~~}
\cxty(g_i) =
\begin{cases}
    l(Q) & \text{if $g_i=\epsilon \widetilde{Q}ABA Q$ or $g_i=\epsilon \widetilde{Q}A^2BA^2 Q$}\\
        0 & \text{otherwise}
\end{cases}.
\end{equation*}

\begin{lemma}
\label{lemma::a_pair_of_vertices_in_Gamma}
Let $(e_1,e_1^{-1})$, $(e_2,e_2^{-1})\in V(\Gamma)$ be distinct vertices.
\begin{enumerate}[(a)]
    \item If there exists an ancestor-descendant relationship between $(e_1,e_1^{-1})$ and $(e_2,e_2^{-1})$, then by a sequence of elementary transformations the quadruple
    \[(e_1,e_1^{-1},e_2,e_2^{-1})\]
    can be transformed into a quadruple of the form
    \[({e'}_1,{e'}_1^{-1},{e'}_2,{e'}_2^{-1})\]
    such that
    $\cxty(e_1,e_1^{-1},e_2,e_2^{-1})>\cxty({e'}_1,{e'}_1^{-1},{e'}_2,{e'}_2^{-1})$.
    
    \item If one of $(e_1,e_1^{-1})$, $(e_2,e_2^{-1})$ is not exceptional and they have no ancestor-descendant relationship, then $m(g_1,g_2)\le \min\{\frac{l(g_1)}{2}, \frac{l(g_2)}{2}\}$ for any $g_1, g_2\in \{e_1,e_1^{-1},e_2,e_2^{-1}\}$ unless $g_1 g_2= I$.
\end{enumerate}
\end{lemma}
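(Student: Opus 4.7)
The plan exploits the recursive tree structure of $\Gamma$: each vertex carries a label $(\epsilon\widetilde{Q}ABAQ,\,-\epsilon\widetilde{Q}A^2BA^2Q)$ with $\cxty=l(Q)$, and each edge in a suffix tree $T_{Q'}$ goes from a vertex with word $Q'$ to one with word $BA\cdot Q'$ or $BA^2\cdot Q'$, strictly increasing $l(Q)$. Consequently, strict descendants have strictly larger complexity, while two vertices without an ancestor--descendant relation have labels $Q_1,Q_2$ whose longest common right-suffix $S$ is strictly shorter than both $Q_1$ and $Q_2$.

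For part (a), I would induct on the directed graph distance from $(e_1,e_1^{-1})$ to $(e_2,e_2^{-1})$. In the base case where $(e_2,e_2^{-1})$ is a direct child of $(e_1,e_1^{-1})$, a direct calculation in $\SL(2,\mathbb{Z})$ exhibits an element $v$ with $e_2=v^{-1}e_1 v$ and $e_2^{-1}=v^{-1}e_1^{-1}v$, where $v$ depends only on the syllable $BA$ or $BA^2$ corresponding to the chosen edge. Since $e_1\cdot e_1^{-1}=I$ is central, Lemma \ref{lemma::move subtuples} allows the inverse pair to be slid past the other components, and repeated $R_i^{\pm1}$ moves that sandwich $(e_2,e_2^{-1})$ against the $(e_1,e_1^{-1})$ pair conjugate $(e_2,e_2^{-1})$ by any word in $\{e_1^{\pm1}\}$. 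A careful check using the defining identity of the suffix tree shows that the required conjugator $v$ can be realised, up to a central sign, by such a word, replacing the child pair by its parent pair; this drops the total complexity by the two letters of the prepended $BA^{k}$. The induction step propagates this one-step reduction along the path in $\Gamma$ joining the two vertices.

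For part (b), the argument is a direct reduced-word computation. For any choice $g_1\in\{e_1,e_1^{-1}\}$ and $g_2\in\{e_2,e_2^{-1}\}$ with $g_1g_2\ne I$, each $g_i$ has the shape $\pm\widetilde{Q_i}(\mathrm{core}_i)Q_i$ with $\mathrm{core}_i\in\{ABA,A^2BA^2\}$, so in the product $g_1g_2$ the suffix $Q_1$ meets the prefix $\widetilde{Q_2}=\pm Q_2^{-1}$. Cancellation in this region is controlled by the common right-suffix $S$ of $Q_1,Q_2$, and after $S$ is eaten away a differing syllable prevents further cancellation from entering either central core. This gives $m(g_1,g_2)\le l(S)+c$ for a uniformly bounded constant $c$; combined with the strict inequalities $l(S)<l(Q_1),l(Q_2)$ and with the non-exceptional assumption ensuring $l(Q_i)\ge 1$, this refines to $m(g_1,g_2)\le\min\{l(g_1)/2,l(g_2)/2\}$. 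The remaining pairs from $\{e_1,e_1^{-1}\}^2$ or $\{e_2,e_2^{-1}\}^2$ share a single $Q$ and so are handled by a similar but simpler direct computation on a single core.

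The principal obstacle will be the bookkeeping in part (a): explicitly producing the conjugator $v$ as a word in $e_1^{\pm1}$, particularly when $(e_1,e_1^{-1})$ is exceptional and $v$ may fall outside $\langle e_1\rangle$. In that case I expect to resolve the reduction by invoking Lemma \ref{lemma::tuples containing a generating set} applied to the subgroup generated by the whole quadruple, after verifying directly that this subgroup contains the required conjugator; the three exceptional labels project to short elements in $\PSL(2,\mathbb{Z})$, where the algebraic identities close nicely.
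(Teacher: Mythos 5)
Part (b) of your proposal is sound and is essentially the paper's argument in different clothing: your ``longest common right-suffix $S$'' is exactly the word $Q$ of the lowest common ancestor when both vertices lie in the same subtree, and the paper gets the bound $m(g_1,g_2)\le l(Q)+1$, which combined with $l(Q_i)\ge l(Q)+2$ (each vertex is a \emph{strict} descendant of the LCA) gives the claimed inequality; you should still check separately the case where one pair is exceptional, since two of the three exceptional labels (e.g.\ $-A^2B$, $-BA^2$) are not of the shape $\pm\widetilde{Q}\,\mathrm{core}\,Q$ that your reduced-word computation assumes, and here the no-ancestor-descendant hypothesis is what forces the other vertex into the single subtree not lying below the exceptional vertex.

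Part (a), however, contains a genuine gap. Your base case asserts that the conjugator $v$ with $e_2=v^{-1}e_1v$ can be realised, up to sign, by a word in $e_1^{\pm1}$, so that the child pair gets replaced by the parent pair, dropping both letters of the prepended $BA^{k}$. This is false: a direct computation (it is precisely the substitution in the paper's proof) shows that conjugating the child $e_2$, whose suffix is $BA^{r}Q_1$, by $e_1^{\pm1}$ produces an element whose suffix is of the form $A^{3-r}Q_1$, of length $l(Q_1)+1$ --- strictly smaller than $l(Q_1)+2$ but never equal to the parent's $l(Q_1)$, and the resulting pair is in general not a vertex of $\Gamma$ at all. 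Moreover no power of $e_1$ can conjugate $e_2$ to $e_1^{\pm1}$, since $e_1^{-k}e_2e_1^{k}=e_1^{\pm1}$ would force $e_2=e_1^{\pm1}$. Your fallback for the exceptional case via Lemma \ref{lemma::tuples containing a generating set} also does not apply: keeping $(e_1,e_1^{-1})$ intact, the only conjugators available for the pair $(e_2,e_2^{-1})$ inside the quadruple are powers of $e_1$, and $\langle e_1,e_2\rangle$ is generically a proper (often free, infinite-index) subgroup of $\SL(2,\mathbb{Z})$, so the generating-set hypothesis fails. The fix is to aim lower, as the paper does: the lemma only requires a \emph{strict decrease} of complexity, which a single conjugation by $e_1^{\pm1}$ already achieves for an arbitrary ancestor-descendant pair (so your induction on graph distance is unnecessary), and the three exceptional ancestors are handled by explicit two-move substitutions exploiting that vertices of $T_B$, $T_{BA}$, $T_{BA^2}$ have reduced words with prescribed two-letter prefixes and suffixes, e.g.\ $(-A^2B,-BA,e_2,e_2^{-1})\rightarrow(-A^2B,-BA,A^2Be_2BA,A^2Be_2^{-1}BA)$ for $e_2$ a vertex of $T_B$.
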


\begin{proof}
(a) We first assume that $(e_1,e_1^{-1})$ is exceptional. Then $(e_2,e_2^{-1})$ cannot be exceptional and we suppose that $(e_2,e_2^{-1})=(\epsilon_2 P_2ABA Q_2,-\epsilon_2 P_2 A^2BA^2 Q_2)$.

When $(e_1,e_1^{-1})=(-A^2B,-BA)$ the pair $(e_2,e_2^{-1})$ is a vertex of either $T_B$ or $T_{BA}$.
If it is a vertex of $T_B$,
then both words $P_2ABAQ_2$ and $P_2A^2BA^2Q_2$ end with $AB$ and start with $BA$. Therefore the following sequence transforms $(e_1,e_1^{-1},e_2,e_2^{-1})$ into a desired quadruple with a smaller complexity.
\begin{equation*}
    (-A^2B,-BA,e_2,e_2^{-1})\longrightarrow
    (-A^2B,e_2,e_2^{-1},-BA)\longrightarrow
    (-A^2B,-BA,A^2Be_2BA,A^2Be_2^{-1}BA).
\end{equation*}
Otherwise, $(e_2,e_2^{-1})$ is a vertex of $T_{BA}$ and then both $P_2ABAQ_2$ and $P_2A^2BA^2Q_2$ end with $ABA$ and start with $A^2BA$. Therefore, the following substitution is desired.
\begin{equation*}
    (-A^2B,-BA,e_2,e_2^{-1})\longrightarrow
    (e_2,e_2^{-1},-A^2B,-BA)\longrightarrow
    (-A^2B,-BA,BAe_2A^2B,BAe_2^{-1}A^2B).
\end{equation*}

When $(e_1,e_1^{-1})=(-ABA,A^2BA^2)$ or $(e_1,e_1^{-1})=(-BA^2,-AB)$, we have similar arguments.

Now we assume that both $(e_1,e_1^{-1})$ and $(e_2,e_2^{-1})$ are unexceptional.
Suppose that
$(e_i,e_i^{-1})=(\epsilon_i P_i ABA  Q_i,-\epsilon_i P_i A^2BA^2 Q_i)$,
for $i=1,2$.
Then $Q_2$ is extended from $Q_1$ by a product of finitely many but at least one $BA$ or $BA^2$ on the left, say
\begin{equation*}
    Q_2=\big(\prod_{i=1}^{\mu} BA^{r_i}\big) Q_1
\end{equation*}
with $\mu\ge 1$ and $r_i\in\{1,2\}$, for each $i=1,\ldots,\mu$.
Therefore, the following substitution is desired for the case $r_{\mu}=1$.
\begin{align*}
    \big(&e_1,e_1^{-1},e_2,e_2^{-1}\big)\\
    =
    \big(&\epsilon_1 P_1 ABA  Q_1,-\epsilon_1 P_1 A^2BA^2 Q_1,\epsilon_2 P_2 ABA  Q_2,-\epsilon_2 P_2 A^2BA^2 Q_2\big)\\
    =
    \big(&\epsilon_1 P_1 ABA  Q_1,
    -\epsilon_1 P_1 A^2BA^2 Q_1,\\
    &\epsilon_2 P_1(\prod_{i=\mu}^{1} A^{3-r_i}B) ABA  (\prod_{i=1}^{\mu} BA^{r_i}) Q_1 ,-\epsilon_2 P_1(\prod_{i=\mu}^{1} A^{3-r_i}B) A^2BA^2 (\prod_{i=1}^{\mu} BA^{r_i}) Q_1\big)\\
    \rightarrow 
    \big(&\epsilon_1 P_1 ABA  Q_1,
    \\
    &\epsilon_2 P_1(\prod_{i=\mu}^{1} A^{3-r_i}B) ABA  (\prod_{i=1}^{\mu} BA^{r_i}) Q_1 ,-\epsilon_2 P_1(\prod_{i=\mu}^{1} A^{3-r_i}B) A^2BA^2 (\prod_{i=1}^{\mu} BA^{r_i}) Q_1,\\
    &-\epsilon_1 P_1 A^2BA^2 Q_1\big)\\
    \rightarrow
    \big(&\epsilon_1 P_1 ABA  Q_1,
    -\epsilon_1 P_1 A^2BA^2 Q_1,\\
    &\epsilon_2 P_1A(\prod_{i=\mu-1}^{1} A^{3-r_i}B) ABA  (\prod_{i=1}^{\mu-1} BA^{r_i}) A^2Q_1 ,-\epsilon_2 P_1A(\prod_{i=\mu-1}^{1} A^{3-r_i}B) A^2BA^2 (\prod_{i=1}^{\mu-1} BA^{r_i}) A^2 Q_1\big).
\end{align*}
Besides, the following substitution is desired for the case $r_{\mu}=2$.
\begin{align*}
    \big(&e_1,e_1^{-1},e_2,e_2^{-1}\big)\\
    =
    \big(&\epsilon_1 P_1 ABA  Q_1,-\epsilon_1 P_1 A^2BA^2 Q_1,\epsilon_2 P_2 ABA  Q_2,-\epsilon_2 P_2 A^2BA^2 Q_2\big)\\
    =
    \big(&\epsilon_1 P_1 ABA  Q_1,
    -\epsilon_1 P_1 A^2BA^2 Q_1,\\
    &\epsilon_2 P_1(\prod_{i=\mu}^{1} A^{3-r_i}B) ABA  (\prod_{i=1}^{\mu} BA^{r_i}) Q_1 ,-\epsilon_2 P_1(\prod_{i=\mu}^{1} A^{3-r_i}B) A^2BA^2 (\prod_{i=1}^{\mu} BA^{r_i}) Q_1\big)\\
    \rightarrow 
    \big(&\epsilon_2 P_1(\prod_{i=\mu}^{1} A^{3-r_i}B) ABA  (\prod_{i=1}^{\mu} BA^{r_i}) Q_1 ,-\epsilon_2 P_1(\prod_{i=\mu}^{1} A^{3-r_i}B) A^2BA^2 (\prod_{i=1}^{\mu} BA^{r_i}) Q_1,\\
    &\epsilon_1 P_1 ABA  Q_1,-\epsilon_1 P_1 A^2BA^2 Q_1\big)\\
    \rightarrow
    \big(&\epsilon_1 P_1 ABA  Q_1,
    -\epsilon_1 P_1 A^2BA^2 Q_1,\\
    &\epsilon_2 P_1A^2(\prod_{i=\mu-1}^{1} A^{3-r_i}B) ABA  (\prod_{i=1}^{\mu-1} BA^{r_i}) AQ_1 ,-\epsilon_2 P_1A^2(\prod_{i=\mu-1}^{1} A^{3-r_i}B) A^2BA^2 (\prod_{i=1}^{\mu-1} BA^{r_i}) A Q_1\big).
\end{align*}

(b) When one of $(e_1,e_1^{-1})$ and $(e_2,e_2^{-1})$ is exceptional, the other belongs to the unique sub-tree either $T_{BA^2}$, $T_B$ or $T_{BA}$.
Therefore, $m(g_1,g_2)\le 1$ unless $g_1g_2=I$.
When $(e_1,e_1^{-1})$ and $(e_2,e_2^{-1})$ belong to different sub-trees of $T_{BA^2}$, $T_B$ and $T_{BA}$, again we have $m(g_1,g_2)\le 1$ unless $g_1g_2=I$.

Now we assume that $(e_1,e_1^{-1})$ and $(e_2,e_2^{-1})$ are vertices of the same sub-tree either $T_{BA^2}$, $T_B$ or $T_{BA}$.
Let $(\epsilon P ABA Q, -\epsilon P A^2BA^2 Q)$ be the lowest common ancestor of $(e_1,e_1^{-1})$ and $(e_2,e_2^{-1})$.
Therefore, there exist the reduced forms of $e_1$, $e_1^{-1}$, $e_2$ and $e_2^{-1}$ such that
\begin{equation*}
    (e_i,e_i^{-1}) = (\epsilon_i P (A^{3-r_i}B) \omega_i (BA^{r_i}) Q, -\epsilon_i P (A^{3-r_i}B) \omega_i^{-1} (BA^{r_i}) Q)
\end{equation*}
with $r_i\in\{1,2\}$, $\epsilon_i=\pm I$, $\omega_i\in \SL(2,\mathbb{Z})$ and $r_1\neq r_2$, for $i=1,2$.
Hence, $m(g_1,g_2)\le l(Q)+1$ unless $g_1g_1=I$. 
\end{proof}

Suppose that a tuple of the form $(e_1,e_1^{-1},\ldots,e_p,e_p^{-1})$ in $\SL(2,\mathbb{Z})$
is a global monodromy of torus achiral Lefschetz fibrations.
One can write it as a formal sum $\sum_{v\in V(\Gamma)} m_v\cdot v$
such that $\sum m_v = p$.
By Lemma \ref{lemma::move subtuples},
different tuples which can be written as the same formal sum are Hurwitz equivalent.

\begin{lemma}
\label{lemma::no-ancestor-descendant-relationship}
Let $\sum_{v\in V(\Gamma)} m_v\cdot v$ be a formal sum over vertices of $\Gamma$.
Let
$(e_1,e_1^{-1},\ldots,e_p,e_p^{-1})$ be a $(2p)$-tuple in $\SL(2,\mathbb{Z})$ expressed by $\sum_{v\in V(\Gamma)} m_v\cdot v$.
Suppose that there exist distinct vertices $v_1$, $v_1$ such that $m_{v_1}\ge 1$, $m_{v_2}\ge 1$ and there exists an ancestor-descendant relationship between $v_1$ and $v_2$.
Then, by a sequence of elementary transformations the $(2p)$-tuple can be transformed into a tuple of the form $({e'}_1,{e'}_1^{-1},\ldots,{e'}_p,{e'}_p^{-1})$ with a smaller complexity.
\end{lemma}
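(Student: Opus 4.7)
The plan is to localize the problem and reduce it directly to Lemma \ref{lemma::a_pair_of_vertices_in_Gamma}(a), which already handles the situation of exactly two pairs of vertices. The key preliminary observation is that each pair $(e_j, e_j^{-1})$ satisfies $e_j\cdot e_j^{-1} = I \in Z(\SL(2,\mathbb{Z}))$, so the hypothesis of Lemma \ref{lemma::move subtuples} is fulfilled for the partition of the given $(2p)$-tuple into $p$ consecutive pairs. This lets me freely permute the $p$ pairs by a finite sequence of elementary transformations, without altering any pair internally.

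Using this, I would first rearrange the tuple so that one pair $(f_1, f_1^{-1})$ labelled by $v_1$ and one pair $(f_2, f_2^{-1})$ labelled by $v_2$ sit in the rightmost four positions, while the remaining $p-2$ pairs occupy positions $1,\ldots,2p-4$ unchanged. This is possible because $m_{v_1}, m_{v_2}\ge 1$ and $v_1\neq v_2$, so at least one pair with each label is available. Then I would invoke Lemma \ref{lemma::a_pair_of_vertices_in_Gamma}(a) on the trailing quadruple: since $v_1$ and $v_2$ have an ancestor-descendant relationship in $\Gamma$, that lemma supplies a finite sequence of elementary transformations $R_i^{\pm 1}$ (with $i \in \{2p-3, 2p-2, 2p-1\}$) transforming $(f_1, f_1^{-1}, f_2, f_2^{-1})$ into a quadruple of the form $(f_1', {f_1'}^{-1}, f_2', {f_2'}^{-1})$ with strictly smaller complexity. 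These transformations act trivially on the leading $2p-4$ positions, so when applied to the full $(2p)$-tuple they output a tuple of the required paired shape $(e_1', {e_1'}^{-1}, \ldots, e_p', {e_p'}^{-1})$. As complexity is additive over components and the leading block is untouched, the total complexity strictly decreases.

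The only point worth emphasizing, rather than a serious obstacle, is that the conclusion of Lemma \ref{lemma::a_pair_of_vertices_in_Gamma}(a) genuinely produces a quadruple with the inverse-pair structure $(f_1', {f_1'}^{-1}, f_2', {f_2'}^{-1})$ rather than an arbitrary $4$-tuple; inspection of the explicit substitutions used in its proof (notably the conjugations by $A^2B\cdots BA$ and by the prefixes $A^{3-r_i}B$) shows that the paired shape is indeed preserved step by step. Hence the present lemma is essentially a packaging of the local reduction of Lemma \ref{lemma::a_pair_of_vertices_in_Gamma}(a) inside a larger tuple via Lemma \ref{lemma::move subtuples}.
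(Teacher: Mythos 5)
Your proof is correct and takes essentially the same approach as the paper, whose entire proof is the citation of Lemma \ref{lemma::a_pair_of_vertices_in_Gamma}~(a); you have simply made explicit the standard bookkeeping, namely permuting the $p$ inverse pairs via Lemma \ref{lemma::move subtuples} (valid since each pair multiplies to $I\in Z(\SL(2,\mathbb{Z}))$) and using additivity of the complexity. One small simplification: the paired shape $({e'}_1,{e'}_1^{-1},{e'}_2,{e'}_2^{-1})$ of the output quadruple is already asserted in the statement of Lemma \ref{lemma::a_pair_of_vertices_in_Gamma}~(a), so no inspection of its proof is required.
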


\begin{proof}
The lemma follows from Lemma \ref{lemma::a_pair_of_vertices_in_Gamma} (a).
\end{proof}

On the other hand, we have the following lemma.

\begin{lemma}
\label{lemma::2p tuple without vertices joined by a directed path}
Let $\sum_{v\in V(\Gamma)} m_v\cdot v$ be a formal sum over vertices of $\Gamma$.
Let
$(e_1,e_1^{-1},\ldots,e_p,e_p^{-1})$ be a $(2p)$-tuple in $\SL(2,\mathbb{Z})$
expressed by $\sum_{v\in V(\Gamma)} m_v\cdot v$.
Suppose that any two distinct vertices $v_1$, $v_2$ with $v_1\ge 1$, $v_2\ge 1$
have no ancestor-descendant relationship.
Then, either
\begin{enumerate}[(i)]
    \item there exist at least two distinct exceptional vertices with $m_v\ge 1$, or
    \item there exists at most one of the three exceptional vertices satisfying $m_v \ge 1$.
\end{enumerate}

In Case (\RNum{1}), all components of the $(2p)$-tuple are short (i.e. $\cxty(e_i)=0$ for $i=1,\ldots,p$) and by a sequence of elementary transformations the $(2p)$-tuple can be transformed into
\begin{equation*}
    (-A^2B,-BA)\bullet (-ABA,A^2BA^2)^{p-1}.
\end{equation*}

In Case (\RNum{2}),
the tuple 
$(e_1,e_1^{-1},\ldots,e_p,e_p^{-1})$ is minimal according to the complexity among tuples obtained from $(e_1,e_1^{-1},\ldots,e_p,e_p^{-1})$ using a sequence of elementary transformations.
Besides, all minimal tuples according to the complexity among them of the form $({e'}_1,{e'}_1^{-1},\ldots,{e'}_p,{e'}_p^{-1})$ are expressed by the formal sum
$\sum_{v\in V(\Gamma)} m_v\cdot v$.
\end{lemma}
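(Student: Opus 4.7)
The plan is to treat the two sub-statements of the lemma separately, after first verifying the combinatorial dichotomy. Each of the three subtrees $T_{BA^2}$, $T_B$, $T_{BA}$ hangs below precisely two of the three exceptional vertices, and every non-exceptional vertex lies in exactly one such subtree. Hence if two distinct exceptional vertices both have $m_v\geq 1$, any non-exceptional $v$ with $m_v\geq 1$ would be a descendant of one of them, contradicting the antichain hypothesis. In Case (i), every active vertex is therefore exceptional and each component of the $(2p)$-tuple is short.

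For Case (i), I would project to $\PSL(2,\mathbb{Z})$ and invoke Theorem \ref{theorem::Livne mathcalS}. The three exceptional pairs project to $(s_0,t_0)$, $(s_1,t_1)$, $(s_2,t_2)$ respectively, so the projected tuple has $p_a=q_a=n_b=0$ with equal numbers of $s$- and $t$-type components. The first alternative of that theorem therefore applies, giving Hurwitz equivalence to $(k_1,k_1^{-1},\ldots,k_p,k_p^{-1})$ with each $k_j$ conjugate to $s_0$. I would then lift the Hurwitz moves back to $\SL(2,\mathbb{Z})$: any two distinct exceptional pairs together form a generating set of $\SL(2,\mathbb{Z})$, so Lemma \ref{lemma::tuples containing a generating set} realises each $\PSL$-level move upstairs, and Lemma \ref{lemma::switch places of components with the same value} redistributes the residual signs. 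A final clean-up delivers the normal form $(-A^2B,-BA)\bullet(-ABA,A^2BA^2)^{p-1}$.

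Case (ii) is the heart of the matter. To prove minimality, I would show that no single elementary transformation $R_i^{\pm 1}$ decreases complexity, and then conclude by induction on sequence length. If the two affected entries $g_i,g_{i+1}$ are an inverse pair at the same vertex, $R_i^{\pm 1}$ acts as the identity. Otherwise they lie at distinct vertices $v\neq v'$, necessarily without ancestor-descendant relationship by the antichain hypothesis; since Case (ii) permits at most one active exceptional vertex, at least one of $v,v'$ is non-exceptional. Lemma \ref{lemma::a_pair_of_vertices_in_Gamma}(b) then caps every relevant cancellation $m(g,g')$ by $\min\{l(g)/2,l(g')/2\}$ (unless $gg'=I$). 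A direct length computation on $g_{i+1}^{-1}g_ig_{i+1}$, using the symmetric reduced form $\pm\widetilde{Q}ABAQ$ (or its $A^2BA^2$ counterpart) of each vertex label, yields $\cxty(g_{i+1}^{-1}g_ig_{i+1})\geq \cxty(g_i)$, with equality only when the vertex of $g_i$ remains unchanged. Thus complexity never decreases, and the same analysis shows that at equality the new pair $(g_{i+1},g_{i+1}^{-1}g_ig_{i+1})$ still visits the same multi-set of vertices as $(g_i,g_{i+1})$.

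Iterating through any complexity-preserving sequence then yields preservation of the formal sum $\sum_{v\in V(\Gamma)}m_v\cdot v$, which proves the second assertion of Case (ii). The main obstacle will be the length computation itself: the reduced forms of $g_i$ and $g_{i+1}$ must be compared across several subcases — according to whether $v$ and $v'$ lie in the same subtree $T_Q$, whether one of them is exceptional, and which one is deeper in $\Gamma$ — and in each instance one must propagate the bound from Lemma \ref{lemma::a_pair_of_vertices_in_Gamma}(b) through both multiplications in the conjugation $g_{i+1}^{-1}\cdot(g_ig_{i+1})$. The symmetric shape $\pm\widetilde{Q}ABAQ$ of the vertex labels is what ultimately makes the bookkeeping tractable, because it forces the two sides of each reduced form to cancel or persist in tandem.
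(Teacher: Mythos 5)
Your combinatorial dichotomy and Case (i) are essentially sound, though the detour through Theorem \ref{theorem::Livne mathcalS} is both unnecessary and insufficient on its own: the first alternative of that theorem only returns a tuple $(k_1,k_1^{-1},\ldots,k_p,k_p^{-1})$ with \emph{unspecified} $k_j$, so it does not pin down the normal form $(-A^2B,-BA)\bullet(-ABA,A^2BA^2)^{p-1}$. What actually delivers the normal form is the other mechanism you mention — two distinct exceptional pairs generate $\SL(2,\mathbb{Z})$, so Lemma \ref{lemma::tuples containing a generating set} conjugates every remaining pair into standard position — and that is precisely the paper's (much shorter) argument, which never leaves $\SL(2,\mathbb{Z})$. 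Your worry about ``residual signs'' is also moot here: every component has trace $+2$, so each $\PSL(2,\mathbb{Z})$-component has a unique trace-$+2$ lift and the sign bookkeeping is automatic.

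The genuine gap is in Case (ii): ``no single elementary transformation decreases complexity, then induct on sequence length'' does not prove minimality over \emph{sequences} of moves. Your single-move analysis is carried out only for tuples of the special paired antichain form, but one move generally destroys that form — e.g.\ $R_2$ sends $(e_1,e_1^{-1},e_2,e_2^{-1})$ to $(e_1,e_2,e_2^{-1}e_1^{-1}e_2,e_2^{-1})$, which is no longer a concatenation of adjacent inverse pairs and typically has strictly larger complexity. All subsequent moves act on such intermediate tuples, which lie outside the class where your non-decrease claim holds; and on general tuples of conjugates of $L$ and $R$ single moves certainly \emph{can} decrease complexity (this is exactly how the reductions in Lemma \ref{lemma::a_pair_of_vertices_in_Gamma}(a) operate). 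So local minimality does not exclude a sequence that first raises the complexity and then drops it below the starting value, and the induction does not close. The paper bridges this with a global argument that you are missing: after \emph{any} finite sequence of elementary transformations, every component of the resulting tuple has the form $Q^{-1}\omega Q$, where $\omega$ is one of the original components and $Q$ is a word in $e_1^{\pm 1},\ldots,e_p^{\pm 1}$; the half-length cancellation bound of Lemma \ref{lemma::a_pair_of_vertices_in_Gamma}(b) (valid under the antichain hypothesis with at most one active exceptional vertex) then shows $\cxty(Q^{-1}\omega Q)\ge\cxty(\omega)$, since no deep cancellation can propagate through the word $Q$, and the equality analysis simultaneously forces any complexity-preserving paired tuple to carry the same formal sum. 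To repair your proof you must replace the induction on sequence length by this conjugation-word argument (or prove non-decrease for \emph{all} reachable tuples, which is false). A minor slip, harmless but worth noting: $R_i$ applied to an inverse pair $(g,g^{-1})$ is the swap $(g^{-1},g)$, not the identity.
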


\begin{proof}
As in Case (\RNum{1}), when there exist at least two distinct exceptional vertices occurring in the product form,
by an elementary transformation,
the corresponding quadruple can be transformed into
\[(-A^2B,-BA,-ABA,A^2BA^2).\]
Besides, the resulting quadruple contains a generating set of $\SL(2,\mathbb{Z})$.
Therefore, by Lemma \ref{lemma::tuples containing a generating set}, the $(2p)$-tuple can be transformed into $(-A^2B,-BA)\bullet (-ABA,A^2BA^2)^{p-1}$, as desired.

In Case (\RNum{2}), we assume that there exists a sequence of elementary transformations that transforms $(e_1,e_1^{-1},\ldots,e_p,e_p^{-1})$ into a new tuple with a smaller complexity or a new tuple of the form $({e'}_1,{e'}_1^{-1},\ldots,{e'}_p,{e'}_p^{-1})$ with the same complexity but expressed by a different formal sum of vertices in $V(\Gamma)$.
Therefore, there exists at least one component of the new tuple, say $Q^{-1}\omega Q$, where $\omega$ is equal to some component of $(e_1,e_1^{-1},\ldots,e_p,e_p^{-1})$ and $Q$ is a product of $e_1,\ldots,e_p$ and their inverses, such that $\cxty(Q^{-1}\omega Q)<\cxty(\omega)$.
It contradicts Lemma \ref{lemma::a_pair_of_vertices_in_Gamma} (b).
\end{proof}

\begin{proof}[Proof of Theorem \ref{theorem::achiralLef}]
Since each component $g\in\SL(2,\mathbb{Z})$ in a global monodromy of torus achiral Lefschetz fibrations is uniquely determined by $\iota(g)\in \PSL(2,\mathbb{Z})$,
by Theorem \ref{theorem::Livne ST},
torus achiral Lefschetz fibrations of type 
$\mathcal{O}=[\underbrace{I_1^+,\ldots,I_1^+}_{\text{$p$ components}},\underbrace{I_1^-,\ldots,I_1^-}_{\text{$q$ components}}]$
have pairwise Hurwitz equivalent global monodromies when $p\neq q$.

When $p=q$, each global monodromy is Hurwitz equivalent to a tuple in $\SL(2,\mathbb{Z})$ of the form 
\[(e_1,e_1^{-1},\ldots,e_p,e_p^{-1})\]
and hence can be written as a formal sum $\sum_{v\in V(\Gamma)} m_v\cdot v$.
We enumerate all possible formal sums of vertices that express minimal tuples according to the complexity among all Hurwitz equivalent tuples.
By Lemma \ref{lemma::no-ancestor-descendant-relationship}, there is no ancestor-descendant relationship between any two distinct vertices in such a formal sum of vertices.

If there are at least two distinct exceptional vertices $v_1$ and $v_2$ such that $m_{v_1}\ge 1$ and $m_{v_2}\ge 1$, 
by Case (\RNum{1}) in Lemma \ref{lemma::2p tuple without vertices joined by a directed path},
then all possible formal sums like this are associated with the same tuple up to Hurwitz equivalence.

If there exists the unique exceptional vertex $v$ occurring in the formal sum, then other vertices belong to the same sub-tree $T$ that is either $T_{BA^2}$, $T_{B}$ or $T_{BA}$.
Therefore, by Case (\RNum{2}) in Lemma \ref{lemma::2p tuple without vertices joined by a directed path}, all possible formal sums are in one-to-one correspondence with elements in $\Omega(T,p-m_v)$.

Otherwise, there is no exceptional vertex in the formal sum.
Again by Case (\RNum{2}) in Lemma \ref{lemma::2p tuple without vertices joined by a directed path}, all possible formal sums are in one-to-one correspondence with elements in $\Omega(T_{BA^2}\sqcup T_B\sqcup T_{BA},p)$.
\end{proof}

\sloppy
\printbibliography[
    heading=bibintoc,
    title={References}
]

\appendixtitleon
\appendixtitletocon
\begin{appendices}
\section{Computability}
\label{section::computability}

We have included this section to demonstrate that all Hurwitz equivalences occurring in our results (including Theorem \ref{thmx::A}, Theorem \ref{thmx::B}, Theorem \ref{thmx::C}) are computable.
For Theorem \ref{thmx::A}, an algorithm exists to provide a sequence of elementary transformations that transforms one tuple to the other.
Its time complexity is
\begin{equation*}
    O\Big(n^5+n^3\sum_{i\in\{1,2\}} \sum_{j=1}^n l(g_j^{(i)})+n\big(\sum_{i\in\{1,2\}} \sum_{j=1}^n l(g_j^{(i)})\big)^2\Big).
\end{equation*}
We implement the algorithm in C\texttt{++} and make our code available on GitHub: \url{https://github.com/AHdoc/monodromy_normalisation}.

The main goal is to analyse the computability of Theorem \ref{theorem::Livne mathcalS}.
In particular, in Theorem \ref{theorem::Livne S->short}, we need an algorithm to make a tuple inverse-free by elementary transformations, but we cannot use Theorem \ref{theorem::Livne mathcalS} directly.

\textbf{Step 1.}
Suppose that $(g_1,\ldots,g_n)$ is a tuple in $\PSL(2,\mathbb{Z})$ whose components are conjugate to short elements.
Recall that Theorem \ref{theorem::Livne S->short} shortens the tuple by removing some pairs of the form $(x,x^{-1})$ and triples of the form $(l,l,l)$ with $l^3=1$;
the resulting tuple is an inverse-free tuple of short elements.
The proof uses an induction on an inverse-free tuple, within which one operation seeks to make the sum of $\mathcal{S}$-complexities strictly-smaller by a sequence of elementary transformations.
In fact, it is sufficient to check all transformations of the form $(R_i)^t$ with $t\in\{-2,-1,+1,+2\}$.

However, if we throw out the inverse-freeness, the induction still works well and ends with $\mu=0$, but the restoration operations (see Subsection \ref{subsection::contractions-restorations}) cannot result in a tuple of short elements.
Indeed, a pair of long elements of the form $(x,x^{-1})$ could have been combined into a single $1$, which was a contradiction in Proposition \ref{proposition::reduce_and_reloading}.
Therefore, restorations result in a tuple $(h_1,\ldots,h_m)$ that probably contains sub-pairs $(h_i,h_{i+1})=(x,x^{-1})$ or/and sub-triples $(h_i,h_{i+1},h_{i+2})=(l,l,l)$ with $l^3=1$.
Using cyclic permutations, we move these pairs and triples, if exist, to the rightmost positions.
Hence, we get a tuple of short elements, still denoted by $(h_1,\ldots,h_m)$.

Proposition \ref{proposition::reduce_and_reloading} asks us to handle each restoration $(h_1 h_2)\dashrightarrow (h_1,h_2)$ carefully.
We repeat the search for $(R_i)^t$ with $t\in\{-2,-1,+1,+2\}$ that makes $f(h_i)+f(h_{i+1})$ strictly-smaller.
In conclusion, Step 1 calls the following procedure.

\begin{breakablealgorithm}
\begin{algorithmic}[1]
\Procedure{Shorten}{$(g_1,\ldots,g_n)$}
\State $(h_1,\ldots,h_m) \gets (g_1,\ldots,g_n)$, $(k_1,\ldots,k_l) \gets$ empty tuple
\While{True} \Comment{see Subsection \ref{subsection::Conjugates of short elements and their tuples}}
    \If{$\exists i$ s.t. Operation $1$ is available on $(h_1,\ldots,h_m)$ for $i$}
    \State combine $(h_i,h_{i+1})$ into $h_ih_{i+1}$
    
    \ElsIf{$\exists i$ s.t. $h_i=1$}
    \State $(h_1,\ldots,h_m) \gets (1,h_1,\ldots,h_{i-1},h_{i+1},\ldots,h_m)$
    \Comment{via a cyclic permutation}
    
    \ElsIf{$\exists i$ and $t\in\{-2,-1,+1,+2\}$ s.t. $(R_i)^t$ makes $\sum_j f(h_j)$ strictly-smaller}
    \State $(h_1,\ldots,h_m) \gets (R_i)^t(h_1,\ldots,h_m)$
    
    \ElsIf{$\exists i$ s.t. $R_i$ keeps $\sum_j f(h_j)$ unchanged but makes $l(h_i)$ smaller}
    \State $(h_1,\ldots,h_m) \gets R_{i}(h_1,\ldots,h_m)$
    
    \Else
    \State \textbf{break while}
    \EndIf
\EndWhile

\While{$\exists$ a restoration on $h_i=\widetilde{h_1}\widetilde{h_2}$}
    \State $(h_1,\ldots,h_m)\gets (h_1,\ldots,h_{i-1},\widetilde{h_1},\widetilde{h_2},h_{i+1},\ldots,h_m)$

    \While{True}
    \If{$\exists t\in\{-2,-1,+1,+2\}$ s.t. $(R_i)^t$ makes $\sum_j f(h_j)$ strictly-smaller}
    \State $(h_1,\ldots,h_m) \gets (R_i)^t(h_1,\ldots,h_m)$
    \Else
    \State \textbf{break while}
    \EndIf
    \EndWhile
\EndWhile
\While{$\exists i$ s.t. $h_{i}h_{i+1}=1$}
    \State $(k_1,\ldots,k_l) \gets (h_i,h_{i+1})\bullet (k_1,\ldots,k_l),
    (h_1,\ldots,h_m) \gets (h_1,\ldots,h_{i-1},h_{i+2}\ldots,h_m)$
\EndWhile
\While{$\exists i$ s.t. $h_i=h_{i+1}=h_{i+2}$ and $h_{i}h_{i+1}h_{i+2}=1$}
    \State $(k_1,\ldots,k_l) \gets (h_i,h_{i+1},h_{i+2})\bullet (k_1,\ldots,k_l),
    (h_1,\ldots,h_m) \gets (h_1,\ldots,h_{i-1},h_{i+3},\ldots,h_m)$
\EndWhile
\State \textbf{return} $(h_1,\ldots,h_m)$ and $(k_1,\ldots,h_l)$
\EndProcedure
\end{algorithmic}
\end{breakablealgorithm}

The input of $\text{SHORTEN}$ is an arbitrary tuple $(g_1,\ldots,g_n)$ in $\PSL(2,\mathbb{Z})$ of conjugates of short elements.
The output is
the concatenation of a tuple
$(h_1,\ldots,h_m)$ of short elements and some pairs of the form $(x,x^{-1})$ and some triples of the form $(l,l,l)$, $l^3=1$, say $(h_1,\ldots,h_m)\bullet (k_1,\ldots,k_l)$.
In general, the tuple $(h_1,\ldots,h_m)$ is not inverse-free.
Therefore the difficulty is inherited to the next step.

\emph{
Time complexity: A step of the induction in $\text{SHORTEN}$ either decreases $\sum_i f(h_i)$ or decreases the number of the pairs $(i,j)$ such that $1\le i<j\le m$ but $l(h_i)>l(h_j)$.
Therefore, the time complexity of $\text{SHORTEN($(g_1,\ldots,g_n)$)}$ is $O((n^2+\sum_i l(g_i))n\sum_i l(g_i))$.
}

\textbf{Step 2.}
The tuple $(h_1,\ldots,h_m)$ of short elements probably has two components (resp. three components) that form a tuple of mutually inverse elements (resp. a triple of the form $(l,l,l)$ with $l^3=1$).
In this case, we move these components to the rightmost positions using cyclic permutations so that $(h_1,\ldots,h_m)$ is transformed into the concatenation of a shorter tuple, still denoted by $(h_1,\ldots,h_m)$, and a pair (resp. a triple).
However, cyclic permutations do not keep components of $(h_1,\ldots,h_m)$ short.
We end up with this reduction in an extra call on $\text{SHORTEN}((h_1,\ldots,h_m))$ and then repeat it.

\emph{
Time complexity: Using two/three cyclic permutations, we transform a tuple of short elements into a tuple, denoted by $(h_1,\ldots,h_m)$, such that $\sum_i l(h_i)=O(m)$.
Therefore, the above reduction is $O(m(m^2+m)m^2)=O(m^5)$.
}

From now on, we can assume that $(h_1,\ldots,h_m)$ contains at most $2$ components equal to $a$, at most $2$ components 
equal to $a^2$, at most $1$ component equal to $b$ and $a,a^2$ cannot appear together within this tuple.
We mark a tuple of short elements with $c_a$ components equal to $a$, $c_{a^2}$ components equal to $a^2$ and $c_b$ components equal to $b$ with the signature $[c_a,c_{a^2},c_b]$.
The following diagram shows a method to simplify such a tuple into a tuple of signature $[c_a,c_{a^2},c_b]$ with $c_a+c_{a^2}+c_b\le 1$ (c.f. Step 2, Step 3 and Step 4 in the proof of Theorem \ref{theorem::Livne mathcalS}).

\begin{center}
\begin{tikzpicture}[node distance={16mm}, thick]
\node (200) {$[2,0,0]$};
\node (010) [right=20mm of 200] {$[0,1,0]$};

\draw[->] (200) to
  node[below]
  {$(a,a)\dashrightarrow a^2$}
(010);

\node (020) [below=4mm of 200] {$[0,2,0]$};
\node (100) [right=20mm of 020] {$[1,0,0]$};

\draw[->] (020) to
  node[below]
  {$(a^2,a^2)\dashrightarrow a$}
(100);

\node (101) [below=4mm of 020] {$[1,0,1]$};
\node (021) [left=20mm of 101] {$[0,2,1]$};
\node (002) [below=10mm of 101] {$[0,0,2]$};
\node (011) [below=10mm of 002] {$[0,1,1]$};
\node (201) [left=20mm of 011] {$[2,0,1]$};
\node (001) [right=20mm of 002] {$[0,0,1]$};
\node (000) [right=20mm of 001] {$[0,0,0]$};

\draw[->] (021) to
  node[above]
  {$(a^2,a^2)\dashrightarrow a$}
(101);
\draw[->] (201) to
  node[below]
  {$(a,a)\dashrightarrow a^2$}
(011);
\draw[->] (101) to
  node[right]
  {
  \begin{tabular}{cc}
  $(a,s_0)\dashrightarrow b$ or \\$(s_2,a)\dashrightarrow b$
  \end{tabular}
  }
(002);
\draw[->] (101) to [out=0,in=90,looseness=1.4]
  node[right]
  {
  \begin{tabular}{cc}
  $(a,t_0)\dashrightarrow s_1$ or \\
  $(a,t_1)\dashrightarrow s_2$ or \\
  ~~$(a,t_2)\dashrightarrow s_0$
  \end{tabular}
  }
(001);
\draw[->] (011) to
  node[right]
  {
  \begin{tabular}{cc}
  $(a^2,t_2)\dashrightarrow b$ or \\$(t_0,a^2)\dashrightarrow b$
  \end{tabular}
  }
(002);
\draw[->] (011) to [out=0,in=270,looseness=1.4]
  node[right]
  {
  \begin{tabular}{cc}
  ~~$(a^2,s_0)\dashrightarrow t_2$ \\
  or $(a^2,s_1)\dashrightarrow t_0$ \\
  or $(a^2,s_2)\dashrightarrow t_1$
  \end{tabular}
  }
(001);

\draw[<->] (101) to [out=190,in=170,looseness=1.4]
  node[left]
  {
  \begin{tabular}{cc}
  $(s_1,a,s_1)\rightarrow (s_1,s_0,a)$\\
  $\dashrightarrow (a,a)\dashrightarrow a^2$ \\
  \\
  $(t_1,a,t_1)\rightarrow (t_1,t_2,a^2)$\\
  $\dashrightarrow (a^2,a^2)\dashrightarrow a$
  \end{tabular}
  }
(011);

\draw[->] (002) to [out=10,in=90,looseness=0.5]
  node[below]
  {~~~~~~~~~~~~~~~$(b,b)\dashrightarrow b$}
(000);

\end{tikzpicture} 
\end{center}

In the diagram, a reduction from a tuple of signature $[c_a,c_{a^2},c_b]$ to a tuple of signature $[c'_a,c'_{a^2},c'_b]$ is a directed edge endowed with some elementary transformations and contractions on a pair or a triple.
The reduction starts with cyclic permutations that create a sub-pair or a sub-triple with which the edge is first endowed.
It ends with a call on $\text{SHORTEN}$.

Signature $[0,0,2]$ is the only exception that does not satisfy the hypotheses on the tuple.
However, with at most $4$ contractions, any tuple of short elements satisfying hypotheses can be transformed into a tuple of signature $[c_a,c_{a^2},c_b]$ with $c_a+c_{a^2}+c_b\le 1$.
Indeed, if a tuple of signature $[1,0,1]$ has to be aimed at a tuple of signature $[0,1,1]$, then it is a tuple of $a,b,s_1$ and it is transformed into a tuple of $a^2,b,s_0,s_1,s_2$ of signature $[0,1,1]$, which is further transformed into a tuple of signature $[0,0,1]$.

\emph{
Time complexity:
Both cyclic permutation and contraction are linear.
We have shown that a cyclic permutation on a tuple of short elements results in a tuple such that $\sum_i l(h_i)=O(m)$.
The simplification along the diagram calls $\text{SHORTEN}$ at most $4$ times, therefore its time complexity is $O((m^2+m)m^2)=O(m^4)$.
}

From now on, we can further assume that $(h_1,\ldots, h_m)$ contains at most $1$ component equal to either $a$, $a^2$ or $b$.
Proposition \ref{proposition::ST} claims that, if $(h_1,\ldots,h_m)$ is inverse-free, then it is either a tuple of $a,a^2,b,s_0,s_1,s_2$ or a tuple of $a,a^2,b,t_0,t_1,t_2$.
The proof is a rearrangement of $s_0,s_1,s_2$ and $t_0,t_1,t_2$.
To introduce a similar reduction, we provide the following procedure.

\begin{breakablealgorithm}
\begin{algorithmic}[1]
\Procedure{\text{st-Rearrangement}}{$(\kappa_1,\ldots,\kappa_l)$}
\If{$\exists i$ s.t. $\kappa_i\in\{a,a^2,b\}$}
    \State $(\kappa_1,\ldots,\kappa_l) \gets (\kappa_i,\kappa_{i+1},\ldots,\kappa_l,\kappa_1,\ldots,\kappa_{i-1})$ \Comment{via a cyclic permutation}
\EndIf
\While{$\exists i$ s.t. $\kappa_i\in\{t_0,t_1,t_2\}$ and $\kappa_{i+1}\in\{s_0,s_1,s_2\}$}
    \If{$(\kappa_i,\kappa_{i+1})\in\{(t_0,s_0),(t_1,s_1),(t_2,s_2)\}$}
        \State \textbf{return} $(\kappa_1,\ldots,\kappa_{i-1},\kappa_{i+2},\ldots,\kappa_l)$
    \ElsIf{$(\kappa_i,\kappa_{i+1})\in\{(t_0,s_1),(t_1,s_2),(t_2,s_0)\}$}
        \State $(\kappa_1,\ldots,\kappa_l) \gets R_i^{-1}(\kappa_1,\ldots,\kappa_l)$
    \ElsIf{$(\kappa_i,\kappa_{i+1})\in\{(t_0,s_2),(t_1,s_0),(t_2,s_1)\}$}
        \State $(\kappa_1,\ldots,\kappa_l) \gets R_i(\kappa_1,\ldots,\kappa_l)$
    \EndIf
\EndWhile
\If{$\exists i<j$ s.t. $(\kappa_i,\kappa_j)\in\{(s_0,t_0),(s_1,t_1),(s_2,t_2)\}$}
    \State \textbf{return} $(\kappa_1,\ldots,\kappa_{i-1},\kappa_{i+1}^{\kappa_j},\ldots,\kappa_{j-1}^{\kappa_j},\kappa_{j+1},\ldots,\kappa_l)$
\ElsIf{$\exists i<j<k$ s.t. $(\kappa_i,\kappa_j,\kappa_k)\in\{(s_0,t_1,t_2),(s_1,t_2,t_0),(s_2,t_0,t_1)\}$}
    \State $(\kappa_1,\ldots,\kappa_l) \gets (R_2)^2 (\kappa_i,\kappa_j,\kappa_k,\kappa_1^{\kappa_i\kappa_j\kappa_k},\ldots,\kappa_{i+1}^{\kappa_j\kappa_k},\ldots,\kappa_{j+1}^{\kappa_k},\ldots,\kappa_{k+1},\ldots)$
    \State \textbf{return} $(\kappa_3,\ldots,\kappa_l)$
\ElsIf{$\exists i<j<k$ s.t. $(\kappa_i,\kappa_j,\kappa_k)\in\{(s_2,s_1,t_0),(s_0,s_2,t_1),(s_1,s_0,t_2)\}$}
    \State $(\kappa_1,\ldots,\kappa_l) \gets (R_2)^{-1} (\kappa_i,\kappa_j,\kappa_k,\kappa_1^{\kappa_i\kappa_j\kappa_k},\ldots,\kappa_{i+1}^{\kappa_j\kappa_k},\ldots,\kappa_{j+1}^{\kappa_k},\ldots,\kappa_{k+1},\ldots)$
    \State \textbf{return} $(\kappa_3,\ldots,\kappa_m)$
\Else
\State \textbf{return}
$(\kappa_1,\ldots,\kappa_m)$
\EndIf
\EndProcedure
\end{algorithmic}
\end{breakablealgorithm}

The input of $\text{ST-REARRANGEMENT}$ is a tuple of short elements, say $(\kappa_1,\ldots,\kappa_l)$, that contains at most $1$ component equal to $a,a^2$ or $b$. The output is either a tuple of short elements of length $l$, meaning that $(\kappa_1,\ldots,\kappa_l)$ is inverse-free, or a tuple $(\widetilde{\kappa_1},\ldots,\widetilde{\kappa_{l-2}})$ of length $l-2$, meaning that $(\kappa_1,\ldots,\kappa_l)$ can be transformed into $(\widetilde{\kappa_1},\ldots,\widetilde{\kappa_{l-2}})\bullet (s_i,t_i)$ with some $i$ by elementary transformations.

We call $\text{ST-REARRANGEMENT}$ and $\text{SHORTEN}$ with $(h_1,\ldots,h_m)$ repeatedly unless the tuple is inverse-free.
In conclusion, Step $2$ calls a procedure, named as $\text{INVERSE-FREE}$, whose input is a tuple $(h_1,\ldots,h_m)$ of short elements and output is an inverse-free tuple $(\kappa_1,\ldots,\kappa_l)$ of short elements such that at most $1$ component is equal to $a,a^2$ or $b$.

\emph{
Time complexity:
The procedure $\text{ST-REARRANGEMENT}$ decreases the length of the tuple and transform a tuple of short elements into a tuple, denoted by $(\kappa_1,\ldots,\kappa_l)$, such that $\sum_i l(\kappa_i)=O(l)$.
The time complexity of $\text{ST-REARRANGEMENT}$ is $O(l^4)$.
}

\emph{
Meanwhile, $\text{SHORTEN}$ transforms the tuple back to a tuple of short elements.
In conclusion, the time complexity of $\text{INVERSE-FREE($h_1,\ldots,h_m$)}$ is
$O(m^5+m^4+m(m^4+((m^2+m)m^2)))=O(m^5)$.
}

\textbf{Step 3.}
To slightly improve the complement to Theorem of R. Livn\'e introduced in \cites[p.180-187]{moishezon1977complex} to a tuple of $a,b,s_0,s_1,s_2$ that at most $1$ component is equal to $a$ or $b$, we first introduce the following procedure named as $\text{MOISHEZON}$ (c.f. Proposition \ref{proposition::reduced_form_s0_s2}).

\begin{breakablealgorithm}
\begin{algorithmic}[1]
\Procedure{\text{MOISHEZON}}{$(\kappa_1,\ldots,\kappa_l)$}
\If{$\exists i$ s.t. $\kappa_i\in\{a,b\}$}
    \State $(\kappa_1,\ldots,\kappa_l) \gets (\kappa_i,\kappa_{i+1},\ldots,\kappa_l,\kappa_1,\ldots,\kappa_{i-1})$ \Comment{via a cyclic permutation}
\EndIf
\While{True}
    \If{$\exists i$ s.t. $(\kappa_i,\kappa_{i+1})=(s_1,s_0)$} \Comment{decrease $\#$ of $s_1$}
        \State $(\kappa_1,\ldots,\kappa_l) \gets R_i (\kappa_1,\ldots,\kappa_l)$
    \ElsIf{$\exists i$ s.t. $(\kappa_i,\kappa_{i+1})=(s_2,s_1)$} \Comment{decrease $\#$ of $s_1$}
        \State $(\kappa_1,\ldots,\kappa_l) \gets R_i^{-1} (\kappa_1,\ldots,\kappa_l)$
    \ElsIf{$\exists i$ s.t. $(\kappa_i,\kappa_{i+1},\kappa_{i+2})=(s_0,s_2,s_0)$} \Comment{Claim $1$}
        \State $(\kappa_1,\ldots,\kappa_l) \gets (\kappa_1,\ldots,\kappa_{i-1},s_2,s_0,s_2,\kappa_{i+3},\ldots,\kappa_l)$
    \ElsIf{$\exists i+1< j$ s.t. $(\kappa_i,\ldots,\kappa_{j+1})=(s_1,s_2,\ldots,s_2,s_0,s_2)$} \Comment{Claim $2$}
        \State $(\kappa_1,\ldots,\kappa_l) \gets (\kappa_1,\ldots,\kappa_{i-1},s_1,s_0,s_2,s_0\ldots,s_0,\kappa_{j+2},\ldots,\kappa_l)$
    \ElsIf{$\exists i+2<j$ s.t $(\kappa_{i-1},\ldots,\kappa_{j+1})=(s_2,s_0,s_2,\ldots,s_2,s_0,s_2)$} \Comment{Claim $3$}
        \State $(\kappa_1,\ldots,\kappa_l) \gets (\kappa_1,\ldots,\kappa_{i-2},s_2,s_0,s_2,s_0,s_2,s_0,\ldots,s_0,\kappa_{j+2},\ldots,\kappa_l)$
        \State $(\kappa_1,\ldots,\kappa_l) \gets (\kappa_1,\ldots,\kappa_{i-2},s_0,\ldots,s_0,\kappa_{j+2},\ldots,\kappa_l)$
    \ElsIf{$\exists i$ s.t. $(\kappa_i,\ldots,\kappa_{i+5})=(s_0,s_2,s_2,s_0,s_2,s_2)$} \Comment{Claim 4}
        \State $(\kappa_1,\ldots,\kappa_l) \gets (\kappa_1,\ldots,\kappa_{i-1},\kappa_{i+6},\ldots,\kappa_l)$
    \ElsIf{$\exists i$ s.t. $(\kappa_i,\ldots,\kappa_{i+5})=(s_2,s_2,s_0,s_2,s_2,s_0)$} \Comment{Claim 4}
        \State $(\kappa_1,\ldots,\kappa_l) \gets (\kappa_1,\ldots,\kappa_{i-1},\kappa_{i+6},\ldots,\kappa_l)$
    \Else
        \State \textbf{break while}
    \EndIf
\EndWhile
\EndProcedure
\end{algorithmic}
\end{breakablealgorithm}

The input of $\text{MOISHEZON}$ is an inverse-free tuple $(\kappa_1,\ldots,\kappa_l)$ of $a,b,s_0,s_1,s_2$ that at most $1$ component is equal to $a$ or $b$.
The output, denoted by $(\kappa'_1,\ldots,\kappa'_{l'})$, is again a tuple of $a,b,s_0,s_1,s_2$ and shows that $(\kappa_1,\ldots,\kappa_l)$ can be transformed into the concatenation of $(\kappa'_1,\ldots,\kappa'_{l'})$ and some sextuples of the form $(s_0,s_2,s_0,s_2,s_0,s_2)$ by elementary transformations.
If $(\kappa_1,\ldots,\kappa_l)$ is a tuple of $s_0,s_1,s_2$, then $l'=0$;
otherwise, by Lemma \ref{lemma::b,s} and Lemma \ref{lemma::a,s}, either
\begin{itemize}[-]
    \item $(\kappa'_1,\kappa'_2)=(a,s_0)$ or $(\kappa'_l,\kappa'_1)=(s_2,a)$ or $(\kappa'_l,\kappa'_1,\kappa'_2)=(s_1,a,s_1)$, or
    \item $(\kappa'_1,\ldots,\kappa'_l)$ starts with $(b,s_2)$ or $(b)\bullet (s_0)^{v_{0,1}}\bullet (s_2,s_0)$ with $v_{0,1}\ge 1$, or
    \item $(\kappa'_2,\ldots,\kappa'_l,\kappa'_1)$ ends with $(s_0,b)$ or $(s_2,s_0)\bullet (s_2)^{u_{\mu,n_{\mu}}}\bullet (b)$ with $u_{\mu,n_{\mu}}\ge 1$.
\end{itemize}
By elementary transformations and at most $2$ contractions, the tuple $(\kappa'_1,\ldots,\kappa'_{l'})$ is further transformed into a tuple of $a,a^2,b,s_0,s_1,s_2$ that at most $1$ component is equal to $a,a^2$ or $b$.

\emph{
Time complexity:
The procedure $\text{MOISHEZON($(\kappa_1,\ldots,\kappa_l)$)}$ is looping, seeks the minimal number of components equal to $s_1$ and seeks the minimal according to the lexicographical order given by $s_0<s_2$.
Therefore, the number of times that the loop loops is related to the number of reverse pairs, i.e. $i<j$ but $\kappa_i>\kappa_j$ according to the lexicographical order, which is $O(l^2)$.
The time complexity of $\text{MOISHEZON}$ is $O(l^5)$.
}

In Step $3$, we consider an inverse-free tuple $(\kappa_1,\ldots,\kappa_l)$ of short elements that contains at most $1$ component equal to $a,a^2$ or $b$.
Let $\mathcal{A}$ be the set of elements in $(\kappa_1,\ldots,\kappa_l)$.
We follow the diagram below to reduce the tuple using elementary transformations and at most $3$ contractions.

\begin{center}
\begin{tikzpicture}[node distance={16mm}, thick]
\node (1) {
  \begin{tabular}{cc}
  $\mathcal{A}=$\\
  $\{a,s_0,s_1,s_2\}$
  \end{tabular}
};

\node (2) [right=20mm of 1] {$\cdot$};
\draw[->] (1) to node[below]
  {$\text{MOISHEZON}$}
(2);

\node (3) [right=20mm of 2] 
{
  \begin{tabular}{cc}
  $\mathcal{A}=$\\
  $\{a^2,s_0,s_1,s_2\}$
  \end{tabular}
};
\draw[->] (2) to node[above]
  {
  \begin{tabular}{cc}
  $(s_1,a,s_1)\rightarrow (s_1,s_0,a)$\\
  $\dashrightarrow (a,a)\dashrightarrow a^2$
  \end{tabular}
  }
(3);

\node (4) [right=20mm of 3]
{
  \begin{tabular}{cc}
  $\mathcal{A}=$\\
  $\{s_0,s_1,s_2\}$
  \end{tabular}
};
\draw[->] (3) to node[below]
  {
  \begin{tabular}{cc}
  $(a^2,s_i)\dashrightarrow t_{i-1}$,\\
  $\text{INVERSE-FREE}$
  \end{tabular}
  }
(4);

\node (5) [below=20mm of 2]
{
  \begin{tabular}{cc}
  $\mathcal{A}=$\\
  $\{b,s_0,s_1,s_2\}$
  \end{tabular}
};
\draw[->] (2) to
[out=270,in=110,looseness=1.4]
node[left]
  {
  \begin{tabular}{cc}
  $(a,s_0)\dashrightarrow b$\\\\
  \end{tabular}
  }
(5);
\draw[->] (2) to
[out=270,in=70,looseness=1.4]
node[right]
  {
  \begin{tabular}{cc}
  $(s_2,a)\dashrightarrow b$\\\\
  \end{tabular}
  }
(5);

\node (6) [right=20mm of 5] {$\cdot$};
\draw[->] (5) to node[below]
  {$\text{MOISHEZON}$}
(6);
\draw[->] (6) to
[out=90,in=250,looseness=1.4]
node[left]
  {
  \begin{tabular}{cc}
  \\\\
  $(b,s_2)\dashrightarrow a^2$
  \end{tabular}
  }
(3);
\draw[->] (6) to
[out=90,in=290,looseness=1.4]
node[right]
  {
  \begin{tabular}{cc}
  \\\\
  $(s_0,b)\dashrightarrow a^2$
  \end{tabular}
  }
(3);

\node (7) [right=20mm of 6] {$(s_0,s_2,s_0,s_2,s_0,s_2)^{m_s}$};
\draw[->] (4) to
node[right]
  {\text{MOISHEZON}}
(7);
\end{tikzpicture} 
\end{center}

A symmetric procedure, named as $\text{MOISHEZON}^{-1}$, can handle a tuple of $a^2,b,t_0,t_1,t_2$ that at most $1$ component is equal to $a^2$ or $b$.
Therefore, we have a symmetric diagram for the rest of the cases.

\emph{
Time complexity:
In conclusion, Step $3$ contracts the tuple at most $3$ times, calls $\text{MOISHEZON}$ several times and calls $\text{INVERSE-FREE}$ at most once.
Its time complexity is $O(l^5)$.
}

\textbf{Step 4.}
We have shown that by elementary transformations and at most $7$ contractions, the initial tuple $(g_1,\ldots,g_n)$ is transformed into a concatenation of the following tuples.
\begin{align*}
&(s_1,t_1)^Q,
(t_1,s_1)^Q,
(a,a^2)^Q,
(a^2,a)^Q,
(b,b)^Q,
(a,a,a)^Q,
(a^2,a^2,a^2)^Q,\\
&(s_0,s_2,s_0,s_2,s_0,s_2),
(t_0,t_2,t_0,t_2,t_0,t_2)
\end{align*}
where $Q\in \PSL(2,\mathbb{Z})$ is arbitrary, such that $(s_0,s_2,s_0,s_2,s_0,s_2)$ and $(t_0,t_2,t_0,t_2,t_0,t_2)$ cannot appear at the same time.
By Lemma \ref{lemma::move subtuples}, the concatenation can be transformed into such with a desired order by elementary transformations.
Besides, a pair of the form $(x,x^{-1})$ can be transformed into $(x^{-1},x)$ by an elementary transformation.
Therefore, we handle each restoration carefully and obtain
\begin{equation*}
(h_1,\ldots,h_m)\bullet
(s_0,s_2)^{3m_s}\bullet
(t_0,t_2)^{3m_t}\bullet
\prod_{i=1}^{m_{st}} (s_1,t_1)^{X_i}\bullet
\prod_{i=1}^{m_a} (a,a^2)^{Y_i}\bullet
\prod_{i=1}^{m_b} (b,b)^{Z_i}\bullet
\prod_{\epsilon=\pm 1}
\prod_{i=1}^{n_{\epsilon}} (a^{\epsilon},a^{\epsilon},a^{\epsilon})^{P_{\epsilon,i}}
\end{equation*}
with $m_sm_t=0$, $X_i,Y_i,Z_i,P_{\epsilon,i}\in \PSL(2,\mathbb{Z})$,
which is Hurwitz equivalent to the initial tuple.

The tuple $(h_1,\ldots,h_m)$, called the exceptional part of the resulting tuple, is a tuple of short elements.
In fact, if $p=q$, $|p_a-q_a|\equiv 0\pmod{3}$ and $n_b$ is even in Theorem \ref{theorem::Livne mathcalS}, the exceptional part does not exist anymore, i.e. $m=0$, thus we have already finished the computation.
Otherwise, the exceptional tuple $(h_1,\ldots,h_m)$ contains a generating set;
by Lemma \ref{lemma::tuples containing a generating set}, we obtain
\begin{equation*}
(h_1,\ldots,h_m)\bullet
(s_0,s_2)^{3m_s}\bullet
(t_0,t_2)^{3m_t}\bullet
(s_1,t_1)^{m_{st}}\bullet
(a,a^2)^{m_a}\bullet
(b,b)^{m_b}\bullet
(a,a,a)^{n_1}\bullet
(a^2,a^2,a^2)^{n_{-1}}.
\end{equation*}
With a slight adjustment using cyclic permutations, we may further assume that $n_1 n_{-1}=0$.

The length of the exceptional tuple is bounded by a constant.
In fact, we claim that the exceptional tuple $(h_1,\ldots,h_m)$ satisfies $m\le 13$ without further discussion.
The proof of Theorem \ref{theorem::Livne mathcalS} has revealed that a partial normal form can be transformed into the desired normal form by cyclic transformations and elementary transformations that keep each component short.
The whole computation ends with a brute-force search.

\emph{
Time complexity:
The brute-force search is $O(1)$ as the length of the exceptional tuple is bounded by a constant.
The time complexity of Step $4$ is $O(n\sum_i l(g_i)+n^3+1)$.
Hence, the computation of Theorem \ref{theorem::Livne mathcalS} has the time complexity $O(n^5+n^3\sum_i l(g_i)+n(\sum_i l(g_i))^2)$.
}

\end{appendices}

\end{document}